\date{\today}
\newcommand{\Z}{{\mathbb Z}}
\newcommand{\R}{{\mathbb R}}
\newcommand{\N}{{\mathbb N}}
\newcommand{\osc}{\mathop{\mathrm{osc}}}
\newcommand{\lR}{\lambda_{RD}}
\newcommand{\Leb}{{\mathrm{Leb}}}
\newcommand{\DOS}{{\mathrm{DOS}}}
\newcommand{\SL}{{\mathrm{SL}}}
\newcommand{\bo}{\bar{\omega}}
\newcommand{\bv}{\bar{v}}
\newcommand{\Dirac}{\delta}
\newcommand{\bmin}{b_-}
\newcommand{\bmax}{b_+}
\newcommand{\tf}{\tilde f}
\newcommand{\tg}{\tilde g}
\newcommand{\tG}{\tilde G}
\newcommand{\tx}{\tilde x}
\newcommand{\ty}{\tilde y}
\newcounter{mit}
\newenvironment{mainit}
	{\setcounter{mit}{0}\begin{itemize}}
	{\end{itemize}}
\newcommand{\mitem}{\refstepcounter{mit}\item[\themit]}
\renewcommand{\themit}{\textbf{\Roman{mit}}}
\newtheorem{theorem}{Theorem}[section]
\newtheorem{lemma}[theorem]{Lemma}
\newtheorem{prop}[theorem]{Proposition}
\newtheorem{coro}[theorem]{Corollary}
\theoremstyle{definition}
\newtheorem{remark}[theorem]{Remark}
\newtheorem{defi}[theorem]{Definition}
\newcommand{\edited}[1]{\textcolor{black}{#1}}
\newcommand{\Sc}{{\mathbb S}^1}
\newcommand{\Lx}{L}
\newcommand{\La}{L_p}
\newcommand{\ep}{\varepsilon'}
\newcommand{\bm}{\bar{m}}
\def\dist{{\rm dist}}
\def\supp{\mathop{\rm supp}}
\newcommand{\bi}{{\bf i}}
\def\N{{\mathbb N}}
\newcommand{\E}{{\mathbb E}\,}
\newcommand{\Prob}{{\mathbb P}\,}
\def\P{\Prob}
\def\be{\begin{equation}}
\def\ee{\end{equation}}
\newcommand{\eps}{{\varepsilon}}
\newcommand{\const}{{\rm const}}
\begin{document}

\title[ Parametric Furstenberg Theorem]{Parametric Furstenberg Theorem \\ on Random Products of $SL(2, \mathbb{R})$ matrices}

\author[A.\ Gorodetski]{Anton Gorodetski}

\address{Department of Mathematics, University of California, Irvine, CA~92697, USA\\
and  National Research University Higher School of Economics, Russian Federation}

\email{asgor@uci.edu}

\thanks{A.\ G.\ was supported in part by Simons Fellowship (grant number 556910), Simons Visiting Professor Award, and NSF grant DMS--1855541} 

\author[V. Kleptsyn]{Victor Kleptsyn}

\address{CNRS, Institute of Mathematical Research of Rennes, IRMAR, UMR 6625 du CNRS}

\email{victor.kleptsyn@univ-rennes1.fr}

\thanks{V.K. was supported in part by RFBR projects 16-01-00748-a and 13-01-00969-a, by
Centre Henri Lebesgue ANR-11-LABX-0020-01, and by ANR Gromeov (ANR-19-CE40-0007).}

\thanks{Both authors were supported in part by Laboratory of Dynamical Systems and Applications NRU HSE, grant of the Ministry of science and higher education of the RF ag. N 075-15-2019-1931.}

\begin{abstract}
We consider random products of $SL(2, \mathbb{R})$ matrices that depend on a parameter in a non-uniformly hyperbolic regime. We show that if the dependence on the parameter is monotone then almost surely the random product has upper (limsup) Lyapunov exponent that is equal to the value prescribed by the Furstenberg Theorem (and hence positive) for all parameters, but the lower (liminf) Lyapunov exponent is equal to zero for a dense $G_\delta$ set of parameters of zero Hausdorff dimension. As a byproduct of our methods, we provide a purely geometrical proof of Spectral Anderson Localization for discrete Schr\"odinger operators with random potentials (including the Anderson-Bernoulli model) on a one dimensional lattice.
\end{abstract}

\maketitle


%

\section{Introduction}

Random products of matrices appear naturally in smooth dynamical systems \cite{V, W}, probability theory  \cite{Bel, Ber, FK, KS}, 
  spectral theory and mathematical physics \cite{D,S}, geometric measure theory \cite{HS, PT, Sh}. The main questions are usually focused on the rate of growth of these products. In this context an important step was made in 1960 by Furstenberg and Kesten \cite{FK}.
 They proved that products of random matrices generated by a stationary process have well defined asymptotic exponential growth rate. This rate of growth is usually called {\it Lyapunov exponent}. It corresponds exactly to the logarithm of the spectral radius when all the random matrices degenerate to a single matrix. In \cite{Fur1, Fur2} Furstenberg  showed that in most cases the Lyapunov exponent must be positive; see also \cite{Vi} for a different proof. Here is the classical version of the Furstenberg Theorem.

\begin{theorem}\label{t.F}
Let $\{X_k, k\ge 1\}$ be independent and identically distributed random variables, taking values in $SL(d, \mathbb{R})$, the $d\times d$ matrices with determinant one, let $G_X$ be the smallest closed subgroup of $SL(d, \mathbb{R})$ containing the support of the distribution of $X_1$, and assume that
$$
\E[\log\|X_1\|]<\infty.
$$
Also, assume that $G_X$ is not compact, and there exists no $G_X$-invariant finite union of proper subspaces of~$\R^d$. Then there exists a positive constant $\lambda_F$ such that with probability one
$$
\lim_{n\to \infty}\frac{1}{n}\log\|X_n\ldots X_2X_1\|=\lambda_F>0.
$$
\end{theorem}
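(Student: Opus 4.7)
The plan is to split the theorem into two parts: existence of a deterministic almost-sure limit, and its positivity.

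For existence, I would apply Kingman's subadditive ergodic theorem to the subadditive cocycle $a_n(\omega) = \log\|X_n(\omega)\cdots X_1(\omega)\|$ over the Bernoulli shift on $\SL(d,\R)^{\N}$. The hypothesis $\mathbb{E}[\log\|X_1\|]<\infty$ supplies $\mathbb{E}[a_1^+]<\infty$, so Kingman yields the almost-sure limit $\lim_n\tfrac{1}{n}a_n$, and ergodicity of the i.i.d.\ shift makes this limit a deterministic constant $\lambda_F\in[-\infty,\infty)$. The $\SL(d,\R)$-bound $\|A^{-1}\|\le\|A\|^{d-1}$ additionally shows that $\log\|X_1^{-1}\|$ is integrable, giving $\lambda_F>-\infty$. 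This portion is the classical Furstenberg-Kesten theorem.

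For positivity I pass to the induced Markov chain on the compact projective space $\mathbb{P}(\R^d)$. Markov-Kakutani applied to the averaging operator $\nu\mapsto\int X_*\nu\,d\mu(X)$ furnishes a $\mu$-stationary Borel probability $\nu$; for it, the Furstenberg-Khasminskii formula reads
\begin{equation*}
\int\!\!\int \log\frac{\|Xv\|}{\|v\|}\, d\mu(X)\, d\nu(\bar v) \;\leq\; \lambda_F,
\end{equation*}
with equality when $\nu$ is chosen to correspond to the top Oseledets direction. Assume for contradiction that $\lambda_F=0$; then the left-hand side vanishes for this $\nu$. The crucial step is Furstenberg's invariance principle: vanishing of $\lambda_F$ forces $\nu$ to be $G_X$-invariant, not merely $\mu$-stationary. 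I would establish this by viewing the random pushforwards $\nu_n(\omega):=(X_n\cdots X_1)_*\nu$ as a martingale in the compact metric space of probability measures on $\mathbb{P}(\R^d)$, converging almost surely to a random measure $\nu_\infty(\omega)$. The hypothesis $\lambda_F=0$ obstructs the degeneration $\nu_\infty=\delta_{\bar{v}_\infty(\omega)}$ that would arise in the hyperbolic case, forcing $\nu_\infty\equiv\nu$ almost surely; unpacking this identity and using the independence of the increments yields $X_*\nu=\nu$ for $\mu$-a.e.\ $X$, hence for every $X$ in the closed subgroup $G_X$.

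The final contradiction combines the remaining hypotheses. If $\nu$ is atomic, the finitely many atoms of largest mass form a finite $G_X$-orbit and thus a $G_X$-invariant finite union of lines in $\R^d$, contradicting strong irreducibility. If $\nu$ is diffuse, its stabilizer $\{g\in\SL(d,\R):g_*\nu=\nu\}$ is a closed subgroup containing $G_X$ that must be compact (a diffuse probability on a projective space has compact stabilizer in $\SL(d,\R)$), contradicting non-compactness of $G_X$. The principal obstacle — and the genuine content of Furstenberg's theorem beyond Furstenberg-Kesten — is the invariance-upgrade step at $\lambda_F=0$; a rigorous implementation requires a delicate martingale analysis on $\mathbb{P}(\R^d)$ combined with the contraction properties of projective actions, or alternatively the boundary/entropy machinery developed in Furstenberg's later work.
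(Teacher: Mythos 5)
The paper itself does not prove Theorem~\ref{t.F}: it is quoted as the classical Furstenberg theorem (cited to Furstenberg's papers), so your proposal can only be compared with the standard literature proof, whose outline you are indeed following (Kingman/Furstenberg--Kesten for existence; stationary measures on projective space and an invariance argument for positivity). The existence half is correct as written; in fact for $SL(d,\R)$ one even has $\|A\|\ge 1$, so $\lambda_F\ge 0$ comes for free.

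In the positivity half there are genuine gaps. First, a small but real one: the measure-valued martingale is $(X_1X_2\cdots X_n)_*\nu$, with new letters composed on the inside; for your sequence $(X_n\cdots X_1)_*\nu$ the conditional expectation given the first $n$ letters is $\bigl(\int X_*\,d\mu(X)\bigr)\,(X_n\cdots X_1)_*\nu$, not $(X_n\cdots X_1)_*\nu$, so martingale convergence does not apply to it as stated (fixable, since both orderings have the same law). Second, the decisive ``invariance upgrade'' at $\lambda_F=0$ is asserted rather than proved: ``$\nu_\infty$ is not a Dirac mass, hence $\nu_\infty\equiv\nu$'' is not a dichotomy --- the almost-sure limit of the martingale can a priori be any random measure --- and the known implementations of this step either follow Furstenberg's original argument or invoke the invariance principle for cocycles all of whose Lyapunov exponents coincide. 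In the latter route you must first note that for $SL(d,\R)$ the exponents sum to zero, so $\lambda_F=0$ forces \emph{all} exponents to vanish; your sketch never uses this, and without it the step has no known proof. You acknowledge the difficulty, but as written this is exactly the content of Furstenberg's theorem and is missing.

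Third, the closing dichotomy ``atomic versus diffuse'' is wrong for $d\ge 3$. A non-atomic measure on $\mathbb{P}(\R^d)$ can charge a proper projective subspace --- e.g.\ the rotation-invariant measure on a projective line inside $\mathbb{P}(\R^3)$ --- and its stabilizer in $SL(d,\R)$ is then non-compact (take $g$ preserving the plane and acting on it as a large scalar times a rotation, compensated on a complement), so ``diffuse $\Rightarrow$ compact stabilizer'' fails. The correct split is Furstenberg's lemma: either some proper subspace has positive $\nu$-mass, in which case the subspaces of minimal dimension carrying the maximal mass are finitely many and their union is $G_X$-invariant, contradicting strong irreducibility; or $\nu$ gives zero mass to \emph{every} proper subspace, and only then is its stabilizer compact, contradicting non-compactness of $G_X$. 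With these repairs your argument becomes the standard proof; for $d=2$ (the case actually used in this paper) your atomic/diffuse split does coincide with the correct one.
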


This result was generalized and improved in many different ways, see \cite{BL, CKN, GM} for classical surveys, and \cite{Fu} for a more recent one.
For example, Oseledets Theorem \cite{O} claims that Lyapunov exponent exists for large class of linear cocycles, not only for the random products of matrices, and describes the structure of subspaces of vectors with different growth rates. Dependence of 
the Lyapunov exponent $\lambda_F$ on the distribution in the space of matrices \edited{(e.g. continuous, H\"older continuous, or smooth dependence)} was considered in \cite{BV, FKif, Kif, KifS, He, Per, TV,  DK1}. Also, in the case of random products of matrices that depend on a parameter the properties of $\lambda_F$ as a function of the parameter were heavily studied. In particular, it is known that for the uniformly hyperbolic case (the formal definition is provided below) $\lambda_F$ is an analytic function of the parameter~\cite{R}, but in general only H\"older continuity can be guaranteed \cite{L}.

The focus of our paper is also on the case when the matrices in the random product depend on a parameter. But instead of studying the properties of $\lambda_F$ as a function of the parameter, we want to fix a (generic) sequence of matrices, and ask whether the Lyapunov exponent exists for all parameters for the product formed by this specific sequence.
In Section \ref{ss.11} below we present two examples to motivate this question, and discuss the case of uniformly hyperbolic set of matrices. Then in Section \ref{ss.12} we formulate our main result, a parametric version of Furstenberg Theorem. In order to illustrate the power of our approach, in Section 1.3 we consider the Anderson model (including the Anderson-Bernoulli model) in the case of discrete Schr\"odinger operators on one dimensional lattice, and give a purely geometrical proof of Anderson Localization (pure point spectrum and exponential decay of eigenfunctions). Finally, in Section \ref{ss.14} we complete the introduction with the statement of the result on properties of finite random products of $SL(2, \mathbb{R})$ matrices; this result is the main technical part of the proof of parametric version of Furstenberg Theorem, but is also of interest by itself.

\subsection{Two examples}\label{ss.11}

Before providing the formal statement of our results let us consider two examples.

\vspace{4pt}

{\bf Example 1.}
Consider two matrices $A,B\in SL(2, \mathbb{R})$, and the family of matrices $\{R_\alpha\circ A, R_\alpha\circ B\}$, where $R_\alpha=\left(
                                                                        \begin{array}{cc}
                                                                          \cos \alpha & -\sin \alpha \\
                                                                          \sin \alpha & \cos \alpha \\
                                                                        \end{array}
                                                                      \right)$
is a rotation by angle $\alpha$,  $\alpha\in [\alpha_1, \alpha_2]$.  Denote $A_\alpha=R_\alpha\circ A, B_\alpha=R_\alpha\circ B$ and consider random products of $A_\alpha$ and $B_\alpha$ (chosen with some given probabilities $p$ and $1-p$) . Assume that for each $\alpha\in [\alpha_1, \alpha_2]$ the set of matrices $\{A_\alpha, B_\alpha\}$ satisfies the Furstenberg genericity conditions, i.e. the group generated by $A_\alpha$ and $B_\alpha$  is not
contained in any compact subgroup of $SL(2, \R)$, and there is no  finite union of proper subspaces of~$\R^2$ that would be invariant under both $A_\alpha$ and $B_\alpha$.
Then due to Theorem \ref{t.F} for any $\alpha\in [\alpha_1, \alpha_2]$ for almost every sequence $\{C_i(\alpha)\}$, $C_i(\alpha)\in \{A_\alpha, B_\alpha\}$, there is a limit
\begin{equation}\label{e.C}
 \lim_{n\to \infty}\frac{1}{n}\|C_1(\alpha)C_2(\alpha)\ldots C_n(\alpha)\|=\lambda_F(\alpha)>0.
\end{equation}
{\it Is it true that for almost every sequence $\{C_i\}$ the limit (\ref{e.C}) exists for all $\alpha\in [\alpha_1, \alpha_2]$?}

\vspace{4pt}

{\bf Example 2.}
Let us consider Schr\"odinger cocycle associated with the one-dimensional Anderson model, where the role of parameter is played by the energy. Namely, we consider Schr\"odinger operators $H$ acting on $\ell^2(\Z)$ via
\begin{equation}\label{e.oper}
[H u](n) = u(n+1) + u(n-1) + V(n) u(n).
\end{equation}
We will assume that $\{V(n)\}$ are i.i.d. random variables, distributed with respect to some compactly supported non-degenerate (support contains more than one point) probability measure $\mu$. Notice that we do not require the distribution $\mu$ to be continuous; in particular, the Anderson-Bernoulli model (when potential $V(n)$ can takes only two different values) is included in our setting. We will denote by $V_\omega$, where $\omega\in (\text{supp}\, \mu)^{\mathbb{Z}}$, the particular choice of the potential $V$, and by $H_\omega$ the corresponding operator (\ref{e.oper}).

A sequence $u\in \ell^2(\Z)$ is an eigenvector of $H$, that is, satisfies $Hu=Eu$ for some eigenvalue (``energy'')~$E$, if and
only if it solves the difference equation
\begin{equation}\label{e.eveom}
u(n+1) + u(n-1) + V_\omega(n) u(n) = E u(n), \quad n \in \Z.
\end{equation}
Now, $u$ solves \eqref{e.eveom} if and only if
\begin{equation}\label{e.ostmom}
\begin{pmatrix} u(n+1) \\ u(n) \end{pmatrix} = \begin{pmatrix} E - V_\omega(n) & -1 \\ 1 & 0 \end{pmatrix} \begin{pmatrix} u(n) \\ u(n-1) \end{pmatrix}, \quad n \in \Z.
\end{equation}
One naturally defines
$$
\Pi_{n,E,\omega} = \begin{pmatrix} E - V_\omega(n) & -1 \\ 1 & 0 \end{pmatrix},
$$
so that \eqref{e.ostmom} implies
$$
\begin{pmatrix} u(n+1) \\ u(n) \end{pmatrix} = \Pi_{n,E,\omega} \times \cdots \times \Pi_{1,E,\omega} \begin{pmatrix} u(1) \\ u(0) \end{pmatrix}
$$
for $n \ge 1$ and any solution $u$ to~\eqref{e.eveom}. Thus, the study of spectral properties of~$H$ motivates the study of such random products;
we set $T_{n, E,\omega} = \Pi_{n,E,\omega} \times \cdots \times \Pi_{1,E,\omega}$.

Due to Theorem \ref{t.F} for any $E\in \mathbb{R}$ for almost every $\omega\in (\text{supp}\, \mu)^{\mathbb{Z}}$ there is a limit
\begin{equation}\label{Lyap}
\lim_{n\to \infty} \frac{1}{n} \log \|T_{n, E,\omega} \| = \lambda_F(E)>0.
\end{equation}
However, from the spectral point of view it makes sense to fix the potential $V_\omega$ first, and then vary the value of the energy $E$.
{\it Is it true that for almost every $\omega\in (\text{supp}\, \mu)^{\mathbb{Z}}$ the limit (\ref{Lyap}) exists for all $E\in \mathbb{R}$? For all $E$ from a given interval $J\subset \mathbb{R}$? }

\vspace{4pt}

To give a comprehensive answer to the questions in both examples let us introduce a more general framework.

\subsection{Parametric version of Furstenberg Theorem}\label{ss.12}

Let $(\Omega, \mu)$ be a probability space, $J\subset \R$ be a compact interval of parameters, and
$F:\Omega\times J \to SL(2,\R)$ be a bounded measurable (and continuous in second argument) map that to any $\omega\in\Omega$ puts in correspondence
a matrix $F_a(\omega)$ that depends continuously on the parameter $a\in J$. In Example 1 above the role of parameter was played by the angle $\alpha$, and in Example~2~--- by the value of energy $E$. For a given sequence $\bo\in \Omega^\mathbb{N}, \bar \omega=\omega_1\omega_2\ldots$ denote
$$
T_{n,a, \bar \omega} = F_a(\omega_n)F_a(\omega_{n-1})\ldots F_a(\omega_1).
$$ Furstenberg-Kesten Theorem \cite{FK} implies that for each value of the parameter $a\in J$  there is a subset $\Omega_a\subseteq \Omega^\mathbb{N}$ with $\mu^\mathbb{N}(\Omega_a)=1$ such that for any $\bar\omega\in \Omega_a$ the limit
\begin{equation}\label{e.lim}
\lambda_F(a):= \lim_{n\to \infty}\frac{1}{n}\log\|T_{n,a, \bar \omega}\|
\end{equation}
exists.

{\it Is it possible to choose $\Omega_a$ uniformly in the parameter? In other words, is it true that $\mu^\mathbb{N}$-almost surely the limit (\ref{e.lim}) exists for all values of the parameter $a\in J$?}

Notice that the questions stated in Section \ref{ss.11} are partial cases of this one. It turns out that the answer to these questions is drastically different depending of presence or absence of uniform hyperbolicity.

\begin{defi}
A collection of $SL(2, \mathbb{R})$ (or $SL(k, \mathbb{R})$) matrices $\{M_\alpha\}_{\alpha\in \mathcal{A}}$  is called \emph{uniformly hyperbolic} if there exists a constant $\eta>1$ such that for any finite sequence of matrices $M_{\alpha_1}, M_{\alpha_2}, \ldots, M_{\alpha_n}$ we have
$
\|M_{\alpha_1} M_{\alpha_2} \ldots M_{\alpha_n}\|>\eta^n.
$
\end{defi}

There is a number of equivalent ways to describe uniform hyperbolicity of $SL(2, \mathbb{R})$ (or $SL(k, \mathbb{R})$) cocycles, such as an invariant splitting into stable and unstable directions, or the absence of a Sacker-Sell solution; compare, for example, \cite{ABY, DFLY14, Y04, Z}. 
In particular, existence of invariant one-dimensional stable and unstable directions for uniformly hyperbolic $SL(2, \mathbb{R})$ cocycles combined with Birkhoff Ergodic Theorem immediately implies the following statement:
\begin{prop}\label{p.uh}
In the setting above, assume that for each $a\in J$ the collection of matrices $\{F_a(\omega)\}_{\omega\in \Omega}$ is uniformly hyperbolic.
Then, for $\mu^{\N}$-a.e. $\bo\in\Omega^{\N}$ the limit
$$
\lim_{n\to \infty} \frac{1}{n} \log \|T_{n,a, \bar \omega} \| = \lambda_F(a)>0
$$
exists for all $a\in J$.
\end{prop}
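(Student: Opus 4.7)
The plan is to express $\log\|T_{n,a,\bo}\|$, up to a uniformly bounded additive error, as a Birkhoff sum over the ergodic shift $(\Omega^{\N},\mu^{\N},\sigma)$, apply the Birkhoff ergodic theorem, and then promote convergence from a countable dense set of parameters to all of $J$ via equicontinuity in $a$. As a preliminary, since uniform hyperbolicity is open in the underlying matrix family and $F_a$ depends continuously on $a$, compactness of $J$ yields a single hyperbolicity rate $\eta>1$ with $\|F_a(\omega_n)\cdots F_a(\omega_1)\|\ge\eta^n$ for every $a\in J$ and every finite string $\omega_1,\ldots,\omega_n$ (the function $a\mapsto\eta(a)$ is lower semicontinuous and strictly greater than $1$ on the compact set $J$, hence uniformly bounded away from $1$).

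Standard hyperbolic theory then produces invariant one-dimensional unstable and stable distributions $E^u_a(\bo),E^s_a(\bo)\subset\R^2$ with $F_a(\omega_1)E^u_a(\bo)=E^u_a(\sigma\bo)$, continuous in $(a,\bo)$, whose mutual angle is bounded below uniformly in $(a,\bo)$. (One builds $E^u_a(\bo)$ either via invariant cone fields or as the fixed point of a uniform contraction on projective space depending continuously on parameters.) Setting
$$
\phi_a(\bo):=\log\|F_a(\omega_1)|_{E^u_a(\bo)}\|,
$$
invariance telescopes to $\log\|T_{n,a,\bo}|_{E^u_a(\bo)}\|=\sum_{j=0}^{n-1}\phi_a(\sigma^j\bo)$, and the uniform lower bound on angles together with exponential contraction on $E^s_a$ gives $\log\|T_{n,a,\bo}\|=\sum_{j=0}^{n-1}\phi_a(\sigma^j\bo)+O(1)$, uniformly in $a,\bo,n$. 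Since $F$ is bounded and $SL(2,\R)$-valued, $\phi_a$ is bounded and hence lies in $L^1(\mu^{\N})$; the Birkhoff ergodic theorem for the mixing Bernoulli shift then yields, for each fixed $a\in J$, a full-measure set on which $\frac{1}{n}\sum_{j=0}^{n-1}\phi_a(\sigma^j\bo)\to\int\phi_a\,d\mu^{\N}=:\lambda_F(a)$.

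To get a single full-measure $\Omega^*\subset\Omega^{\N}$ on which convergence holds simultaneously for all $a\in J$, fix a countable dense $\{a_k\}\subset J$ and intersect the countably many full-measure sets produced above. For $\bo\in\Omega^*$ and any $a\in J$,
$$
\Bigl|\tfrac{1}{n}\sum_{j=0}^{n-1}\phi_a(\sigma^j\bo)-\lambda_F(a)\Bigr|\le 2\,\|\phi_a-\phi_{a_k}\|_\infty+\Bigl|\tfrac{1}{n}\sum_{j=0}^{n-1}\phi_{a_k}(\sigma^j\bo)-\lambda_F(a_k)\Bigr|,
$$
since $|\lambda_F(a)-\lambda_F(a_k)|\le\|\phi_a-\phi_{a_k}\|_\infty$. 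Choosing $a_k\to a$ and then $n\to\infty$, together with (i) equicontinuity of $a\mapsto\phi_a$ in sup norm, which follows from continuity in $a$ of both $F_a(\omega)$ and $E^u_a(\bo)$ uniformly in $\omega,\bo$, and (ii) Birkhoff convergence at $a_k$, yields convergence at every $a\in J$ on the single set $\Omega^*$. Positivity $\lambda_F(a)\ge\log\eta>0$ comes for free from the uniform lower bound. The main technical point is precisely step (i): the \emph{uniform-in-$\bo$} continuity of $a\mapsto E^u_a(\bo)$, which is where the uniform hyperbolicity rate $\eta>1$ on the compact interval is essential; absent such a rate, the splitting could degenerate as $a$ varies and the equicontinuity step would collapse.
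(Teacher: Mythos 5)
Your overall route is exactly the one the paper has in mind (the paper only records the one-line recipe ``invariant stable/unstable directions plus Birkhoff''): telescope the log-norm into a Birkhoff sum along an invariant direction, apply the ergodic theorem, then upgrade from a countable dense set of parameters to all of $J$ by equicontinuity in $a$. However, one step fails as written: over the \emph{one-sided} shift $\Omega^{\N}$ there is no line field $E^u_a(\bo)$ with $F_a(\omega_1)E^u_a(\bo)=E^u_a(\sigma\bo)$. Such equivariance would force $F_a(\omega_1)E^u_a(\omega_1\omega_2\dots)$ to be independent of $\omega_1$ once $\omega_2\omega_3\dots$ is fixed, and indeed the unstable direction of a hyperbolic cocycle is determined by the past, which a one-sided sequence does not have; the only future-measurable equivariant line field is the stable one, $E^s_a(\bo)=\{v:\ \|T_{n,a,\bo}v\|\to 0\}$, which satisfies $F_a(\omega_1)E^s_a(\bo)=E^s_a(\sigma\bo)$. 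The repair is immediate and preserves your structure: either telescope $-\log\|T_{n,a,\bo}|_{E^s_a(\bo)}\| = -\sum_{j=0}^{n-1}\log\|F_a(\omega_{j+1})|_{E^s_a(\sigma^j\bo)}\|$ and verify that this equals $\log\|T_{n,a,\bo}\|+O(1)$ uniformly (here you need the standard estimate that, by uniform hyperbolicity and boundedness of the matrices, the most contracted singular direction of $T_{n,a,\bo}$ lies within angle $O(\|T_{n,a,\bo}\|^{-2})$ of $E^s_a(\bo)$, so the $O(1)$ is genuinely uniform), or pass to the natural extension over $\Omega^{\Z}$, where $E^u$ does exist and your argument runs verbatim, and then observe that $\log\|T_{n,a,\bo}\|$ depends only on the future coordinates, so the full-measure conclusion descends to $\mu^{\N}$.

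Two smaller points should be made explicit. First, your compactness claim (a single rate $\eta>1$ valid on all of $J$) and the equicontinuity of $a\mapsto\phi_a$ both require the continuity of $a\mapsto F_a(\omega)$ to be \emph{uniform in $\omega$}; the bare setting of the proposition only gives continuity for each fixed $\omega$, so you should invoke the paper's standing regularity (the uniform $C^1$ bound of (A2), i.e.\ uniform Lipschitz dependence on $a$) at this point — the same uniformity is also what makes $E^s_a(\bo)$ (hence $\phi_a$) continuous in $a$ uniformly in $\bo$. Second, when identifying the Birkhoff average with $\lambda_F(a)$, note that the $O(1)$ comparison with $\log\|T_{n,a,\bo}\|$ plus the Furstenberg--Kesten theorem already forces $\int\phi_a\,d\mu^{\N}$ (with the sign convention of the stable version, $-\int\phi_a$) to equal $\lambda_F(a)$, so no separate computation is needed. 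With these repairs the proof is complete and is essentially the argument the paper intends.
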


\begin{remark} In the case of $SL(k, \mathbb{R})$, $k>2$, even uniform hyperbolicity does not guarantee the convergence uniformly in parameter, or even pointwise convergence for all parameters. More restrictive assumptions (e.g. positivity of all entries of the matrices, as in \cite{CN, Pol}, \edited{or existence of a dominated splitting of index or co-index 1, see \cite{BoGo}}) are needed; see also \cite[Theorem 2.2]{G}.
\end{remark}

The goal of this work is to provide the detailed description of the case complementary to the setting of Proposition \ref{p.uh}. This case (positive Lyapunov exponent in absence of uniform hyperbolicity) is usually referred to as {\it  non-uniformly hyperbolic case.}

From now on, we will proceed under the following standing assumptions:
\begin{itemize}
\item[(A1)] \textbf{(Furstenberg condition)} Denote by $\mu_a$ the measure $\mu_a=(F_a)_*(\mu)$. We assume that for
each $a\in J$ the measure $\mu_a$ on $SL(2,\R)$ satisfies the (individual) Furstenberg non-degeneracy condition, that is, its support is not
contained in any compact subgroup of $SL(2, \R)$, and there is no $\supp \mu_a$-invariant finite union of proper subspaces of~$\R^2$.

\vspace{4pt}

\item[(A2)] \textbf{($C^1$-boundedness)} The maps $F_a(\omega)$ are $C^1$-smooth in the parameter $a\in J$, with  uniformly bounded
$C^1$-norm, i.e. there exists $M>0$ such that for all $\omega\in \Omega$ and all $a\in J$
$$
 \|F_{a}(\omega)\|, \left\|\frac{d}{da}F_{a}(\omega)\right\| \le M.
$$
\item[(A3)] \textbf{(Non-uniform hyperbolicity)} For each $a\in J$ the collection of matrices $\{F_a(\omega)\}_{\omega\in \Omega}$  is \emph{not} uniformly  hyperbolic.
    \vspace{4pt}
\item[(A4)] \textbf{(Monotonicity)} There exists $\delta>0$ such that
$$
\frac{d}{da}\mathrm{arg}(F_a(\omega)\bar v)>\delta>0
$$
for all $a\in J, \omega\in \Omega, \bar v \in \mathbb{R}^2\backslash \{0\}$. In other words, as we increase the parameter, the image of
any given vector $\bar v$ spins in the positive direction with a speed that is bounded from below.
\end{itemize}

\begin{remark}\label{r.cont}
\edited{The condition $(A1)$ is sometimes  referred to as {\it strong irreducibility} (non existence of proper subspaces invariant under
the closed semigroup generated by the support of the measure $\mu_a$) and
{\it contractivity} (existence of matrices of arbitrarily large norm in that semigroup) assumptions. In it known that under these assumptions the Lyapunov exponent $\lambda_F(a)$ is continuous \cite{FK}. For the current state of art regarding continuity of Lyapunov exponents see the monographs \cite{DK1} and \cite{V}. }
\end{remark}

\vspace{2pt}

Our main result is the following theorem, describing the behaviour of the random parameter-dependent products of $SL(2, \R)$ matrices:
\begin{theorem}[Parametric version of Furstenberg Theorem]\label{t:product}
Under the assumptions $(A1)-(A4)$ above, for $\mu^{\N}$-almost every $\bo\in\Omega^{\N}$ the following holds:
\begin{itemize}
\item[$\bullet$] \textbf{(Regular upper limit)} For every $a\in J$ we have
$$
 \quad \limsup_{n\to\infty} \frac{1}{n} \log \|T_{n,a, \bar \omega} \| = \lambda_F(a)>0.
$$
\item[$\bullet$] \textbf{($G_{\delta}$-vanishing)} The set
$$
S_0(\bo):=\left\{a \in J \mid \liminf_{n\to\infty} \frac{1}{n} \log \|T_{n,a, \bar \omega} \| =0 \right\}
$$
is a (random) dense $G_{\delta}$-subset of the interval~$J$.
\vspace{4pt}
\item[$\bullet$] \textbf{(Hausdorff dimension)} The (random) set of parameters with exceptional behaviour,
$$
S_{e}(\bo):=\left\{a \in J \mid \liminf_{n\to\infty} \frac{1}{n} \log \|T_{n,a, \bar \omega} \| <\lambda_F(a) \right\},
$$
has zero Hausdorff dimension:
$$
\dim_H S_{e}(\bo)=0.
$$
\end{itemize}
\end{theorem}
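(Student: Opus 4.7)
The plan is to combine three ingredients: a Le Page-type uniform large deviation estimate for $\frac{1}{n}\log\|T_{n,a,\bar\omega}\|$, valid with a rate uniform in $a$ by compactness of $J$ together with continuity of $a\mapsto\mu_a$ inherited from (A2); the $C^1$-bound (A2), which controls how $\log\|T_{n,a,\bar\omega}\|$ varies between nearby parameters; and the monotonicity (A4), the key geometric input that forces bad parameters to be spread out rather than clustered. The finite-product statement announced in Section 1.4 would serve as the main quantitative tool for the counting estimates.

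For the regular upper limit, the bound $\limsup\le\lambda_F(a)$ would be established via a countable-dense approximation: picking $\{a_k\}\subset J$ dense, the classical Furstenberg theorem applies a.s.\ at each $a_k$, and an exponentially refined $C^1$-comparison of $T_{n,a,\bar\omega}$ to $T_{n,a_k,\bar\omega}$ (using (A2)) promotes this to an upper bound for every $a\in J$ with an error vanishing as $\epsilon\to0$. For the reverse inequality $\limsup\ge\lambda_F(a)$, I would use the following: the uniform large deviation bound implies that the random set $\{a\in J:\frac{1}{n}\log\|T_{n,a,\bar\omega}\|<\lambda_F(a)-\epsilon\}$ has expected Lebesgue measure $\le e^{-cn}$, while (A4) forces it to be a union of at most subexponentially many intervals (monotonicity prevents fine oscillations of the growth function). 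Along a subsequence $n_j$ chosen via Borel-Cantelli, no single parameter belongs to all these bad sets; hence a.s.\ for every $a$ there are infinitely many $n$ with $\frac{1}{n}\log\|T_{n,a,\bar\omega}\|\ge\lambda_F(a)-\epsilon$, and a countable exhaustion in $\epsilon$ yields the claim.

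For the $G_\delta$-vanishing statement, define the open sets
$$U_{m,\epsilon}(\bar\omega)=\bigcup_{n\ge m}\bigl\{a\in J:\tfrac{1}{n}\log\|T_{n,a,\bar\omega}\|<\epsilon\bigr\}.$$
Then $S_0(\bar\omega)=\bigcap_{k\ge1}\bigcap_{m\ge1}U_{m,1/k}(\bar\omega)$ is $G_\delta$, so the problem reduces to density of each $U_{m,\epsilon}$. Given an arbitrary subinterval $I\subset J$, I would combine (A3) with (A4): as $a$ varies over $I$, the image of a reference vector under $T_{n,a,\bar\omega}$ rotates in $\mathbb{RP}^1$ at speed bounded below, so for suitable $n\ge m$ it must sweep across the contracting direction of a subsequent factor, producing a parameter where $\|T_{n,a,\bar\omega}\|$ is close to~$1$. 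Almost-sure existence of such alignments in every $I$ follows from recurrence of the projective random walk under (A1) applied jointly to the backward and forward dynamics.

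The Hausdorff-dimension statement is the main obstacle. The aim is to cover $S_e(\bar\omega)$, for each $\epsilon>0$, by unions of intervals of vanishing $s$-Hausdorff content for every $s>0$. At each scale $n$, I would cover
$$B_{n,\epsilon}(\bar\omega):=\bigl\{a\in J:\tfrac{1}{n}\log\|T_{n,a,\bar\omega}\|<\lambda_F(a)-\epsilon\bigr\}$$
by at most $e^{o(n)}$ intervals, each of length $\le e^{-\kappa n}$ for some $\kappa=\kappa(\epsilon)>0$. The length bound comes from the $C^1$-comparison; the counting bound uses monotonicity decisively: by (A4) a bad parameter corresponds to a near-alignment between the contracting direction of $T_{n,a,\bar\omega}$ and an expanding direction of an upcoming factor, and monotonicity of projective actions bounds the number of disjoint subintervals where such near-alignments can occur via a Bezout-style argument (intersections of monotone graphs on $\mathbb{RP}^1$). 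Summing $s$-content over $n\to\infty$ then gives zero for every $s>0$. The hardest step—and where I expect the finite-product analysis of Section 1.4 to be essential—is reconciling the $e^{o(n)}$ count with the $e^{-\kappa n}$ length, so that the resulting product is summable at every positive Hausdorff exponent while controlling the cascade of monotonicity through all $n$ factors.
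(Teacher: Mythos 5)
The central gap is in the hard half of the first bullet, the bound $\limsup_{n\to\infty}\frac1n\log\|T_{n,a,\bar\omega}\|\ge\lambda_F(a)$ for \emph{every} $a\in J$. Your argument --- the bad set $B_{n,\varepsilon}=\{a:\frac1n\log\|T_{n,a,\bar\omega}\|<\lambda_F(a)-\varepsilon\}$ has exponentially small expected measure, consists of subexponentially many intervals (itself a fact that already leans on the finite-product analysis), and then ``along a subsequence chosen via Borel--Cantelli no single parameter belongs to all these bad sets'' --- does not close. Borel--Cantelli with respect to Lebesgue measure only yields that Lebesgue-almost every $a$ lies in finitely many $B_{n_j,\varepsilon}$; smallness of measure together with a subexponential bound on the number of components cannot force $\bigcap_{j\ge J}B_{n_j,\varepsilon}=\emptyset$, since nested families of tiny intervals can share a common point. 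In fact they do: every $a\in S_0(\bar\omega)$ lies in $B_{n,\varepsilon}$ for infinitely many $n$, and your own second bullet makes that set a dense $G_\delta$, so no measure/counting argument alone can show that each parameter escapes the bad sets at infinitely many scales. The paper's proof uses instead the structure \emph{inside} the exceptional intervals: part \ref{i:m3} of Theorem~\ref{t:main} guarantees that even for $a$ in a jump interval the products grow at rate $\lambda_F(a)$ both up to the jump moment $m_k$ and after it, and the dichotomy $m_{k}/n\gtrless\sqrt{\varepsilon}$ then produces, for every such $a$ and arbitrarily large scales, times (either $m_{k}$ or $n$ itself) at which the normalized log-norm is within $O(\sqrt{\varepsilon}\,)$ of $\lambda_F(a)$. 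Your proposal never uses this ingredient, and without it (or a substitute) the ``Regular upper limit'' bullet is unproven.

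Two further soft spots. For the upper bound $\limsup\le\lambda_F(a)$, comparing $T_{n,a,\bar\omega}$ with $T_{n,a_k,\bar\omega}$ over the whole length $n$ costs an error of order $nM^{n}|a-a_k|$, so the comparison point must be taken $e^{-cn}$-close to $a$, hence $n$-dependent; almost-sure Furstenberg asymptotics at a \emph{fixed} countable dense set then no longer provide the finite-$n$ control needed at those points. The paper avoids this by comparing parameters only over blocks of a fixed length $n_0$ on a finite subcover of $J$ and applying the ergodic theorem to the block maxima (Proposition~\ref{p.upper}); alternatively one can use the uniform large deviation estimate on an $n$-dependent grid as in Lemma~\ref{l:LD-RD}. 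For the density step in the $G_\delta$ part, having the image of the reference direction sweep past the contracting direction of the remaining product does not by itself make $\|T_{n,a,\bar\omega}\|$ subexponential: one also needs the two halves of the product to have comparable norms, which is why the paper evaluates the product at time $2m_k$ with $m_k/n\in(1/4,1/2)$ and invokes the exact cancellation of Lemma~\ref{l:cancel} through parts \ref{i:m4} and \ref{i:m5} of Theorem~\ref{t:main}. Finally, for the Hausdorff dimension you neither need nor easily get covers by exponentially short intervals: the paper's cover by $O(n)$ exceptional intervals of length $|J|e^{-\sqrt[4]{n}}$, summed over $n$, already has vanishing $d$-content for every $d>0$.
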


\begin{remark}\label{r.ex}
Let us  consider the properties  $(A1)-(A4)$ in the context of Examples 1 and 2 from Section \ref{ss.11} to show that Theorem \ref{t:product} can be applied to both of them.

\vspace{4pt}

Example 1: The assumptions $(A2)$ and $(A4)$ obviously hold. It is also not hard to give an explicit example of $\{A_\alpha, B_\alpha\}$ and an interval $J$ such that $\{A_\alpha, B_\alpha\}$ is not uniformly hyperbolic and satisfy Furstenberg non-degeneracy conditions for all $\alpha\in J$, e.g. see  \cite[Example 2.2]{GI}. It is interesting to compare Theorem \ref{t:product} in the context of Example 1 with \cite[Corollary 4]{AB}.

\vspace{4pt}

Example 2: Assumption $(A2)$ is certainly satisfied. Notice that the Furstenberg conditions are satisfied automatically for transition matrices $\{\Pi_{n, E, \omega}\}$, e.g. see the proof of Theorem 2.17 from \cite{D}. As for assumption $(A3)$, Johnson showed in \cite{J86} that the set of energies $E$ for which the collection of matrices $\{\Pi_{n, E, \omega}\}_{\omega\in (\text{supp}\, \mu)^{\mathbb{Z}}}$ is uniformly hyperbolic, is equal to the resolvent set of $H_\omega$ for $\mu^{\mathbb{Z}}$-almost every $\omega$. Besides, $\mu^{\mathbb{Z}}$-almost surely the spectrum of $H_\omega$ is a finite union of intervals of length at least four (more precisely, it is equal to $[-2,2]+\supp \mu$), e.g. see Theorem 4.1 from \cite{D16}. Therefore, an interval of energies inside of the spectrum corresponds to the non-uniformly hyperbolic case. Finally, notice that while the condition $(A4)$ (monotonicity) does not hold in general for matrices $\Pi_{n, E, \omega}$, it is a straightforward calculation to check that it does hold for a product of two consecutive matrices $\Pi_{n, E, \omega}\Pi_{n+1, E, \omega}$, and this allows to apply Theorem \ref{t:product} in the context of Example~2.

Notice that in this case existence of a dense subset of energies in the spectrum for which the limit that defines the Lyapunov exponent does not exist was shown in \cite[Theorem 6.2]{G}.
\end{remark}

\begin{remark}
Monotonic cocycles (i.e. satisfying the property $(A4)$) were considered previously, for example, by Avila and Krikorian in \cite{AvK}. There they developed, in particular, a dynamical analog of Kotani Theory, see \cite[Theorem 1.7]{AvK}. Theorem \ref{t:product} also has some counterparts in spectral theory. Namely, the statement on ``Regular upper limit'' can be considered as a dynamical analog (and, in fact, improvement) of the result by Craig and Simon \cite[Theorem 2.3]{CS}. Also, ``$G_{\delta}$-vanishing'' part seems to be related to \cite[Theorem 2, Theorem 2.1]{DMS}, see also \cite[Theorem 2]{Gor}. Namely, the set of exceptional parameters $S_e$ from Theorem \ref{t:product} is analogous to the set of ``exceptional energies'' for rank one perturbations of a (continuous) Schr\"odinger operator without a.c. spectrum, see \cite[Example 5.2]{DMS}. Moreover, one could extract  from the proofs in \cite{DJLS} the arguments needed to show that in the case of random potential the set of ``exceptional energies'' must have zero Hausdorff dimension \cite{J}.  We are grateful to Lana Jitomirskaya for this remark.
\end{remark}

\begin{remark}
It is interesting to compare Theorem \ref{t:product} with the result from \cite{Bo} that claims that for any fixed invertible ergodic dynamical system over a compact space, there is a residual set of continuous $SL(2,\mathbb{R})$-cocycles which are either  uniformly hyperbolic or have zero
exponents a.e.; for related results on $SL(k,\mathbb{R})$ cocycles see \cite{BoV1, BoV2}. In the opposite direction, denseness of $SL(k,\mathbb{R})$ cocycles with non-zero Lyapunov exponents was shown in \cite{Av}. Moreover, for a generic smooth (or H\"older) cocycle over a hyperbolic base positivity of Lyapunov exponents was shown in \cite{V2, BGV}; see also \cite{BV, BocV, VY} for other related results. The question about positivity of Lyapunov exponent for Schr\"odinger cocycles over a hyperbolic base in some specific cases was studied in \cite{ChS, Z2}; in full generality essential progress was also announced \cite{Dpers}.


%

\end{remark}

\begin{remark}
\edited{One of the powerful methods currently available to study the properties of cocycles with positive Lyapunov exponent is Avalanche Principle, see \cite{GS}, \cite{DK1}. Notice that this is not an approach we are using in this paper. Indeed, Avalanche Principle allows to establish an inductive procedure by using estimates on the norms of products of ``blocks'' of matrices under an assumption that no critical cancelations happen between two subsequent ``blocks''. We do not establish any inductive procedure; instead we analyze the properties of large finite products of parameter dependent matrices directly, see Theorem \ref{t:main} below.
We are grateful to one of the referees for this remark.}
\end{remark}
%

\subsection{Anderson Localization}\label{ss.13}

One  important application of Furstenberg's Theorem on random matrix products lies in the context of Anderson Localization for discrete Schr\"odinger operators with random potentials on one dimensional lattice; this model is described in Example 2 from Section \ref{ss.11}. The following result is well known.

\begin{theorem}[Spectral Anderson Localization, 1D]\label{t.al}
The spectrum of the operator $H_\omega$ defined by (\ref{e.oper}) is $\mu^{\mathbb{Z}}$-almost surely pure point, with exponentially decreasing eigenfunctions. The same statement holds for spectrum of discrete Schr\"odinger operator with random potential in $\ell^2(\mathbb{N})$ with Dirichlet boundary condition.
\end{theorem}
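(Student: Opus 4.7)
The plan is to combine Theorem \ref{t:product} with Schnol's theorem in a purely geometric way, avoiding Kotani theory, subordinacy, and spectral averaging. First I would verify that the hypotheses (A1)--(A4) apply to the Schr\"odinger cocycle over any compact subinterval of the almost sure spectrum: (A1) and (A2) are standard for transfer matrices, (A3) holds inside the spectrum by Johnson's theorem, and (A4) is obtained after passing to products of two consecutive matrices $\Pi_{n+1,E,\omega}\Pi_{n,E,\omega}$, as noted in Remark \ref{r.ex}. For the bi-infinite operator on $\ell^2(\Z)$, I would extend $\omega$ to $\Omega^{\Z}$ by independence and apply Theorem \ref{t:product} separately to the forward and backward halves; the $\ell^2(\N)$ Dirichlet case needs only the forward side.

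Next, invoke Schnol's theorem: the spectral measure of $H_\omega$ is carried by those energies admitting a nontrivial polynomially bounded solution. It therefore suffices to show that, $\mu^{\Z}$-almost surely, every such solution lies in $\ell^2$ with exponential decay at rate $\lambda_F(E)$. Fix $\omega$ in the full-measure set from Theorem \ref{t:product} and let $u_E$ be a polynomially bounded solution with initial vector $v_E = (u_E(1), u_E(0))^{T}$. The regular-upper-limit conclusion yields a subsequence $n_k \to \infty$ with $\|T_{n_k, E, \omega}\| \geq e^{(\lambda_F(E) - \eps) n_k}$. Decomposing $v_E$ in the singular value basis of $T_{n_k, E, \omega}$ and using $\|T_{n_k, E, \omega} v_E\| \leq C n_k^c$ forces $v_E$ to be exponentially close to the stable direction $s^+_{n_k}$. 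Hence the $s^+_{n_k}$ form a Cauchy sequence at geometric rate $\|T_{n_k}\|^{-2}$, converging to a limit $s^+_\infty$ proportional to $v_E$. The same argument on the backward half produces $s^-_\infty$ with $v_E \parallel s^-_\infty$.

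Finally I would upgrade sub-sequential exponential smallness to pointwise exponential decay of $u_E$. Writing $T_n v_E = T_n s^+_n + T_n(v_E - s^+_n)$, the first summand has norm $\|T_n\|^{-1}$ by the definition of the stable direction, while the second is bounded by $\|T_n\| \cdot \|v_E - s^+_n\| \lesssim \|T_n\|\cdot \|T_n\|^{-2} = \|T_n\|^{-1}$, using the geometric convergence rate of stable directions in $SL(2, \R)$. This gives $|u_E(n)| \lesssim \|T_n\|^{-1}$ for every $n \geq 1$, and combining this with the analogous backward bound yields $\ell^2$ eigenfunctions with exponential decay at rate $\lambda_F(E)$ for all $E$ outside the exceptional set $S_e(\omega)$ produced by the Hausdorff-dimension-zero part of Theorem \ref{t:product}. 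The main obstacle I foresee is precisely the set $S_e(\omega)$: for $E\in S_e(\omega)$ the liminf may drop, so the bound $\|T_n\|^{-1}$ does not decay exponentially for every $n$, and pure-point spectrum on this residual set is not immediate. I expect this to be handled by propagating the Hausdorff-dimension-zero estimate from $\omega$-probability to spectral measure via the monotonicity hypothesis (A4) and the $C^1$-control of (A2), which together provide the transversal parameter deformation needed to conclude that the spectral measure cannot concentrate on $S_e(\omega)$, so Anderson Localization holds on the full spectrum.
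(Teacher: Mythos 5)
Your skeleton (check (A1)--(A4) via Remark \ref{r.ex}, reduce to Schnol's Theorem \ref{t.shnol}, then show every polynomially bounded generalized eigenfunction decays exponentially) is the paper's skeleton, but the core analytic step is not closed, and this is exactly where the paper invests all its effort. The paper does not derive localization from Theorem \ref{t:product} alone; it proves the separate Theorem \ref{t.vector}, which asserts exponential decay at rate $\lambda_F(a)$ for \emph{every} parameter, including the exceptional energies, and its proof requires the fine structure of Theorem \ref{t:main} (uniform hyperbolicity of the pieces $T_{m,a,\bo}$ and $T_{[m_k,m],a,\bo}$ on exceptional intervals) together with the V-shape and W-shape lemmas and a dyadic-scale argument comparing the breakpoints at scales $n$ and $2n$. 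Your replacement argument has a quantitative gap: from $\|T_{n,E,\omega}v_E\|\le Cn^c$ and $\|T_{n_k}\|\ge e^{(\lambda_F-\eps)n_k}$ the alignment you can actually extract is $\dist(v_E,s^+_{n_k})\lesssim n_k^c/\|T_{n_k}\|$, not $\|T_{n_k}\|^{-2}$; plugging this into your decomposition $T_nv_E=T_ns^+_n+T_n(v_E-s^+_n)$ only returns $\|T_nv_E\|\lesssim n^c$, i.e.\ the polynomial bound you started with, not $\|T_n\|^{-1}$. Moreover the alignment is only available along the subsequence $n_k$, whereas for $E\in S_0(\bo)$ (a dense $G_\delta$ by Theorem \ref{t:product}) one has $\liminf\frac1n\log\|T_{n,E,\omega}\|=0$, so even a bound of the form $|u_E(n)|\lesssim\|T_{n,E,\omega}\|^{-1}$ would not give exponential decay along those scales. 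This is precisely the regime the paper's Lemmas \ref{l:line-shape}--\ref{l:W-shape} and the doubling argument (Lemmas \ref{l:more-than-half}, \ref{l:2-half}) are designed to handle, and you cannot reach their conclusion from the three bullets of Theorem \ref{t:product} alone.

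Your proposed patch for the exceptional set --- arguing that the spectral measure cannot charge $S_e(\bo)$ because it has zero Hausdorff dimension, using (A2) and (A4) as a ``transversal deformation'' --- is not carried out and is not the paper's route. Transferring a zero-dimension statement about a random set of energies into a statement about the (nonrandom, $\omega$-dependent) spectral measure is a genuinely nontrivial step (it is essentially the rank-one perturbation technology of Del Rio--Jitomirskaya--Last--Simon alluded to in the paper's remarks), and the paper explicitly avoids it: localization is deduced from Theorem \ref{t.vector}, which is proved for all $a\in J$, so no energy needs to be excluded. Note also the paper's warning that the half-line statement is only true for a fixed initial vector; any argument that implicitly lets the candidate stable direction vary with the scale runs into exactly the counterexamples mentioned there. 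As written, your proposal would prove localization only off a dense $G_\delta$ of energies of the spectrum, which is not sufficient for Theorem \ref{t.al}.
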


\begin{remark}
In Example 2 above we assume that $\mu$ is a non-degenerate compactly supported measure on $\mathbb{R}$. Theorem \ref{t.al} is known to hold also for the case of $\mu$ with unbounded support (under some extra conditions), e.g. see Theorem 2.1 from \cite{CKM}. We believe that our approach and results (including Theorem \ref{t:product}) can also be extended to the case of distribution with unbounded support under some reasonable conditions, but do not elaborate on it in this paper.
\end{remark}

There are many different proofs of Theorem \ref{t.al}, see  \cite{GMP, KuS} the initial proofs of related statements, and \cite{D} for a survey. Most of the proofs rely either on Furstenberg Theorem (Theorem \ref{t.F}), or on Kunz-Souillard method \cite[Section 4]{D} (but there are exceptions, e.g. see \cite[Remark 4.2]{FLSSS}). The Kunz-Souillard method requires absolute continuity  of the distribution $\mu$. The same condition (or at least existence of  an absolutely continuous component) is needed for shorter proofs that use Furstenberg Theorem, e.g. the method of Spectral Averaging  \cite{SW} (see also \cite[Section 3.2]{D}). The first complete proof of Theorem \ref{t.al} that would also cover the Anderson-Bernoulli model  (the case when the support of $\mu$ consists of two points) was given by Carmona, Klein, and Martinelli in \cite{CKM}, see also \cite{DSS} for continuum case. When this paper was at the final stage of preparation, we learned about two other proofs. The  paper \cite{BDFGVWZ} provides a proof of Anderson Localization in 1D that is relatively elementary and avoids multi-scale analysis, using Furstenberg Theorem as the main tool. Also, the very recent paper \cite{JZh} gives a short proof of Theorem \ref{t.al}. \edited{Anderson Localization of random Jacobi operators (and related version of Large Deviation Estimates) was studied by Duarte and Klein in \cite{DK2}.}

We would like to present here a purely geometrical proof of Theorem \ref{t.al} based on techniques similar to the parametric version of Furstenberg Theorem above, that shows that in 1D case Anderson Localization can arguably be considered as a dynamical rather than purely spectral phenomenon.

More specifically, we can show that the following statement holds:

\begin{theorem}\label{t.vector}
Under the assumptions $(A1)-(A4)$ we have:
\begin{itemize}
\item[$\bullet$] For almost all $\bo\in\Omega^{\N}$, for all $a\in J$ the following holds. If
\begin{equation}\label{eq:lim-less}
\limsup_{n\to+\infty} \frac{1}{n} \log |T_{n,a, \bar \omega} \left( \begin{smallmatrix} 1 \\ 0 \end{smallmatrix} \right)| < \lambda_F(a),
\end{equation}
then in fact $|T_{n,a, \bar \omega} \left(  \begin{smallmatrix} 1 \\ 0 \end{smallmatrix} \right)|$ tends to zero exponentially as $n\to \infty$. Namely,
$$
\lim_{n\to+\infty} \frac{1}{n} \log |T_{n,a, \bar \omega} \left( \begin{smallmatrix} 1 \\ 0 \end{smallmatrix} \right)| = - \lambda_F(a).
$$

\item[$\bullet$] For almost all $\bo\in\Omega^{\Z}$, for all $a\in J$ the following holds. If for some $\bar{v}\in \R^2\setminus \{0\}$ we have
\begin{equation}\label{eq:lim-both-less}
\limsup_{n\to+\infty} \frac{1}{n} \log |T_{n,a, \bar \omega} \bar v| < \lambda_F(a), \ \ \text{and}\ \ \
\limsup_{n\to+\infty} \frac{1}{n} \log |T_{-n,a, \bar \omega} \bar v| < \lambda_F(a),
\end{equation}
where $$T_{-n,a, \bar \omega}:=F_a(\omega_{-n})^{-1}\dots F_a(\omega_{-1})^{-1}F_a(\omega_0)^{-1},$$
then both $|T_{n,a, \bar \omega} \bar v|, |T_{-n,a, \bar \omega} \bar v|$ in fact tend to zero exponentially. Namely,
$$
\lim_{n\to+\infty} \frac{1}{n} \log |T_{n,a, \bar \omega} \bar v| = - \lambda_F(a), \ \ \text{and}\ \ \
\lim_{n\to+\infty} \frac{1}{n} \log |T_{-n,a, \bar \omega} \bar v| = - \lambda_F(a).
$$
\end{itemize}
\end{theorem}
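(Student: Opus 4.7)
The proof will combine Theorem~\ref{t:product} (uniform-in-$a$ control of the operator norm) with the $SL(2,\R)$ rigidity that the two singular values of $T_{n,a,\bo}$ are exactly $\sigma_n:=\|T_{n,a,\bo}\|$ and $\sigma_n^{-1}$, in order to promote the hypothesis ``upper exponent of $w$ is strictly less than $\lambda_F(a)$'' to the sharp conclusion ``$|T_{n,a,\bo}w|$ decays at rate $-\lambda_F(a)$''.

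The easy half is a free lower bound. For any unit vector $w$, the $SL(2,\R)$-identity $|T_{n,a,\bo}w|\ge\sigma_n^{-1}$ combined with the ``Regular upper limit'' $\limsup n^{-1}\log\sigma_n=\lambda_F(a)$ of Theorem~\ref{t:product} yields $\liminf_{n\to\infty} n^{-1}\log|T_{n,a,\bo}w|\ge-\lambda_F(a)$, simultaneously for all $a\in J$ and $\mu^\N$-a.e. $\bo$. The matching upper bound requires the real work. Assume $\limsup n^{-1}\log|T_{n,a,\bo}w|<\lambda_F(a)-\epsilon_0$ and pick a subsequence $\{n_k\}$ with $\sigma_{n_k}\ge e^{n_k(\lambda_F(a)-\epsilon)}$, $\epsilon<\epsilon_0$. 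Decomposing $w=\alpha_k e^u_{n_k}+\beta_k e^s_{n_k}$ in the singular-value basis of $T_{n_k,a,\bo}$ (with $e^s_{n_k}$ the most contracted input direction), the identity $|T_{n_k,a,\bo}w|^2=\alpha_k^2\sigma_{n_k}^2+\beta_k^2\sigma_{n_k}^{-2}$ forces $|\alpha_k|\le e^{-n_k(\epsilon_0-\epsilon)}$, so $w$ is exponentially aligned with $e^s_{n_k}$.

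A telescoping computation based on $T_{n+1,a,\bo}=F_a(\omega_{n+1})T_{n,a,\bo}$ and the $C^1$-bound $\|F_a^{\pm1}\|\le M$ from (A2) yields $\sin\angle(e^s_n,e^s_{n+1})\le M/(\sigma_n\sigma_{n+1})$; whenever $\sigma_n$ grows exponentially this is summable, producing a limit stable direction $e^s_\infty(a,\bo)$ with $\|e^s_n-e^s_\infty\|=O(\sigma_n^{-2})$ at good times. The alignment step identifies $w$ with $e^s_\infty$ up to sign, and decomposing $e^s_\infty=e^s_n+(e^s_\infty-e^s_n)$ before applying $T_{n,a,\bo}$ yields $|T_{n,a,\bo}e^s_\infty|\le \sigma_n^{-1}+\sigma_n\cdot O(\sigma_n^{-2})=O(\sigma_n^{-1})$ at good times. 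Together with the free lower bound and (A2)-Lipschitz interpolation between good times, this pins $\lim n^{-1}\log|T_{n,a,\bo}w|=-\lambda_F(a)$.

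For the two-sided statement one runs the same argument forward and backward: inversion preserves singular values in $SL(2,\R)$, so Theorem~\ref{t:product} applies to the backward cocycle $T_{-n,a,\bo}$ with the same Furstenberg exponent $\lambda_F(a)$, and the joint hypothesis~(\ref{eq:lim-both-less}) simultaneously pins $\bv$ to the forward and backward stable limit directions, each decaying at rate $-\lambda_F(a)$ on its own side. The principal obstacle is the transition from the good subsequence back to all $n$: for parameters $a\in S_e(\bo)$ the Cauchy series for $e^s_n$ may fail to converge, and one must argue via the lower bound $|T_n w|\ge\sigma_n^{-1}$ that the hypothesis itself forces $a$ out of $S_e(\bo)$, pushing the argument into the regular regime where good times have density one; reconciling this uniformly in $a$ is where the monotonicity~(A4) and the structural content behind Theorem~\ref{t:product} enter crucially.
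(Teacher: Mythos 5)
There is a genuine gap, and it sits exactly where you flag the ``principal obstacle''. Your telescoping estimate $\sin\angle(e^s_n,e^s_{n+1})\le M/(\sigma_n\sigma_{n+1})$ and the resulting convergence $e^s_n\to e^s_\infty$ with $|T_{n,a,\bo}e^s_\infty|=O(\sigma_n^{-1})$ are only available when $\sigma_n=\|T_{n,a,\bo}\|$ grows exponentially along \emph{all} times, i.e.\ precisely when $a\notin S_e(\bo)$. Your proposed bridge --- that the hypothesis \eqref{eq:lim-less} together with $|T_{n,a,\bo}w|\ge\sigma_n^{-1}$ forces $a$ out of $S_e(\bo)$ --- does not work: for $a\in S_0(\bo)$ (a dense $G_\delta$, by Theorem \ref{t:product}) the lower bound only says $|T_{n_j,a,\bo}w|\ge e^{-o(n_j)}$ along the bad subsequence, which is perfectly compatible with $\limsup_n\frac1n\log|T_{n,a,\bo}w|=0<\lambda_F(a)$; no contradiction with \eqref{eq:lim-less} arises. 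Worse, nothing in your argument uses that the initial vector is the \emph{fixed} vector $\left(\begin{smallmatrix}1\\0\end{smallmatrix}\right)$: your sketch would apply verbatim to any unit vector, but the one-sided statement is false for general vectors --- almost surely there is a residual set of parameters $a$ and nonzero vectors $v_0$ with $\limsup_n\frac1n\log|T_{n,a,\bo}v_0|=0$, as noted in the remark after Theorem \ref{t.vector}. So any completion of your scheme must inject probabilistic input tied to the fixed vector, and the purely deterministic reduction to Theorem \ref{t:product} cannot suffice. (Your ``free lower bound'' $\liminf\frac1n\log|T_{n,a,\bo}w|\ge-\lambda_F(a)$ via Proposition \ref{p.upper} is fine and is also how the paper closes its argument.)

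For comparison, the paper's proof works at finite scales with the fixed point $x_0=\left[\left(\begin{smallmatrix}1\\0\end{smallmatrix}\right)\right]$: Borel--Cantelli applied to Theorem \ref{t:main} and Proposition \ref{p:derivatives-control} shows that under \eqref{eq:lim-l-eps} the parameter must, for every large $n$, lie in a \emph{jump} interval whose jump index satisfies $m_0'<\frac{n}{10}$ (otherwise the derivative control at $x_0$ would force norm growth of $|T_{m,a,\bo}v_0|$ at nearly the full rate $\lambda_F(a)$, contradicting the hypothesis). Then the $(2n\eps',\lambda_F)$-hyperbolicity of $T_{[m_0',n],a,\bo}$ feeds into the V-shape Lemma \ref{l:V-shape}, and the dyadic doubling argument of Lemma \ref{l:more-than-half} pins the break point $m'_{(n)}$ beyond $\frac{n}{2}$, yielding decay at rate $\lambda_F-O(\eps')$ on $[\frac{n}{4},\frac{n}{2}]$ at every scale; chaining over dyadic scales and letting $\eps'\to0$ gives the exact rate $-\lambda_F(a)$. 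The two-sided claim is handled analogously with the W-shape Lemma \ref{l:W-shape} applied to $T_{[-n,n],a,\bo}$ --- it is the two-sided hypothesis, not convergence of singular directions, that removes the freedom which makes the one-sided statement fail for general vectors. If you want to salvage your approach, you would need to prove directly (using the finite-scale statements and the fact that $x_0$ is fixed in advance) that \eqref{eq:lim-less} forces $\liminf_n\frac1n\log\|T_{n,a,\bo}\|=\lambda_F(a)$; that step is essentially the content of the theorem and is not obtainable from the $SL(2,\R)$ singular-value identities alone.
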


\begin{remark}
In the first claim of Theorem \ref{t.vector} it is crucially important that the initial vector (in our case $\left(\begin{smallmatrix} 1 \\ 0 \end{smallmatrix} \right)$) is fixed. Otherwise the statement would not hold. In the context of Example 2 above this is related to results on rank one perturbations, see \cite{DMS}, \cite[Theorem 3]{Gor}.
\end{remark}

\begin{remark}
It is interesting to notice that exponential decay of eigenfunctions (this is how Theorem \ref{t.vector} can be interpreted in the context of Example 2) is a specific property of Anderson Model that does not have to hold in general. For example, there are regimes where Almost Mathieu operator exhibits Anderson Localization with sub-exponential decay of eigenfunctions, see \cite[Theorem 1.2]{JL}.
\end{remark}

The following result is usually referred to as ``Schnol Theorem'', due to a similar result in the paper \cite{Sch} (see also \cite{Gl1, Gl2}):

\begin{theorem}\label{t.shnol}
Let $H:\ell^2(\mathbb{Z})\to \ell^2(\mathbb{Z})$ be an operator of the form
$$
Hu(n)=u(n-1)+u(n+1)+V(n)u(n),
$$
with a bounded potential $\{V(n)\}_{n\in \mathbb{Z}}$. If every polynomially bounded solution to $Hu=Eu$ is in fact exponentially decreasing, then $H$ has pure point spectrum, with exponentially decaying eigenfunctions. Similar statement holds for operators on $\ell^2(\mathbb{N})$ with Dirichlet boundary condition.
\end{theorem}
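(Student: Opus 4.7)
The plan is to combine the spectral theorem with the classical Berezanskii-Shnol generalized eigenfunction expansion. First, I would reduce the claim to showing that for every $\phi\in\ell^2(\mathbb{Z})$ the spectral measure $\mu_\phi$ of $H$ is pure point: by the spectral theorem this is equivalent to $H$ having pure point spectrum, and the exponential decay of the corresponding eigenfunctions will then be automatic from the hypothesis of the theorem.

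The main technical ingredient is the following expansion: for every $\phi\in\ell^2(\mathbb{Z})$, for $\mu_\phi$-a.e.\ $E\in \R$, there exists a nonzero polynomially bounded solution $u_E$ of $Hu=Eu$. I would prove this via a weighted trace-class argument. Let $w(n)=(1+|n|)^{1/2+\varepsilon}$ and let $W$ denote multiplication by $w$. Boundedness of $V$ together with $\sum_n w(n)^{-2}<\infty$ make $W^{-1}\chi_I(H)W^{-1}$ Hilbert-Schmidt for every bounded interval $I$. A measure-theoretic disintegration of its integral kernel against the spectral resolution of $H$ then produces, for $\mu_\phi$-a.e.\ $E$, a vector $u_E$ with $\sum_n w(n)^{-2}|u_E(n)|^2<\infty$ satisfying $Hu_E=Eu_E$ pointwise; in particular $u_E$ is polynomially bounded. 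A standard normalization (for instance $|u_E(0)|^2+|u_E(1)|^2=1$, after which the rigidity of the three-term recurrence forces $u_E\not\equiv 0$) ensures nontriviality.

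Combining these ingredients: $\mu_\phi$-a.e.\ $E$ admits a polynomially bounded solution $u_E$; by hypothesis, each such solution is exponentially decreasing, hence lies in $\ell^2(\mathbb{Z})$, so $E$ is an eigenvalue of $H$ with exponentially decaying eigenfunction. Since the set of eigenvalues of a self-adjoint operator on a separable Hilbert space is countable, $\mu_\phi$ is pure point. As $\phi$ was arbitrary, $H$ has pure point spectrum with exponentially decaying eigenfunctions. The $\ell^2(\mathbb{N})$ case with a Dirichlet boundary condition is handled identically after replacing the weight by $w(n)=(1+n)^{1/2+\varepsilon}$.

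The main obstacle in this outline is the Berezanskii-Shnol expansion. Although classical, it requires a careful Hilbert-Schmidt estimate for the weighted spectral projection, a measurable choice of the family $E\mapsto u_E$ (so that the pointwise relation $Hu_E=Eu_E$ holds, rather than a merely integrated version), and a nontriviality argument ensuring $u_E\neq 0$ on a $\mu_\phi$-full set. Once this expansion is in hand, the remaining steps are essentially definitional and the hypothesis of the theorem does all the spectral work.
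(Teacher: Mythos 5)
Your proposal is correct, and it is essentially the argument the paper relies on: Theorem \ref{t.shnol} is not proved in the paper but quoted as classical, with the proof delegated to the cited references (e.g.\ \cite[Theorem 7.1]{Kir}), and those proofs are exactly the Berezanskii--Shnol generalized eigenfunction expansion you outline (weighted Hilbert--Schmidt/trace-class spectral projections, a disintegration producing a nonzero polynomially bounded solution of $Hu=Eu$ for $\mu_\phi$-a.e.\ $E$, then the hypothesis plus countability of the eigenvalue set forcing each $\mu_\phi$ to be pure point, with exponential decay of eigenfunctions automatic since any $\ell^2$ eigenfunction is bounded). The only small wobble is the nontriviality remark: normalizing $|u_E(0)|^2+|u_E(1)|^2=1$ presupposes $u_E\neq 0$, which must instead come from the expansion itself (the Radon--Nikodym derivative of the weighted spectral measure is a nonzero positive operator a.e.\ with respect to its trace measure, and $\mu_\phi$ is absolutely continuous with respect to that trace measure), a point you do correctly flag as the part requiring care.
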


In continuum case Theorem \ref{t.shnol} follows also from \cite[Theorem 1.1]{Sim}. For the formal proof in the discrete case see, for example, \cite[Theorem 7.1]{Kir}; some improved versions of this result can be found in  \cite[Lemma 2.6]{JZ} or \cite{H}.

Now Theorem \ref{t.al} follows directly from Theorem~\ref{t.vector}, Remark~\ref{r.ex}, and Theorem~\ref{t.shnol}.

\subsection{Properties of finite matrix products and density of states measure}\label{ss.14}
Here we discuss the statement that forms the main technical part of the proof of Theorem \ref{t:product}, but is also of independent interest. Namely, we consider random matrices that depend on a parameter and satisfy the conditions $(A1)-(A4)$, and study the growth of products of large but finite number of these matrices. It turns out that for most parameters the growth is ``uniformly exponential'' with exponent prescribed by Furstenberg Thereom, but there are exceptional parameters that have well defined asymptotic distribution. This asymptotic distribution is a generalization of the {\it density of states measure}, the key notion in the theory of ergodic Schr\"odinger operators.

To give the formal statement we need the notion of a rotation number. In our case this is given by the following construction. For each $a\in J$ and each linear map $F_a(\omega)\in SL(2, \mathbb{R})$ denote by $f_{a,\omega}:\mathbb{S}^1\to \mathbb{S}^1$, $\Sc\cong \R P^1$, the projectivization of the map $F_a(\omega):\mathbb{R}^2\to \mathbb{R}^2.$  Recall that the map $F:\Omega\times J\to SL(2, \mathbb{R})$ is measurable, continuous in $a\in J$, and bounded (due to $(A2)$). Therefore one can also choose the lifts $\tilde f_{a,\omega}:\mathbb{R}\to \mathbb{R}$, $\tilde f_{a,\omega}(x)(\text{mod}\, 1)= f_{a,\omega}(x\,(\text{mod}\, 1))$,  in a measurable way, depending continuously on $a\in J$, and such that the set $\{\tilde f_{a,\omega}(0)\}_{\omega\in \Omega}\subseteq \mathbb{R}$ is uniformly bounded in $a\in J$.
\begin{prop}\label{p.rotnumlin}
There exists a continuous function $\rho:J\to \mathbb{R}$ such that for all $a\in J$, a.e. $\bo\in \Omega^{\mathbb{N}}$, and every $x\in \mathbb{R}$ the limit
$$
\lim_{n\to \infty}\frac{1}{n} \tilde f_{a,\omega_n}\circ \tilde f_{a,\omega_{n-1}}\circ \ldots \circ \tilde f_{a,\omega_2}\circ\tilde f_{a,\omega_1}(x)
$$
exists and is equal to $\rho(a)$.
\end{prop}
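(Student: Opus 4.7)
The plan is to combine three ingredients: the classical bound on the rotation-number displacement, which forces $x$-independence; the Furstenberg uniqueness of the $\mu_a$-stationary measure on $\R P^1$ under $(A1)$; and Birkhoff's ergodic theorem applied to the natural skew product over the shift.

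Fix $a\in J$. Each $\tilde f_{a,\omega}$ is a lift of an orientation-preserving homeomorphism of $\Sc$, so $\tilde f_{a,\omega}(x+1)=\tilde f_{a,\omega}(x)+1$ and the displacement $\tilde f_{a,\omega}(x)-x$ is $1$-periodic in $x$; together with the uniform bound on $\tilde f_{a,\omega}(0)$ that comes with the chosen measurable lift, it is uniformly bounded in $\omega$, $a$, and $x$. Writing $\tilde F_n:=\tilde f_{a,\omega_n}\circ\cdots\circ\tilde f_{a,\omega_1}$, monotonicity and commutation with integer translations are preserved under composition, which yields the standard estimate
\[
\bigl|(\tilde F_n(x)-x)-(\tilde F_n(y)-y)\bigr|\le 2|x-y|+1
\]
uniformly in $n$. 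Dividing by $n$ shows that if $\tfrac{1}{n}(\tilde F_n(x)-x)$ converges for some $x\in\R$, it converges for every $x\in\R$ with the same limit; the $x$-independence asserted by the proposition is therefore automatic, and it suffices to establish existence of the limit at a single $x$.

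Now introduce the skew product $T_a:\Omega^\N\times\Sc\to\Omega^\N\times\Sc$ given by $T_a(\bo,x)=(\sigma\bo,f_{a,\omega_1}(x))$, where $\sigma$ is the left shift. Under $(A1)$, Furstenberg's theorem on $SL(2,\R)$ provides a \emph{unique} $\mu_a$-stationary probability measure $\nu_a$ on $\Sc\cong\R P^1$. The product $\mu^\N\otimes\nu_a$ is $T_a$-invariant, and uniqueness of $\nu_a$ is well known to be equivalent to ergodicity of this invariant measure. Define the bounded measurable cocycle
\[
\phi_a(\bo,x):=\tilde f_{a,\omega_1}(\tilde x)-\tilde x,
\]
which is well-defined on $\Omega^\N\times\Sc$ because the displacement is $1$-periodic in $\tilde x$. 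A direct telescoping shows
\[
\tilde F_n(\tilde x)-\tilde x=\sum_{k=0}^{n-1}\phi_a\bigl(T_a^k(\bo,x)\bigr),
\]
so Birkhoff's ergodic theorem, together with ergodicity, yields
\[
\lim_{n\to\infty}\frac{1}{n}\bigl(\tilde F_n(\tilde x)-\tilde x\bigr)=\int\phi_a\,d(\mu^\N\otimes\nu_a)=:\rho(a)
\]
for $(\mu^\N\otimes\nu_a)$-a.e.\ $(\bo,x)$. By Fubini, for $\mu^\N$-a.e.\ $\bo$ the limit holds at some $x$; by the previous paragraph it then holds at every $x\in\R$ with the same value $\rho(a)$.

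Continuity of $\rho$ follows by a Krylov--Bogolyubov/uniqueness argument. Assumption $(A2)$ makes $(a,\bo,x)\mapsto \phi_a(\bo,x)$ continuous in $a$ uniformly in $(\bo,x)$. Any weak-$*$ limit $\nu$ of a net $\nu_{a_k}$ with $a_k\to a$ is $\mu_a$-stationary (uniform convergence of $f_{a_k,\omega}$ to $f_{a,\omega}$ together with weak-$*$ convergence of $\nu_{a_k}$ passes to the stationarity equation), and by uniqueness $\nu=\nu_a$; compactness of the space of probability measures on $\Sc$ therefore gives weak-$*$ continuity of $a\mapsto\nu_a$, whence continuity of $\rho(a)=\iint\phi_a\,d\mu\,d\nu_a$. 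The step I expect to require the most care is the uniqueness (and consequently the weak-$*$ continuity) of $\nu_a$; once that is in hand, the rotation-number displacement bound and the skew-product formalism make the remaining steps essentially automatic.
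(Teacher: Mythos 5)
Your argument is correct, but it is not the route the paper takes, and the comparison is instructive. The paper handles existence and $\bo$-a.e.\ constancy through the more general Proposition~\ref{p.rotnum} in the Appendix: there one only uses Krylov--Bogolyubov to produce \emph{some} invariant measure for the skew product projecting onto the (ergodic) base measure, applies Birkhoff to the $1$-periodic displacement, notes that the limit is fiberwise constant because displacements of two points in one fiber differ by at most~$1$, and then invokes ergodicity of the \emph{base} shift to make the limit constant in $\bo$; no Furstenberg condition, no uniqueness of the stationary measure, and no ergodicity of the product measure are needed, so that statement applies to arbitrary circle homeomorphisms over any ergodic base. You instead use $(A1)$ to get the unique $\mu_a$-stationary measure $\nu_a$, take the product measure $\mu^{\N}\otimes\nu_a$, and use its ergodicity plus Fubini; this is perfectly valid (one caveat: uniqueness of $\nu_a$ \emph{implies} ergodicity of $\mu^{\N}\otimes\nu_a$ via extremality, but the two are not equivalent, so phrase it as an implication), and it buys you the explicit formula $\rho(a)=\iint\phi_a\,d\mu\,d\nu_a$, which the paper's softer argument does not directly produce. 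For continuity in $a$ the paper also argues differently: it proves the uniform-in-parameter large deviation estimate of Proposition~\ref{p.ldtrot}, which gives uniform convergence of $\frac1n\tf_{n,a,\bo}(\tx_0)$ to $\rho(a)$ on $J$, and continuity of $\rho$ follows as a uniform limit of continuous functions (with the quantitative estimate being reused later in the proof of Theorem~\ref{t:main}); your weak-$*$ continuity of $a\mapsto\nu_a$ via uniqueness (the same uniqueness the paper uses elsewhere, in Proposition~\ref{p:phi}) is a legitimate, more qualitative alternative. So: correct proof, genuinely different mechanism for both the a.e.\ constancy and the continuity.
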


The number $\rho$ (that depends on a parameter $a\in J$) from Proposition \ref{p.rotnumlin} is called {\it rotation number}. \edited{For iterates of a homeomorphism of the circle the notion of rotation number goes back to Poincare; for cocycles it appeared, for example, in \cite{Her}.} Notice that it depends on the choice of the lifts $\{\tilde f_{a,\omega}\}_{\omega\in \Omega}$, but a different choice of the lifts will only add a constant to the function $\rho$. Also, it is clear that due to the monotonicity assumption $(A4)$ the function $\rho$ must be non-decreasing. Hence, it can be used to define a (non-atomic, non-probability) measure
on $J$ that we will denote $DOS$:
$$
DOS([b,b']) = \rho(b') - \rho(b) \quad \forall b<b', \, b,b'\in J
$$
(the notation reminds that this is a generalization of the Density of States Measure from the spectral theory of ergodic Schr\"odinger operators).
Moreover, Theorem \ref{t.gjt} (generalized Johnson's Theorem) together with the assumption $(A3)$ imply that $DOS$ has the whole
interval $J$ as its support.
\begin{remark}
The rotation number of a Schr\"odinger cocycle is the distribution function of the density of states measure (that can be defined in purely spectral terms) of the corresponding ergodic Schr\"odinger operator. This holds for a large class of ergodic potentials, not only for random potentials, see \cite{DS, JM}.
\end{remark}
In order to study the properties of finite products of matrices of length $n$, we split the interval of parameters $J$ into $N=[\exp(\sqrt[4]{n})]$ equal intervals $J_1,\dots,J_N$, and denote $J_i=[b_{i-1},b_{i}]$, $i=1, \ldots, N$. Notice that $b_0$ and $b_N$ are the endpoints of the interval $J$. To emphasize their independence of $n$, let us denote these endpoints by $b_-$ and $b_+$, so $J=[b_-, b_+]$.
 The number of small intervals $N$ and the whole construction depend on the length of the products $n$; to simplify the formulas we do not reflect it in the notation.

 By $U_\varepsilon(x)$ we denote the $\varepsilon$-neighborhood of the point~$x$.

\begin{theorem}\label{t:main}
For any $\eps>0$ there exist $n_0=n_0(\eps)$ and $\delta_0=\delta_0(\eps)$ such that
for any $n>n_0$ the following statement hold. With probability $1-\exp(-\delta_0 \sqrt[4]{n})$, there exists a
number $M\in \mathbb{N}$, exceptional intervals $J_{i_1},\dots,J_{i_M}$ (each of length $\frac{|J|}{N}$), and corresponding numbers $m_1,\dots, m_M\in \{1,\dots,n\}$, such that:
\vspace{4pt}
\begin{mainit}
\mitem \textbf{(Quantity)}
\label{i:m1}
The number $M$ is $n\eps$-close to $(\rho(b_+)-\rho(b_-))\cdot n$.
\vspace{4pt}
\mitem \textbf{(Uniform growth in typical subintervals)}
\label{i:m2}
For any $i$ different from $i_{1},\dots,i_{M}$, for any $a\in J_i$, and for any $m=1,\dots, n$ one has
$$
\log \| T_{m,a,\bo} \| \in U_{n\eps} (\lambda_F(a) m).
$$
\mitem \textbf{(Uniform growth in exceptional subintervals)}
\label{i:m3}
For any $k=1,\dots, M$, for any $a\in J_{i_k}$, and for any $m=1,\dots,m_k$ one has
$$
\log \| T_{m,a,\bo} \| \in U_{n\eps} (\lambda_F(a) m);
$$
for any $m=m_k+1,\dots, n$ one has
$$
\log \| T_{[m_k,m],a,\bo} \| \in U_{n\eps} (\lambda_F(a) (m-m_k)),
$$
where
$$
T_{[m_k,m],a,\bo}:=T_{m,a,\bo} T_{m_k,a,\bo}^{-1} = F_a(\omega_m)F_a(\omega_{m-1})\dots F_{a}(\omega_{m_k+1}).
$$
\mitem \textbf{(Cancellation)}
\label{i:m4}
For any $k=1,\dots,M$ there exists $a_{k}\in J_{i_k}$ such that for any $m=1,\dots, n$
\begin{equation}\label{e.IV}
\log \|  T_{m,a_k,\bo} \| \in U_{n\eps} (\lambda_F(a_k) \cdot \psi_{m_k}(m)),
\end{equation}
where
$$\psi_{m'}(m)=
\begin{cases}
m , & m<m', \\
2m'-m, & m'\le m < 2m', \\
m-2m', & m\ge  2m';
\end{cases}$$
in other words, for $m\ge m_k$ the parts of the product over the intervals $[1,m_k]$ and $[m_k,m]$ cancel each other in the best possible way.
\vspace{4pt}
\mitem \textbf{(Measure)}
\label{i:m5}
For each $k=1, \ldots, M$ consider the point $\left(\frac{m_k}{n}, a_{k} \right)\in [0,1]\times J.$  The measure
$$
\frac{1}{n}\sum_{k=1}^{M}\Dirac_{\left(\frac{m_k}{n}, a_{k} \right)}
$$
is $\eps$-close (in Levy-Prokhorov metric\footnote{\edited{If $\mu_1, \mu_2$ are two measures on a compact metric space $M$, the Levi-Prohorov distance can be defined as infimum of $\eps>0$ such that for any Borel $E\subseteq M$ one has $\mu_1(E)\le \mu_2(E^\eps)+\eps$ and $\mu_2(E)\le \mu_1(E^\eps)+\eps$, where $E^\eps$ is an $\eps$-neighborhood of $E$.}}, i.e. in \edited{a} metric that defines weak-* topology) to the measure $\text{Leb}\times \DOS$ on $[0,1]\times J.$
\end{mainit}
\end{theorem}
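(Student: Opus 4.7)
The plan is to combine uniform large-deviation bounds with a rotation-number counting argument, using monotonicity (A4) to (i) classify exceptional intervals via crossings of a reference direction, (ii) select the distinguished parameters $a_k$ by an intermediate value argument, and (iii) equidistribute the pairs $(m_k/n,a_k)$ against $\Leb \times \DOS$.

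First, I would establish a uniform large-deviation bound
$$\Prob\bigl(\bigl|\log\|T_{m,a,\bo}\| - \lambda_F(a)\,m\bigr| > m\eps\bigr) \le e^{-c(\eps) m}$$
for every $a \in J$ and $m \le n$, which is the Le Page large-deviation theorem available under (A1). A union bound over the times $m \in \{1,\dots,n\}$ and over the $N$-point grid on $J$, combined with the $C^1$-bound from (A2) to transfer estimates within each $J_i$ of length $|J|/N$, reduces the failure probability of the whole scheme to $\exp(-\delta_0 \sqrt[4]{n})$. The choice $N = [\exp(\sqrt[4]{n})]$ balances the rotation-counting resolution (which must be finer than the $O(n)$ total number of crossings) against both the union-bound cost and the parameter-derivative of the lifts on each $J_i$.

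Second, monotonicity (A4) forces the lift $\tF^{(m)}_{a,\bo}(x) := \tf_{a,\omega_m} \circ \cdots \circ \tf_{a,\omega_1}(x)$ to be strictly increasing in $a$ for every fixed $m$ and $x$. By Proposition~\ref{p.rotnumlin} combined with the large-deviation bound, $a \mapsto \tF^{(n)}_{a,\bo}(0)$ has increase on $J$ close to $n(\rho(b_+) - \rho(b_-))$. Each unit increment of this monotone lift corresponds to a rotation event: a pair $(m_k, J_{i_k})$ at which, as $a$ sweeps across $J_{i_k}$, the past-contracting direction of $T_{m_k,a,\bo}$ crosses a fixed reference direction. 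This produces claim~I, and the intermediate value theorem yields a unique $a_k \in J_{i_k}$ at which the alignment is exact. Away from exceptional intervals, the contracting and expanding directions of $T_{m,a,\bo}$ for $m \le n$ remain uniformly separated from the reference direction, so claim~II follows from the large-deviation bound applied to the norm of a vector with bounded angle to the expanding direction. Claim~III is obtained by splitting the product at $m_k$ and applying the same argument to each half, using that the angular configuration resets at $m_k$.

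Third, claim~IV is the main obstacle. At $a_k$ the contracting direction of $T_{m_k,a_k,\bo}$ coincides with the reference; hence the reference vector first grows at rate $\lambda_F(a_k)$ until time $m_k$, then lies on the contracting direction of the subsequent product $T_{[m_k,m],a_k,\bo}$ and is contracted at rate $\lambda_F(a_k)$ for $m - m_k \le m_k$, yielding $\psi_{m_k}(m) = 2m_k - m$; for $m \ge 2m_k$ the vector escapes the contracting cone and resumes growth at rate $\lambda_F(a_k)$. Making this rigorous requires a quantitative finite-scale Oseledets decomposition with explicit angular control: monotonicity locates $a_k$ to precision much finer than $|J|/N$, so the alignment is sharp enough to propagate through the cancellation window despite the vanishing of the growth rate there.

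Finally, claim~V combines two equidistribution statements. By stationarity of $\bo$ and the large-deviation bound, the empirical distribution of the times $m_k/n$ is $\eps$-close to $\Leb$ on $[0,1]$ in weak-$*$ topology. By the rotation-number interpretation, the count of $a_k$ lying in any subinterval $[b,b'] \subset J$ is within $n\eps$ of $n(\rho(b') - \rho(b)) = n\,\DOS([b,b'])$. A standard product-grid argument on $[0,1] \times J$ converts these two marginal statements into closeness of the joint empirical measure to $\Leb \times \DOS$ in Levy--Prokhorov distance.
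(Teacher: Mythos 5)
Your outline reproduces the heuristic picture that the paper itself sketches (jumps of the monotone lift, alignment of expanded/contracted directions, DOS counting), but it leaves out the machinery that makes this picture a proof, and the step you delegate to routine tools actually fails. The critical issue is interpolation of estimates inside a single parameter interval $J_i$ of length $|J|/N\sim e^{-\sqrt[4]{n}}$. The $C^1$-bound (A2) only gives that $\log\|T_{m,a,\bo}\|$ varies by $O(M^{m}\,|J|/N)$-type amounts in the worst case; it is \emph{not} Lipschitz in $a$ with constant $O(n)$ at this scale -- indeed, precisely on the exceptional intervals the quantity $\log\|T_{2m_k,a,\bo}\|$ drops from $\approx 2\lambda_F m_k$ to $\approx 0$ within one interval $J_{i_k}$, so no transfer from the grid points $b_i$ can give claim II uniformly in $a\in J_i$, nor can it tell you which intervals are exceptional. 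The paper replaces this by distortion control for the projectivized dynamics (Lemma~\ref{l:DC}), which is usable only after one knows that $\sum_m|X_{m,i}|$ is small; and that, in turn, is the content of Proposition~\ref{p:classes} (every interval is ``small'', ``opinion-changing'' or a ``jump'' interval), whose proof requires the quantitative contraction estimate of Proposition~\ref{p:phi} (the function $\varphi=\dist^s$ contracted in expectation, uniformly in $a$) together with the counting of ``suspicious'' intervals (only $O(n^2)$ of them, versus $e^{\sqrt[4]{n}}$ intervals in total) to make a union bound possible. Nothing in your proposal plays the role of this classification, and your phrase ``the contracting and expanding directions remain uniformly separated from the reference direction'' away from exceptional intervals is exactly the statement that needs proof, not an input.

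Two further gaps of the same nature. For claims I and V you assert that ``each unit increment of this monotone lift corresponds to a rotation event'' on some interval $J_{i_k}$; a priori the total increment $\approx n(\rho(b_+)-\rho(b_-))$ could be spread over many intervals each gaining a fraction of a turn, and ruling this out is Proposition~\ref{p:ones}, proved in the paper by a bisection/branching argument that again rests on the synchronization estimates -- your proposal contains no substitute. For claim IV you correctly identify it as the main obstacle, but what you describe controls only the image of the fixed reference vector, whereas \eqref{e.IV} is a statement about the norm of the full matrix product; this needs the exact $SL(2,\R)$ cancellation identity of Lemma~\ref{l:cancel} applied at a parameter $a_k$ where $x^+(T_{\bar m,a_k,\bo})=x^-(T_{[\bar m,\bm'],a_k,\bo})$, and the existence of such $a_k$ is obtained in Lemma~\ref{l:m-half} by a full-turn intermediate-value argument that itself uses the jump-by-one property and the derivative bounds from Proposition~\ref{p:derivatives-control}. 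Saying that this ``requires a quantitative finite-scale Oseledets decomposition with explicit angular control'' names the missing ingredient without providing it. As it stands, the proposal is an accurate informal roadmap but not a proof: the classification of intervals, the concentration of the rotation increment in unit jumps, and the norm-level cancellation argument all remain unestablished.
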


\begin{remark}
\edited{The Levy-Prokhorov metric in Theorem \ref{t:main} can be replaced by any other metric that induces the weak-* convergence, e.g. by ``earth mover's distance'' or, more generally, any of the Wasserstein metrics.}
\end{remark}
\begin{remark}
We expect that the statement on representation of $DOS$ measure as distribution of ``exceptional'' intervals in Theorem \ref{t:main} allows numerous and far reaching generalizations. For example, compare it with the notion of bifurcation current (supported on bifurcation locus) from \cite{DD}.
\end{remark}

\subsection{Structure of the paper}

In Section \ref{s.3} we show that almost surely the Furstenberg Lyapunov exponent gives an upper bound on upper Lyapunov exponent for all values of the parameter. This can be considered as a dynamical analog of Craig-Simon's result \cite[Theorem 2.3]{CS} on Schr\"odinger cocycles.

In Section \ref{s:discretization} we deduce the main result of the paper, Theorem \ref{t:product}, from the properties of finite matrix
products described in Theorem~\ref{t:main}.

Section \ref{s.5} is devoted to the proof of Theorem \ref{t:main}. This is the most technical part of the paper. In Section \ref{ss.5.1}, we introduce the language of projective dynamics on the circle and study possible behaviors of an image of a given point under finite random compositions of maps when the parameter is changing along a small interval. Proposition \ref{p:classes} gives the list of scenarios that exhaust all the possibilities with probability close to one. The rest of Section \ref{ss.5.1} provides an informal non-technical explanation how Theorem \ref{t:main} follows from Proposition \ref{p:classes}, and the main idea of the proof of Proposition \ref{p:classes}. Then, after providing technical tools (distortion control in Section \ref{ss:distortion}, large deviation estimates in Section \ref{ss.5.3}, and quantitative estimates on exponential contraction in Section \ref{ss.5.6}), we deduce parts {\bf II} and {\bf III} of Theorem \ref{t:main} from Proposition \ref{p:classes} in Section \ref{ss:norms}, part {\bf IV} -- in Section~\ref{ss:cancellation}, and parts {\bf I} and {\bf V} -- in Section \ref{ss:law}. In Section \ref{ss:jump-classes} we give the formal proof of Proposition~\ref{p:classes}.

Dynamical analog of Anderson Localization, Theorem \ref{t.vector}, is proven in Section~\ref{s.al}.

Finally, in Appendix \ref{s.johnson}  we provide a dynamical analog of Johnson's Theorem, that in the context of ergodic Schr\"odinger operators claims that a given energy belongs to the spectrum if and only if the corresponding Schr\"odinger cocycle is not uniformly hyperbolic. While this statement is certainly not surprising to the experts in spectral theory of ergodic Schr\"odinger operators, it is probably less known to the dynamical community, and we include it here formulated in the form convenient for a reader with background in dynamical systems.

\section{Upper bound for the upper limit}\label{s.3}

The following statement can be considered as a dynamical analog of Craig-Simon's result \cite[Theorem 2.3]{CS} on Schr\"odinger cocycles.

\begin{prop}\label{p.upper}
For a.e. $\bar\omega\in \Omega^{\mathbb{N}}$ and \emph{any} $a\in J$ one has $$\limsup_{n\to \infty}\frac{1}{n}\log \|T_{n, a, \bar \omega}\|\le \lambda_F(a).$$
\end{prop}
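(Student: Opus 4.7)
My plan is to adapt the standard Craig--Simon argument to this abstract i.i.d.\ setting, by combining a subadditive identity for $\lambda_F(a)$ with a parameter-space compactness argument. The essential swap of quantifiers (from ``for each $a$, a.e.\ $\bo$'' in Furstenberg--Kesten to ``a.e.\ $\bo$, for all $a$'') will be handled by a covering of $J$ adapted to a discretization of the parameter.

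First I would establish that
$$
\lambda_F(a) \;=\; \inf_{n \ge 1} \frac{1}{n}\int_{\Omega^{\N}} \log\|T_{n,a,\bo}\|\, d\mu^{\N}(\bo).
$$
Subadditivity of $n \mapsto \E\log\|T_{n,a,\cdot}\|$ (from submultiplicativity and i.i.d.) combined with Fekete's lemma gives the limit as the infimum, while Furstenberg--Kesten identifies the limit with $\lambda_F(a)$. By (A2) each integrand is continuous in $a$ and uniformly bounded by $n\log M$, so dominated convergence makes $a\mapsto \tfrac{1}{n}\E\log\|T_{n,a}\|$ continuous. Hence $\lambda_F$ is an infimum of continuous functions, so it is upper semicontinuous on $J$; being of Baire class one, its set of continuity points is a dense $G_\delta$ in $J$ by Baire's theorem.

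Next, I would fix $\eps>0$ and build a finite good cover of $J$. At each continuity point $a^\star$ of $\lambda_F$ choose $N^\star$ with $\tfrac{1}{N^\star}\E\log\|T_{N^\star,a^\star}\| < \lambda_F(a^\star)+\eps$, then shrink a neighborhood $V(a^\star)$ so that: (i) $|\lambda_F(b)-\lambda_F(a^\star)|<\eps$ for $b\in V(a^\star)$ (using continuity at $a^\star$), and (ii) $|L_{N^\star}(b,\bo)-L_{N^\star}(a^\star,\bo)|\le N^\star\eps$ uniformly in $\bo$, where $L_n:=\log\|T_{n,a,\bo}\|$. The last estimate uses the Lipschitz bound $|L_{N}(b,\bo)-L_N(a,\bo)|\le N M^{2N}|b-a|$, which follows from (A2) applied to the Leibniz expansion of $\tfrac{d}{da}T_{N,a,\bo}$ together with the uniform lower bound $\|T_{N,a,\bo}\|\ge M^{-N}$. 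By compactness of $J$ a finite subcollection of the $V(a^\star)$ covers $J$, giving base points $a^\star_1,\ldots,a^\star_K$ and block lengths $N_1,\ldots,N_K$.

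Finally, apply the ergodic theorem (SLLN for i.i.d.\ blocks of length $N_j$ under the shift $\sigma$ on $\Omega^{\N}$): for each $j$, on a full-measure set $\Omega^{(\eps)}_j$,
$$
\frac{1}{k}\sum_{i=0}^{k-1} L_{N_j}(a^\star_j, \sigma^{iN_j}\bo) \;\xrightarrow[k\to\infty]{}\; \int L_{N_j}(a^\star_j,\cdot)\, d\mu^{\N}.
$$
Set $\Omega^{(\eps)}=\bigcap_{j=1}^K \Omega^{(\eps)}_j$, still of full measure. For $\bo\in\Omega^{(\eps)}$ and $a\in V(a^\star_j)$, submultiplicativity gives
$$
L_{kN_j}(a,\bo) \;\le\; \sum_{i=0}^{k-1} L_{N_j}(a,\sigma^{iN_j}\bo) \;\le\; \sum_{i=0}^{k-1}\bigl[L_{N_j}(a^\star_j,\sigma^{iN_j}\bo)+N_j\eps\bigr],
$$
and dividing by $kN_j$ and passing to $k\to\infty$ yields $\limsup_n \tfrac{1}{n}L_n(a,\bo) \le \lambda_F(a^\star_j)+2\eps < \lambda_F(a)+3\eps$ (the last inequality uses (i)). The extension from the subsequence $\{kN_j\}$ to all $n$ is routine because $|L_{n+1}-L_n|\le \log M$. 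Intersecting over $\eps=1/\ell$, $\ell\in\N$, gives the conclusion on a single full-measure set.

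The main obstacle is the exponential Lipschitz constant $NM^{2N}$ of $L_N$ in $a$, which competes with the need to take $N$ large so that $\tfrac{1}{N}\E L_N$ is close to $\lambda_F$. The resolution is that the order of quantifiers in the cover is ``$\eps$ first, then $N_j$, then shrink $V_j$'', so for each fixed $\eps$ the exponential constant is finite and harmless; only finitely many $j$ are ever used. Note that monotonicity (A4) and the non-uniform hyperbolicity assumption (A3) are not needed here, which matches the fact that the analogous Craig--Simon statement holds for completely general ergodic cocycles.
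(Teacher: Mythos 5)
Your overall strategy (blocks of length $N_j$, submultiplicativity, the strong law for the i.i.d.\ block variables, a finite cover of $J$ with a uniform-in-$\bo$ comparison of $\log\|T_{N_j,a,\bo}\|$ across each small parameter interval) is exactly the paper's strategy; your Lipschitz bound $|L_N(b,\bo)-L_N(a,\bo)|\le NM^{2N}|b-a|$ plays the role of the paper's ``$\max_{a'\in J_a}$ inside the integral''. But there is a genuine gap in how you handle the parameter dependence of $\lambda_F$. You only establish that $\lambda_F$ is upper semicontinuous and Baire class one, and you then center your neighborhoods $V(a^\star)$ exclusively at \emph{continuity points} of $\lambda_F$. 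Two things go wrong. First, the union of such neighborhoods is an open set containing a dense $G_\delta$, but it need not be all of $J$: if $\lambda_F$ had a discontinuity point $b$ with oscillation exceeding $\eps$, then for small $\eps$ no neighborhood satisfying your two-sided condition (i) can contain $b$, so compactness cannot be invoked to extract a finite subcover of $J$, and the point $b$ is simply never treated. Second, the inequality you actually need at the end, $\lambda_F(a^\star_j)\le\lambda_F(a)+\eps$ for \emph{every} $a\in V(a^\star_j)$, is a lower-semicontinuity-type requirement; upper semicontinuity plus Baire genericity gives no control of this kind at points you fail to cover. So the proof as written only works if $\lambda_F$ is continuous at every point of $J$ --- which is true, but is a theorem (continuity of the Lyapunov exponent under the Furstenberg condition (A1) holding at every $a\in J$, cf.\ Furstenberg--Kifer/Le Page; the paper invokes exactly this continuity, ``as both $\lambda_F(a)$ and $T_{n,a,w}$ depend on $a$ continuously''), not something you get from subadditivity alone.

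The fix is simple and brings you back to the paper's proof: cite (or prove) continuity of $\lambda_F$ on $J$ under (A1)--(A2), let \emph{every} $a\in J$ be the center of its own neighborhood, and proceed as you did. Relatedly, your closing remark that the argument shows the Craig--Simon statement ``for completely general ergodic cocycles'' overreaches: with only upper semicontinuity of the exponent the covering argument does not close (you can only bound $\limsup_n\frac1n L_n(a,\bo)$ by $\lambda_F$ at the finitely many chosen centers, not at $a$ itself), and Craig--Simon's own proof uses subharmonicity in the energy rather than mere measurable/continuous parameter dependence. Here it is assumption (A1) at every parameter --- not (A3) or (A4), which you correctly note are unused --- that rescues the statement.
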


\begin{proof}
This event is an intersection of a countable number of events of the type
\begin{equation}\label{eq:Lyap-eps}
\limsup_{n\to \infty}\frac{1}{n}\log \|T_{n, a, \bar \omega}\|\le \lambda_F(a)+\eps  \ \ \text{for all}\ \ a\in J \quad
\end{equation}
along a sequence of values of $\eps>0$ that tend to zero. Hence, it suffices to show that each value $\eps>0$ the event (\ref{eq:Lyap-eps}) has full probability.

Fix $\eps>0$. Note that (due to the subadditive ergodic theorem) for any fixed $a\in J$ we have
$$
\lambda_F(a)=\lim_{n\to\infty} \frac{1}{n} \int_{|w|=n} \log \|T_{n,a, w}\| \, dP(w).
$$
In particular, for any $a\in J$ there exists $n_0=n_0(a)$ such that for any $n\ge n_0$
$$
\frac{1}{n} \int_{|w|=n} \log \|T_{n, a, w}\| \, dP(w) < \lambda_F(a)+\frac{\eps}{2}.
$$
As both $\lambda_F(a)$ and $T_{n,a, w}$ (for any fixed $n$) depend on $a$ continuously \edited{(see Remark \ref{r.cont})}, any $a$ is contained in a neighborhood $J_{a}$ such that for $n_0=n_0(a)$ one has
\begin{equation}\label{e.expect}
\frac{1}{n_0} \int_{|w|=n_0} \log \max_{a'\in J_{a}} \|T_{n_0, a', w}\| \, dP(w) < \min_{a'\in J_a} \lambda_F(a')+\eps.
\end{equation}
Extracting a finite subcover, we see that the whole interval $J$ is covered by finitely many such intervals $J_{a}$. Let us recall the notation
$$
T_{[m',m''],a,\bar\omega} = F_a(\omega_{m''})F_a(\omega_{m''-1})\dots F_{a}(\omega_{m'}).
$$
On each interval $J_a$, we have for any $a''\in J_a$ and $n=n_0(a)$
\begin{multline*}
\limsup_{N\to\infty} \frac{1}{N} \log \|T_{N, a'', \bar\omega}\| = \limsup_{k\to\infty} \frac{1}{kn} \log \|T_{kn, a'', \bar\omega}\|
\\ \le
\limsup_{k\to\infty} \frac{1}{k} \sum_{j=1}^k \frac{1}{n} \log \|T_{[(j-1)n+1, jn], a'', \bo}\|
\\ \le
\limsup_{k\to\infty} \frac{1}{k} \sum_{j=1}^k \frac{1}{n} \log \max_{a'\in J_{a}} \|T_{[(j-1)n+1, jn], a', \bo}\|.
\end{multline*}
The right hand side almost surely (and independently of $a''$) equals to
$$
\frac{1}{n} \int_{|w|=n} \log \max_{a'\in J_{a}} \|T_{n,a', w}\| \, dP(w) < \min_{a'\in J_a} \lambda_F(a')+\eps.
$$
Thus we get the desired estimate for the parameters from $J_{a}$. Intersecting finitely many such events, associated to the chosen intervals that form the finite subcover, we get \eqref{eq:Lyap-eps}. Proposition~\ref{p.upper} follows.
\end{proof}
\begin{remark}\edited{
While we assume that the assumptions $(A1)-(A4)$ hold throughout the paper, it is worth to note that the proof of Proposition \ref{p.upper} does not really use the monotonicity assumption $(A4)$.}
\end{remark}

One can combine the above arguments with the Large Deviation Theorem. 
This gives the following useful finite-$n$ upper bound:

\begin{prop}\label{l:upper-finite}
For any $\eps'>0$ there exists $c_3>0$ and $n_1\in\N$ such that for any $n>n_1$ with the probability at least $1-\exp(-c_3 n)$ the following statement holds.
For any $a\in J$ and any $m,m'$, $1\le m\le m'\le n$ one has
\begin{equation}\label{eq:m-all}
\log \|T_{[m, m'],a,\bo}\| \le n\eps' + \lambda_F(a) \cdot (m'-m).
\end{equation}
\end{prop}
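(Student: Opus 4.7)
The plan is to combine the covering argument from Proposition~\ref{p.upper} with a Hoeffding bound at a fixed block scale, and then to upgrade from a single product $T_{n,a,\bo}$ to all subproducts $T_{[m,m'],a,\bo}$ via shift-invariance of $\mu^{\N}$ and a trivial treatment of short subintervals.

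\textbf{Step 1 (uniform-in-$a$ finite-$n$ LDT).} I will first establish: for every $\eta>0$ there exist $n_\eta\in\N$ and $c(\eta)>0$ such that
$$
\Prob\bigl(\exists\,a\in J:\ \log\|T_{n,a,\bo}\|>(\lambda_F(a)+\eta)\,n\bigr)\le e^{-c(\eta)n}\qquad(n\ge n_\eta).
$$
As in Proposition~\ref{p.upper}, fix a scale $n_0=n_0(\eta)$ and a finite cover $\{J_{a_k}\}_{k=1}^K$ of $J$ satisfying \eqref{e.expect} with $\eta/4$ in place of $\eps$. For $n=Ln_0+r$, $0\le r<n_0$, the random variables
$$
Y_j^{(k)}:=\log\max_{a'\in J_{a_k}}\|T_{[(j-1)n_0,jn_0],a',\bo}\|,\qquad j=1,\dots,L,
$$
are i.i.d.\ and uniformly bounded (by $n_0\log M$) thanks to $(A2)$, with expectation below $n_0\bigl(\min_{a'\in J_{a_k}}\lambda_F(a')+\eta/4\bigr)$. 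Hoeffding's inequality yields
$$
\Prob\Bigl(\tfrac1L\sum_{j=1}^L Y_j^{(k)}>n_0\min_{a'\in J_{a_k}}\lambda_F(a')+\tfrac{n_0\eta}{2}\Bigr)\le e^{-c'(\eta)L},
$$
and on the complementary event $\log\|T_{n,a'',\bo}\|\le\sum_j Y_j^{(k)}+r\log M\le n\bigl(\lambda_F(a'')+\eta\bigr)$ for every $a''\in J_{a_k}$, provided $n$ is large. A union bound over $k=1,\dots,K$ finishes Step~1.

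\textbf{Step 2 (subinterval version).} By shift-invariance of $\mu^{\N}$, Step~1 applied to $\sigma^m\bo$ gives, for any $m'-m\ge n_\eta$,
$$
\Prob\bigl(\exists\,a\in J:\ \log\|T_{[m,m'],a,\bo}\|>(\lambda_F(a)+\eta)(m'-m)\bigr)\le e^{-c(\eta)(m'-m)}.
$$
Now set $\eta=\eps'/2$ and $\alpha=\eps'/(2\log M)$. For pairs $(m,m')$ with $m'-m<\alpha n$ the deterministic bound
$$
\log\|T_{[m,m'],a,\bo}\|\le(m'-m)\log M\le n\eps'/2\le n\eps'+\lambda_F(a)(m'-m)
$$
(using $\lambda_F(a)\ge 0$) already gives \eqref{eq:m-all}. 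For the remaining pairs $m'-m\ge\alpha n$, which is $\ge n_\eta$ once $n$ is large, the displayed inequality has failure probability $\le e^{-c(\eta)\alpha n}$, and on the good event \eqref{eq:m-all} follows from $\lambda_F(a)(m'-m)+\eta(m'-m)\le\lambda_F(a)(m'-m)+n\eps'$. Intersecting over the at most $n^2/2$ such pairs yields total failure probability $\le n^2 e^{-c(\eta)\alpha n}\le e^{-c_3 n}$ for any $c_3<c(\eta)\alpha$ and $n$ sufficiently large.

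\textbf{Where the work is.} The only genuinely new ingredient beyond Proposition~\ref{p.upper} is the finite-$n$ exponential probability bound of Step~1, uniform in $a\in J$; its proof is precisely the covering argument of Proposition~\ref{p.upper} with Birkhoff's theorem replaced by Hoeffding's inequality applied to the bounded i.i.d.\ block maxima $Y_j^{(k)}$. The extension to arbitrary subintervals in Step~2 is then routine. The one subtle point is that a naïve union bound over $(m,m')$ loses a factor of $n^2$, which is comfortably absorbed by the exponential tail only because short subintervals, where no useful probabilistic bound is available, are treated deterministically using boundedness of $F_a(\omega)$.
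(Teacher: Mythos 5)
Your proposal is correct and follows essentially the same route as the paper: a finite cover of $J$ by intervals $J_a$ satisfying \eqref{e.expect}, i.i.d.\ bounded block maxima of length $n_0$ controlled by a large-deviation bound (the paper invokes a large deviation estimate for the i.i.d.\ sums $\Psi_j$ where you invoke Hoeffding, which is the same tool), a deterministic treatment of short subintervals via $\|F_a(\omega)\|\le M$, and a union bound over the finite cover and the at most $n^2$ pairs $(m,m')$, with the polynomial factor absorbed into the exponential.
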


\begin{proof}
It suffices to obtain an upper bound of the form $1-P_2(n) \exp(-c_3' n)$ for some $c_3'>0$ and a quadratic polynomial $P_2$. Indeed,
taking $c_3>0$, $c_3<c_3'$, we have for all sufficiently large $n$
$$
\exp(-c_3 n) > P_2(n) \exp(-c_3' n).
$$
For any given $\eps'>0$, set $\eps=\frac{\eps'}{4}$, and consider the finite cover of $J$ by intervals of the form~$J_a$, constructed in the
proof of Proposition~\ref{p.upper}. It is enough to obtain the desired estimate for each of them separately:
indeed, the probabilities that~\eqref{eq:m-all} does not hold at most add up.

Fix an interval $J_a\subset J$ such that for some $n_0=n_0(a)$ the inequality (\ref{e.expect}) holds. For all $a\in J$ and $\omega\in \Omega$ we have $\|F_{a'}(\omega)\|\le M$. Therefore, if $m'-m<\frac{n\eps'}{2\edited{\log} M}$, the
inequality~\eqref{eq:m-all} holds. So we have to handle less than $n^2$ pairs $(m,m')$ with $m'-m\ge \frac{\eps'}{2\edited{\log} M} n$.

Define $\Psi:\Omega^{n_0}\to \mathbb{R}$,
$$
\Psi(\omega_1, \omega_2, \ldots, \omega_{n_0})=\frac{1}{n_0}\log\max_{a'\in J_a}\|F_{a'}(\omega_{n_0})F_{a'}(\omega_{n_0-1})\ldots F_{a'}(\omega_{1})\|.
$$
Given $\bar \omega\in \Omega^{\mathbb{N}}$, set $\Psi_j(\bar \omega)=\Psi(\omega_{jn_0+1}, \ldots, \omega_{(j+1)n_0})$. Then $\{\Psi_j\}$ is a sequence of i.i.d. random variables on $\Omega^{\mathbb{N}}$. By (\ref{e.expect}) we have
$$
\mathbb{E}\Psi<\min_{a'\in J_a}\lambda_F(a')+\eps.
$$
Therefore, if we denote $\mathbb{P}=\mu^{\mathbb{N}}$, then due to the Large Deviation Theorem for random matrix products we have
$$
\mathbb{P}\left\{\frac{1}{k}\sum_{j=1}^k\Psi_j(\bar\omega)>\min_{a'\in J_a}\lambda_F(a')+2\eps \right\}\le e^{-\zeta k}
$$
for some $\zeta=\zeta(\eps)>0$.

Therefore, if $m'-m$ is large, $m'-m=n_0k+r$, where $0\le r<n_0$, $a''\in J_a$, and $M>0$ is given by $(A2)$, then
\begin{multline*}
\frac{1}{m'-m}\log \|T_{[m, m'],a,\bo}\|\le \\
\frac{1}{m'-m}\left(\sum_{j=0}^{k-1}\log\|T_{[m+n_0j, m+n_0(j+1)-1], a'', \bo}\|\right)+\frac{r\edited{\log}M}{m'-m}\le\\
\frac{1}{k}\sum_{j=0}^{k-1}\Psi_j(\bo^*)+\eps,
\end{multline*}
where $\bar\omega^*=\omega_m\omega_{m+1}\ldots$.

Hence
\begin{multline*}
    \mathbb{P}\left(\frac{1}{m'-m}\log \|T_{[m, m'],a'',\bo}\|>\eps'+\lambda_F(a'') \ \ \text{for some}\ \ a''\in J_a\right)\le \\
    \mathbb{P}\left(\frac{1}{m'-m}\max_{a'\in J_a}\log \|T_{[m, m'],a'',\bo}\|>4\eps+\min_{a'\in J_a}\lambda_F(a') \right)\le
    \\
    \mathbb{P}\left(\frac{1}{k}\sum_{j=0}^{k-1}\Psi_j(\bo^*)+\eps >4\eps+\min_{a'\in J_a}\lambda_F(a')\right)\le e^{-\zeta k}\le e^{-\zeta\frac{m'-m}{n_0}}<e^{-\left(\frac{\zeta}{n_0}\frac{\eps'}{2\edited{\log} M}\right)n}.
\end{multline*}
This completes the proof of Proposition \ref{l:upper-finite}.
\end{proof}

\section{Proof of parametric Furstenberg Theorem \\ \ via parameter discretization}\label{s:discretization}

Here we derive  Theorem~\ref{t:product} (parametric Furstenberg Theorem) from Theorem~\ref{t:main} (on properties of finite products of random matrices).

\begin{proof}[Proof of~Theorem~\ref{t:product}] Combining Borel-Cantelli Lemma with Theorem~\ref{t:main} we observe that for any $\eps>0$ \ $\mu^\mathbb{N}$-almost surely there exists $n_0=n_0(\eps)$ such that for any $n\ge n_0$ there are $M_n\in \mathbb{N}$ and exceptional intervals $J_{i_1, n}, J_{i_2, n}, \ldots J_{i_{M_n}, n}$ such that the properties {\bf I--V} from Theorem \ref{t:main} hold. Notice that comparing to the notation used in Theorem \ref{t:main} we add $n$ as an index to emphasize the dependence of these objects on $n$. Let us also define
$$
V_{n',\eps}:= \bigcup_{n\ge n'} \, \bigcup_{k=1,\dots, M_n} J_{i_k,n},
$$
and
$$H_{\eps}=\bigcap_{n'\ge n_0(\eps)} V_{n',\eps}.$$

{\bf Regular upper limit:} Due to Proposition \ref{p.upper} we only need to show that $\mu^\mathbb{N}$-almost surely for all $a\in J$ we have
\begin{equation}\label{e.uplim}
\limsup_{n\to\infty} \frac{1}{n}\log \|T_{n,a,\bo} \| \ge \lambda_F(a).
\end{equation}
If a given $a\in J$ does not belong to $H_{\eps}$, then it does not belong to exceptional intervals $J_{i_k,n}$ for all sufficiently large $n$. Therefore due to property~\ref{i:m2} from Theorem \ref{t:main} for all sufficiently large $n$ we have  $\log \|T_{n,a,\bo} \|\ge (\lambda_F(a)-\eps)n$, or $\frac{1}{n}\log \|T_{n,a,\bo} \|\ge \lambda_F(a)-\eps.$ Hence
\begin{equation}\label{e.n11}
\limsup_{n\to \infty}\frac{1}{n}\log \|T_{n,a,\bo} \|\ge \lambda_F(a)-\eps.
\end{equation}
If $a\in H_{\eps}$, there is an arbitrarily large $n$ such that $a\in J_{i_k,n}$ for some exceptional interval $J_{i_k,n}$. Consider the corresponding value $m_{k,n}$ and notice that the property~\ref{i:m3} from Theorem \ref{t:main} implies the following. If $\frac{m_{k,n}}{n}>\sqrt{\eps}$, then $\log \|T_{m_{k,n},a,\bo} \|\ge \lambda_F(a)m_{k,n}-\eps n$, or
\begin{equation}\label{e.n1}
\frac{1}{m_{k,n}}\log \|T_{m_{k,n},a,\bo} \|\ge \lambda_F(a) -\eps \frac{n}{m_{k,n}}\ge \lambda_F(a) -\sqrt{\eps}.
\end{equation}
If $\frac{m_{k,n}}{n}\le\sqrt{\eps}$, then
\begin{multline*}
    \log\|T_{n,a,\bo} \|\ge \log \|T_{[m_{k,n}, n],a,\bo} \| -\log \|T_{m_{k,n}-1,a,\bo} \|\ge\\
    \lambda_F(a)(n-m_{k,n})-\eps n-(\lambda_F(a)m_{k,n}+\eps n)= \\ \lambda_F(a)(n-2m_{k,n})-2\eps n\ge
    \lambda_F(a)n - (2\eps +2\lambda_F(a)\sqrt{\eps})n,
\end{multline*}
hence
\begin{equation}\label{e.n2}
\frac{1}{{n}}\log\|T_{n,a,\bo} \|\ge \lambda_F(a) -(2\eps +2\lambda_F(a)\sqrt{\eps}).
\end{equation}

Therefore, in any case from (\ref{e.n1}) and (\ref{e.n2}) we get
\begin{equation}\label{e.n22}
\limsup_{n\to \infty}\frac{1}{n}\log \|T_{n,a,\bo} \|\ge \lambda_F(a)-\max(\sqrt{\eps}, 2\eps +2\lambda_F(a)\sqrt{\eps}).
\end{equation}
Finally, applying (\ref{e.n11}) and (\ref{e.n22}) along a sequence of values of $\eps>0$ that tends to zero, we observe that $\mu^\mathbb{N}$-almost surely  (\ref{e.uplim}) holds, and hence the first claim of Theorem~\ref{t:product} (on regular upper limit) follows.

\vspace{4pt}

{\bf $G_\delta$ vanishing:} For each $n, p\in \mathbb{N}$ introduce the set
$$
W_{n, p}=\left\{a\in J\ |\ \text{for some}\ m\ge n\ \text{we have} \ \frac{1}{m}\log\|T_{m, a, \bo}\|<\frac{2}{p}\right\}.
$$
We claim that $W_{n,p}$ is open and dense for any $n,p\in \mathbb{N}$. Indeed, it is clear that each set $W_{n,p}$ is open.  Apply Theorem \ref{t:main} for $\eps=\frac{1}{p}$.
 Property \ref{i:m5} \edited{and the fact that DOS has the whole interval $J$ as its support imply that the set of parameters $\{a_k\}$ for which $\frac{m_k}{n}\in (1/4,1/2)$ is $r(\eps)$-dense in~$J$,}
where 
$r(\eps)\to 0$ as $\eps\to 0$ (or, equivalently, $p\to \infty$).


For each sufficiently large $n$ and each such $k$ with $\frac{m_k}{n}\in (1/4,1/2)$, the property \ref{i:m4} of Theorem~\ref{t:main} implies that
$$
\frac{1}{2m_k}\log \|T_{2m_k,a_k,\bo} \| < \frac{n\eps}{2m_k} = \frac{\eps}{2m_k/n} \le 2\eps=\frac{2}{p}
$$
for some $a_k\in J_{i_k,n}$. Hence, for any $n$ and any $p$ the set $W_{n,p}$
 is $r(\eps)$-dense in~$J$, where $\eps=\frac{1}{p}$. Since $W_{n,p'}\subseteq W_{n,p}$ if $p'\ge p$, this implies that $W_{n,p}$ is dense in~$J$.

Since $W_{n, p}$ is open and dense in $J$, the intersection $\bigcap_{n,p=1}^\infty W_{n,p}$ is a dense $G_\delta$-subset of $J$, and for any $a\in \bigcap_{n,p=1}^\infty W_{n,p}$ we have
$$
\liminf_{n\to \infty}\frac{1}{n}\log \|T_{n, a, \bo}\|=0.
$$
{\bf Hausdorff dimension:} First of all, notice that $H_{\eps}\subseteq J$ has zero Hausdorff dimension. Indeed, $H_{\eps}$ is contained in $V_{n',\eps}$, which is covered by $\left\{J_{i_k, n}\right\}_{n\ge n', \, k\le M_n}$. Taking into account property {\bf I} from Theorem \ref{t:main}, $d$-volume of this cover can be estimated as follows:
$$
\sum_{n\ge n'} M_n\left(\frac{|J|}{N(n)}\right)^d\le  \sum_{n\ge n'} \const \cdot n \frac{|J|^d}{N(n)^d} \le \const' \sum_{n\ge n'} n \exp(-d  \sqrt[4]{n}).
$$
Therefore it tends to zero as $n'$ tends to $\infty$. Since this holds for any $d>0$, we have $\text{dim}_H\, H_\eps=0$.

If $a\not\in H_\eps$, then due to property \ref{i:m2} from Theorem \ref{t:main} for all sufficiently large $n$ we have $\frac{1}{n}\log \|T_{n,a,\bo} \|\ge \lambda_F(a)-\eps,$ hence
$$
\liminf_{n\to\infty} \frac{1}{n}\log \|T_{n,a,\bo} \| \ge \lambda_F(a)-\eps.
$$
Taking a countable union of sets $H_\eps$ over a sequence of values of $\eps>0$ that tend to zero, we get a set of zero Hausdorff dimension that contains all values of $a\in J$ such that
$$
\liminf_{n\to\infty} \frac{1}{n}\log \|T_{n,a,\bo} \| < \lambda_F(a).
$$
This proves the last part of Theorem \ref{t:product}.
\end{proof}

\section{On finite products of random matrices}\label{s.5} 

In this section we prove Theorem~\ref{t:main}.

\subsection{Key proposition and the outline of the proof}\label{ss.5.1}

Theorem~\ref{t:main} describes the ``most-probable'' behaviour of a finite long product of random matrices, handling ``uniformly'' sufficiently small intervals of parameter. Hence, it is natural to inquire how does such a product change as we change the parameter. The answer, stated in terms of the corresponding projective dynamics on the circle and its lift to the real line, is given by Proposition~\ref{p:classes} below, and it is a key ingredient of the proof of Theorem~\ref{t:main}. We will  formulate it (with a geometric interpretation of its conclusion in Remark~\ref{r:classes} below), and then provide an informal outline of the rest of the proof of Theorem~\ref{t:main}.


First, together with the initial linear dynamics of $SL(2,\R)$-matrices $F_a(\omega)$, $\omega\in \Omega$, we consider
their projectivizations that act on the circle of directions $\Sc\cong \R P^1$, and lift this action to the action on
the real line $\R$ for which $\Sc=\R/\Z$:
let
$$
f_{a, \omega}:\Sc\to \Sc
$$
be the map induced by $F_a(\omega):\mathbb{R}^2\to \mathbb{R}^2$, and let
$$
\tf_{a, \omega}:\mathbb{R}\to \mathbb{R}
$$
be \edited{a} lift of $f_{a, \omega}:\mathbb{S}^1\to \mathbb{S}^1$.
The lifts $\tf_{a, \omega}$ can be chosen continuous in $a\in J$ and measurable in $\omega\in \Omega$. Also, denote by
$$
f_{n, a, \bar \omega}:\Sc\to \Sc
$$
the map induced by $T_{n,a, \bar \omega}:\mathbb{R}^2\to \mathbb{R}^2$, and \edited{define}
$$
\tf_{n, a, \bar \omega}:\mathbb{R}\to \mathbb{R}
$$
 to be the lift of $f_{n, a, \bar \omega}:\mathbb{S}^1\to \mathbb{S}^1$ \edited{given by $\tf_{n, a, \bar \omega}=\tf_{a, \bar \omega_n}\circ \ldots \circ \tf_{a, \bar \omega_1}$}. For any fixed value of parameter $a\in J$, the (exponential)
growth of norms of $T_{m,\omega, a}$ is related to the (exponential) contraction on the circle of the projectivized dynamics.
Namely, standard easy computation shows that for a unit vector $v_0$ in the direction given by the point $x_0$, one has
\begin{equation}\label{eq:der-norm}
f'_{n,a,\bo} (x_0) = \frac{1}{\|T_{n,a,\bo}(v_0)\|^2}.
\end{equation}

Fix some point $x_0\in \Sc$, for example, the point that corresponds to the vector $\left( \begin{smallmatrix} 1 \\ 0 \end{smallmatrix} \right)$. Denote by $\tx_0\in [0,1)$ its lift to $\mathbb{R}^1$.
Recall that the interval $J=[b_-, b_+]$ was divided into $N=[\exp(\sqrt[4]{n})]$ equal intervals $J_1,\dots,J_N$ that were denoted by $J_i=[b_{i-1},b_{i}]$, $i=1, \ldots, N$.

Let $\tx_{m,i}$ be the image of $\tx_0$ after $m$ iterations of the lifted maps that correspond to the value of the parameter $b_i$,
$$
\tx_{m,i}:=\tf_{m,b_i,\bar\omega}(\tx_0)
$$
(we omit here the explicit indication of the dependence on $\bo$), and
let
\begin{equation}\label{e.Xmi}
X_{m,i}:=[\tx_{m,i-1},\tx_{m,i}]
\end{equation}
be the interval that is spanned by $m$-th (random) image of the initial point $\tx_0$ while the
parameter~$a$ varies in~$J_i=[b_{i-1},b_{i}]$.


\begin{figure}[!h!]
\begin{center}
\includegraphics{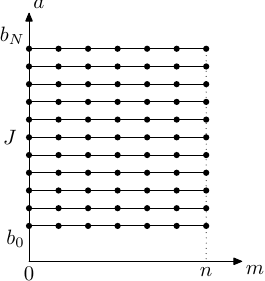} \qquad \qquad  \includegraphics{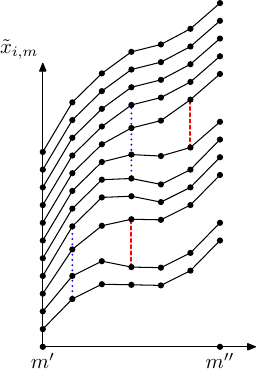}
\end{center}
\caption{Left: a grid of parameters and numbers of iterations. Right: graphs of $\tilde x_{m,i}$, where $m$ varies in a subinterval $0<m'<m<m''$, with the occurring suspicious intervals marked with blue (dotted) lines and the jumping ones with red (dashed) lines.}\label{f:jumps-2}
\end{figure}

\begin{figure}[!h!]
\begin{center}
\includegraphics[scale=0.8]{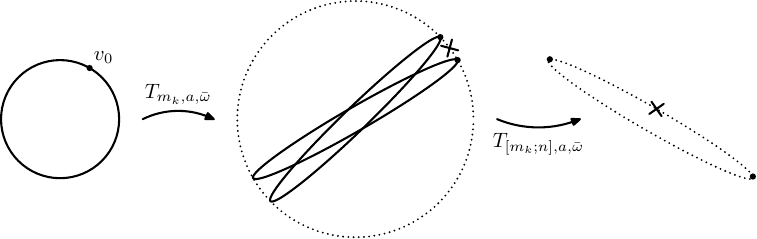}
\end{center}
\caption{Left: a unit circle with a marked point $x_0$. Center: its image after $m_k$ iterations under two different values of parameter $a=b_{i_k-1}$ and $a=b_{i_k}$,
together with a most contracted direction for $T_{[m_k,n],a,\bar{\omega}}$ for some $a\in J_{i_k}$, marked by a cross. Right: final image after $n$ iterations; note that the images of $x_0$ are almost opposite, meaning that they have made a full turn on the projective line of the directions.}\label{f:jumps-1}
\end{figure}


\begin{prop}[Types of the behavior]\label{p:classes}
For any $\eps'>0$ there exists $c_1>0$ such that for any sufficiently large $n$
with the probability at least $1-\exp(-c_1 \sqrt[4]{n})$ the following  holds. For each $i=1,\dots,N$ the lengths $|X_{m,i}|$ behave in one of the three possible ways:
\begin{itemize}
\item[\textbf{$\bullet$}]\textbf{(Small intervals)} The lengths $|X_{m,i}|$ do not exceed $\eps'$ for all $m=1,\dots,n$;
\item[\textbf{$\bullet$}]\textbf{(Opinion-changers)} There is $m_0$ such that $|X_{m_0, i}|>\eps'$, and
$$
|X_{m,i}|\edited{\le}\eps' \quad \text{if } m<m_0 \ \text{ or } \  m>m_0+\eps'n;
$$
\item[\textbf{$\bullet$}]\textbf{(Jump intervals)} There is $m_0$ such that $|X_{m_0,i}|>\eps'$, and
$$
|X_{m,i}|\edited{\le}\eps' \quad \text{if } m<m_0,
$$
$$
1<|X_{m,i}|\edited{\le} 1+\eps' \quad \text{if } m>m_0+\eps'n.
$$
\end{itemize}
\end{prop}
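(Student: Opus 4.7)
The plan is to set up a scalar recursion for $|X_{m,i}|$ and apply uniform large deviation estimates over the $N=[\exp(\sqrt[4]{n})]$ sampled parameters $b_i$. Splitting
$$
\tilde x_{m+1,i}-\tilde x_{m+1,i-1}=[\tilde f_{b_i,\omega_{m+1}}(\tilde x_{m,i})-\tilde f_{b_i,\omega_{m+1}}(\tilde x_{m,i-1})]+[\tilde f_{b_i,\omega_{m+1}}(\tilde x_{m,i-1})-\tilde f_{b_{i-1},\omega_{m+1}}(\tilde x_{m,i-1})]
$$
and applying the mean value theorem yields
$$
|X_{m+1,i}|=\tilde f'_{b_i,\omega_{m+1}}(\xi_m)\,|X_{m,i}|+\partial_a\tilde f_{a^*,\omega_{m+1}}(\tilde x_{m,i-1})\,|J_i|,
$$
where by (\ref{eq:der-norm}) the multiplicative factor equals $\|T_{m+1,b_i,\bar\omega}(\bar v_{\xi_m})\|^{-2}$, and by $(A2)$--$(A4)$ the additive coefficient lies in $[\delta,M]$, with $|J_i|=|J|\exp(-\sqrt[4]{n})$.

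The first step is to union-bound, over the $N=\exp(\sqrt[4]{n})$ parameters $b_i$ and over the $O(n^2)$ time windows, the large deviation conclusion of Proposition~\ref{l:upper-finite} together with Le~Page's exponential contraction estimate for the projective action on $\mathbb{S}^1$. Each failure has probability $e^{-cn}$, so since $N$ is sub-exponential in $n$, this produces a ``good event'' of probability at least $1-\exp(-c_1\sqrt[4]{n})$ on which, for every $i$ simultaneously: (a) the products grow at rate $\lambda_F(b_i)\pm\eps''$ on every macroscopic window, and (b) the projective orbit of $\tilde x_0$ under the $b_i$-cocycle converges exponentially to the instantaneous most-contracted direction, except on a sparse set of ``expansion events'' occupying a negligible fraction of $[1,n]$.

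While $|X_{m,i}|<\eps'$, the intermediate point $\xi_m$ stays close to the projective orbit $\tilde x_{m,i}$, so the recursion reduces to a contraction-with-source whose multiplicative factor averages $e^{-2\lambda_F(b_i)}$ per step away from the rare expansion events; the additive input $|J_i|=e^{-\sqrt[4]{n}}$ alone cannot push $|X_{m,i}|$ past $\eps'$. Hence a first exit time $m_0$, if it exists, must be triggered by one of the expansion events of (b), at which the orbit passes near the instantaneous most-expanding direction of the tail product. From $|X_{m_0,i}|\ge\eps'$ onward I study the arc $\pi(X_{m,i})\subset\mathbb{S}^1$: within $O(\eps'n)$ further steps either it meets the future-unstable direction of the tail product and is then stretched to cover the whole circle, yielding $|X_{m,i}|\ge 1$ (the Jump case), or else it stays on one side of that direction and is contracted back below $\eps'$ by the mechanism of (b) (the Opinion-changer case). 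The time bound $\eps'n$ follows from the exponential rates in (a) and (b) combined with the lower bound $\eps'$ on the starting arc length.

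The main obstacle is the stabilization claim in the Jump case: once $|X_{m,i}|$ has just crossed $1$, I must show it remains in $(1,1+\eps')$ for all subsequent $m\le n$. Writing $\tilde x_{m,i}-\tilde x_{m,i-1}=1+\eta$, the $\mathbb{Z}$-equivariance $\tilde f_{a,\omega}(x+1)=\tilde f_{a,\omega}(x)+1$ reduces the problem to the \emph{same} recursion for the excess $\eta$, now with the decisive feature that the additive source $\partial_a\tilde f\cdot|J_i|\ge\delta|J_i|>0$ is strictly positive by $(A4)$; the excess therefore cannot return to zero (ruling out any unwrapping) while the contracting multiplicative factor pins it at $O(n|J_i|)=O(n\exp(-\sqrt[4]{n}))\ll\eps'$. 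Making this rigorous uniformly for all $m\ge m_0+\eps'n$ requires controlling the density of expansion events on the tail window, which is precisely the part of the argument most sensitive to the uniform large deviations of step two; this is also where the finely tuned choice $N=[\exp(\sqrt[4]{n})]$ earns its keep, balancing the cost of the union bound against the strength of the per-parameter contraction estimates.
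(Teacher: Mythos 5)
Your geometric picture of the dichotomy (the arc either crosses the repelling direction of the tail product and wraps by a full turn, or is contracted back), the affine recursion with the $\delta|J_i|$ drift, and the monotonicity argument showing that once $|X_{m,i}|>1$ it can never drop back below $1$ all match the paper (cf.\ Remark~\ref{r:classes}, Lemmas~\ref{l.skok}, \ref{l.pereskok}). The gap is in the probabilistic bookkeeping at the decisive step. The only estimates available with failure probability $e^{-cn}$ are the norm/derivative large deviations at the grid parameters (Proposition~\ref{l:upper-finite}, Lemma~\ref{l:LD-RD}); these control growth along the orbits of $\tx_0$, but they do not control what you actually need after the first exit time, namely that the two endpoint orbits (driven at parameters differing by $\sim\gamma=\exp(-\sqrt[4]{n})$) re-synchronize mod $1$ within $o(\eps' n)$ steps and then \emph{stay} within $o(1)$ of $0$ or of $1$ for all remaining $m\le n$. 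That stay-close statement is proved in the paper by Chebyshev applied to the $s$-moment contraction (Proposition~\ref{p:phi}, Corollaries~\ref{c:l-a}, \ref{c:l-a1}; Lemmas~\ref{l.52}, \ref{l:pair1}, \ref{l.pereskokprob}), and its failure probability is only stretched-exponentially small, of order $n\gamma^{s/20}$ — it cannot be $e^{-cn}$, because a synchronized pair genuinely de-synchronizes (passes near the repeller of the tail product) with probability comparable to $\gamma^{s}$ per step; this is exactly the mechanism that produces the $\sim n$ jump intervals in the first place. Hence your plan of a single good event obtained by a union bound over the $N$ parameters fails: $N\cdot n\gamma^{s/20}=n\exp\bigl((1-\tfrac{s}{20})\sqrt[4]{n}\bigr)\to\infty$, so no event of probability $1-\exp(-c_1\sqrt[4]{n})$ is produced this way. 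Also note that the ``exponential contraction estimate for the projective action'' you invoke is not an off-the-shelf $e^{-cn}$ statement; the paper has to construct the uniform, parameter-coupled version itself precisely because the literature does not provide it.

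The paper closes this gap with an idea absent from your proposal: the restriction to \emph{suspicious} intervals. Since $\sum_i |X_{m,i}|=\tx_{m,N}-\tx_{m,0}\le \const\cdot m$, at each time $m_0$ at most $\const\cdot m_0/\eps'$ intervals can exceed length $\eps'$ for the first time, so there are only $O(n^2)$ suspicious pairs $(i,m_0)$ in total (Lemma~\ref{l.easy}). Being $m_0$-suspicious is measurable with respect to $\omega_1,\dots,\omega_{m_0}$, while the re-synchronization/stabilization estimate uses only the future randomness, which gives the conditional bound $\P(\text{bad}\mid m_0\text{-suspicious})\le 5\gamma^{s/20}$ (Corollary~\ref{c:bad}); the union bound is then taken over the $O(n^2)$ conditioned events rather than over the $N=[\exp(\sqrt[4]{n})]$ intervals, yielding the bound $\exp(-\tfrac{s}{40}\sqrt[4]{n})$. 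Without this conditioning-and-counting step (or an equivalent argument showing the unconditional per-interval probability of bad behaviour is $o(1/N)$), your route cannot reach the probability claimed in the proposition.
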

\begin{remark}\label{r:classes}
Let us explain the geometrical meaning of Proposition \ref{p:classes}. Consider the images $\tf_{n,a,\bo}(\tilde x_0)$
as a sequence of functions of the parameter~$a$. As the number $n$
of iterations grows, the increment of this function on $J$ grows asymptotically linearly in $n$, and is
roughly equal to~$(\rho(\bmax)-\rho(\bmin))\cdot n$. However, this increment is not distributed uniformly on $J$; rather,
most of it comes from ``jumps by one'', when a very small increment of the parameter $a$ leads to the increment of the
image by~$1$; see Figure~\ref{f:jumps-2}.

The latter happens exactly at the exceptional intervals $J_{i_k}$.
Namely, when we increase the parameter from $b_{i_k-1}$ to $b_{i_k}$,
the maps $f_{m_k,a,\bo}$ and $f_{[m_k,n],a,\bo}$ do not
change much and continue to be hyperbolic. However, the image of $\bar x_0$
under $f_{m_k,a,\bo}$ (that is exponentially close to the image of the most expanded
direction for $T_{m_k,a,\bo}$) moves past the most repelling point of $\tf_{[m_k,n],a,\bo}$
(that corresponds to the most contracted
direction for $T_{[m_k,n],a,\bo}$); see Figure~\ref{f:jumps-1}.

In particular, we find the values $a_k\in J_{i_k}$ (from property \ref{i:m4} in Theorem \ref{t:main})
as those where the image of the most expanded direction
under $T_{m_k,a_k,\bo}$ coincides with the most contracted direction
of~$T_{[m_k,n],a_k,\bo}$. Proposition \ref{p:classes} provides the formal justification of (part of) this picture.
\end{remark}

Now the proof of Theorem \ref{t:main} splits into two parts: deduction of Theorem \ref{t:main} from Proposition \ref{p:classes}, and the proof of Proposition \ref{p:classes}. Since both of these parts are somewhat technical, we start here with a brief informal outline of the proofs.

First, let us discuss how Proposition \ref{p:classes} will be used to prove Theorem~\ref{t:main}.
Consider the random products of matrices for the parameter values $b_j$, $j=0,1,\dots, N$. For each individual parameter value $a=b_j$,
the growth (with large probability) is exponential, as prescribed by Furstenberg Theorem, hence the
derivatives $\tf'_{m,b_j, \bo}(\tx_0)$ decrease exponentially.
Moreover, due to the (uniform in parameter)
Large Deviations Theorem (\cite[Theorem 4]{T}, reproduced below as Theorem \ref{t.diviations}), the probability of ``irregular behaviour'' is
exponentially small. Hence, as we have chosen the number $N$ to be subexponential
in~$n$, with the probability exponentially close to~1 the
derivatives $\tf'_{m,b_j, \bo}(\tx_0)$ admit a well controlled exponentially decreasing
bound for all $j=0,1,\dots, N$; this argument is formalized in Lemma~\ref{l:LD-RD} below.


Next, for each interval $J_i$ consider the increments of the images of $\tx_0$ over $J_i$, that is, the lengths of the corresponding intervals~$X_{m',i}$, $m'=1,\ldots, m$.
A modification of the standard distortion control technique implies that if $\sum_{m'=1}^m |X_{m',i}|$ is sufficiently small, then the logarithms of
the derivatives  of all the maps $\tf_{m,a,\bo}$, $a\in J_j$, at $\tx_0$ are sufficiently close
to each other.
This implies that the derivatives at~$\tx_0$ stay exponentially decreasing uniformly in
$a\in J_i$, and hence the products $T_{n,a,\bo}$ admit the desired exponential
growth lower bound uniformly in~$a$ on such~$J_i$. This argument handles both the ``small'' and the ``opinion-changing'' intervals
from  Proposition~\ref{p:classes}: in both these cases, the sum
of the lengths of $X_{m',i}$  does not exceed $2\eps'n$, which is sufficient to obtain the
desired control (see Lemma~\ref{l:DC} for the distortion control and Proposition~\ref{p:derivatives-control} for the
extension of the ``hyperbolic'' behavior inside the parameter intervals). In these cases the interval $J_i$ is not exceptional, and combining the obtained lower estimates with the upper estimates from Proposition~\ref{l:upper-finite} proves part \ref{i:m2} of Theorem \ref{t:main}.

The ``jump'' intervals from Proposition~\ref{p:classes} correspond to the exceptional intervals
$J_{i_k}$ from Theorem~\ref{t:main}. For these intervals,
we still have a sufficient control on the distortion ``before the jump'', thus obtaining a
uniform bound on the growth of the norm of the products $T_{m,a,\bo}$ for $m\le m_k$.
At the same time, ``after the jump'' we consider intervals $X'_{m,i}:=[x_{m,i-1}+1,x_{m,i}]$,
that are again of controlled lengths for all $m\ge m_k+\eps' n$. Applying again the control
of the distortion, we get a uniform lower bound for the norm of the product $T_{[m_k,n],a,\bo}$ for
all $a\in J_{i_k}$, thus establishing part \ref{i:m3} of Theorem~\ref{t:main}.

The obtained description for the norms of the maps $T_{m_k,a,\bo}$
and~$T_{[m_k,n],a,\bo}$ for $a\in J_{i_k}$ together with the ``jump by 1''
from Proposition~\ref{p:classes} implies that for some parameter value $a_k\in J_{i_k}$ the image
of the most expanded by $T_{m_k,a,\bo}$ direction will coincide with the
most contracted by $T_{[m_k,n],a,\bo}$ direction. This will imply the part \ref{i:m4} (Cancellation) of Theorem \ref{t:main}, see Section~\ref{ss:cancellation} for details.

Finally, the parts {\bf I} (Quantity) and  \ref{i:m5} (Measure) of Theorem \ref{t:main} are obtained by the same argument.
Namely,  most of the increment
$x_{m,i}-x_{m,i'} \approx m(\rho(b_i)-\rho(b_{i'}))$
comes from the ``jumps'' that has already occurred at this moment. Hence, the
number of exceptional intervals $J_{i_k}$ such that $m_k<m$ and $a_k\in [b_{i'}, b_{i}]$ can be approximated
as $m\cdot(\rho(b_i)-\rho(b_{i'}))$. Thus, if we denote
$$
\Pi=\left[0,\frac{m}{n}\right]\times
\left[b_{i',}b_{i}\right]\ \ \ \text{and}\ \ \ \xi=\frac{1}{n}\sum_{k=1}^{M}\Dirac_{\left(\frac{m_k}{n}, a_{k} \right)},
$$
then (with large probability) we have
$$
\xi(\Pi) \approx \frac{m}{n} \cdot (\rho(b_i)-\rho(b_{i'})) = \text{Leb}\times\DOS(\Pi).
$$
These arguments are formalized in Section~\ref{ss:law}, which concludes
the proof of Theorem~\ref{t:main}.

\vspace{4pt}

Let us now describe the main idea of the proof of Proposition~\ref{p:classes}.

Consider the lengths of all the intervals $X_{m,i}$, $i=1, \ldots, N$, $m=1, \ldots, n$. Let us say that
an interval $J_{i}$ is \emph{suspicious} if at some $m$ we have $|X_{m,i}|>\eps'$.
All the non-suspicious intervals are automatically ``small'' and satisfy the conclusion of Proposition~\ref{p:classes}.

The sum of lengths of all $|X_{m,i}|$ over all $i$ and $m$ grows with $n$ as
\begin{multline*}
\sum_{i=1}^N\sum_{m=1}^n |X_{m,i}| = \sum_{m=1}^n (\tx_{m,N}-\tx_{m,0}) \sim
\sum_m m(\rho(\bmax)-\rho(\bmin)) \sim \\
(\rho(\bmax)-\rho(\bmin)) \frac{n^2}{2} = O(n^2),
\end{multline*}
hence there are at most $\sim \frac{n^2}{2\eps'}(\rho(\bmax)-\rho(\bmin))  = O(n^2)$ suspicious intervals.

Suppose now that $J_i$ is a suspicious interval, and $m$ is the first iterate when $|X_{m,i}|>\eps'$. With large probability, under subsequent iterates  the images of the points $\tx_{m, i-1}$ and $\tx_{m, i}$ either quickly become  very close, or diverge to a distance that is very close to~$1$,
and stays exponentially close or at the distance close to~$1$ under all the remaining iterates. Indeed, for any specific value of the parameter $a$ Furstenberg Theorem implies that with large probability a given pair of points on the circle converge exponentially fast under a random sequence of projective maps. In our case the points $\tx_{m, i-1}$ and $\tx_{m, i}$ will be iterated by the sequence of maps that correspond to different values of parameter, namely $b_{i-1}$ and $b_i$, but since these values are very close to each other, it does not change the picture qualitatively. Finally, the probability of such a behavior approaches~$1$ faster than any inverse power of $n$, thus for all sufficiently large $n$ with large probability this description holds simultaneously for all the suspicious intervals.

 The formal presentation of these arguments is contained in Section~\ref{ss:jump-classes}.

\subsection{Distortion control}\label{ss:distortion}
The distortion estimates is a standard tool in smooth one dimensional dynamics, e.g. see \cite[Lemma 12.1.3]{KH} and \cite[Lemma 6.1]{W2}. In our case we need the distortion estimates for compositions of different but very close to each other maps. Here is the statement that we need:

\begin{lemma}[Distortion control]\label{l:DC}
For any $\bo\in \Omega^{\mathbb{N}}$, $\bo=\omega_1 \omega_2 \ldots \omega_m \ldots$, the following holds. Given $m'<m''$, $y_1<y_2$, and $\bar a_1<\bar a_2$, define the sequence of intervals $Y_{m}=[y_{m,1},y_{m,2}]$, $m=m',...,m''$,
 by
$$ 
y_{m',j}=y_j, \quad  y_{m+1,j}=\tf_{\bar a_j ,\omega_m}(y_{m,j}), \quad j=1,2, \ \  m=m',...,m''-1.
$$
Then for any $\bar a_3 \in [\bar a_1,\bar a_2]$, any $m=m',\dots,m''$, and any $y_3\in [y_1,y_2]$ we have
$$
\left| \log \tilde f'_{[m',m], \bar a_3,\bar\omega}(y_3)- \log \tilde f'_{[m',m],\bar a_1,\bar\omega}(y_1) \right| \le \kappa \sum_{k=m'}^{m''-1} |Y_k| + C |\bar a_2-\bar a_1| \cdot (m''-m'),
$$
where the constants $\kappa$ and $C$ are defined by 
$$
\kappa:= \sup_{y\in \mathbb{R}^1, \, \omega\in\Omega,\, a\in J} |\partial_y \log \tilde f'_{a,\omega}(y)|,
\quad C:= \sup_{y\in \mathbb{R}^1, \, \omega\in\Omega,\, a\in J} |\partial_a \log \tilde f'_{a,\omega}(y)|.
$$
\end{lemma}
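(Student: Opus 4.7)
The plan is a standard telescoping/chain-rule argument combined with an induction that exploits monotonicity to keep a comparison orbit inside the intervals $Y_k$. For $j\in\{1,2,3\}$, define $y^{(j)}_{m'}=y_j$ and $y^{(j)}_{k+1}=\tilde f_{\bar a_j,\omega_k}(y^{(j)}_k)$; note that $y^{(j)}_k=y_{k,j}$ for $j=1,2$. By the chain rule,
$$
\log \tilde f'_{[m',m],\bar a_j,\bar\omega}(y_j) = \sum_{k=m'}^{m-1} \log \tilde f'_{\bar a_j,\omega_k}(y^{(j)}_k), \qquad j\in\{1,3\},
$$
so the quantity to estimate is a sum of $m-m'\le m''-m'$ differences of the form $\log \tilde f'_{\bar a_3,\omega_k}(y^{(3)}_k)-\log \tilde f'_{\bar a_1,\omega_k}(y^{(1)}_k)$.

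The key geometric input is monotonicity in \emph{both} arguments of $\tilde f_{a,\omega}$: assumption (A4) gives strict monotonicity in $a$, and since every matrix in $SL(2,\mathbb{R})$ has positive determinant, its projectivization acts on $\mathbb{R}P^1\cong\mathbb{S}^1$ preserving orientation, so the lift $\tilde f_{a,\omega}(\cdot)$ is monotone increasing in the point argument. Using this I would prove by induction on $k$ that
$$
y^{(3)}_k \in [y^{(1)}_k, y^{(2)}_k] = Y_k, \qquad k=m',\dots,m''.
$$
The base case $k=m'$ is the hypothesis $y_3\in[y_1,y_2]$. For the inductive step, joint monotonicity together with $\bar a_1\le\bar a_3\le\bar a_2$ and $y^{(1)}_k\le y^{(3)}_k\le y^{(2)}_k$ yields
$$
y^{(1)}_{k+1} = \tilde f_{\bar a_1,\omega_k}(y^{(1)}_k) \le \tilde f_{\bar a_3,\omega_k}(y^{(3)}_k) \le \tilde f_{\bar a_2,\omega_k}(y^{(2)}_k) = y^{(2)}_{k+1}.
$$
In particular $|y^{(3)}_k-y^{(1)}_k|\le |Y_k|$ for each $k$.

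With this containment in hand, I would estimate each summand by a two-step comparison. Moving first the point argument from $y^{(3)}_k$ to $y^{(1)}_k$ with the parameter held at $\bar a_3$ costs at most $\kappa\,|y^{(3)}_k-y^{(1)}_k|\le \kappa|Y_k|$ by the definition of $\kappa$ and the fundamental theorem of calculus applied to $y\mapsto \log \tilde f'_{\bar a_3,\omega_k}(y)$; then moving the parameter from $\bar a_3$ to $\bar a_1$ with $y$ held at $y^{(1)}_k$ costs at most $C\,|\bar a_3-\bar a_1|\le C|\bar a_2-\bar a_1|$ by the definition of $C$. Summing over $k=m',\dots,m-1$ and enlarging the ranges to $k=m',\dots,m''-1$ and $m''-m'$ respectively gives the stated bound.

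The whole argument is essentially mechanical once monotonicity is invoked; the only non-routine point is the induction $y^{(3)}_k\in Y_k$. Without it, $|y^{(3)}_k-y^{(1)}_k|$ is not a priori controlled by $|Y_k|$ and the distortion estimate would collapse—so that induction, and the corresponding use of (A4) together with orientation-preservation of the projective action, is the step I would flag as the heart of the lemma.
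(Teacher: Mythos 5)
Your proof is correct and is essentially the paper's own argument: the same telescoping of $\log \tilde f'$ along the orbit, the same splitting of each summand into a parameter-change and a point-change bounded by $C$ and $\kappa$ respectively, and the same use of monotonicity (in $a$ via (A4), in $y$ via orientation-preservation of the projective action) to keep the comparison orbit of $y_3$ inside the intervals $Y_k$, which the paper states as $\tilde f_{a,\omega_m}(Y_m)\subset Y_{m+1}$. The only cosmetic difference is the order of the two-step comparison (you move the point first, the paper moves the parameter first), which does not affect the bound.
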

\begin{proof}
By the monotonicity assumption, for any $m$ and any $a \in [\bar a_1, \bar a_2]$ we have $\tf_{a,\omega_m}(Y_m)\subset Y_{m+1}$.
The difference of logarithms can be estimated as
\begin{multline}
\left| \log \tf'_{[m',m], \bar a_3, \omega}(y_3)- \log \tf'_{[m',m], \bar a_1, \omega}(y_1) \right| =
\\ = \left| \sum_{k=m'}^{m-1} \log \tf'_{\bar a_3,\omega_k} (y_{k,3}) -
\log \tf'_{\bar a_1,\omega_k} (y_{k,1})  \right| \le
\\
\le \sum_{k=m'}^{m-1} \left( \left | \log \tf'_{\bar a_3,\omega_k} (y_{k,3}) -
\log  \tf'_{\bar a_1,\omega_k} (y_{k,3})  \right| +  \left | \log \tf'_{\bar a_1,\omega_k} (y_{k,3}) -
\log \tf'_{\bar a_1,\omega_k} (y_{k,1})  \right| \right) \le
\\
\le  \sum_{k=m'}^{m-1} |\bar a_3-\bar a_1| \cdot \sup_{y,\omega, a} |\partial_a \log \tf'_{a,\omega}(y)| + \sum_{k=m'}^{m-1} |y_{k,3}-y_{k,1}|  \cdot \sup_{y,\omega, a} |\partial_y \log \tf'_{a,\omega}(y)| \le \\
\le C  |\bar a_2-\bar a_1|  \cdot (m''-m') + \kappa  \sum_{k=m'}^{m''-1} |Y_k|.
\end{multline}
\end{proof}
Another estimate that we will need shows how fast nearby points can diverge under iterates of different but close maps.
\begin{lemma}\label{l.shift}
In notations of Lemma \ref{l:DC}, we have
\begin{equation}\label{eq:shift}
|y_{m'', 1}-y_{m'', 2}|\le \Lx^{m''-m'}|y_{m', 1}-y_{m', 2}|+\La(m''-m')\cdot \Lx^{m''-m'-1}|\bar a_2-\bar a_1|,
\end{equation}
where $\Lx=  \sup_{y\in \mathbb{R}^1, a\in J, \omega\in \Omega}|\tf'_{a, \omega}(y)|$ and $\La=\sup_{y\in \mathbb{R}^1, a\in J, \omega\in \Omega}|{\partial_a}\tf_{a, \omega}(y)|$ are the Lipschitz constants for the maps~$\tf_{a,\omega}(y)$ in space and parameter directions respectively.
\end{lemma}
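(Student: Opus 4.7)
The plan is to prove the bound by a straightforward one-step recursion followed by iteration from $m'$ up to $m''$. At a single step, I would write
\[
y_{m+1,2}-y_{m+1,1} = \bigl(\tf_{\bar a_2,\omega_m}(y_{m,2})-\tf_{\bar a_2,\omega_m}(y_{m,1})\bigr) + \bigl(\tf_{\bar a_2,\omega_m}(y_{m,1})-\tf_{\bar a_1,\omega_m}(y_{m,1})\bigr),
\]
apply the mean value theorem to the first bracket (using the spatial Lipschitz constant $\Lx$) and to the second bracket (using the parameter Lipschitz constant $\La$), and obtain the one-step recursion
\[
|y_{m+1,1}-y_{m+1,2}| \le \Lx\,|y_{m,1}-y_{m,2}| + \La\,|\bar a_2-\bar a_1|.
\]

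Next I would iterate this recursion by induction on $k=m''-m'$. The base case $k=0$ is trivial, and the inductive step gives, after unrolling,
\[
|y_{m'',1}-y_{m'',2}| \le \Lx^{m''-m'}|y_{m',1}-y_{m',2}| + \La\,|\bar a_2-\bar a_1|\sum_{k=0}^{m''-m'-1}\Lx^{k}.
\]
To obtain the form stated in the lemma, I would observe that $\Lx\ge 1$ (the projectivization of a matrix in $SL(2,\R)$ has supremum derivative at least $1$, since the product of derivatives at antipodal points equals $1$ with equality only for rotations, and in any case the supremum taken over $\omega\in\Omega$ and $a\in J$ is at least $1$), and hence each term in the geometric sum satisfies $\Lx^{k}\le \Lx^{m''-m'-1}$, giving
\[
\sum_{k=0}^{m''-m'-1}\Lx^{k}\le (m''-m')\,\Lx^{m''-m'-1}.
\]
Substituting this bound yields precisely \eqref{eq:shift}.

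The only subtlety I would be careful about is the case $\Lx<1$, which in principle could occur if one bounds the derivative on a very specific region; but in our setting $\Lx$ is a supremum over all points of $\R$, all $\omega$ and all $a$, so $\Lx\ge 1$ is automatic from the $SL(2,\R)$ structure. Apart from this, the proof is just bookkeeping of a linear recursion, and I do not expect any genuine obstacle.
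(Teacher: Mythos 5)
Your proof is correct and is essentially the paper's own argument: the same one-step decomposition into a spatial Lipschitz term (constant $\Lx$) and a parameter Lipschitz term (constant $\La$), iterated by induction from $m'$ to $m''$, with $\Lx\ge 1$ used to replace the geometric sum by $(m''-m')\,\Lx^{m''-m'-1}$. The only (inessential) caveat is your parenthetical justification of $\Lx\ge 1$: the product of the derivatives of a projective map at two orthogonal directions is in general $\le 1$, not $=1$, but the needed inequality is still immediate, e.g. because $\sup_y \tf'_{a,\omega}(y)=\|F_a(\omega)\|^{2}\ge 1$, or simply because any degree-one circle diffeomorphism has average derivative equal to $1$.
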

\begin{proof}
By induction. The base,~$m''=m'$, is evident: in this case, left and right hand sides of~\eqref{eq:shift} coincide. For the induction step, once $m''>m'$, we decompose the difference $|y_{m'', 1}-y_{m'', 2}|=|\tf_{\bar a_1, \omega_{m'}}(y_{m',1})-\tf_{\bar a_2, \omega_{m'}}(y_{m',2})|$ into two parts:
\begin{multline*}
|y_{m'', 1}-y_{m'', 2}|\le  |\tf_{\bar a_1, \omega_{m''}}(y_{m''-1,1})-\tf_{\bar a_1, \omega_{m''}}(y_{m''-1,2})| \\+|\tf_{\bar a_1, \omega_{m''}}(y_{m''-1,2})-\tf_{\bar a_2, \omega_{m''}}(y_{m''-1,2})|.
\end{multline*}
The first summand does not exceed $\Lx |y_{m''-1,1}-y_{m''-1,2}|$, the second one does not exceed $\La\cdot |\bar a_1-\bar a_2|$, as $\Lx$ and $\La$ are Lipschitz constants in the circle- and parameter directions respectively. Applying the induction assumption (and using the inequality $\Lx\ge 1$), we finally get
\begin{multline*}
    |y_{m'', 1}-y_{m'', 2}|\le \Lx |y_{m''-1, 1}-y_{m''-1, 2}|+\La|\bar a_1-\bar a_2|\le \\
    \Lx\left(\Lx^{m''-m'-1}|y_{m', 1}-y_{m', 2}|+\La(m''-m'-1)\Lx^{m''-m'-2}|\bar a_2-\bar a_1|\right)+ \La|\bar a_1-\bar a_2|= \\
    \Lx^{m''-m'}|y_{m', 1}-y_{m', 2}|+\La(m''-m'-1) \cdot \Lx^{m''-m'-1}|\bar a_2-\bar a_1|+\La|\bar a_1-\bar a_2|\le \\
        \Lx^{m''-m'}|y_{m', 1}-y_{m', 2}|+\La(m''-m')\cdot \Lx^{m''-m'-1}|\bar a_2-\bar a_1|.
\end{multline*}
\end{proof}

\subsection{Large deviations: convenient versions}\label{ss.5.3}

Here we formulate several versions of Large Deviation Theorem in the context of random matrix products that will be specifically useful in our setting.

Let us first formulate the classical Large Deviation Theorem for the random matrix products. Initially it was obtained in \cite{L}, see also \cite{BL, BQ}. \edited{Improved versions of the Large Deviation Theorem, with explicit relation between $\varepsilon$ and $\zeta$ (in the notations of Theorem \ref{t.diviations}) as well as with relaxed or removed conditions on irreducibility of the cocycle, were obtained by Duarte and Klein in \cite{DK1, DK2}.} Here we will use the version of Large Deviation Theorem that is uniform in the parameter.

\begin{theorem}[Proposition 3.6 from \cite{BDFGVWZ}, Theorem 4 from \cite{T}]\label{t.diviations}
For each $\eps > 0$ there exists an $\zeta > 0$ such that for all
$\|u\| = 1$,
$$
\mathbb{P}\left\{\left|\frac{1}{n}\log \|T_{n, a, \bar\omega}u\|-\lambda_F(a)\right|>\eps\right\}<e^{-\zeta n}
$$
for all $a\in J$.
\end{theorem}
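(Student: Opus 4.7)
My plan is to follow Le Page's original strategy, based on the spectral gap of the Markov transfer operator on H\"older functions of the projective line, and then upgrade to uniformity in $a\in J$ by a compactness/finite-cover argument analogous to the one used in the proof of Proposition~\ref{l:upper-finite}.

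First, I would recast the norm growth as a Birkhoff sum over the skew product. Writing $x_0\in \Sc$ for the direction of $u$, $x_k:=f_{k,a,\bo}(x_0)$, and $v_{x_k}$ for the unit vector in direction $x_k$, the chain rule gives
\begin{equation*}
\log \|T_{n,a,\bo}\, u\| = \sum_{k=0}^{n-1}\varphi_a(x_k,\omega_{k+1}), \qquad \varphi_a(x,\omega):=\log\|F_a(\omega)\,v_x\|.
\end{equation*}
Under (A1), Furstenberg's theorem provides a unique stationary probability measure $\nu_a$ on $\Sc$ for the Markov operator $(P_a\phi)(x)=\int \phi(f_{a,\omega}(x))\,d\mu(\omega)$, together with the integral formula $\lambda_F(a)=\iint \varphi_a\,d\mu\,d\nu_a$.

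The heart of the argument is the spectral gap: under (A1)--(A2), Le Page's theorem shows that for some $\gamma>0$ the operator $P_a$ acting on $C^\gamma(\Sc)$ has a simple leading eigenvalue $1$, with the rest of the spectrum inside a disk of radius $\theta_a<1$. Introducing the twisted operators $(P_{a,t}\phi)(x)=\int \phi(f_{a,\omega}(x))\,e^{t\varphi_a(x,\omega)}\,d\mu(\omega)$, analytic perturbation theory produces, for $|t|$ small, a simple leading eigenvalue $\Lambda_a(t)$ with $\Lambda_a(0)=1$ and $\Lambda_a'(0)=\lambda_F(a)$. A standard Chernoff/Laplace-transform bound based on $\|P_{a,t}^n\mathbf{1}\|_{C^\gamma}\le C\,\Lambda_a(t)^n$ then yields the exponential concentration $\mathbb{P}\bigl(|\tfrac{1}{n}\sum \varphi_a(x_k,\omega_{k+1})-\lambda_F(a)|>\eps\bigr)\le e^{-\zeta_a n}$ when the initial direction is distributed according to a $C^\gamma$ density against $\nu_a$. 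To pass to an arbitrary unit vector~$u$, I would use exponential contraction on $\Sc$ (which follows from the same spectral gap): with overwhelming probability, after $k\sim\log n$ iterates the distribution of $x_k$ is H\"older-close to $\nu_a$, while the first $\log n$ terms of the Birkhoff sum contribute only $O(\log n)=o(n\eps)$.

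For uniformity in $a\in J$, the continuity in $a$ of $F_a(\omega)$ from (A2), combined with compactness of $J$ and the (pointwise) Furstenberg condition~(A1), ensures that $\nu_a$, $\Lambda_a(t)$, and the contraction rate $\theta_a$ depend continuously on $a$. A finite covering argument identical in spirit to the one in the proof of Proposition~\ref{l:upper-finite} then upgrades the pointwise rates $\zeta_a$ to a single $\zeta>0$ valid for all $a\in J$. The main obstacle is establishing the \emph{uniform} spectral gap: one must check that the Doeblin--Fortet/Lasota--Yorke-type inequality for $P_a$ on $C^\gamma(\Sc)$ has constants that do not degenerate as $a$ ranges over $J$. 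This is precisely where the uniform $C^1$-boundedness in (A2), together with compactness of $J$ and the uniform validity of (A1), are used: they prevent $\nu_a$ from collapsing onto a finite orbit and prevent the contraction rate on the projective line from tending to $1$ as $a$ varies.
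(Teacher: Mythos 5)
You should first note that the paper itself contains no proof of this statement: it is quoted verbatim as Theorem~4 of Tsay \cite{T}, a parameter-uniform version of Le Page's large deviation theorem \cite{L}, and is used as a black box. So there is no internal argument to compare against; what you have written is essentially a reconstruction of how the cited result is proved in the literature, and as an outline it is sound: the identity $\log\|T_{n,a,\bo}u\|=\sum_{k}\varphi_a(x_k,\omega_{k+1})$, the spectral gap of $P_a$ on $C^\gamma(\Sc)$ under (A1)--(A2), the twisted operators $P_{a,t}$ with leading eigenvalue $\Lambda_a(t)$, $\Lambda_a'(0)=\lambda_F(a)$, and a Chernoff bound are exactly the standard ingredients.

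Two remarks on the execution. First, the detour through an initial direction distributed with a $C^\gamma$ density, followed by a $\log n$ ``burn-in'' and contraction, is unnecessary: $(P_{a,t}^n\mathbf{1})(x_0)=\E\exp\bigl(t\log\|T_{n,a,\bo}u\|\bigr)$ for the fixed direction $x_0$ of $u$, so the bound $\|P_{a,t}^n\mathbf{1}\|_{C^\gamma}\le C\,\Lambda_a(t)^n$ (together with its analogue for $t<0$) already gives the exponential estimate for every unit vector $u$, uniformly in $x_0$. Second, the genuinely delicate point is the one you flag at the end, but your plan understates what must be done: $a\mapsto P_{a,t}$ is in general \emph{not} continuous in the $C^\gamma$ operator norm, so continuity of $\nu_a$, $\Lambda_a(t)$ and of the gap cannot simply be asserted from (A1)--(A2); one either proves a Doeblin--Fortet/Lasota--Yorke inequality with constants uniform over $J$ (a compactness argument of exactly the type the paper carries out for Proposition~\ref{p:phi}, see Lemma~\ref{l:r-contr}) and then invokes Keller--Liverani-type perturbation to get continuity of the leading eigenvalue, or one reruns the whole Le Page scheme with all estimates made uniform in $a$ from the outset, which is what \cite{T} does. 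Relatedly, the finite-cover step is not ``identical in spirit'' to Proposition~\ref{l:upper-finite}: that argument exploits subadditivity and $\max_{a'\in J_a}$, which only produces one-sided (upper) bounds, whereas the large deviation statement is two-sided; in your route the covering argument should instead say that the uniform Lasota--Yorke constants make the local rate $\zeta_a$ locally uniform in $a$, and compactness of $J$ then yields a single $\zeta>0$ (for all sufficiently large $n$). With these points filled in, your proposal is a correct proof strategy for the cited theorem.
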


%

Let us recall that together with the random products of matrices $\{F_a(\omega)\}$ we consider the random dynamics of corresponding projective maps $\{f_{a, \omega}\}$ and their lifts $\{\tf_{a,\omega}\}$. By (\ref{eq:der-norm}), if
 $v_0$ is a unit vector in the direction given by the point $x_0\in \Sc$, and $\lim_{n\to \infty} \frac{1}{n}\edited{\log}\|T_{n, a, \bo}(v_0)\|=\lambda_F(a)$, then $\lim_{n\to \infty} \frac{1}{n}\edited{\log}|\tf'_{n, a, \bo}(\tx_0)|=-2\lambda_F(a)$. Let us denote
 $$
 \lambda_{RD}(a)=-2\lambda_F(a).
 $$
 From Theorem~\ref{t.diviations}  one can deduce the following statement:
\begin{lemma}\label{l.prelimLD}
For any $\ep>0$ there exists $\zeta_1>0$ such that for all  sufficiently large $n\in \mathbb{N}$  the following holds. For any $a\in J$, any given $0\le m_1<m_2\le n$, and $\tx_0\in \mathbb{R}$ with probability at least $1-\exp(-\zeta_1n)$ one has
\begin{equation}\label{eq:LD-RD}
\log \tf'_{[m_1, m_2],a,\bar\omega}(\tf_{m_1, a, \bar\omega}(\tx_0)) \in U_{\ep n} (\lR(a)\cdot (m_2-m_1) ).
\end{equation}
\end{lemma}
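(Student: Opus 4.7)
The strategy is to reduce the claim to Theorem~\ref{t.diviations} applied to the sub-product $T_{[m_1,m_2],a,\bo}$, treated as an i.i.d.\ product of length $k := m_2-m_1$ acting on a fixed unit vector. The key independence observation is that $T_{[m_1,m_2],a,\bo}$ depends only on $\omega_{m_1+1},\dots,\omega_{m_2}$, whereas the relevant unit vector depends only on $\tilde x_0$ and $\omega_1,\dots,\omega_{m_1}$.

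First, using the identity~\eqref{eq:der-norm}, I would rewrite
$$
\log \tilde f'_{[m_1,m_2],a,\bo}\bigl(\tilde f_{m_1,a,\bo}(\tilde x_0)\bigr) = -2\log \|T_{[m_1,m_2],a,\bo}\, v\|,
$$
where $v$ is the unit vector in the direction $f_{m_1,a,\bo}(x_0)\in\mathbb{S}^1$. Since $\lR(a) = -2\lambda_F(a)$, the target bound is equivalent to
$$
\bigl|\log \|T_{[m_1,m_2],a,\bo}\, v\| - \lambda_F(a)\cdot k\bigr| \le \tfrac{\varepsilon}{2}\, n.
$$
Conditioning on $\omega_1,\dots,\omega_{m_1}$ makes $v$ deterministic, so Theorem~\ref{t.diviations} (applied with error parameter $\varepsilon/4$) gives a $\zeta>0$, uniform in $a\in J$, such that
$$
\mathbb{P}\Bigl\{\bigl|\log \|T_{[m_1,m_2],a,\bo}\, v\| - \lambda_F(a)\, k\bigr| > \tfrac{\varepsilon}{4}\, k\Bigr\} \le e^{-\zeta k}.
$$
Averaging over $\omega_1,\dots,\omega_{m_1}$ preserves this unconditional bound.

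Next, I would split on the size of $k$. Set $\delta := \varepsilon/(8\log M)$, where $M$ is the constant from~(A2). If $k \ge \delta n$, the inequality above produces a deviation at most $\varepsilon k/4 \le \varepsilon n/4$ with probability at least $1-e^{-\zeta k} \ge 1-e^{-\zeta\delta n}$. If instead $k < \delta n$, I would invoke the a priori estimates: since every $A\in SL(2,\R)$ satisfies $\|A^{-1}\|=\|A\|$, one has $\|F_a(\omega)^{\pm1}\|\le M$ and therefore $\bigl|\log \|T_{[m_1,m_2],a,\bo}\, v\|\bigr| \le k\log M$; meanwhile Furstenberg--Kesten together with~(A2) gives $0\le\lambda_F(a)\le \log M$. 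Hence $\bigl|\log \|T v\|-\lambda_F(a) k\bigr|\le 2 k\log M\le 2\delta n\log M = \varepsilon n/4$ deterministically. Setting $\zeta_1 := \zeta\delta$ and combining the two cases gives the lemma for all sufficiently large $n$.

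The only substantive point is the bookkeeping mismatch between the deviation scale $\varepsilon k$ coming from Theorem~\ref{t.diviations} and the required scale $\varepsilon n$: for $k \ll n$ the probabilistic bound is vacuous, but then the norm bounds from~(A2) alone already suffice. Everything else is a direct application of the uniform large deviation theorem to the appropriate i.i.d.\ sub-product, combined with the projective-linear dictionary~\eqref{eq:der-norm}.
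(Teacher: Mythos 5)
Your proposal is correct and follows essentially the same route as the paper: split on whether $m_2-m_1$ is at least a fixed fraction of $n$, handle short blocks by the deterministic a priori bounds coming from $(A2)$, and handle long blocks by the uniform large deviation theorem (Theorem~\ref{t.diviations}) applied to the sub-product via the dictionary~\eqref{eq:der-norm}, yielding $\zeta_1$ proportional to the threshold times the LDT constant. Your explicit conditioning on $\omega_1,\dots,\omega_{m_1}$ to make the evaluation vector deterministic is a point the paper leaves implicit, but it is the same argument.
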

\begin{remark}
Notice that in the case $m_1=0, m_2=n$ the statement of Lemma \ref{l.prelimLD} turns into Theorem~\ref{t.diviations}.
\end{remark}
\begin{proof}
Set
$$
\eps^*=\min\left(\frac{\ep}{2\sup_{\tx\in \mathbb{R}, a\in J, \omega\in \Omega}|\log\tf_{a,\omega}'(\tx)|}, \frac{\ep}{2\max_{a\in J}\lambda_{RD}(a)}\right).
$$
If $m_2-m_1<\eps^*n$, then
\begin{multline*}
\left|\log \tf'_{[m_1,m_2], a, \bar\omega}(\tx_0)\right|\le \sum_{k=m_1+1}^{m_2}\left|\log\tf'_{a, \omega_k}(\tf_{k-1, a, \bar\omega}(\tx_0)\right|\le \\  (m_2-m_1)\cdot \sup_{\tx\in \mathbb{R}}\sup_{\omega\in \Omega}\sup_{a\in J}|\log\tf_{a,\omega}'(\tx)|\le (m_2-m_1)\frac{\ep}{2\eps^*}\le \frac{\ep}{2}n,
\end{multline*}
and
$$
\lambda_{RD}(a)(m_2-m_1)<\eps^*\lambda_{RD}(a)n\le \frac{\ep}{2}n.
$$
Therefore,
$$
\log \tf'_{[m_1,m_2], a, \bar\omega}(\tx_0)\in U_{\ep n}(\lambda_{RD}(a)(m_2-m_1)).
$$
If $\eps^*n\le m_2-m_1\le n$, then by Theorem~\ref{t.diviations} we have
\begin{multline*}
\P\left(\log \tf'_{[m_1,m_2], a, \bar\omega}(\tf_{m_1, a, \bar\omega}(\tx_0))\not\in U_{\ep n}(\lambda_{RD}(a)(m_2-m_1))\right)\le \\
\P\left(\log \tf'_{[m_1,m_2], a, \bar\omega}(\tf_{m_1, a, \bar\omega}(\tx_0))\not\in U_{\ep (m_2-m_1)}(\lambda_{RD}(a)(m_2-m_1))\right)\le \\ e^{-\zeta(m_2-m_1)}\le e^{-\zeta\eps^*n}.
\end{multline*}
Hence, Lemma \ref{l.prelimLD} holds with $\zeta_1=\eps^*\zeta$.
\end{proof}
Let us recall that the interval $J$ is divided into $N=[\exp(\sqrt[4]{n})]$ equal subintervals $J_1,\dots,J_N$ detoted  $J_i=[b_{i-1},b_{i}]$, $i=1, \ldots, N$. With large probability (\ref{eq:LD-RD}) holds simultaneously for all possible $m_1, m_2$ with $0\le m_1<m_2\le n$ and all parameter values that form the grid $\{b_0, b_1, \ldots, b_N\}$. Namely, the following statement holds:
\begin{lemma}\label{l:LD-RD}
For any $\ep>0$ there exists $\zeta_2>0$ such that for all  sufficiently large $n\in \mathbb{N}$ the following holds. For a given $\tx_0\in \mathbb{R}$ with probability at least $1-\exp(-\zeta_2n)$ one has
\begin{equation}\label{eq:LD-RD-2}
\log \tf'_{[m_1, m_2],b_i,\bar\omega}(\tf_{m_1, \edited{b_i}, \bar\omega}(\tx_0)) \in U_{\ep n} (\lR(b_i)\cdot (m_2-m_1) ).
\end{equation}
 for all $m_1, m_2$ with $0\le m_1<m_2\le n$ and all $i=0,1,\dots, N$.
\end{lemma}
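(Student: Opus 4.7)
The plan is to deduce Lemma \ref{l:LD-RD} from Lemma \ref{l.prelimLD} applied once per grid point via a conditional-independence argument that handles the ``mixed'' character of the base point (where the iteration up to time $m_1$ uses parameter $a$ while the derivative on $[m_1,m_2]$ is computed at $b_i$), and then close with a union bound over the sub-exponentially many grid points. The key observation is that the quantity $\tf'_{[m_1,m_2],b_i,\bar\omega}(z)$ depends only on the value $z\in\R$ and on the matrices $\omega_{m_1+1},\dots,\omega_{m_2}$; the parameter $a$ and the matrices $\omega_1,\dots,\omega_{m_1}$ enter only through the single point $z=\tf_{m_1,a,\bar\omega}(\tx_0)$.

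Fix $\eps>0$ and let $\zeta_1>0$ be the constant provided by Lemma \ref{l.prelimLD}. For each triple $(i,m_1,m_2)$ with $i\in\{0,\dots,N\}$ and $0\le m_1<m_2\le n$, condition on $\mathcal{F}_{m_1}:=\sigma(\omega_1,\dots,\omega_{m_1})$; under this conditioning $z:=\tf_{m_1,a,\bar\omega}(\tx_0)$ is a deterministic real number, and the tail $(\omega_{m_1+1},\omega_{m_1+2},\dots)$ remains i.i.d.\ with distribution $\mu$ and independent of $\mathcal{F}_{m_1}$. I then apply Lemma \ref{l.prelimLD} conditionally with the shifted sequence $\tilde\omega_k:=\omega_{m_1+k}$, parameter $b_i$, time window $[0,m_2-m_1]$ inside the horizon $n$, and starting point $z$. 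Using $\tf_{0,b_i,\tilde{\bar\omega}}(z)=z$ together with the reindexing identity
$$\tf'_{[0,m_2-m_1],b_i,\tilde{\bar\omega}}(z)=\tf'_{[m_1,m_2],b_i,\bar\omega}\bigl(\tf_{m_1,a,\bar\omega}(\tx_0)\bigr),$$
the conclusion of Lemma \ref{l.prelimLD}---which is uniform in the starting point, since Theorem \ref{t.diviations} is uniform over unit vectors---bounds the conditional failure probability by $\exp(-\zeta_1 n)$. Integrating out $\mathcal{F}_{m_1}$ preserves this bound unconditionally.

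A union bound over all triples $(i,m_1,m_2)$ covers at most $(N+1)\binom{n+1}{2}\le n^{2}\exp(\sqrt[4]{n})$ events, yielding a total failure probability of at most $n^{2}\exp\bigl(\sqrt[4]{n}-\zeta_1 n\bigr)$; for any fixed $\zeta_2\in(0,\zeta_1)$ this sits below $\exp(-\zeta_2 n)$ once $n$ is large enough, which is the desired bound.

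The only genuine obstacle is the mismatch between the $a$-orbit that supplies the base point and the $b_i$-composition whose derivative is being estimated. It is resolved by the conditioning argument above: independence of $\omega_1,\dots,\omega_{m_1}$ from $\omega_{m_1+1},\dots,\omega_{m_2}$ lets me treat the base point $z$ as a deterministic initial condition for a fresh i.i.d.\ sequence, and the uniformity in the starting point inherited from Theorem \ref{t.diviations} means no further quantitative control on the dependence of $z$ on $a$ (no monotonicity or distortion bounds from Section~\ref{ss:distortion}) is needed at this stage.
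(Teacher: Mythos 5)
Your proof is correct and follows essentially the same route as the paper: a union bound over the $(N+1)\cdot\frac{n(n+1)}{2}$ grid events, each controlled by Lemma~\ref{l.prelimLD}, with the subexponential factor $N=[\exp(\sqrt[4]{n})]$ absorbed by taking any $\zeta_2<\zeta_1$ and $n$ large. The explicit conditioning on $\sigma(\omega_1,\dots,\omega_{m_1})$ to decouple the (random) base point from the window $[m_1,m_2]$ is a detail the paper leaves implicit --- it is already buried in the proof of Lemma~\ref{l.prelimLD} via the uniformity in the unit vector in Theorem~\ref{t.diviations} --- so making it explicit is welcome but does not constitute a different argument.
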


\begin{proof}
Let $\zeta_1$ be given by Lemma \ref{l.prelimLD}, and take any positive $\zeta_2<\zeta_1$. For a given $a\in\{b_0,b_1,\dots, b_N\}$ and given $m\in \{1,\dots, n\}$ the event \eqref{eq:LD-RD} holds with probability at least $1-\exp(-\zeta_1 n)$. Intersecting the events \eqref{eq:LD-RD} for all  $a\in\{b_0,b_1,\dots, b_N\}$ and  all $m_1, m_2=0, 1,\dots, n$ with $m_1<m_2$  we observe that  \eqref{eq:LD-RD-2} holds with probability at least $1-\frac{n(n+1)}{2}(N+1)\exp(-\zeta_1 n)$. Since $N=[\exp(\sqrt[4]{n})]$ and $\zeta_2<\zeta_1$, we get
$$
1-\frac{n(n+1)}{2}(N+1)\exp(-\zeta_1 n)> 1-\exp(-\zeta_2n)
$$
for all sufficiently large $n$.
\end{proof}

We will also need  Large Deviation Theorem stated in the context of the rotation number.

\begin{prop}\label{p.ldtrot}
For $\mu^\mathbb{N}$-almost every $\bar \omega\in \Omega^{\mathbb{N}}$, the sequence $\frac{1}{n}\tf_{n, a, \bo}(\tx_0)$ converges to $\rho(a)$ uniformly in $a\in J$. Moreover, for every $\eps>0$ there exists a constant $\zeta_3>0$ such that \edited{for all sufficiently large $n\in \mathbb{N}$}
$$
\P\left(\ \left|\frac{1}{n} \tf_{n,a,\omega}(\tx_0)- \rho(a)\right| > \eps' \ \text{for some}\ \ a\in J \right)\le e^{-\zeta_3n}.
$$
\end{prop}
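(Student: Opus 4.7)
The plan is to combine a large deviation bound at each individual parameter value with the monotonicity assumption (A4) and the uniform continuity of $\rho$ on $J$, using a sub-exponentially fine grid in the parameter. This is structurally the same ``grid + sandwich'' strategy already employed to obtain Lemma \ref{l:LD-RD} from Lemma \ref{l.prelimLD}.

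First I would establish the pointwise large deviation bound: for every $a\in J$ and every $\eps'>0$,
$$
\P\left(\left|\frac{1}{n}\tf_{n,a,\bo}(\tx_0)-\rho(a)\right|>\frac{\eps'}{3}\right)\le e^{-\zeta n},
$$
with $\zeta=\zeta(\eps')>0$ independent of $a\in J$. To get this, write
$$
\tf_{n,a,\bo}(\tx_0)-\tx_0=\sum_{k=1}^{n}\phi_{a,\omega_k}\bigl(f_{k-1,a,\bo}(x_0)\bigr),
\qquad \phi_{a,\omega}(\tx):=\tf_{a,\omega}(\tx)-\tx,
$$
where $\phi_{a,\omega}$ is bounded and $1$-periodic in $\tx$ and hence descends to a bounded Hölder function on $\Sc$. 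This presents the quantity in question as a Birkhoff sum along the Markov chain on $\Sc$ driven by the i.i.d.\ sequence $\{\omega_k\}$. Under the Furstenberg condition (A1) this Markov chain has a unique stationary measure $\nu_a$, whose mean of $\phi_{a,\cdot}$ equals $\rho(a)$ by Proposition \ref{p.rotnumlin}, and the classical Furstenberg--Le Page spectral gap on Hölder observables (see \cite{L, BL}) yields exponential concentration. Continuity of the stationary data in $a$ together with compactness of $J$ (and the fact that the Furstenberg non-degeneracy is assumed on every $a\in J$) allow the rate $\zeta$ to be chosen uniform in $a$.

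Next, I would apply this bound on the grid $\{b_0,b_1,\dots,b_N\}$ with $N=[\exp(\sqrt[4]{n})]$ used throughout the paper. A union bound gives
$$
\P\left(\exists\,i\in\{0,\dots,N\}:\ \left|\tfrac{1}{n}\tf_{n,b_i,\bo}(\tx_0)-\rho(b_i)\right|>\tfrac{\eps'}{3}\right)\le (N+1)\,e^{-\zeta n}\le e^{-\zeta_3 n}
$$
for any $\zeta_3<\zeta$ and all sufficiently large $n$, since $N$ is sub-exponential in $n$.

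Finally, I would use (A4) to interpolate between adjacent grid points. Monotonicity of $a\mapsto\tf_{a,\omega}(\tx)$ (the lift of $f_{a,\omega}$ rotating monotonically on $\Sc$) is preserved under composition, hence $a\mapsto \tf_{n,a,\bo}(\tx_0)$ is non-decreasing, giving the sandwich
$$
\tf_{n,b_{i-1},\bo}(\tx_0)\ \le\ \tf_{n,a,\bo}(\tx_0)\ \le\ \tf_{n,b_{i},\bo}(\tx_0)\qquad \forall\,a\in J_i.
$$
Combined with the uniform continuity of $\rho$ on the compact interval $J$ — which forces $|\rho(b_i)-\rho(b_{i-1})|<\eps'/3$ once $n$ is large enough, as $|J|/N\to 0$ — this sandwiches $\frac{1}{n}\tf_{n,a,\bo}(\tx_0)$ within $\eps'$ of $\rho(a)$ uniformly in $a\in J$, on the high-probability event above. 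Borel--Cantelli upgrades the exponential tail bound to a.s.\ uniform convergence. The main obstacle is Step 1: the existing large deviation result in the paper (Theorem \ref{t.diviations}) controls norms, not the projective argument, so one genuinely needs to invoke the spectral-gap machinery for the Markov chain on $\Sc$ to extract the rotation-number concentration. Once this is available, the monotonicity/grid argument is straightforward.
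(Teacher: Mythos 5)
Your proposal is correct in outline, but it takes a genuinely different route from the paper. The paper's own proof is elementary: it fixes a block length $n_0$, writes $\tf_{kn_0,a,\bo}(\tx_0)-\tx_0$ as a sum of block displacements, and replaces each block displacement by $\bar\varphi_{a,n_0}(\bo)=\max_{\ty\in\R}(\tf_{n_0,a,\bo}(\ty)-\ty)$, using that the oscillation in $\ty$ of a lift displacement is at most $1$, so the replacement costs at most $1$ per block ($1/n_0$ per unit time). The variables $\bar\varphi_{a,n_0}$ over disjoint blocks are bounded i.i.d., so the classical scalar large deviation bound applies, and uniformity in $a$ is obtained by a finite cover of $J$ on which $\bar\varphi_{a,n_0}$ varies by less than $\eps'/10$ uniformly in $\bo$ --- no spectral gap, no sub-exponential grid, no use of monotonicity, and no use of (A1) beyond the existence of $\rho$. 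You instead view $\tf_{n,a,\bo}(\tx_0)-\tx_0$ as an additive functional of the Markov chain on $\Sc$ and invoke the Le~Page spectral gap (Nagaev--Guivarc'h) machinery to get exponential concentration at each $a$, then a union bound over the grid $\{b_i\}$ and the monotone sandwich from (A4) (indeed $a\mapsto\tf_{n,a,\bo}(\tx)$ is non-decreasing, as the paper uses elsewhere). Your route yields a sharper, more conceptual pointwise statement (and H\"older continuity of $\rho$ as a by-product), but at the cost of machinery the paper deliberately avoids; note that a uniform-in-$a$ large deviation rate for this observable is not literally stated in \cite{L, BL}, so the compactness/continuity argument for uniformity (in the spirit of \cite{T}) must be carried out, as you yourself flag.

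Two points need care before your argument is complete. First, your interpolation step uses uniform continuity of $\rho$ on $J$ to make $\rho(b_i)-\rho(b_{i-1})$ small; in the paper, continuity of $\rho$ is deduced \emph{from} Proposition~\ref{p.ldtrot} (see the remark immediately following it), so you must establish it independently --- e.g.\ from uniqueness of the stationary measures $\nu_a$ together with their weak-$*$ continuity in $a$, or from Le~Page's H\"older continuity --- to avoid circularity inside the paper's logical structure; this is available within your framework but should be stated explicitly. Second, the identification $\rho(a)=\int\!\!\int \phi_{a,\omega}(x)\,d\mu(\omega)\,d\nu_a(x)$ and the uniformity over $a\in J$ of the spectral-gap constants are exactly the heavy steps of your Step~1; they are standard but not off-the-shelf in the cited references, whereas the paper's block/i.i.d.\ trick sidesteps them entirely. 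Once these are granted, your grid-plus-sandwich conclusion and the Borel--Cantelli upgrade are fine.
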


\begin{coro}\label{c.ldtrot}
For any $\eps'>0$ there exists $\zeta_4>0$ such that \edited{for all sufficiently large $n\in \mathbb{N}$}
$$
\P\left(\ \left|\tf_{m,a,\omega}(\tx_0)- m\rho(a)\right| > \eps'n \ \text{for some}\ \ a\in J \ \text{and}\ m\le n\right)\le e^{-\zeta_4n}.
$$
\end{coro}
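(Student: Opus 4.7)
\smallskip

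The plan is to split the range $m\in\{1,\dots,n\}$ into a short-time regime $m\le\eps^*n$, to be handled deterministically via a uniform bound on the displacement of the lifts, and a long-time regime $\eps^*n<m\le n$, to be handled by applying Proposition~\ref{p.ldtrot} at scale $m$ followed by a union bound over $m$. Here $\eps^*>0$ will be a small constant depending only on $\eps'$ and the family $\{F_a(\omega)\}$.

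To handle the short-time regime, I first establish a uniform bound on displacement. Since $\{\tf_{a,\omega}(0)\}_{a\in J,\,\omega\in\Omega}$ is uniformly bounded (as noted just before Proposition~\ref{p.rotnumlin}) and $y\mapsto\tf_{a,\omega}(y)-y$ is $1$-periodic, there is a constant $C'>0$ such that $|\tf_{a,\omega}(y)-y|\le C'$ for all $y,a,\omega$. Iterating gives $|\tf_{m,a,\omega}(\tx_0)-\tx_0|\le mC'$, and passing to the limit in Proposition~\ref{p.rotnumlin} yields $|\rho(a)|\le C'$ as well. Hence, deterministically,
$$
|\tf_{m,a,\omega}(\tx_0)-m\rho(a)| \;\le\; 2mC' + |\tx_0|,
$$
which is strictly less than $\eps'n$ once $m\le\eps^*n$ with $\eps^*:=\eps'/(4C')$ and $n$ is sufficiently large. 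So the bad event contributes nothing from the short-time regime.

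For the long-time regime $\eps^*n<m\le n$, I observe that $n/m\ge 1$, so the event $\{|\tf_{m,a,\omega}(\tx_0)-m\rho(a)|>\eps'n\}$ is contained in the event $\{|m^{-1}\tf_{m,a,\omega}(\tx_0)-\rho(a)|>\eps'\}$. Applying Proposition~\ref{p.ldtrot} at length $m$ with deviation $\eps'$, the probability that the latter event occurs for some $a\in J$ is at most $e^{-\zeta_3 m}\le e^{-\zeta_3\eps^*n}$, where $\zeta_3=\zeta_3(\eps')>0$ is the constant provided by that proposition. A union bound over the at most $n$ values of $m$ in this regime yields total probability at most $n\cdot e^{-\zeta_3\eps^*n}$, which is bounded by $e^{-\zeta_4 n}$ for any fixed $\zeta_4<\zeta_3\eps^*$ and $n$ large; the finitely many remaining small values of $n$ can be absorbed by further decreasing $\zeta_4$, since the probability is in any case at most $1$.

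I do not foresee a serious obstacle here: the argument is a routine \emph{pointwise-to-maximal} upgrade of a single-time large-deviation bound, made possible because the short-time regime is controlled deterministically, while the long-time regime features only linearly many time-scales with an exponentially small per-scale probability.
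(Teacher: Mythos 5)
Your argument is correct, and it is essentially the deduction the paper leaves implicit: the corollary is stated without proof, and your split into a short-time regime ($m\le\eps^* n$, controlled deterministically by the uniform bound on the displacement $\tf_{a,\omega}(y)-y$ and on $\rho$) and a long-time regime (Proposition~\ref{p.ldtrot} applied at scale $m$, then a union bound over the at most $n$ values of $m$) is exactly the same two-regime technique the paper itself uses to pass from Theorem~\ref{t.diviations} to Lemma~\ref{l.prelimLD} and Lemma~\ref{l:LD-RD}. The only nit is your closing remark: for the finitely many small $n$, ``probability at most $1$'' does not yield a bound of the form $e^{-\zeta_4 n}$, so the estimate should simply be read (as with the other large-deviation statements in the paper) as holding for all sufficiently large $n$, which is all that is ever used.
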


\begin{proof}[Proof of Proposition~\ref{p.ldtrot}]
Take any fixed $n_0\in \N$. Note first that (upon replacing~$\eps'$ with a smaller value, e.g.~$3\eps'/4$) we can restrict ourselves to $n$ that are
multiples of~$n_0$. Indeed, taking $k=[\frac{n}{n_0}]$, we get
$$
\tf_{n,a,\bo}=\tf_{[kn_0,n],a,\bo} \circ \tf_{kn_0,a,\bo},
$$
and as the increment $\tf_{[kn_0,n],a,\bo}(\ty)-\ty$ is uniformly bounded, the same holds for the difference
$$
\left| \tf_{n,a,\bo}(\tx_0)-\tf_{kn_0,a,\bo}(\tx_0) \right|.
$$
For $n=kn_0$ we can split the length $n$ composition $\tf_{n,a,\bo}$ into  groups of length $n_0$:
$$
\tf_{n,a,\bo} = \tf_{[(k-1)n_0, kn_0],a,\bo} \circ \dots \tf_{[n_0, 2n_0],a,\bo} \circ \tf_{n_0,a,\bo}.
$$
 If we denote (compare with Section~\ref{s.johnson})
$$
\varphi_{a, n_0}(\bo,\ty):=\tf_{n_0,a,\bo}(\ty)-\ty,
$$
then we have
$$
\tf_{kn_0,a,\bo}(\tx_0)-\tx_0 = \sum_{j=0}^{k-1} \varphi_{a,n_0}(\sigma^{jn_0}\bo, \tf_{jn_0,a,\bo}(\tx_0)).
$$
Now for any $n_0$ and $\bo\in \Omega^{\mathbb{N}}$ we have $\osc_{\ty\in\R} (\tf_{n_0,a,\bo}(\ty)-\ty) \le 1$. Define
$$
\bar\varphi_{a,n_0}(\bo):= \max_{\ty\in\R} (\tf_{n_0,a,\bo}(\ty)-\ty).
$$
Notice that $\bar\varphi_{a, n_0}$ depends only on the first $n_0$ letters of the word~$\bo$.
We have
$$
\frac{1}{kn_0} \cdot \left| (\tf_{kn_0,a,\bo}(\tx_0)-\tx_0) - \sum_{j=0}^{k-1} \bar\varphi_{a,n_0}(\omega_{jn_0+1},\dots, \omega_{jn_0+n_0}) \right| \le \frac{1}{n_0}.
$$

In particular, passing to the limit $k\to \infty$ for an individual~$a$, we see that
\begin{equation}
\left|\rho(a)-\frac{1}{n_0}\E \bar\varphi_{a,n_0}  \right| <\frac{1}{n_0}.
\end{equation}
Now, take $n_0>\frac{10}{\eps'}$. Then, we have
\begin{multline*}
\frac{1}{kn_0} \cdot \left| \tf_{kn_0,a,\bo}(\tx_0) - kn_0 \cdot \rho(a) \right| \le \left|\rho(a) - \frac{1}{n_0} \E \bar\varphi_{a,n_0}  \right| +  \\
\frac{1}{kn_0} \cdot \left| (\tf_{kn_0,a,\bo}(\tx_0)-\tx_0) - \sum_{j=0}^{k-1} \bar\varphi_{a,n_0}(\omega_{jn_0+1},\dots, \omega_{jn_0+n_0}) \right| + \frac{|\tx_0|}{kn_0}+ \\
 \left| \frac{1}{k}\sum_{j=0}^{k-1} \frac{1}{n_0}\bar\varphi_{a,n_0}(\omega_{jn_0+1},\dots, \omega_{jn_0+n_0}) - \frac{1}{n_0}\E \bar\varphi_{a,n_0} \right|.
\end{multline*}
Each of the first three summands on the right hand side does not exceed $\frac{\eps'}{10}$. Hence, for any~$a\in J$ the event
$$
\left|\frac{1}{kn_0} \tf_{kn_0,a,\bo}(\tx_0)- \rho(a)\right| > \eps'
$$
is contained in the event
\begin{equation}\label{eq:rho-a}
  \left| \frac{1}{k}\sum_{j=0}^{k-1}\frac{1}{n_0} \bar\varphi_{a,n_0}(\omega_{jn_0+1},\dots, \omega_{jn_0+n_0}) - \frac{1}{n_0}\E \bar\varphi_{a,n_0} \right| > \frac{7\eps'}{10}.
\end{equation}

Now, for any fixed $a$ the event in the left hand side of~\eqref{eq:rho-a} can be estimated using the standard  Large Deviations Theorem from the theory of probability: we have a sum of bounded i.i.d. random variables.

Let us now extend these argument to the full interval $J$. Notice that for a fixed $n_0$ the displacements $\varphi_{a,n_0}(\bo,\ty)$ are continuous in $a$ uniformly in both $\ty$ and $\bo$. Therefore, $\bar\varphi_{a, n_0}(\bo)$ is also continuous in $a$ uniformly in $\bo\in \Omega^{\mathbb{N}}$.  Hence, any $a'\in J$ is contained in an open interval $J_{a'}$ such that
$$
\left| \bar\varphi_{a, n_0}(\bo) - \bar\varphi_{a',n_0}(\bo) \right| < \frac{\eps'}{10}
$$
for any $a\in J_{a'}$ and $\bo\in \Omega^{\mathbb{N}}$.
In particular, this implies  that
$$
|\E\bar\varphi_{a, n_0}-\E\bar\varphi_{a', n_0}|<\frac{\eps'}{10},
$$
and, moreover, for any
%
 $a\in J_{a'}$ the event~\eqref{eq:rho-a} is contained in the similar event for $a'$,
\begin{equation}\label{eq:a-prime}
 \left|\frac{1}{k} \sum_{j=0}^{k-1}\frac{1}{n_0} \bar\varphi_{a',n_0}(\omega_{jn_0+1},\dots, \omega_{jn_0+n_0}) - \frac{1}{n_0}\E \bar\varphi_{a',n_0} \right| > \frac{5\eps'}{10}=\frac{\eps'}{2}.
\end{equation}

As $J$ is compact, we can extract a finite cover $J_{a_i}$ of $J$; for each $a'=a_i$, the event~\eqref{eq:a-prime} has exponentially small probability: less than $e^{- \zeta_{(i)} k}$ for all sufficiently large~$k$. As there is a finite number of them, we get the desired estimate with any $\zeta_3<\frac{1}{n_0} \min_i \zeta_{(i)}$.

Finally,  uniform convergence $\frac{1}{n}\tf_{n, a, \bo}(\tx_0)\to \rho(a)$ for $\mu^{\mathbb{N}}$-a.e. $\bo$ directly follows from the Large Deviation estimate and Borel-Cantelli type arguments.
\end{proof}
\begin{remark}
Since uniform limit of continuous functions is continuous, Proposition \ref{p.ldtrot} implies continuity of the rotation number $\rho(a)$. In fact, it is known that the function $\rho(a)$ must be H\"older continuous, see \cite{L}, but we are not using this fact in our proof.
\end{remark}

\subsection{Uniform growth estimates}\label{ss:norms} Here we deduce parts \ref{i:m2} and \ref{i:m3} of Theorem ~\ref{t:main} from Proposition~\ref{p:classes}. \edited{Let us recall that ``jump intervals'' in terms of Proposition~\ref{p:classes} correspond to the exceptional intervals in Theorem \ref{t:main}.}

First let us show that the distortion control given by Lemma \ref{l:DC} together with Proposition~\ref{p:classes} allows us to use Lemma~\ref{l:LD-RD}
to estimate the derivatives at~$\tx_0$ at all parameter values $a\in J$:

\begin{prop}\label{p:derivatives-control}
There exists a constant $C_1$ such that \edited{for any $\varepsilon'>0$}  the following property holds for all sufficiently large~$n$.
Assume that $\bo$ is such that the conclusions of Lemma~\ref{l:LD-RD} and Proposition~\ref{p:classes} hold.
Then, for any $a\in J$:
\begin{itemize}
\item If $a\in J_i$, and $J_i$ is either ``small'' or ``opinion-changing'' interval in terms of Proposition~\ref{p:classes}, then
\begin{equation}\label{eq:derivative-u}
\forall\ m=1,\dots, n \quad \log \tf'_{m,a,\bar\omega}(\tx_0) \in U_{C_1\eps' n} (\lR(a)\cdot m).
\end{equation}
\item If $a\in J_i$, and $J_i$ is a ``jump'' interval in terms of Proposition~\ref{p:classes}, with the associated moment $m_0$, then
\begin{equation}\label{eq:derivative-upl}
\forall\ m=1,\dots, m_0' \quad \log \tf'_{m,a,\bar\omega}(\tx_0) \in U_{C_1\eps' n} (\lR(a)\cdot m),
\end{equation}
where $m_0':=m_0+\eps' n$, and 
\begin{equation}\label{eq:derivative-upr}
\forall\ m={m_0'}+1,\dots, n \quad \log \tf'_{[m_0', m],a,\bar\omega}(\tx_1) \in U_{C_1\eps' n} (\lR(a)\cdot (m-{m_0'})),
\end{equation}
for any $\tx_1\in X_{m_0, i}'$, where $m_0':=m_0+\eps'n$ and we denote $ X_{m_0', i}':=[\tx_{m_0',i-1}+1,\tx_{m_0',i}]$.
\end{itemize}
\end{prop}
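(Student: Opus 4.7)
The plan is to combine the distortion control from Lemma~\ref{l:DC} with the grid estimates from Lemma~\ref{l:LD-RD} and the three-case classification from Proposition~\ref{p:classes}. Note that the constants $\kappa$ and $C$ in Lemma~\ref{l:DC} are finite by the $C^1$-boundedness assumption (A2). The strategy is: at each grid parameter $a = b_i$ we already have tight control on $\log \tf'_{m, b_i, \bo}(\tx_0)$, and transporting this to any $a \in J_i$ costs at most a distortion term that will be controlled by the structural information about the intervals $X_{k, i}$.

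First I would handle the ``small'' and ``opinion-changing'' cases together. Fix $a \in J_i$ and apply Lemma~\ref{l:DC} with $m' = 0$, $\bar a_1 = b_{i-1}$, $\bar a_2 = b_i$, $\bar a_3 = a$, and $y_1 = y_3 = \tx_0$; the corresponding intervals $Y_k$ are then precisely $X_{k, i}$. The distortion bound reads
$$
\bigl|\log \tf'_{m, a, \bo}(\tx_0) - \log \tf'_{m, b_{i-1}, \bo}(\tx_0)\bigr| \le \kappa \sum_{k=0}^{m-1} |X_{k, i}| + C\,|J_i|\,m.
$$
The second term is at most $C|J|\,n/N$, which for $N = [\exp(\sqrt[4]{n})]$ is superpolynomially small in $n$. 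For the first term, the classification gives $\sum_k |X_{k, i}| = O(\eps' n)$: in the ``small'' case each summand is $\le \eps'$; in the ``opinion-changing'' case the bound splits into a contribution $\le n\eps'$ from the two long time-intervals where $|X_{k, i}| < \eps'$, plus a contribution from the exceptional time-window of length $\le \eps' n$ during which $|X_{k, i}|$ is bounded by an absolute constant (since no jump by $1$ has occurred, so the lifts stay at bounded distance). Combining this with Lemma~\ref{l:LD-RD} at the grid point $b_{i-1}$ and with continuity of $\lR$ (which makes the difference $|(\lR(a) - \lR(b_{i-1}))\cdot m|$ negligible for $|J_i| = O(e^{-\sqrt[4]{n}})$) yields \eqref{eq:derivative-u} for suitable $C_1$ of order $\kappa$.

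The ``before the jump'' estimate \eqref{eq:derivative-upl} is obtained by the same argument applied for $m \le m_0'$; the sum $\sum_{k < m_0'} |X_{k, i}|$ is again $O(\eps' n)$ because the transition window $[m_0, m_0']$ has length only $\eps' n$ and $|X_{k, i}|$ is bounded there by an absolute constant. For the ``after the jump'' estimate \eqref{eq:derivative-upr} I would use the $\mathbb{Z}$-equivariance of the lifts: since $\tf_{[m_0', k], b_{i-1}, \bo}$ commutes with the unit translation, the iterates of $\tx_{m_0', i-1} + 1$ are exactly $\tx_{k, i-1} + 1$, and the derivative at this shifted point coincides with the derivative at $\tx_{m_0', i-1}$. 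Hence Lemma~\ref{l:LD-RD} supplies the required grid estimate for $\log \tf'_{[m_0', m], b_{i-1}, \bo}(\tx_{m_0', i-1} + 1)$. Applying Lemma~\ref{l:DC} now with $m' = m_0'$, $y_1 = \tx_{m_0', i-1} + 1$, $y_2 = \tx_{m_0', i}$, $y_3 = \tx_1$, the relevant intervals become $X'_{k, i} = [\tx_{k, i-1} + 1, \tx_{k, i}]$; for $k > m_0'$ the ``jump'' case tells us $|X'_{k, i}| = |X_{k, i}| - 1 < \eps'$, so the sum is again $O(\eps' n)$ (plus a single $O(1)$ term from $k = m_0'$, absorbed for large $n$).

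The main obstacle, in my view, is the careful bookkeeping of constants in the transitional window $[m_0, m_0 + \eps' n]$ (and its mirror for the ``after'' estimate): there $|X_{k, i}|$ or $|X'_{k, i}|$ may fail to be small, and one must verify that its length $\eps' n$ times an absolute bound still contributes only $O(\eps' n)$ to the distortion sum. One must also check that the errors from (i) the grid point estimate at $b_{i-1}$, (ii) the replacement $\lR(b_{i-1}) \mapsto \lR(a)$, (iii) the parameter-drift term $C|J_i|\cdot m$, and (iv) the distortion sum $\kappa \sum |X_{k,i}|$ all combine into a single $C_1 \eps' n$ bound, which forces one to take the parameter $\eps$ used in Lemma~\ref{l:LD-RD} to be a small enough fixed multiple of $\eps'$. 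Modulo this accounting, everything reduces to a clean triangle-inequality argument using Lemmas~\ref{l:DC} and~\ref{l:LD-RD}, continuity of $\lR$, and the $\mathbb{Z}$-equivariance of the lifts.
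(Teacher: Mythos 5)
Your proposal is correct and follows essentially the same route as the paper: distortion control (Lemma~\ref{l:DC}) transports the grid estimates of Lemma~\ref{l:LD-RD} from $b_{i-1}$ (resp.\ $b_i$) to all $a\in J_i$, with the sums $\sum_m|X_{m,i}|$ (resp.\ $\sum_m|X'_{m,i}|$) bounded by $O(\eps'n)$ via the classification of Proposition~\ref{p:classes}, plus continuity of $\lR$ and the smallness of $|J|/N$. Your explicit appeal to the $\Z$-equivariance of the lifts for the post-jump estimate is exactly the (implicit) step the paper uses when applying \eqref{eq:LD-RD-2} at the shifted point $\tx_{m_0',i-1}+1$, so there is no substantive difference.
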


\begin{proof}
In the first case, regardless of whether the interval $J_i$ is a ``small'' one or an ``opinion-changer'', we have an
upper bound for the sum of the corresponding lengths
\begin{equation}\label{eq:sum-no-jump}
\sum_{m=0}^{n-1} |X_{m,i}|=\sum_{|X_{m,i}|<\eps'} |X_{m,i}|+\sum_{|X_{m,i}|\ge \eps'} |X_{m,i}| \le n\cdot \eps'+ n\eps'\cdot 1= 2n\eps'.
\end{equation}
Lemma~\ref{l:DC} implies that for all $a\in J_i$ and all $m=1, \ldots, n$ we have
$$
|\log \tf'_{m,a,\bar\omega}(\tx_0)-\log \tf'_{m,b_i,\bar\omega}(\tx_0)| \le 2\kappa \eps'  n + C \cdot \frac{|J|}{N} n.
$$

Since $\lR(a)$ is a continuous function of the parameter $a\in J$ (\edited{see Remark \ref{r.cont}}), for a given $\eps'>0$ and sufficiently large $n$ we have:
 $$
 |\lR(a)-\lR(b_i)|\le \eps', \ \ \text{and}\ \ \ \frac{|J|}{N}<\eps'.
 $$
Together with the estimate~\eqref{eq:LD-RD-2} this gives
%
\begin{multline}\label{eq:log-f-prim}
|\log \tf'_{m,a,\bar\omega}(\tx_0) - \lR(a) m | \le |\log \tf'_{m,a,\bar\omega}(\tx_0) - \log \tf'_{m,b_i,\bar\omega}(\tx_0) |+
\\
 +  |\log \tf'_{m,b_i,\bar\omega}(\tx_0) - \lR(b_i) m | + |\lR(b_i) m - \lR(a) m |\le \\ 2\kappa \eps'  n + C \eps' n + \eps' n + \eps' m   \le  (2\kappa +C+2) \eps' n.
\end{multline}
Therefore~\eqref{eq:derivative-u} holds once $C_1>2\kappa+C+2$.

Suppose now that $J_i$ is a ``jump'' interval. Checking~\eqref{eq:derivative-upl} goes exactly in the same way as in~\eqref{eq:sum-no-jump}:
$$
\sum_{m=0}^{m_0'} |X_{m,i}|=\sum_{m=0}^{m_0-1} |X_{m,i}|+\sum_{m=m_0}^{m_0'-1} |X_{m,i}| \le n\cdot \eps'+ n\eps'\cdot 2= 3n\eps'.
$$
Hence, in the same way as in~\eqref{eq:log-f-prim}, we have for any $m\le m_0'$
$$
|\log \tf'_{m,a,\bar\omega}(\tx_0) - \lR(a) m | \le 3\kappa \eps'  n + C \eps' n + \eps' n + \eps' m   \le  (3\kappa +C+2) \eps' n,
$$
and we have the desired~\eqref{eq:derivative-upl} once $C_1>3\kappa+C+2$.

Finally, the intervals $X_{i,m}'$ for $m\ge m_0'$ also satisfy the assumptions of Lemma~\ref{l:DC}. One has
$$
\sum_{m=m_0'}^{n} |X_{m,i}| \le \eps' n,
$$
and thus (again, together with~\eqref{eq:LD-RD-2}) we get
$$
|\log \tf'_{[m_0',m],a,\bar\omega}(\tx_1) - \lR(a) (m-m_0') | \le \kappa \eps'  n + C \eps' n + \eps' n + \eps' m   \le  (\kappa +C+2) \eps' n.
$$
This proves~\eqref{eq:derivative-upr} for any $C_1>\kappa +C+2$, and thus concludes the proof of Proposition \ref{p:derivatives-control}.
\end{proof}

Proposition \ref{p:derivatives-control} implies the parts \ref{i:m2} and \ref{i:m3} of Theorem~\ref{t:main}. Indeed,
 for any $A\in SL(2,\R)$ and for any vector $v\neq 0$ one has
\begin{equation}\label{eq:der-norms}
f_A'(x_v)= \frac{|v|^2}{|Av|^2},
\end{equation}
where $x_v\in \Sc$ is the direction corresponding to the vector~$v$. \edited{Therefore,} for any point $x$ on the circle
one has $\log \|A\| \ge -\frac{1}{2} \log f_A'(x)$ (as the right hand side of~\eqref{eq:der-norms} is not less than~$\frac{1}{\|A\|^2}$).
In particular, for any $m, a,\bo$ we have
\begin{equation}\label{eq:norms-lower}
\log \|T_{m,a,\bo}\| \ge -\frac{1}{2} \log f'_{m,a,\bo}(\bar x).
\end{equation}

\edited{If $a$ belongs to ``small'' or ``opinion-changing'' interval $J_i$, } by joining this estimate with~\eqref{eq:derivative-u}, we obtain a lower bound for the norm
$$
\log \|T_{m,a,\bo}\| \ge -\frac{1}{2} \cdot (\lR (a) m + C_1 n \eps') = \lambda_F(a) m - \frac{C_1}{2} \eps' n.
$$
Hence, to obtain the lower bound in the ``Uniformity'' part, it suffices to take
$$
\eps'<\frac{2\eps}{C_1}.
$$

On the other hand, Proposition~\ref{l:upper-finite} states that the upper bound
$$
\log \|T_{m,a,\bo}\| < \lambda_F(a) m + n\eps
$$
holds with the probability $1-\exp(c_3 n)$. We thus obtain the desired
$$
\log \|T_{m,a,\bo}\| \in U_{n \eps}(\lambda_F(a)m)
$$
for all $a\in J_i$, provided that the interval $J_i$ was ``small'' or ``opinion-changing''. Now, assume that $a\in J_i$,
and the interval $J_i$ is a ``jump'' interval. \edited{Set $\bar m:=m_0+\varepsilon'$.} Then again, joining~\eqref{eq:norms-lower} with~\eqref{eq:derivative-upl}--\eqref{eq:derivative-upr}, we obtain
$$
\forall m=1,\dots, \bar{m} \quad \log \|T_{m,a,\omega}\| \ge \lambda_F(a) m - \frac{C_1}{2} \eps' n> \lambda_F(a) m - \eps n
$$
and
$$
\forall m=\bar{m}+1,\dots, n \quad \log \|T_{[\bar{m};m],a,\omega}\| \ge \lambda_F(a) (m-\bar m) - \frac{C_1}{2} \eps' n> \lambda_F(a) (m-\bar m) -\eps n,
$$
where the last inequalities come from the choice of~$\eps'$.

Again, Proposition~\ref{l:upper-finite} gives the upper bounds
$$
\forall m=1,\dots, \bar{m} \quad \log \|T_{m,a,\bo}\| < \lambda_F(a) m + n\eps
$$
and
$$
\forall m=\bar{m}+1,\dots, n \quad \log \|T_{[\bar{m};m],a,\bo}\| < \lambda_F(a) (m-\bar m) + n\eps.
$$
This implies the desired ``Uniformity'' estimates
$$
\forall m=1,\dots, \bar{m} \quad \log \|T_{m,a,\bo}\| \in U_{n\eps}( \lambda_F(a) m)
$$
$$
\forall m=\bar{m}+1,\dots, n \quad \log \|T_{[\bar{m};m],a,\bo}\| \in U_{n\eps}( \lambda_F(a) (m-\bar m) ),
$$
thus concluding the proof of parts \ref{i:m2} and \ref{i:m3}  of Theorem~\ref{t:main}.

\subsection{Cancellation lemmas}\label{ss:cancellation}

The arguments in this paragraph use in essential way the properties of projective dynamics. This is not an artifact of the proof.
In fact, we expect the behavior of generic parameter-dependent random
dynamical system on the circle to be different.

For any $A\in SL(2, \mathbb{R})$ denote by $f_A$ the corresponding projective map of \edited{${\Sc}$}. Also, for $A\notin SO(2,\R)$ let $x^-(A)\in \Sc$ be the point where $f_A$ has the largest derivative, and $x^+(A)\in \Sc$ be the image under $f_A$ of the point where $f_A$ has the smallest derivative. Equivalently, $x^+(A)$ is the direction of the large axis of the ellipse, obtained by applying $A$ to the unit circle, and $x^-(A)=x^+(A^{-1})$.

Let $\alpha$ and $\beta$ be the angles of $x^-(A)$ and $x^+(A)$ respectively. Then, \edited{using singular value decomposition, we} see that
$$
A=\pm R_{\beta} \left(
\begin{matrix}
\|A\| & 0 \\
0 & \|A\|^{-1}
\end{matrix}
\right) R_{\alpha+\pi/2}^{-1}.
$$
In particular, one has the following useful
\begin{lemma}[Cancellation for matrices]\label{l:cancel}
Let $A,B\in SL(2,\R)\setminus SO(2,\R)$ be two matrices such that $x^+(A)=x^-(B)$. Then
$$
\|BA\|=\max (\frac{\|B\|}{\|A\|}, \frac{\|A\|}{\|B\|}).
$$
\end{lemma}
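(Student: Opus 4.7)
The plan is to use directly the normal form given just before the statement. Write
$$
A = \pm R_{\beta} \begin{pmatrix} \|A\| & 0 \\ 0 & \|A\|^{-1} \end{pmatrix} R_{\alpha+\pi/2}^{-1}, \qquad
B = \pm R_{\beta'} \begin{pmatrix} \|B\| & 0 \\ 0 & \|B\|^{-1} \end{pmatrix} R_{\alpha'+\pi/2}^{-1},
$$
where $\alpha, \beta$ are the angles of $x^-(A), x^+(A)$ and $\alpha', \beta'$ are the angles of $x^-(B), x^+(B)$. When we multiply $B \cdot A$, the middle factor to examine is $R_{\alpha'+\pi/2}^{-1} R_{\beta}$, which is the rotation by angle $\beta - \alpha' - \pi/2$.

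The first key step is to translate the hypothesis $x^+(A)=x^-(B)$. Since these are points of $\Sc \cong \R P^1$, the angles $\beta$ and $\alpha'$ agree modulo $\pi$, so $\beta - \alpha' = k\pi$ for some integer $k$, and consequently $R_{\alpha'+\pi/2}^{-1} R_{\beta} = \pm R_{-\pi/2}$. The sign is irrelevant for computing norms, so
$$
\|BA\| \,=\, \left\|\, R_{\beta'}\, D_B\, R_{-\pi/2}\, D_A\, R_{\alpha+\pi/2}^{-1} \,\right\| \,=\, \|D_B\, R_{-\pi/2}\, D_A\|,
$$
where $D_A = \operatorname{diag}(\|A\|,\|A\|^{-1})$ and similarly for $D_B$, and we used that rotations are isometries.

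The second step is a direct $2\times 2$ computation: since $R_{-\pi/2} = \bigl(\begin{smallmatrix} 0 & 1 \\ -1 & 0 \end{smallmatrix}\bigr)$, one gets
$$
D_B\, R_{-\pi/2}\, D_A \,=\, \begin{pmatrix} 0 & \|B\|/\|A\| \\ -\|A\|/\|B\| & 0 \end{pmatrix}.
$$
An antidiagonal matrix has operator norm equal to the larger of the absolute values of its entries, so
$$
\|BA\| \,=\, \max\!\left( \tfrac{\|B\|}{\|A\|}, \tfrac{\|A\|}{\|B\|} \right),
$$
which is the claim.

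There is no serious obstacle here: the only subtlety is to keep track of the projective identification $\beta \equiv \alpha' \pmod{\pi}$, which is exactly the reason the middle rotation collapses to $\pm R_{-\pi/2}$ and lets the two diagonal factors cancel optimally against each other.
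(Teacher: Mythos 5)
Your proof is correct and follows essentially the same route as the paper: both use the singular-value normal form stated just before the lemma, observe that the hypothesis $x^+(A)=x^-(B)$ collapses the middle rotation to $\pm R_{-\pi/2}$, and then compute that the inner product of the two diagonal factors with the quarter-turn has norm $\max(\|B\|/\|A\|,\|A\|/\|B\|)$, the outer rotations being irrelevant. The only cosmetic difference is that the paper conjugates the quarter-turn through $D_A$ to land on a diagonal matrix, while you keep the equivalent antidiagonal matrix; your explicit handling of the angles modulo $\pi$ is a fine touch.
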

\edited{The proof of Lemma \ref{l:cancel} is straightforward, and is left to the reader. }

We will also use the following lemma, saying, roughly speaking, that a direction that is expanded is sent close to the maximally expanded direction. \edited{Here we will measure a distance between two directions by a smallest angle between those directions, i.e. interpret the projective space as $\mathbb{R}/2\pi \mathbb{Z}$.}
\begin{lemma}\label{l:x-C-image}
Let $A\in SL(2,\R)\setminus SO(2,\R)$, $x\in \Sc$ be a point on the circle, and $v_x$ be some vector in the corresponding direction. Then:
\begin{itemize}
\item $\dist (f_A(x),x^+(A))\le \frac{\pi}{2} \cdot \frac{|Av_x|/|v_x|}{\|A\|}$,
\item $\dist (x,x^-(A))\le \frac{\pi}{2} \cdot \frac{|v_x|/|Av_x|}{\|A\|}$,
\item If we have $f_A'(x)<\frac{1}{C}$, then $\|A\|\ge \sqrt{C}$ and $x^+(A)$ belongs to $\frac{\pi }{2C}$-neighborhood of $f_A(x)$.
\end{itemize}
\end{lemma}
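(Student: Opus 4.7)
My plan is to exploit the explicit polar (SVD) decomposition
$$A = \pm R_\beta \begin{pmatrix} \|A\| & 0 \\ 0 & \|A\|^{-1} \end{pmatrix} R_{\alpha+\pi/2}^{-1}$$
derived just above the lemma, where $\alpha,\beta$ are the angles of $x^-(A)$ and $x^+(A)$. Since left- and right-multiplication by rotations act as isometries on $\Sc$ and preserve each quantity appearing in the three bounds ($|Av_x|/|v_x|$, $\|A\|$, distances on $\R P^1$), it suffices to verify the lemma in the coordinate frame where $A=\operatorname{diag}(s,s^{-1})$ with $s=\|A\|$, so that $x^+(A)=e_1$ (angle $0$) and $x^-(A)=e_2$ (angle $\pi/2$).

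In this frame, parametrize $v_x=|v_x|(\cos\theta,\sin\theta)$, so that $Av_x=|v_x|(s\cos\theta,\,s^{-1}\sin\theta)$. For the second bound, the sine of the angle between $v_x$ and $x^-(A)=e_2$ is exactly $|\cos\theta|$, so the elementary inequality $\phi\le(\pi/2)\sin\phi$ for $\phi\in[0,\pi/2]$ yields $\dist(x,x^-(A))\le (\pi/2)|\cos\theta|$. Reading off just the first coordinate gives $|Av_x|\ge s|v_x||\cos\theta|$, whence $|\cos\theta|\le(|Av_x|/|v_x|)/\|A\|$, which is the bound we want. The first inequality follows from the analogous computation applied to the image: the sine of the angle between $Av_x$ and $x^+(A)=e_1$ equals $|v_x||\sin\theta|/(s|Av_x|)\le |v_x|/(s|Av_x|)$, and the same $\phi\le(\pi/2)\sin\phi$ step yields a control of $\dist(f_A(x),x^+(A))$ by a constant times $1/(s|Av_x|/|v_x|)\cdot 1/\|A\|^0$, with the correct singular-value factor $\|A\|$ in the denominator.

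For the third claim, the identity~\eqref{eq:der-norm} applied to a single matrix gives $f_A'(x)=|v_x|^2/|Av_x|^2$, so the hypothesis $f_A'(x)<1/C$ is equivalent to $|Av_x|/|v_x|>\sqrt{C}$. Combined with the singular-value bound $|Av_x|/|v_x|\le \|A\|$, this forces $\|A\|\ge\sqrt{C}$. Feeding both of these estimates into the first inequality then produces $\dist(f_A(x),x^+(A))<(\pi/2)\cdot(1/\sqrt{C})/\sqrt{C}=\pi/(2C)$, which is the desired neighborhood bound.

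The main obstacle is essentially bookkeeping: one must carefully match the conventions so that $\dist$ on $\R P^1$ lies in $[0,\pi/2]$ (so that $\phi\le(\pi/2)\sin\phi$ applies), and verify that the SVD reduction preserves the ratios $|Av_x|/|v_x|$ and $\|A\|$ as well as the distance to $x^\pm(A)$. Once the diagonal frame is in place, the proof reduces to two-dimensional trigonometry and presents no genuine conceptual difficulty.
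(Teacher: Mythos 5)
Your argument is sound, but be aware that the first two displayed inequalities of the lemma, as printed, have the two ratios interchanged (a typo): the true bounds --- the ones you actually derive --- are $\dist(f_A(x),x^+(A))\le \frac{\pi}{2}\cdot\frac{|v_x|/|Av_x|}{\|A\|}$ and $\dist(x,x^-(A))\le \frac{\pi}{2}\cdot\frac{|Av_x|/|v_x|}{\|A\|}$. The printed versions fail already for $A=\mathrm{diag}(s,s^{-1})$ with $v_x$ along a principal axis, while the corrected form is the one compatible with the third item (whose deduction you reproduce exactly as in the paper's proof) and with the later uses of the lemma, e.g.\ in Lemma~\ref{l:m-half} and Proposition~\ref{l:f-unstable}. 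So when you assert that $|\cos\theta|\le (|Av_x|/|v_x|)/\|A\|$ ``is the bound we want'', you are (correctly) proving the repaired statement rather than the printed one; this deserves an explicit remark, but it is not a gap in your reasoning.

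Methodologically you take a different, equally elementary route from the paper's for the first two items. You pass to the diagonal frame via the singular value decomposition displayed just before the lemma and compute the relevant sines explicitly; this is legitimate once the (routine) invariance of $|Av_x|/|v_x|$, $\|A\|$, $f_A'(x)$ and of the distances to $x^{\pm}(A)$ under the two rotations is checked, as you note. The paper instead argues coordinate-freely: the parallelogram spanned by $v_x$ and a most-expanded unit vector $v_+$ has area at most $|v_x|$, this area is preserved since $\det A=1$ and equals $|Av_x|\cdot\|A\|\cdot\sin\alpha$ for the angle $\alpha$ between the images, whence $\sin\alpha\le\frac{|v_x|/|Av_x|}{\|A\|}$; the second item is then obtained by applying the first to $A^{-1}$, and both proofs conclude with $\alpha\le\frac{\pi}{2}\sin\alpha$ and the same computation for the third item. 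The area argument is shorter and requires no choice of frame; your frame computation yields the same constants and makes everything explicit, but the intermediate phrase ``a constant times $1/(s|Av_x|/|v_x|)\cdot 1/\|A\|^0$'' is garbled and should simply read $\frac{\pi}{2}\cdot\frac{|v_x|/|Av_x|}{\|A\|}$.
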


\begin{figure}[!h!]
\begin{center}
\includegraphics{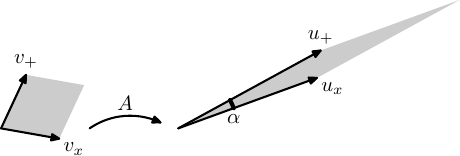}
\end{center}
\caption{Vectors $v_x$, $v_+$, their images $u_x$, $u_+$, and the angle between these images.}\label{f:A-derivative}
\end{figure}

\begin{proof}
Take $v_+$ to be the unit vector in the most expanded direction, that is, $|Av_+| = \|A\|$.
Let $u_+:=Av_+$, and let $\alpha$ be the angle between $Av_x$ and $u_+$ (see Fig.~\ref{f:A-derivative}).
In particular, $u_+$ is a vector in the direction given by $x^+(A)$,
and we can assume (changing the sign of one of the vectors if necessary) that $\alpha$ is the distance
between~$x^+(A)$ and~$f_A(x)$.

Now, the area of the parallelogram defined by unit vectors $v_x$ and $v_+$ is at most $|v_x| \cdot |v_+|$, hence the same holds for the area of the parallelogram defined by their images. We thus have
$$
|v_x| \ge |Av_x| \cdot |u_+| \cdot \sin \alpha = \|A\| \cdot |A v_x| \cdot \sin \alpha;
$$
hence $\sin \alpha <  \frac{|v_x|/|Av_x|}{\|A\|}$, and we get the desired $\alpha < \frac{\pi}{2} \cdot \frac{|v_x|/|Av_x|}{\|A\|}$.

The second part is obtained from the first one by replacing $A$ with $A^{-1}$.

For the last part, recall that $f_A'(x)=\frac{|v_x|^2}{|Av_x|^2}$, where $v_x$ is any vector in the direction given by $x\in\Sc$. Hence,
$$
\|A\|\ge \frac{|Av_x|}{|v_x|} = \frac{1}{\sqrt{f'_A(x)}} > \sqrt{C};
$$
joining this with the first part, we get the desired estimate:
$$
\frac{\pi}{2} \frac{|v_x|/|Av_x|}{\|A\|} < \frac{\pi}{2} \frac{1/\sqrt{C}}{\sqrt{C}} = \frac{\pi}{2C}.
$$
\end{proof}

Let us now prove the ``Cancellation'' part~\ref{i:m4} of the conclusions of Theorem~\ref{t:main}; to do that, we have to handle the ``jump'' intervals. 
Namely, assume that the conclusions of Lemma~\ref{l:LD-RD} hold, and $J_i$ is a ``jump'' interval in terms of Proposition~\ref{p:classes}. Set $\bar m:=m_0+\eps' n$, where $m_0$ is given by the definition of ``jump interval'' in Proposition~\ref{p:classes}.
Notice (we will use it later) that 
\begin{equation}\label{e.assumption}
|X_{\bar m+1,i}|\ge 1+ \delta \frac{|J|}{N},
\end{equation}  
where $\delta>0$ is given by the standing assumption $(A4)$.
Indeed, the inequality $\tx_{\bar m,i}\ge \tx_{\bar m,i-1}+1$ implies that
\begin{multline*}
\tx_{\bar m+1,i}-\tx_{\bar m+1,i-1}=\tf_{\omega_{\bar m+1},b_i}(\tx_{\bar m,i})- \tf_{\omega_{\bar m+1},b_{i-1}}(\tx_{\bar m,i-1})
\\ =(\tf_{\omega_{\bar m+1},b_i}(\tx_{\bar m,i})-\tf_{\omega_{\bar m+1},b_{i-1}}(\tx_{\bar m,i}))
\\ + (\tf_{\omega_{\bar m+1},b_{i-1}}(\tx_{\bar m,i})-\tf_{\omega_{\bar m+1},b_{i-1}}(\tx_{\bar m,i-1}));
\end{multline*}
the former summand is bounded from below by $\delta (b_i-b_{i-1})$, and the latter is at least~$1$.

We start by handling the case when the jump moment happens too close to the first or the last iteration.
\begin{lemma}\label{l:close}
Let $\eps', \eps''>0$, and assume that the conclusions of  Proposition~\ref{p:classes} 
hold, and also that the conclusions of the part~\ref{i:m3} of Theorem \ref{t:main} hold with the value $\eps'$ instead of $\eps$. Suppose $J_i$  is a ``jump'' interval with associated index $m_0$, and set $\bm:=m_0+\eps' n$. Assume that 
 $\bm\le \eps'' n $ or $\bm \ge (1-\eps'') n$.
Then the conclusions of the ``Cancellation'' part~\ref{i:m4} of Theorem \ref{t:main} are satisfied for an arbitrary $a\in J_i$, provided that one has
$$
2 \eps' +2\lambda_F (a) \eps''<\eps.
$$
\end{lemma}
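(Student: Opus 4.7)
The plan is to take $a_k$ to be an \emph{arbitrary} point of $J_i$ (as permitted by the conclusion of the lemma) and deduce the desired estimate directly from part~\ref{i:m3} of Theorem~\ref{t:main} (available by hypothesis, with tolerance $\eps'$) combined with the two-sided submultiplicativity of the norm in $SL(2,\R)$. The underlying observation is that when $\bar m:=m_0+\eps' n$ sits within $\eps'' n$ of either endpoint of $\{1,\dots,n\}$, the gap $m-\psi_{\bar m}(m)$ is bounded by $2\eps'' n$ uniformly in $m$; consequently, the trivial bound $\|T_m\|\le \|T_{[\bar m,m]}\|\cdot\|T_{\bar m}\|$ already fits within the required $U_{n\eps}$ window, and no geometric matching of the expanded and contracted directions is needed.

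Using the identity $\|M^{-1}\|=\|M\|$ valid in $SL(2,\R)$, I will write
\begin{equation*}
\max\!\left(\tfrac{\|T_{\bar m,a,\bo}\|}{\|T_{[\bar m,m],a,\bo}\|},\ \tfrac{\|T_{[\bar m,m],a,\bo}\|}{\|T_{\bar m,a,\bo}\|}\right) \le \|T_{m,a,\bo}\| \le \|T_{[\bar m,m],a,\bo}\|\cdot\|T_{\bar m,a,\bo}\|.
\end{equation*}
Feeding in the estimates $\log\|T_{\bar m,a,\bo}\|\in U_{n\eps'}(\lambda_F(a)\bar m)$ and $\log\|T_{[\bar m,m],a,\bo}\|\in U_{n\eps'}(\lambda_F(a)(m-\bar m))$ from part~\ref{i:m3}, for $m>\bar m$ this yields
\begin{equation*}
\lambda_F(a)\,|m - 2\bar m| - 2n\eps' \;\le\; \log\|T_{m,a,\bo}\| \;\le\; \lambda_F(a)\,m + 2n\eps'.
\end{equation*}
Since $\psi_{\bar m}(m)=|m-2\bar m|$ for $m\ge \bar m$, the lower bound matches the target up to $2n\eps'$; and for $m\le \bar m$, part~\ref{i:m3} directly gives $\log\|T_{m,a,\bo}\|\in U_{n\eps'}(\lambda_F(a)m)=U_{n\eps'}(\lambda_F(a)\psi_{\bar m}(m))$.

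The only remaining point is to absorb the upper-bound gap $\lambda_F(a)(m-\psi_{\bar m}(m))$. A short case analysis covers both boundary regimes: if $\bar m\le \eps'' n$, then $m-\psi_{\bar m}(m)\le 2\bar m\le 2\eps'' n$ for every $m>\bar m$ (since $\psi_{\bar m}$ never drops below zero and $m-\psi_{\bar m}(m)\in\{2m-2\bar m,\,2\bar m\}$); if instead $\bar m\ge(1-\eps'')n$, then every $m\le n$ satisfies $m<2\bar m$, so $\psi_{\bar m}(m)=2\bar m - m$ and $m-\psi_{\bar m}(m)=2(m-\bar m)\le 2(n-\bar m)\le 2\eps'' n$. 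In both cases the upper bound reduces to
\begin{equation*}
\log\|T_{m,a,\bo}\|\le \lambda_F(a)\,\psi_{\bar m}(m) + \bigl(2\lambda_F(a)\eps'' + 2\eps'\bigr)\,n,
\end{equation*}
which fits in $U_{n\eps}(\lambda_F(a)\psi_{\bar m}(m))$ precisely under the stated hypothesis $2\eps'+2\lambda_F(a)\eps''<\eps$. I expect no essential obstacle; the genuine cancellation machinery of Lemmas~\ref{l:cancel} and~\ref{l:x-C-image} is reserved for the complementary regime $\eps'' n<\bar m<(1-\eps'')n$, while the boundary regime handled by Lemma~\ref{l:close} is closed off by careful bookkeeping of submultiplicativity alone.
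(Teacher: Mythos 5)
Your proof is correct and follows essentially the same route as the paper's: both arguments use only the part~\ref{i:m3} estimates together with two-sided submultiplicativity of the norm ($\|T_{m}\|\le\|T_{[\bm,m]}\|\,\|T_{\bm}\|$ and $\|T_m\|\ge\max(\|T_{[\bm,m]}\|/\|T_{\bm}\|,\|T_{\bm}\|/\|T_{[\bm,m]}\|)$), and absorb the discrepancy $\lambda_F(a)\,|m-\psi_{\bm}(m)|\le 2\lambda_F(a)\eps'' n$ available in the two boundary regimes. The only difference is organizational: you center the estimate directly on $\lambda_F(a)\psi_{\bm}(m)$ (noting the lower bound needs no boundary assumption), whereas the paper centers on $\lambda_F(a)(m-\bm)$ or $\lambda_F(a)\bm$ and then shifts — the same bookkeeping.
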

\begin{remark}
\edited{Notice that using $\varepsilon'$ instead of $\eps$ in the part~\ref{i:m3} of Theorem \ref{t:main} does not lead to any problems. Indeed, largeness of $n\in \mathbb{N}$ needed for which the conclusions hold does depend on the value of $\eps$, but those conclusions hold for all  $n$ larger than some threshold,  so by increasing that threshold we can assume that the conclusions of  Proposition~\ref{p:classes}
and  the conclusions of the part~\ref{i:m3} of Theorem \ref{t:main} hold with the value $\eps'$ instead of $\eps$ simultaneously for all sufficiently large $n$.}
\end{remark}
\begin{proof}
Consider first the case $\bm\le \eps''n$.
For $m\le \bm$, due to the conclusions of part~\ref{i:m3}
we have
$$
\log \|T_{m,a,\bo}\|\le n\eps' + \lambda_F (a) \bm, \quad \psi_{\bm}(m)=m\le \bm,
$$
and hence
$$
\left| \log \|T_{m,a,\bo}\| - \lambda_F(a) \psi_{\bm}(m) \right| \le n\eps' + 2 \lambda_F (a) \bm \le (\eps'+ 2 \lambda_F(a) \eps'') n < \eps n
$$
thus guaranteeing the desired~\eqref{e.IV}. On the other hand, once $m\ge \bm$, we have
$$
\log \|T_{\bm,a_i,\bo} \| \le n\eps' + \lambda_F (a) \eps'' n, \quad \log \|T_{[\bm, m],a,\bo}\| \in U_{n\eps'} (\lambda_F(a) \cdot (m-\bm)),
$$
hence
\begin{equation}\label{eq:left}
\log \|T_{m,a,\bo}\| \in U_{2 n\eps'+\lambda_F (a) \eps'' n} (\lambda_F(a) \cdot (m-\bm)).
\end{equation}
Finally, the functions $(m-\bm)$ and $\psi_{\bm}(m)$ differ by at most $\bm$, and we get from~\eqref{eq:left} the desired
\begin{multline*}
\left| \log \|T_{m,a,\bo}\| -\lambda_F(a) \psi_{\bm}(m) \right| \le 2 n\eps'+\lambda_F (a) \eps'' n + \lambda_F (a) \eps'' n
\\ = (2\eps'+2\eps'' \lambda_F(a)) n < \eps n.
\end{multline*}

The case $\bm\ge (1-\eps'') n$ is handled in the same way: for $m\le \bm$, the conclusions of part~\ref{i:m4} coincide with the conclusions of part~\ref{i:m3}. At the same time, if $m\ge \bm$, one has
$$
T_{m,a,\bo} = T_{[\bm, m],a,\bo} T_{\bm,a,\bo},
$$
and hence $\log \|T_{m,a,\bo}\|$ is $2n\eps'+ \lambda_F(a) n\eps''$-close to $\lambda_F(a) \bm$. And the latter is $\lambda_F(a) n\eps''$-close to $\lambda_F(a)\psi_{\bm}(m)$, finally implying the desired
$$
\left| \log \|T_{m,a,\bo}\| - \lambda_F(a)\psi_{\bm}(m) \right| \le (2\eps'+ 2\lambda_F(a) \eps'') n.
$$
\end{proof}

Let us now consider the case when the jump moment is sufficiently away from the endpoints of the interval of iterations, $\eps''n<\bm<(1-\eps'') n$.
First, we find the corresponding value of the parameter $a\in J_i$. Denote
$$
\lambda_{\min}:=\min_{a\in J} \lambda_F(a), \quad \lambda_{\max}:=\max_{a\in J} \lambda_F(a).
$$
\begin{lemma}\label{l:m-half}
Let $\eps', \eps''>0$ satisfy
\begin{equation}\label{eq:eps-pp}
\frac{\lambda_{\min}}{2C_1} \eps'' > \eps',
\end{equation}
where $C_1>1$ is given by Proposition \ref{p:derivatives-control}.
For all sufficiently large $n$, the following statement holds.

Assume that the conclusions of Lemma~\ref{l:LD-RD} and of Proposition~\ref{p:classes} hold,
 $J_i$  is a ``jump'' interval with associated index $m_0$, and set $\bm:=m_0+\eps' n$.
 Assume also that the conclusions of the part~\ref{i:m3} hold with the value~$\eps'$ instead of~$\eps$. Then there exists $a\in J_i$ such that
$$
x^+(T_{\bar m,a,\bo})=x^-(T_{[\bar m, \bm'],a,\bo}),
$$
where $\bm':= \min (2\bm, n)$.
\end{lemma}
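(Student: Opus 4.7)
The plan is to apply the intermediate value theorem on $J_i$ to a suitable continuous function related to the angular coincidence of $\alpha(a):=x^+(T_{\bar m,a,\bo})$ and $\beta(a):=x^-(T_{[\bar m,\bar m'],a,\bo})$, both of which are continuous in $a$. Let $\tilde\alpha,\tilde\beta:J_i\to\R$ be continuous lifts of $\alpha,\beta$ to the real line. The goal is to show that $\tilde\alpha-\tilde\beta$ assumes an integer value on $J_i$, which gives the desired coincidence $\alpha(a^*)=\beta(a^*)$ on $\Sc$. The strategy is to show that $\tilde\alpha$ is (essentially) strictly monotone increasing with total increment exceeding $1$, while $\tilde\beta$ is (essentially) strictly monotone decreasing, so their difference must cross an integer.

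By \eqref{eq:derivative-upl} combined with the standing assumption \eqref{eq:eps-pp} and the lower bound $\bar m\ge \eps''n$ from the case under consideration, the derivative $\tilde f'_{\bar m,a,\bar\omega}(\tilde x_0)$ is exponentially small in $n$, uniformly in $a\in J_i$. Lemma~\ref{l:x-C-image} (part 3) then implies $\alpha(a)$ is exponentially close to $f_{\bar m,a,\bar\omega}(x_0)$ on $\Sc$, and I may choose the lift so that $|\tilde\alpha(a)-\tilde f_{\bar m,a,\bar\omega}(\tilde x_0)|<\exp(-cn)$ uniformly in $a\in J_i$. By assumption $(A4)$ the map $a\mapsto\tilde f_{\bar m,a,\bar\omega}(\tilde x_0)$ is strictly increasing, and its increment across $J_i$ equals $|X_{\bar m,i}|\ge 1+\delta|J|/N$ by \eqref{e.assumption}. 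Hence $\tilde\alpha(b_i)-\tilde\alpha(b_{i-1})\ge 1+\delta|J|/N-O(\exp(-cn))$ and $\tilde\alpha$ is monotone increasing up to exponentially small corrections.

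For $\tilde\beta$, applying \eqref{eq:derivative-upr} at any $\tilde x_1\in X'_{\bar m,i}$ together with Lemma~\ref{l:x-C-image} part 3 shows that $x^+(T_{[\bar m,\bar m'],a,\bo})$ remains exponentially close to $f_{[\bar m,\bar m'],a,\bar\omega}(\tilde x_1)$, so (since $|X'_{\bar m,i}|<\eps'$) the position of $x^+(T_{[\bar m,\bar m'],a,\bo})$ is confined to an arc of $\Sc$ of length $O(\eps')$ as $a$ varies over $J_i$. Fix a reference $\tilde z\in\R$ whose projection to $\Sc$ sits at the antipode of this arc, i.e., close to the direction perpendicular to $x^+$; this is exactly where $f_{[\bar m,\bar m'],a,\bar\omega}(x^-(T_{[\bar m,\bar m'],a,\bo}))$ lies, uniformly in $a\in J_i$. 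Applying Lemma~\ref{l:x-C-image} part 3 to the inverse $T_{[\bar m,\bar m'],a,\bo}^{-1}$ at $\tilde z$, using the identity $(f_T^{-1})'(\tilde z)=1/f'_T(f_T^{-1}(\tilde z))$ and the fact that $f_T$ has exponentially large derivative on an exponentially small neighbourhood of $x^-$, one concludes that $\tilde f^{-1}_{[\bar m,\bar m'],a,\bar\omega}(\tilde z)$ is exponentially close to $\beta(a)$. Choose $\tilde\beta$ as the continuous lift with $|\tilde\beta(a)-\tilde f^{-1}_{[\bar m,\bar m'],a,\bar\omega}(\tilde z)|<\exp(-cn)$. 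By $(A4)$ applied to the implicit relation $\tilde f_{[\bar m,\bar m'],a,\bar\omega}(\Phi(a))=\tilde z$, where $\Phi(a):=\tilde f^{-1}_{[\bar m,\bar m'],a,\bar\omega}(\tilde z)$, one has $\Phi'(a)=-\partial_a\tilde f\,/\,\partial_z\tilde f<0$, so $\tilde\beta$ is strictly monotone decreasing up to exponentially small perturbations.

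Combining the two monotonicities, $\tilde\alpha-\tilde\beta$ is monotone increasing (modulo exponentially small corrections), with total increment across $J_i$ at least $1+\delta|J|/N-O(\exp(-cn))>1$ for sufficiently large $n$, since $\delta|J|/N=\delta|J|\exp(-\sqrt[4]{n})$ dominates $\exp(-cn)$. The intermediate value theorem applied to the continuous function $\tilde\alpha-\tilde\beta$ on $J_i$ then produces the desired $a^*\in J_i$ with $\tilde\alpha(a^*)-\tilde\beta(a^*)\in\Z$, i.e., $\alpha(a^*)=\beta(a^*)$ on $\Sc$. The main obstacle is the second step: carefully verifying that a single reference lift $\tilde z$ can be chosen to work uniformly in $a\in J_i$, which requires tracking the location of $x^+(T_{[\bar m,\bar m'],a,\bo})$ across the interval and confining it to an arc small enough to leave room on the circle for an antipodal $\tilde z$ satisfying the required derivative estimate for all $a\in J_i$ simultaneously.
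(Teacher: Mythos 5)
Your argument is correct in substance and rests on the same engine as the paper's proof: by the derivative bound \eqref{eq:derivative-upl} and Lemma~\ref{l:x-C-image}, $x^+(T_{\bm,a,\bo})$ tracks $f_{\bm,a,\bo}(x_0)$ up to an exponentially small error, the tracked image gains more than a full turn across $J_i$ by \eqref{e.assumption}, and the conclusion is extracted by an intermediate value argument on lifted differences. Where you genuinely diverge is in the control of $x^-(a):=x^-(T_{[\bm,\bm'],a,\bo})$. You make its lift (essentially) monotone decreasing by realizing it, up to exponentially small error, as the preimage $\tf^{-1}_{[\bm,\bm'],a,\bo}(\tilde z)$ of a single reference point, using that the lifted family is increasing in $a$; this is what forces your extra step of confining $x^+(T_{[\bm,\bm'],a,\bo})$ to a short arc so that one $\tilde z$ works for all $a\in J_i$. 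The paper's treatment of $x^-$ is lighter and purely exclusionary: the distortion estimates give that the derivative of $f_{[\bm,\bm'],a,\bo}$ on $X_{\bm,i}'$ is $<1$, while at $x^-(a)$ it equals $\|T_{[\bm,\bm'],a,\bo}\|^2>1$, so $x^-(a)$ can never lie in $X_{\bm,i}'$ and in particular never crosses its midpoint $r$, which $x^+(a)$ sweeps past twice with a full turn in between; lifts are then chosen so that $\tilde x^+-\tilde x^-$ changes sign. Your route buys a stronger structural fact (approximate monotonicity of the repelling direction in the parameter), at the cost of the uniform-reference-point step; the paper's buys brevity.

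Two points to tighten. First, the confinement of $x^+(T_{[\bm,\bm'],a,\bo})$ to an $O(\eps')$-arc does not follow from $|X'_{\bm,i}|<\eps'$ alone: you need that for all $a\in J_i$ and $\tx_1\in X'_{\bm,i}$ the images $\tf_{[\bm,\bm'],a,\bo}(\tx_1)$ are sandwiched, by monotonicity in both $a$ and $x$, between $\tx_{\bm',i-1}+1$ and $\tx_{\bm',i}$, i.e.\ lie in $X'_{\bm',i}$, whose length is $<\eps'$ by the jump-interval property at time $\bm'$. Second, the step ``$f_T$ has exponentially large derivative on an exponentially small neighbourhood of $x^-$, hence $(f_T^{-1})'(\tilde z)$ is exponentially small'' is circular as phrased, since it presumes you already know where the preimage of $\tilde z$ lands; it is cleaner to bound $(f^{-1}_{[\bm,\bm'],a,\bo})'(z)$ directly by a singular-value computation, which shows it is at most $\const\cdot\|T_{[\bm,\bm'],a,\bo}\|^{-2}$ whenever $z$ stays at a definite distance from $x^+(T_{[\bm,\bm'],a,\bo})$ (exactly what your uniform choice of $\tilde z$ provides), and then apply part 3 of Lemma~\ref{l:x-C-image} to the inverse matrix. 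Both are fixable and do not affect the validity of the approach.
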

\begin{proof}
Note that the uniformity estimates imply that the products $T_{\bar m,a,\bo}$ and $T_{[\bar m;\bm'],a,\bo}$
are of norm bounded away from~1 for all $a\in J_i$. Indeed, the conclusions of the part~\ref{i:m3} imply that
$$
\log \|T_{\bm,a,\bo} \| > \bm \lambda_F(a) - n \eps' > n( \eps'' \lambda_{\min} -\eps') >0,
$$
$$
\log \|T_{[\bm,\bm'], a,\bo} \| > (\bm' -\bm) \lambda_F(a) - n \eps' > n( \eps'' \lambda_{\min} -\eps') >0,
$$
where the last inequalities are due to~\eqref{eq:eps-pp}.

Hence the directions
$x^+(T_{\bar m,a,\bo})$ and $x^-(T_{[\bar m;\bm'],a,\bo})$ depend continuously on~$a\in J_i$. To shorten the notations, we denote
$$
x^+(a):=x^+(T_{\bar m,a,\bo}), \quad x^-(a):=x^-(T_{[\bar m, \bm'],a,\bo}).
$$

 Lemma~\ref{l:x-C-image} implies that $x^+(a)$ stays $\frac{\pi}{2} f_{\bar m,a,\bo}'(x_0)$-close to the image~$f_{\bar m,a,\bo}(x_0)$
as $a$ varies in $J_i$. \edited{Also}, for any $a\in J_i$ we have
$$
\frac{\pi}{2}f_{\bar m,a,\bo}'(x_0)< \frac{\pi}{2}\exp(-2\lambda_F(a) \bar m + C_1n \eps') <\exp\left(-{\lambda_F(a)} \bar m \right) < \frac{\delta |J|}{2N},
$$
where the second inequality is due to the assumptions $\bar m \ge n\eps''$ and $\frac{\lambda_{\min}}{2C_1}\eps''>\eps'$, and the last one
is due to the subexponential growth of~$N=[\exp(\sqrt[4]{n})]$.

At the same time, due to (\ref{e.assumption}),
 we have $|X_{\bm, i}'|\ge  \frac{\delta |J|}{N}$, \edited{where $X_{\bm, i}'=[\tx_{\bm, i-1}+1, \tx_{\bm,i}]$}.
Hence, as $a$ varies over $J$, the point $x^+(a)$ passes through the midpoint
$$
r:=\pi\left(\frac{(\tx_{\bar m, i-1}+1)+\tx_{\bar m, i}}{2}\right)
$$
of the interval $\pi(X_{\bar m,i}')=\pi([\tx_{\bar m, i-1}+1,\tx_{\bar m, i}])$
at least twice, making the full turn in between; see Figure~\ref{f:x-plus-minus}.

\begin{figure}[!h!]
\begin{center}
\includegraphics{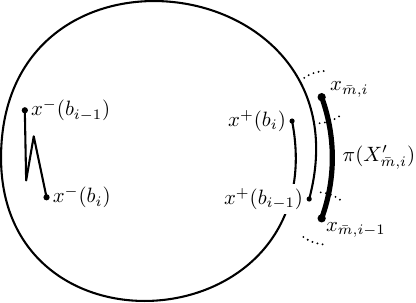}
\end{center}
\caption{While the parameter $a$ varies over a jump interval $J_i$, the $x^+(a):=x^{+}(T_{\bar m,\omega,a})$ makes more than a full turn,
staying in a neighborhood of the corresponding image $f_{\bar m,a,\omega}(\bar x)$. At the same time, the point $x^-(a):=x^-(T_{[\bm,\bm'],a,\omega})$
never enters the interval $\edited{\pi(X'_{\bar m, i})}$ (the arc shown in bold).}\label{f:x-plus-minus}
\end{figure}

We know from the
distortion control estimates \edited{given by} Proposition~\ref{p:derivatives-control}
that the derivatives of $f_{[\bar m,\bm'],a,\bo}$ on $X_{\bar m,i}'$ do not exceed
$$
\exp(\lR(\bm'-\bar m)+C_1\eps' n)= \exp(-2 n\eps''\lambda_F(a) +C_1\eps' n) < \exp( (-2\lambda_{\min} \eps'' + C_1\eps') n) < 1,
$$
again using~\eqref{eq:eps-pp} for the last inequality.

Hence the point $x^-(a)$ never crosses $r$ for $a\in J_i$. Thus, we can choose the lifts $\tilde x^+(a)$ and $\tilde x^-(a)$ on the real line of
$x^+(a)$, $x^-(a)$ respectively such that the difference $\tilde x^+(a)-\tilde x^-(a)$ changes sign while $a$ varies in $J_i$. Hence, there
exists a point $a\in J_i$ for which the directions $x^+(a)$ and $x^-(a)$ coincide.
\end{proof}

We are now ready to conclude the proof of the ``Cancellation'' part~\ref{i:m4}. Take $\eps', \eps''>0$ such that~\eqref{eq:eps-pp}
holds, as well as
$$
\eps'<\frac{\eps}{4}, \quad 2(\eps'+ \lambda_{\max} \eps'') < \eps.
$$

Assume that the conclusions of Lemma~\ref{l:LD-RD} hold and of Proposition~\ref{p:classes} hold,
that $J_i$ in its terms is a ``jump'' interval, with $\bm:=m_0+\eps' n$ being the corresponding jump moment.
Assume also that the conclusions of the part~\ref{i:m3} hold with the value~$\eps'$ instead of~$\eps$.

Let us show that then the part~\ref{i:m4} of conclusions of Theorem~\ref{t:main} are satisfied.
Indeed, if $\bm\le \eps'' n$ or $\bm\ge (1-\eps'') n$, this directly follows from Lemma~\ref{l:close}. Otherwise
we can apply Lemma~\ref{l:m-half}; take $a_i$ to be the value of the parameter $a$ given by Lemma~\ref{l:m-half},
and let us check that \eqref{e.IV} holds for all $m= 1,\dots, n$.

Note that for any $m\in [1,\bar m]$ the estimates of the part~\ref{i:m3} imply
\begin{equation}\label{eq:m1}
\log \|T_{m,a_i,\bo}\| \in U_{\eps' n}(\lambda_F(a)m) = U_{\eps' n}(\psi(m)).
\end{equation}

We have now to handle the case $m\in [\bm,n]$. The next steps depend on whether~$\bm$ is greater or less than $\frac{n}{2}$.


Consider first the case $\bm\le \frac{n}{2}$ (in this case $\bm'=2\bm$).
Then, applying Lemma~\ref{l:cancel} and the uniformity estimates on the intervals $[1,\bar m]$ and $[\bar m, 2\bar m]$, we get
\begin{multline}\label{eq:almost-zero}
\log \|T_{2\bar m,a_i,\bo}\| = \left| \log \|T_{\bar m,a_i,\bo}\| -  \log \|T_{[\bar m,2\bar m],a_i,\bo}\| \right|
\\
\le \left| \log \|T_{\bar m,a_i,\bo}\| - \lambda_F(a_i) \bar m \right| +\left| \log \|T_{[\bar m,2\bar m],a_i,\bo}\| - \lambda_F(a_i) \bar m \right|
\le 2n\eps'.
\end{multline}
For any $m\in [\bar m,2\bar m]$ we can represent
$$
T_{m,a_i,\bo} = T_{[m, 2\bar m],a_i,\bo}^{-1} T_{2\bar m,a_i,\bo}.
$$
The log-norm of the latter factor does not exceed $2\eps'n$ by~\eqref{eq:almost-zero}, while the log-norm of the former factor is $2n\eps'$-close to $\lambda_F(a_i) (2\bar m - m)=\psi(m)$ due to the conclusion of the part~\ref{i:m3} and Proposition \ref{l:upper-finite}. Indeed,
$$
\|T_{[m, 2\bar m],a_i,\bo}^{-1}\|=\|T_{[m, 2\bar m],a_i,\bo}\|,
$$
and
$$
T_{[m, 2\bar m],a_i,\bo}=T_{[\bm, 2\bar m],a_i,\bo}T_{[\bm,  m],a_i,\bo}^{-1}.
$$
Due to the part~\ref{i:m3} of Theorem \ref{t:main}
$$
\log\|T_{[\bm, 2\bar m],a_i,\bo}\|\in U_{\eps'n}(\lambda_F(a_i)\bm), \ \ \ \log\|T_{[\bm,  m],a_i,\bo}\|\in U_{\eps'n}(\lambda_F(a_i)(m-\bm)),
$$
and hence
$$
\log\|T_{[m, 2\bar m],a_i,\bo}\|\ge \log\|T_{[\bm, 2\bar m],a_i,\bo}\|-\log\|T_{[\bm,  m],a_i,\bo}\|\ge \lambda_F(a_i)(2\bm-m)-2\eps'n.
$$
On the other hand, by Proposition \ref{l:upper-finite}
$$
\log\|T_{[m, 2\bar m],a_i,\bo}\|\le \lambda_F(a_i)(2\bm-m)+\eps'n.
$$
Therefore, $$\log\|T_{[m, 2\bar m],a_i,\bo}\|\in U_{2n\eps}(\lambda_F(a_i)(2\bm-m)).$$
Hence
\begin{equation}\label{eq:m2}
\log \|T_{m,a_i,\bo}\| \in U_{4n\eps'}(\lambda_F(a_i)\psi(m)).
\end{equation}
Finally, for any $m\in [2\bar m,n]$ we have
$$
T_{m,a_i,\bo} = T_{[2\bar m, m],a_i,\bo} T_{2\bar m,a_i,\bo}.
$$
Again, the log-norm of the latter factor does not exceed $2\eps'n$ by~\eqref{eq:almost-zero}, while the log-norm of the former factor is $2n\eps'$-close to
$\lambda_F(a_i)(m-2\bar m)=\psi(m)$, due to the conclusion of the part~\ref{i:m3} and Proposition \ref{l:upper-finite}.
This implies the desired
\begin{equation}\label{eq:m3}
\log \|T_{m,a_i,\bo}\| \in U_{4n\eps'}(\lambda_F(a_i)\psi(m)).
\end{equation}

Together~\eqref{eq:m1}, \eqref{eq:m2} and~\eqref{eq:m3} cover all possible $m\le n$, thus implying
$$
\forall m=1,\dots, n \quad  \log \|T_{m,a_i,\bo}\| \in U_{4n\eps'}(\lambda_F(a_i)\psi(m)).
$$
As we have $\eps'<\frac{\eps}{4}$, we obtain the desired estimate.

Finally, consider the case $\bm>\frac{n}{2}$ (in this case $\bm'=n$). Then in the same way as in~\eqref{eq:almost-zero} the estimates of the part~\ref{i:m3} imply
$$
\log \|T_{\bm,a_i,\bo}\| \in U_{n\eps'}(\lambda_F(a_i) \bm), \quad  \log \|T_{[\bm,n],a_i,\bo}\| \in U_{n\eps'}(\lambda_F(a_i) (n-\bm)),
$$
and thus finally
\begin{equation}\label{eq:bm-p}
\log \|T_{n,a_i,\bo}\| \in U_{2n\eps'} (\lambda_F(a_i) (2\bm - n))= U_{2n\eps'} (\lambda_F(a_i) \psi_{\bm}(n)).
\end{equation}

Now, for any $m\in [\bm,n]$ we have two representations for $T_{m,a_i,\bo}$:
\begin{equation}\label{eq:steps}
T_{m,a_i,\bo}=T_{[\bm,m],a_i,\bo}T_{\bm,a_i,\bo} = T_{[m,n],a_i,\bo}^{-1} T_{n,a_i,\bo}.
\end{equation}
By Proposition \ref{l:upper-finite} we have
$$
\log \| T_{[\bm,m],a_i,\bo}\| \le \lambda_F(a_i) (m-\bm) + n\eps', \quad \log \| T_{[m,n],a_i,\bo}\| \le \lambda_F(a_i) (n-m) + n\eps',
$$
so from~\eqref{eq:bm-p}  and~\eqref{eq:steps} we get both the upper estimate
\begin{multline*}
\log \|T_{m,a_i,\bo}\| \le \log \|T_{n,a_i,\bo}\|  + \log \|T_{[m,n],a_i,\bo}\| \le
\\
\le ( \lambda_F(a_i) (2\bm-n)+ 2n\eps')+ (\lambda_F(a_i) (n-m) +n\eps') = \lambda_F(a_i) \psi_{\bm}(m) + 3 n \eps'
\end{multline*}
and the lower one
\begin{multline*}
\log \|T_{m,a_i,\bo}\| \ge \log \|T_{\bm,a_i,\bo}\|  - \log \|T_{[\bm,m],a_i,\bo}\| \ge
\\
\ge (\bm \lambda_F(a_i) - 2n\eps')  - (\lambda_F(a_i) (m-\bm) + n\eps') = \lambda_F(a_i) \psi_{\bm}(m) - 3 n \eps'.
\end{multline*}

Thus, in this case we also get the desired
$$
\log \|T_{m,a_i,\bo}\| \in U_{3n\eps'} (\psi_{\bm}(m) \lambda_F(a_i)),
$$
concluding the proof of the ``Cancellation'' part~\ref{i:m4} of Theorem \ref{t:main}.

\subsection{\edited{Contraction on average:} quantitative statements}\label{ss.5.6}
We will need a quantitative way to control the
exponential contraction of the corresponding random dynamics.
The Furstenberg Theorem implies that for the projective dynamics
on the circle the points almost surely approach each other exponentially
fast. Moreover, for a non-projective $C^1$-smooth dynamics (under mild assumptions)
such a statement also holds due to the Baxendale theorem~\cite{Bax}, that
implies negativity of the Lyapunov exponent. And even (quite surprisingly!) it
was recently shown by D. Malicet~\cite{M} for the case of homeomorphisms,
with no regularity assumptions at all.

However, here we will need a qualitative
estimate that can be used for the dynamics involving a parameter, so we cannot
make a pure reference to one of these papers.
The main result of this section is the following proposition (that was also simultaneously
and independently proven by Czudek, Szarek, and Zdunik \cite{Cz}):

\begin{prop}\label{p:phi}
There are constants $s\in (0,1]$ and $K\in \N$ such that for any $a\in J$
the function
\begin{equation}\label{eq:phi-def}
\varphi(x,y):=(\dist_{\Sc}(x,y))^s
\end{equation}
satisfies
\begin{equation}\label{eq:phi-contraction}
\E \varphi(f_{K, a,\omega}(x),f_{K, a,\omega}(y)) :=\int \varphi(f_{K, a,\omega}(x),f_{K, a,\omega}(y)) d\mu^{\mathbb{N}}(\bar \omega) \le \frac{1}{2} \varphi(x,y).
\end{equation}
\end{prop}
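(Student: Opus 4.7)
The plan is to reduce Proposition~\ref{p:phi} to a uniform Le Page-type moment estimate for the random matrix products, via a classical identity on projective space. Introduce the sine-metric $d(\bar x,\bar y):=|\sin\angle(v_x,v_y)|$ for unit representatives $v_x,v_y$, which is comparable to $\dist_{\Sc}$ up to universal constants. The key identity is
$$
d(f_A\bar x,\, f_A\bar y)=\frac{d(\bar x,\bar y)}{\|A v_x\|\cdot\|A v_y\|},\qquad A\in SL(2,\R).
$$
Iterating along a random word of length $K$, raising to the $s$-th power, taking expectations and applying the Cauchy-Schwarz inequality yields
$$
\E\bigl[d(f_{K,a,\omega}\bar x,f_{K,a,\omega}\bar y)^s\bigr]\le d(\bar x,\bar y)^s\cdot\sup_{\|v\|=1}\E\bigl[\|T_{K,a,\omega}v\|^{-2s}\bigr].
$$
Since $d$ and $\dist_{\Sc}$ are comparable, the contraction~\eqref{eq:phi-contraction} reduces to producing $s>0$, $K_0\in\N$ and $\rho<1$ for which $\sup_{a\in J,\,\|v\|=1}\E[\|T_{K_0,a,\omega}v\|^{-2s}]<\rho$; taking $K$ to be a sufficiently large multiple of $K_0$ then absorbs the metric constants and produces the factor $1/2$.

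The heart of the argument is thus a Le Page-type moment estimate, uniform in the parameter $a\in J$. For each individual $a$, the classical theorem of Le Page (\cite{L}; see also \cite{BL,BQ}) gives under (A1) some $s(a)>0$, $K(a)\in\N$ and $\rho(a)<1$ such that $\sup_{\|v\|=1}\E[\|T_{K(a),a,\omega}v\|^{-2s(a)}]<\rho(a)$. To upgrade this to uniformity in $a$, I would run a compactness argument: thanks to the $C^1$-bound (A2), for each \emph{fixed} $K$ and $s$ the quantity $\sup_{\|v\|=1}\E[\|T_{K,a,\omega}v\|^{-2s}]$ depends continuously on $a$, so the pointwise bound extends to an open neighborhood $U_{a^*}\ni a^*$. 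Extracting a finite subcover of the compact interval $J$, taking $K_0$ to be a common multiple of the local values $K(a_i)$, iterating the one-step contraction via the Markov property, and absorbing the ``remainders'' of length less than $K_0$ using the uniform bound $\|F_a(\omega)\|\le M$ from (A2) yields the desired uniform exponential decay.

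The main obstacle is this uniformity across $J$: for a single parameter the estimate is standard, but carrying the constants uniformly requires some care. In spirit one can alternatively proceed by transfer operators, considering the family $Q_{s,a}g(\bar x):=\E[\|F_a(\omega)v_x\|^{-s}g(f_{a,\omega}(\bar x))]$ acting on a fixed H\"older space on $\Sc$; at $s=0$ each $Q_{0,a}$ is a Markov operator with spectral gap (given by Furstenberg's uniqueness of the stationary measure) depending continuously on $a$, and for small $s>0$ the spectral radius drops below $1$ at a rate that, by (A2) and a standard perturbation argument, is uniform in $a\in J$. This is close in spirit to the independent approach of Czudek, Szarek and Zdunik~\cite{Cz} mentioned in the statement. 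Either way, the nontrivial content of Proposition~\ref{p:phi} is precisely the parametric uniformity: the pointwise estimate is classical, and promoting it to hold with common $s$ and $K$ across the whole compact interval $J$ is what the proof must deliver.
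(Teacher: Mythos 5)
Your proposal is correct, but it takes a genuinely different route from the paper. The paper works entirely at the level of the projective circle dynamics: for nearby points it uses the Mean Value Theorem together with the Taylor-in-$s$ trick applied to $\E (f'_{K_0,a,\bo}(x))^s$ (the analogue of your moment estimate, but for the derivative at a single point, derived from the negative fibered exponent $\lR(a)$ and uniform convergence of averages), while for points at distance at least $r$ it needs a separate synchronization argument (Lemma~\ref{l:r-contr}, built on the random repeller $r_-(a,\bo)$, non-atomicity of the stationary measure of the inverse maps, and a second compactness argument in $a$), and finally a stopping-time/Markov-moment gluing step to combine the two regimes into a single deterministic $K$. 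Your argument bypasses the near/far dichotomy entirely: the exact $SL(2,\R)$ identity $|\sin\angle(Av_x,Av_y)|=|\sin\angle(v_x,v_y)|/(\|Av_x\|\,\|Av_y\|)$ controls distances of images \emph{globally}, so after Cauchy--Schwarz everything reduces to the uniform negative-moment bound $\sup_{a\in J,\|v\|=1}\E\|T_{K_0,a,\omega}v\|^{-2s}<\rho<1$, and your compactness-in-$a$ step (continuity of the moment for fixed $K,s$, Jensen to pass to a common $s$, iteration via independence to a common multiple $K_0$, then a large multiple of $K_0$ to absorb the comparability constants between the sine metric and $\dist_{\Sc}$) is sound; the fixed-$a$ input you cite from Le Page can alternatively be obtained by exactly the paper's Taylor-in-$s$ device applied to $\E\log\|T_{K}v\|\ge \tfrac12 K\lambda_F(a)>0$, so no gap is hidden there. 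What each approach buys: yours is shorter and avoids both the synchronization lemma and the stopping-time argument, at the price of being tied to the projective/linear structure; the paper's proof is organized so that its scheme (local contraction plus synchronization) is the one that survives for non-linear random circle dynamics, which is in line with their remark comparing to Baxendale and Malicet and with the independent proof of Czudek--Szarek--Zdunik that your transfer-operator aside parallels.
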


\begin{proof}
Notice first that for any $x\in \Sc$ we have
$$
\E \frac{1}{m} \log f'_{m,a,\bo}(x) \to \lR(a)<0 \quad \text{as } m\to\infty.
$$
Moreover, the convergence here is uniform in $x\in\Sc$ (this follows from the uniqueness of the stationary measure~$\nu_a$
on the circle, see \cite[Theorem 4.1]{BL}, in the same way as unique ergodicity implies uniform convergence of time averages, compare with the proof of \cite[Theorem 4.3.1]{HK}).
Hence, taking $K_0$ to be sufficiently large, we can find $\delta_1>0$ such that
\begin{equation}\label{eq:log-der-delta}
 \quad \forall x\in \Sc \quad  \E \log f'_{K_0,a,\bo}(x)<-\delta_1.
\end{equation}
Compactness arguments show that $K_0$ and $\delta_1>0$ in (\ref{eq:log-der-delta}) can be chosen uniformly also in $a\in J$.

For any $d>0$, as $s\to 0$, Taylor's formula gives
$$
d^s=\exp(s\log d) =1+ s \log d +s^2\frac{(\log d)^2d^\xi}{2!},
$$
for some $\xi\in (0,s)$.
Therefore we have
$$
(f'_{K_0,a,\bo}(x))^s=1+s(\log f'_{K_0,a,\bo}(x))+s^2\frac{(\log f'_{K_0,a,\bo}(x))^2(f'_{K_0,a,\bo}(x))^\xi}{2!},
$$
and, since $|f'_{K_0,a,\bo}(x)|$ is uniformly bounded,
$$
\E (f'_{K_0,a,\bo}(x))^s = 1 + s \E \log f'_{K_0,a,\bo}(x) + O(s^2),
$$
where $O(s^2)$ is uniform both in $x\in\Sc$ and $a\in J$. Hence~\eqref{eq:log-der-delta}
implies that for a sufficiently small $s>0$ there exists $\delta_2>0$ such that
\begin{equation}\label{eq:der-power}
\forall a\in J \ \ \forall x\in \Sc \quad \E (f'_{K_0,a,\bo}(x))^s < 1-2\delta_2.
\end{equation}

Next, the Mean Value Theorem implies that for the function $\varphi(x,y)$ defined by~\eqref{eq:phi-def}
for any sufficiently close $x,y\in\Sc$
and any $\bo$ we have
$$
\varphi(f_{K_0,a,\bo}(x),f_{K_0,a,\bo}(y))= (f'_{K_0,a,\bo}(z))^s \varphi(x,y)
$$
for some $z$ on the shortest arc connecting $x$ and $y$ (closeness here is needed to
ensure that the image of this arc is the shortest arc connecting $f(x)$ and $f(y)$).

As the function $(f'_{K_0,a,\bo}(z))^s$ is continuous in $z$ uniformly in $a,z,\bo$, there
exists $r>0$ such that if $\dist_{\Sc}(x,y)<r$, then
$$
\left|(f'_{K_0,a,\bo}(x))^s-(f'_{K_0,a,\bo}(z))^s\right|<\delta_2.
$$
Hence, for any $x,y$ with $\dist_{\Sc}(x,y)<r$ one has
$$
\E \varphi(f_{K_0,a,\bo}(x),f_{K_0,a,\bo}(y)) \le (1-\delta_2) \varphi(x,y).
$$
Let us fix $K_0$, $r$ and $\delta_2$ as above.

Next, let us handle case of two initial points being far away from each other.
The contraction of orbits for random dynamical systems on the circle is well-known: after many iterations
the images of two initial points will be most probably very close to each other.
We will need its version that is uniform in parameter $a$ and in the initial points $x,y$.
\begin{lemma}\label{l:r-contr}
For any $\eps_1,\eps_2>0$ there exists \edited{$K_0$} such that for any $a\in J$ and any $x,y\in \Sc$
we have
$$
\P \left( \dist( f_{K_0, a, \bo} (x), f_{K_0, a, \bo} (y)) <\eps_1 \right) > 1- \eps_2.
$$
\end{lemma}

Let us show that it suffices to conclude the proof of Proposition~\ref{p:phi}. Note first that it implies the following
\begin{coro}
There exists $K_1>0$ such that for any $x,y\in \Sc$ with $\dist_{\Sc}(x,y)\ge r$ and any $a\in J$ one has
\begin{equation}\label{eq:phi-s}
\E \varphi(f_{K_1,a,\bo}(x),f_{K_1,a,\bo}(y)) \le \frac{1}{2} \varphi(x,y).
\end{equation}
\end{coro}
\begin{proof}
Indeed, take
$$
\eps_1:=\frac{r}{4^{1/s}}, \quad \eps_2:=\frac{r^s}{4},
$$
and let~$K_1$ be the corresponding number of iterations from the conclusion of Lemma~\ref{l:r-contr}.
Then for any $a\in J$ and any $x,y\in \Sc$ with $\dist(x,y)\ge r$ we have
\begin{equation}\label{eq:r-s}
\varphi(x,y)\ge r^s,
\end{equation}
and
\begin{multline}\label{eq:long}
\E \varphi(f_{K_1,a,\bo}(x),f_{K_1,a,\bo}(y)) \le \P \left( \dist( f_{K_1, a, \bo} (x), f_{K_1, a, \bo} (y)) \ge \eps_1 \right)\cdot 1^s+
\\
+ \P \left( \dist( f_{K_1, a, \bo} (x), f_{K_1, a, \bo} (y)) <\eps_1 \right) \cdot \eps_1^s \le
\\
\le  \eps_2 +\eps_1^s = \frac{r^s}{4} + \frac{r^s}{4}  = \frac{r^s}{2}.
\end{multline}
Joining~\eqref{eq:r-s} and~\eqref{eq:long}, we get the desired~\eqref{eq:phi-s}.
\end{proof}

We are now ready to conclude the proof of Proposition~\ref{p:phi} (modulo Lemma~\ref{l:r-contr}).
Indeed, consider the following random process on the pairs of points $(x,y)$: if they are closer
than $r$, we do $K_0$ random iterations of $f_{a,\omega}$, otherwise $K_1$ iterations.
Repeating this process untill the total number of random iterations exceeds a given number $K$,
we define a random Markov moment $t(\bo)$ such that $K\le t(\bo)\le K+\max(K_0,K_1)$. Then
\begin{equation}\label{eq:K-pr}
\E \varphi(f_{t(\bo),a,\bo}(x),f_{t(\bo),a,\bo}(y)) \le \lambda_K \varphi(x,y),
\end{equation}
where
$$
\lambda_K=\max ((1-\delta_2)^{K/K_0},(1/2)^{K/K_1}).
$$
At the same time, application of any $f_{a,\omega}$ (or its inverse) changes the distances with multiplier at most~$M^2$ (recall
that the norms of all the matrices $T_{a,\omega}$ are uniformly bounded by~$M$).
Hence~\eqref{eq:K-pr} implies that if instead we stop the process exactly after~$K$ random iterations, we get
$$
\E \varphi(f_{K,a,\bo}(x),f_{K,a,\bo}(y)) \le M^{2s\max(K_0,K_1)} \lambda_K \varphi(x,y).
$$
The first factor is constant, while $\lambda_K \to 0$ as $K\to \infty$. Taking $K$ such that
$M^{2s\max(K_0,K_1)} \lambda_K<1/2$, we obtain \eqref{eq:phi-contraction}, as desired.
\end{proof}


%
%

%

%

For the sake of completeness, we provide here a proof of Lemma~\ref{l:r-contr}.

\begin{proof}[Proof of Lemma~\ref{l:r-contr}]
We start by recalling some standard general arguments from the theory of random dynamical systems. Namely, it is known that the Furstenberg's theorem implies an individual contraction of orbits: 
\begin{prop}\label{p:contr1}
For any $a\in J$, for any $x,y\in \Sc$ for almost all $\bo$ one has
$$
\lim_{n\to\infty} \dist (f_{n,a,\bo}(x),f_{n,a,\bo}(y)) =0.
$$
\end{prop}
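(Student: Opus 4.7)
The plan is to prove Proposition~\ref{p:contr1} as a direct consequence of Furstenberg's theorem via the standard determinantal cancellation trick. First, I would lift $x,y\in\Sc\cong\R P^1$ to representatives $v_x, v_y \in \R^2\setminus\{0\}$; the case $x=y$ is trivial, so assume $v_x$ and $v_y$ are linearly independent.

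The key observation is that the projective distance is comparable to the sine of the angle: there is an absolute constant $C$ such that
\[
\dist_{\Sc}([u],[w]) \;\leq\; C\,\frac{|u\wedge w|}{|u|\cdot|w|}
\]
for all nonzero $u,w\in\R^2$. Writing $T_n := T_{n,a,\bo}\in SL(2,\R)$ and using $\det T_n=1$, the numerator is preserved:
\[
|T_n v_x \wedge T_n v_y| \;=\; |\det T_n|\cdot|v_x\wedge v_y| \;=\; |v_x \wedge v_y|,
\]
a strictly positive constant independent of $n$. Hence
\[
\dist_{\Sc}\bigl(f_{n,a,\bo}(x),f_{n,a,\bo}(y)\bigr) \;\leq\; \frac{C\,|v_x\wedge v_y|}{|T_n v_x|\cdot |T_n v_y|}.
\]

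Next, I would invoke Furstenberg's theorem (Theorem~\ref{t.F}) in its pointwise form for individual vectors: under assumption $(A1)$, for every fixed nonzero $v\in\R^2$,
\[
\lim_{n\to\infty} \tfrac{1}{n}\log|T_{n,a,\bo} v| \;=\; \lambda_F(a) \;>\; 0 \quad \text{a.s.}
\]
This is the classical strengthening of the norm convergence: since $(A1)$ forbids any $\supp\mu_a$-invariant finite set of lines, the random Oseledets stable direction has non-atomic law on $\R P^1$, so any deterministically chosen direction lies a.s.\ outside it and therefore grows at the top Lyapunov rate. Applying this a.s.\ to both $v_x$ and $v_y$ and combining with the bound above yields, almost surely,
\[
\dist_{\Sc}\bigl(f_{n,a,\bo}(x),f_{n,a,\bo}(y)\bigr) \;\leq\; C\,|v_x\wedge v_y|\,e^{-2n\lambda_F(a)+o(n)} \;\longrightarrow\; 0,
\]
in fact exponentially fast at rate $2\lambda_F(a)$.

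The main (minor) subtlety I anticipate is ensuring that one really uses the pointwise form of Furstenberg's theorem for each fixed vector, rather than the statement for the operator norm $\|T_n\|$; this is exactly where the non-degeneracy assumption $(A1)$ is essential via the Oseledets decomposition (it is what guarantees the law of the stable line is non-atomic). Beyond this, nothing more than the algebraic identity $\det T_n = 1$ and a direct appeal to an already-cited theorem is required, so I expect no further obstacle.
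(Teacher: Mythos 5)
Your proposal is correct. Note that the paper itself does not prove Proposition~\ref{p:contr1}: it is invoked inside the proof of Lemma~\ref{l:r-contr} as a ``standard'' consequence of Furstenberg's theorem, so there is no competing argument to match step by step; what you wrote is exactly the classical route, and it is sound. The wedge-product bound $\dist_{\Sc}([u],[w])\le C\,|u\wedge w|/(|u||w|)$ together with $\det T_{n,a,\bo}=1$ reduces everything to showing that $|T_{n,a,\bo}v_x|$ and $|T_{n,a,\bo}v_y|$ both grow at rate $\lambda_F(a)$ almost surely, and the fixed-vector form of Furstenberg's theorem that you appeal to is indeed classical (Furstenberg; Bougerol--Lacroix \cite{BL}). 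Your justification of that form---the law of the Oseledets slow line is a stationary measure for the inverse maps, hence non-atomic under $(A1)$---is correct and in fact mirrors the argument the paper itself uses later in Lemma~\ref{l:K0} (non-atomicity of the stationary measure $\nu_-$ via \cite[Proposition~6]{KN}). One small simplification available to you: the ``subtlety'' you flag is already dispatched by the paper's own Theorem~\ref{t.diviations}, which is stated for $\|T_{n,a,\bo}u\|$ with a fixed unit vector $u$ and gives exponential large-deviation bounds, so Borel--Cantelli yields the a.s.\ convergence $\frac1n\log|T_{n,a,\bo}v|\to\lambda_F(a)$ for each fixed $v$ without any detour through Oseledets; with that reference your proof becomes a two-line deduction, and it even gives the exponential contraction rate $2\lambda_F(a)$ that you note.
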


Note, that this automatically implies the (almost-sure) existence of a (random) ``repelling'' point $r_-(a,\bo)$,
such that all the points except for it approach each other:
\begin{lemma}\label{l:r-}
For any $a\in J$, for almost all $\bo$ there exists a (random) point $r_-=r_-(a,\bo)\in \Sc$ such that
\begin{equation}\label{eq:r-}
\forall x,y\in \Sc, \, x,y\neq r_- \quad \lim_{n\to\infty} \dist (f_{n,a,\bo}(x),f_{n,a,\bo}(y)) =0.
\end{equation}
\end{lemma}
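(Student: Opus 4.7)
I would choose a countable dense set $D=\{x_k\}_{k\ge 1}\subset\Sc$ and, by Proposition~\ref{p:contr1} together with Fubini and a countable intersection, pass to a $\mu^\N$-full measure event $\Omega_0$ on which $\dist_{\Sc}(f_{n,a,\bar\omega}(x_k),f_{n,a,\bar\omega}(x_l))\to 0$ simultaneously for all pairs $k,l$. Fix such $\bar\omega$ and define the equivalence relation $y_1\sim y_2 \iff \dist(f_{n,a,\bar\omega}(y_1),f_{n,a,\bar\omega}(y_2))\to 0$, which is genuinely transitive by the triangle inequality. Let $E=E(\bar\omega)$ be the equivalence class containing $D$. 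The statement reduces to showing $|\Sc\setminus E|\le 1$, after which $r_-$ is declared to be the unique exceptional point (with an arbitrary choice when $\Sc\setminus E=\emptyset$).

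The core of the argument exploits that each $f_{n,a,\bar\omega}$ is an \emph{orientation-preserving} homeomorphism of $\Sc\cong\R P^1$, since $SL(2,\R)$ acts by orientation-preserving M\"obius maps. Assume toward a contradiction that $r_1\ne r_2$ both lie outside $E$, and, by density of $D$, pick $z_1,z_2\in D$ one in each of the two arcs of $\Sc\setminus\{r_1,r_2\}$. Then in cyclic order on $\Sc$ we have $z_1,r_1,z_2,r_2$, and by orientation preservation this cyclic order is preserved by $f_{n,a,\bar\omega}$ for every $n$. Since $\dist(f_n(z_1),f_n(z_2))\to 0$, the two arcs between $f_n(z_1)$ and $f_n(z_2)$ have lengths tending to $0$ and $2\pi$ respectively; the short arc necessarily contains exactly one of the two images $f_n(r_1)$, $f_n(r_2)$, pinning it close to the cluster of $\{f_n(x_k)\}_{x_k\in D}$.

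The main obstacle is that the sign $\sigma_n\in\{+,-\}$ recording which of the two arcs is short may \emph{a priori} oscillate with $n$, in which case $r_1$ and $r_2$ would only be pinned along complementary subsequences and neither would formally lie in $E$. I would close this gap by refining the sandwich with additional sample points $z_1^{(m)},z_2^{(m)}\in D$ with $z_j^{(m)}\to r_j$ from inside the respective arcs: if $\sigma_n$ takes both values infinitely often, the cyclic-order argument applied to each enlarged sandwich forces the cluster of $f_n(D)$ to simultaneously approach $f_n(r_1)$ along one subsequence and $f_n(r_2)$ along the other, which, together with the pairwise synchronization of $D$, yields $\dist(f_n(r_1),f_n(r_2))\to 0$ and hence $r_1\sim r_2$; an auxiliary third point $z_3\in D$ then separates them on $\Sc$ and produces the contradiction.

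As a cleaner alternative I would note that under the Furstenberg conditions~(A1) the stationary measure $\nu_a$ on $\Sc$ is unique (\cite[Theorem~4.1]{BL}), and a standard martingale-convergence argument for the sequence of random measures $(f_{n,a,\bar\omega})_*\nu_a$ gives that they converge weakly a.s.\ to a random Dirac mass $\delta_{p(a,\bar\omega)}$; the existence of a single repelling point $r_-(a,\bar\omega)$ such that $f_{n,a,\bar\omega}(x)\to p(a,\bar\omega)$ uniformly on compact subsets of $\Sc\setminus\{r_-\}$ is then immediate from the explicit M\"obius form of $f_{n,a,\bar\omega}$ (which has a unique attracting and a unique repelling fixed direction once it is sufficiently hyperbolic), and~\eqref{eq:r-} follows.
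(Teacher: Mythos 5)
Your first route is, in substance, the paper's own argument: the paper runs Proposition~\ref{p:contr1} on the finite grids $x_0^{(i)}=i/l$, notes that exactly one of the $l$ arcs has images whose length tends to $1$ while all others shrink, and lets $l\to\infty$ so that these distinguished arcs shrink to the single point $r_-$; your dense set $D$, equivalence relation, and ``at most one point outside $E$'' reduction is the same mechanism. You correctly flag the one delicate point (which of the two arcs cut out by a synchronized pair stays short could a priori depend on $n$), but your proposed repair does not close it. If the sign $\sigma_n$ oscillates, all your sandwich argument yields is that along one subsequence $f_{n,a,\bo}(r_1)$ lies in the short arc of its sandwich (hence is pinned to the cluster of images of $D$), while along the complementary subsequence $f_{n,a,\bo}(r_2)$ is pinned; you never control both images at the same times, so the asserted conclusion $\dist(f_{n,a,\bo}(r_1),f_{n,a,\bo}(r_2))\to 0$ along the full sequence (hence $r_1\sim r_2$) simply does not follow --- at the times when $r_1$ is unpinned it sits in the long complementary arc and can be far from the cluster containing $f_{n,a,\bo}(r_2)$. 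The gap is real but easy to fix by a one-step distortion bound, which is what is implicitly behind the paper's assertion that each arc length \emph{tends} to $0$ or to $1$: since $\|F_a(\omega)\|\le M$, each projectivized map changes arc lengths by a factor at most $M^2$, so once the distance between the images of the sandwich endpoints is below $1/(2M^2)$ the image of the fixed initial arc cannot switch between being the short and the long complementary arc; hence $\sigma_n$ is eventually constant, and your own dichotomy (eventually $r_1$ pinned, or eventually $r_2$ pinned) contradicts $r_1,r_2\notin E$. (Also, a single point $z_3$ does not separate two points of $\Sc$; you need one auxiliary point in each of the two complementary arcs, after which that last step does work.)

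The ``cleaner alternative'' has a genuine error of its own: the martingale argument applies to the compositions taken in the reversed order, where the new map is composed innermost (for the stationary $\nu_a$, $(f_{a,\omega_1}\circ\dots\circ f_{a,\omega_n})_*\nu_a$ is a measure-valued martingale), and it is those pushforwards that converge a.s.\ to a random Dirac mass. For the forward compositions $f_{n,a,\bo}=f_{a,\omega_n}\circ\dots\circ f_{a,\omega_1}$ used here, $(f_{n,a,\bo})_*\nu_a$ is not a martingale and does not converge a.s., and $f_{n,a,\bo}(x)$ does not converge to a point $p(a,\bo)$: what stabilizes almost surely is the repelling direction $x^-(T_{n,a,\bo})$ (equivalently, apply the martingale/Dirac argument to the inverse products), while the attracting direction $x^+(T_{n,a,\bo})$ keeps moving and one only gets that mutual distances of images of points away from $r_-$ tend to zero --- which is all Lemma~\ref{l:r-} needs, but is not what your sketch asserts. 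So this route is salvageable (via a.s.\ norm growth plus convergence of $x^-(T_{n,a,\bo})$, in the spirit of Lemma~\ref{l:x-C-image}), but as written it proves the statement for the wrong order of composition.
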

\begin{proof}
Proposition~\ref{p:contr1} implies that for any two points $x_0,y_0$ the length of the positive direction arc $[x_n,y_n]$, joining their images
$$
x_n:=f_{n,a,\bo}(x_0), \quad y_n:=f_{n,a,\bo}(y_0),
$$
tends either to 0, or to~1. Now, take an arbitrary $l$ and consider $l$ initial points $x_0^{i}=i/l, \, i=1,\dots, l$ on the circle. For any fixed $l$, the images
$$
x_n^{(i)}:=f_{n,a,\bo}(x_0^{(i)})
$$
of these points almost surely approach each other, and hence (almost surely) exactly one of the arcs $[x_n^{(i)},x_n^{(i+1)}]$ has its length tending to~$1$, while the length of the other ones tend to zero. We denote this arc by $I_l$ (omitting the dependence on $a$ and $\omega$). If neither of two initial points $x$ and $y$ does not belong to $I_l$, the distance between their images also tends to zero. Now, as $l$ becomes larger and larger, the arcs $I_l$ become smaller and smaller, and in the limit we see that there exists a random point $r_-$ such that~\eqref{eq:r-} holds.

In fact, translating the above description, we see that the preimages of the Lebesgue measure by the dynamics converge to the Dirac measure:
$$
(f_{n,a,\bo}^{-1})_* \Leb \to \delta_{r_-(\bo)} \quad \text{as } n\to\infty.
$$
\end{proof}

The  description above implies that the length of the composition that we have to apply to bring two points $x$ and $y$ close to each other with a high probability can be chosen uniformly in $x$ and $y$, at least for any fixed parameter $a$:
\begin{lemma}\label{l:K0}
For any $a\in J$ and any $\eps_1,\eps_2>0$ there exists $K_0$ such that
\begin{equation}\label{eq:f-K0}
\forall x,y\in \Sc \quad \P \left( \dist( f_{K_0, a, \bo} (x), f_{K_0, a, \bo} (y)) <\eps_1 \right) > 1- \eps_2.
\end{equation}
\end{lemma}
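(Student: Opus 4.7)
The plan is to bypass the time-dependent convergence statement of Proposition~\ref{p:contr1} (which is awkward to make uniform in $(x,y)$) and reduce the claim directly to the uniform large deviation bound of Theorem~\ref{t.diviations}, combining it with the geometric control from Lemma~\ref{l:x-C-image}. In particular, the proof will not use Lemma~\ref{l:r-} or the random repelling point $r_-(a,\bo)$ at all.

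First, I would observe, using the third bullet of Lemma~\ref{l:x-C-image}, that if both $f_{K_0,a,\bo}'(x) < \eps_1/\pi$ and $f_{K_0,a,\bo}'(y) < \eps_1/\pi$, then both images $f_{K_0,a,\bo}(x)$ and $f_{K_0,a,\bo}(y)$ lie within a $\eps_1/2$-neighborhood of the single point $x^+(T_{K_0,a,\bo})$, so that $\dist(f_{K_0,a,\bo}(x), f_{K_0,a,\bo}(y)) < \eps_1$. Thus it suffices to show that, for all sufficiently large $n$ and \emph{uniformly} in a single initial direction $x\in\Sc$, one has $\mathbb{P}(f_{n,a,\bo}'(x) < \eps_1/\pi) > 1-\eps_2/2$; a union bound over $x$ and $y$ will then finish the proof.

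To establish this uniform smallness of the derivative, I would use the formula $f_{n,a,\bo}'(x) = 1/|T_{n,a,\bo} u_x|^2$ (equation~\eqref{eq:der-norm}), where $u_x$ is a unit vector in the direction of $x$. Fixing $\eps \in (0,\lambda_F(a)/2)$, Theorem~\ref{t.diviations} produces $\zeta>0$ such that, uniformly in the unit vector $u$,
$$
\mathbb{P}\!\left(\tfrac{1}{n}\log\|T_{n,a,\bo}u\| < \lambda_F(a)-\eps\right) < e^{-\zeta n}.
$$
Translating this into the language of derivatives gives $\mathbb{P}(f_{n,a,\bo}'(x) \ge e^{-2(\lambda_F(a)-\eps)n}) < e^{-\zeta n}$, uniformly in $x\in\Sc$. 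Now I would choose $n_1$ large enough that both $e^{-2(\lambda_F(a)-\eps)n_1} < \eps_1/\pi$ and $2e^{-\zeta n_1} < \eps_2$, and set $K_0 := n_1$. A union bound across $x$ and $y$ then yields the desired $\mathbb{P}(\dist(f_{K_0,a,\bo}(x), f_{K_0,a,\bo}(y)) < \eps_1) > 1-\eps_2$ for every pair $(x,y) \in \Sc\times\Sc$.

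The obstacle one might anticipate --- namely, that the ``time to contract'' could a priori depend on the pair $(x,y)$, blocking any naive compactness or lower-semicontinuity argument --- dissolves because the uniformity in the initial unit vector is already built into the statement of Theorem~\ref{t.diviations}. No Dini-type or compactness trick is needed; no continuity in $(x,y)$ of probabilities has to be quantified; and there is no need to track how $x^-(T_{n,a,\bo})$ moves with $\bo$. The only mild care to take is to ensure that the event $\{f_{n,a,\bo}'(x) < \eps_1/\pi\}$ implies $T_{n,a,\bo} \notin SO(2,\R)$, which is automatic once $\eps_1/\pi < 1$, so that Lemma~\ref{l:x-C-image} applies.
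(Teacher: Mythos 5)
Your proof is correct, and it takes a genuinely different route from the paper. The paper argues softly: it invokes the individual contraction of orbits (Proposition~\ref{p:contr1}), extracts the random repelling point $r_-(a,\bo)$ of Lemma~\ref{l:r-}, observes that its distribution is a stationary measure for the inverse maps and is non-atomic by the Furstenberg condition, and then partitions the circle into $l$ small arcs so that with high probability every pair $x,y$ avoiding the single ``bad'' arc gets contracted. You instead quantify everything at once: from~\eqref{eq:der-norm} and the uniform large deviation bound of Theorem~\ref{t.diviations} you get, uniformly in the initial direction, that $f'_{n,a,\bo}(x)$ is exponentially small off an event of probability $e^{-\zeta n}$, and the third bullet of Lemma~\ref{l:x-C-image} then pins both images to the single point $x^+(T_{n,a,\bo})$; a union bound over the two directions finishes it (and your remark that $f'<\eps_1/\pi<1$ forces $T_{n,a,\bo}\notin SO(2,\R)$ closes the only loose end). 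What each approach buys: your argument is shorter, gives an explicit exponential rate for $K_0$, and—since Theorem~\ref{t.diviations} is already uniform in $a\in J$—would in fact prove the parameter-uniform Lemma~\ref{l:r-contr} directly, making the compactness/open-cover step there unnecessary; on the other hand it uses the linear (projective) structure essentially, through $f'=\|Tu\|^{-2}$ and the singular direction $x^+$, whereas the paper's softer argument only uses the qualitative synchronization on the circle and so is closer in spirit to the non-linear statements (Antonov/Malicet type) that the authors allude to at the start of Section~\ref{ss.5.6}.
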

\begin{proof}
Note that the point $r_-(a,\bo)$ from the conclusion of Lemma~\ref{l:r-} satisfies the relation
$$
r_-(a,\bo) = f_{a,\omega_1}^{-1} (r_-(a,\sigma \bo));
$$
indeed, the application of $f_{a,\omega_1}$ sends the conclusion~\eqref{eq:r-} for $\bo$ to the conclusion~\eqref{eq:r-} for $\sigma \bo$. In particular,
the distribution of values of $r_-$, that is, the measure $\nu_-$ defined as $\nu_-:=(r_-(a,\cdot))_* \mu^{\N}$ satisfies
$$
\nu=\int (f_{a,\omega}^{-1})_*(\nu) \, d\mu(\omega).
$$
In other words, the measure $\nu$ is \emph{stationary} for the system of the inverse maps $f_{a,\cdot}^{-1}$.

Such a measure $\nu_-$ is known to be non-atomic: otherwise, the set of atoms of highest possible weight
would be completely invariant (e.g. see~\cite[Proposition~6]{KN}), and this would contradict to
the Furstenberg condition~(A1). Hence, given $\eps_2>0$, we can find a (sufficiently large) $l$ such that
$$
\forall i=1,\dots, l \quad \P \left(r_-(a,\bo)\in [x_0^{(i)},x_0^{(i+1)}] \right) < \frac{\eps_2}{3}.
$$

In turn, for every $i$ the lengths $|[x_n^{(i)},x_n^{(i+1)}]|$ tend to 0 or 1. Hence, for a sufficiently large $n$ one has with the probability at least $1-\frac{\eps_2}{3}$
$$
\forall i \quad |[x_n^{(i)},x_n^{(i+1)}]|<\eps_1 \, \text{ or } |[x_n^{(i)},x_n^{(i+1)}]|>1-\eps_1,
$$
and the second possibility happens for the interval that contains $r_-(a,\bo)$. We will denote the index $i$ for such an interval (if it exists) by~$i_-=i_-(a,\bo)$.

Denote such $n$ by $K_0$ and show that for it the conclusion of the lemma holds. Indeed, for any two points $x,y\in\Sc$ there are at most two indices $i$ such that
$$
x\in (x_0^{(i)},x_0^{(i+1)}) \, \text{ or } y\in (x_0^{(i)},x_0^{(i+1)}).
$$
Hence, with the probability at least $1-\frac{\eps_2}{3}-2\frac{\eps_2}{3}=1-\eps_2$ the index $i_-$ is defined, and  we have
$$
x,y \notin (x_0^{(i_-)},x_0^{(i_-+1)}).
$$
On the other hand, if this is the case, one of the two arcs $[x,y]$ and $[y,x]$ does not intersect $(x_0^{(i_-)},x_0^{(i_-+1)})$. Hence, its image, joining $f_{K_0,a,\bo}(x)$ and $f_{K_0,a,\bo}(y)$, does not intersect the image $(x_0^{(i_-)},x_0^{(i_-+1)})$, that is of length more than $1-\eps_1$. Hence, we get the desired
$$
\dist (f_{K_0,a,\bo}(x) ,f_{K_0,a,\bo}(y))<\eps_1.
$$
\end{proof}
\begin{remark}\label{r:more}
Note that if for some $a\in J$ the conclusions of Lemma~\ref{l:K0} hold for some $K_0$, they automatically hold for any $K>K_0$. Indeed, we can decompose
$$
f_{K,a,\bo}(x)= f_{K_0,a,\bo'}(x'), \quad f_{K,a,\bo}(y)= f_{K_0,a,\bo'}(y'),
$$
where
$$
x'=f_{K-K_0,a,\bo}(x), \quad  y'=f_{K-K_0,a,\bo}(y), \quad \bo' = \sigma^{K-K_0} \bo.
$$
For any $x,y\in \Sc$ conditionally to any $\omega_1,\dots, \omega_{K-K_0}$ the points $x', y'$ are non-random, while
$\bo'$ is independent from them. Applying Lemma~\ref{l:K0} to $x', y'$ and then averaging over $\omega_1,\dots, \omega_{K-K_0}$
(in other words, applying the total probability formula), we get the desired estimate.
\end{remark}

We are now ready to conclude the proof of Lemma~\ref{l:r-contr}. Namely, the interval $J$ is a compact interval, and for any $a\in J$
there exists the corresponding $K_0(a)$ in the sense of Lemma~\ref{l:K0}. On the other hands, its conclusion~\eqref{eq:f-K0} is an open condition,
hence due to the continuous dependence on~$a$ for the same value $K_0(a)$ the same conclusion holds in some open neighborhood $I_a\ni a$
of~$a$ in~$J$.

Such neighborhoods form an open cover of $J$. Due to the compactness of $J$ there exists a finite subcover $I_{a_1}, \dots, I_{a_p}$. Take
$$
K=\max_i K_0(a_i).
$$
Then, for each of the neighborhoods $I(a_i)$, we have $K>K_0(a_i)$, and due to Remark~\ref{r:more}, the desired conclusion holds for all $a\in I(a_i)$.
As these neighborhoods form a cover of~$J$, we finally get the conclusion of the lemma for all $a\in J$.
\end{proof}

Finally, we use Proposition~\ref{p:phi} to estimate the behavior of random iterations
with different parameters:
\begin{coro}\label{c:l-a}
Fix constants $K,s$ given by Proposition~\ref{p:phi}. There exists a constant $C_\varphi$ such that for any $a,a'\in J$, $x,y\in \Sc$ one has
\begin{equation}\label{eq:C-a}
\E \varphi(f_{K,a,\bar\omega}(x),f_{K,a',\bar\omega}(y)) \le \frac{1}{2} \varphi(x,y) + C_{\varphi} |a-a'|^s.
\end{equation}
\end{coro}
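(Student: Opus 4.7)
The plan is to compare $f_{K,a',\bar\omega}(y)$ with $f_{K,a,\bar\omega}(y)$ using the parameter-Lipschitz bound from Lemma \ref{l.shift}, and then reduce the claim to Proposition \ref{p:phi} by means of the triangle inequality together with the $s$-subadditivity $(u+v)^s \le u^s + v^s$ valid for $s \in (0,1]$.

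First, I apply Lemma \ref{l.shift} with $\bar a_1 = a$, $\bar a_2 = a'$, and the initial points $y_1 = y_2 = y$. Starting from the same point, the first term in estimate \eqref{eq:shift} vanishes, so after $K$ iterations we have a parameter-Lipschitz bound
$$
|\tilde f_{K,a,\bar\omega}(y) - \tilde f_{K,a',\bar\omega}(y)| \le K\, L_p\, L^{K-1}\, |a-a'|,
$$
and passing to the circle (where distances are no larger than on $\R$) yields
$$
\dist_{\Sc}(f_{K,a,\bar\omega}(y), f_{K,a',\bar\omega}(y)) \le C_0\, |a-a'|,
$$
with $C_0 := K L_p L^{K-1}$. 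Crucially, this bound is deterministic in $\bar\omega$.

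Next I use the triangle inequality on $\Sc$:
$$
\dist_{\Sc}(f_{K,a,\bar\omega}(x), f_{K,a',\bar\omega}(y)) \le \dist_{\Sc}(f_{K,a,\bar\omega}(x), f_{K,a,\bar\omega}(y)) + \dist_{\Sc}(f_{K,a,\bar\omega}(y), f_{K,a',\bar\omega}(y)).
$$
Since $t \mapsto t^s$ is concave on $[0,\infty)$ with value $0$ at $0$ (here $s\in(0,1]$ is given by Proposition \ref{p:phi}), it is subadditive: $(u+v)^s \le u^s + v^s$. Applied to the previous line with $u,v$ being the two distances on the right, this gives
$$
\varphi(f_{K,a,\bar\omega}(x), f_{K,a',\bar\omega}(y)) \le \varphi(f_{K,a,\bar\omega}(x), f_{K,a,\bar\omega}(y)) + C_0^s\, |a-a'|^s.
$$

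Finally, taking expectations with respect to $\mu^\N$ and invoking Proposition \ref{p:phi} for the first term (with the common parameter $a$),
$$
\E\,\varphi(f_{K,a,\bar\omega}(x), f_{K,a',\bar\omega}(y)) \le \tfrac12 \varphi(x,y) + C_0^s |a-a'|^s,
$$
which is exactly \eqref{eq:C-a} with $C_\varphi := C_0^s = (K L_p L^{K-1})^s$. There is no real obstacle here: the only nontrivial input beyond Proposition \ref{p:phi} is the parameter-Lipschitz property, which is furnished by Lemma \ref{l.shift}, and the concavity trick is a routine device for passing $s$-th powers through a sum.
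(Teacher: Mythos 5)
Your proof is correct and follows essentially the same route as the paper: triangle inequality on $\Sc$, $s$-subadditivity of $t\mapsto t^s$, and an application of Proposition~\ref{p:phi} after taking expectations. The only difference is cosmetic: you make the parameter-Lipschitz bound explicit via Lemma~\ref{l.shift} with the constant $(K \La \Lx^{K-1})^s$, whereas the paper simply absorbs this into the constant $C_\varphi$.
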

\begin{proof}
Since $s\in (0,1]$, we have
\begin{multline*}
    \varphi(f_{K,a,\bar\omega}(x),f_{K,a',\bar\omega}(y)) =(\dist(f_{K,a,\bar\omega}(x),f_{K,a',\bar\omega}(y)))^s\le \\
    (\dist(f_{K,a,\bar\omega}(x),f_{K,a,\bar\omega}(y))+\dist(f_{K,a,\bar\omega}(y),f_{K,a',\bar\omega}(y)))^s\le \\
    (\dist(f_{K,a,\bar\omega}(x),f_{K,a,\bar\omega}(y))^s+(\dist(f_{K,a,\bar\omega}(y),f_{K,a',\bar\omega}(y)))^s\le \\
    \varphi(f_{K,a,\bar\omega}(x),f_{K,a,\bar\omega}(y))+C_\varphi|a-a'|^s.
\end{multline*}
Application of Proposition~\ref{p:phi} completes the proof.
\end{proof}
Iterating Corollary \ref{c:l-a}, we get
\begin{coro}\label{c:l-a1}
There are positive constants $C_{\varphi}'$ and $C_{\varphi}''$ (that depend on $K, s$, and constants $\Lx$, $\La$ from Lemma \ref{l.shift}) such that for any $l\in \N$, $k'<K$, and any $a,a'\in J$, $x,y\in \Sc$ we have
\begin{equation}\label{e.in}
\E \varphi(f_{lK+k',a,\bar\omega}(x),f_{lK+k',a',\bar\omega}(y)) \le \frac{C_{\varphi}'}{2^l}\varphi(x,y)+ C_{\varphi}'' |a-a'|^s.
\end{equation}
\end{coro}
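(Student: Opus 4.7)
The plan is to iterate the one-step estimate from Corollary \ref{c:l-a} using the Markov (conditioning) structure of the random dynamics, and then to absorb the leftover $k' < K$ iterations via the crude Lipschitz bound provided by Lemma \ref{l.shift}.

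First, I would establish the iterated bound at multiples of $K$. Decompose
$$
f_{(l+1)K, a, \bar\omega} = f_{K, a, \sigma^{lK}\bar\omega} \circ f_{lK, a, \bar\omega},
$$
and similarly for $a'$. Since $\sigma^{lK}\bar\omega$ is independent of $\omega_1,\dots,\omega_{lK}$, I condition on the first $lK$ coordinates of $\bar\omega$ and apply Corollary \ref{c:l-a} to the pair of ``initial points'' $f_{lK, a, \bar\omega}(x)$ and $f_{lK, a', \bar\omega}(y)$. Taking total expectation yields the recursion
$$
\E \varphi\bigl(f_{(l+1)K,a,\bar\omega}(x), f_{(l+1)K,a',\bar\omega}(y)\bigr) \le \tfrac{1}{2}\, \E \varphi\bigl(f_{lK,a,\bar\omega}(x), f_{lK,a',\bar\omega}(y)\bigr) + C_\varphi |a-a'|^s.
$$
A straightforward induction then gives, with a geometric sum of errors,
$$
\E \varphi\bigl(f_{lK,a,\bar\omega}(x), f_{lK,a',\bar\omega}(y)\bigr) \le \frac{1}{2^l}\varphi(x,y) + 2 C_\varphi |a-a'|^s.
$$

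Second, I would handle the leftover $k' < K$ iterations. Applying Lemma \ref{l.shift} to the $k'$ maps $\tf_{a,\omega_{lK+1}},\dots,\tf_{a,\omega_{lK+k'}}$ (resp.\ with $a'$), with the ``initial points'' $f_{lK,a,\bar\omega}(x)$ and $f_{lK,a',\bar\omega}(y)$, yields the pointwise estimate
$$
\dist\bigl(f_{lK+k',a,\bar\omega}(x), f_{lK+k',a',\bar\omega}(y)\bigr) \le \Lx^{k'} \dist\bigl(f_{lK,a,\bar\omega}(x), f_{lK,a',\bar\omega}(y)\bigr) + \La k' \Lx^{k'-1} |a-a'|.
$$
Raising to the power $s\in(0,1]$ and using the subadditivity $(u+v)^s \le u^s+v^s$ gives
$$
\varphi\bigl(f_{lK+k',a,\bar\omega}(x), f_{lK+k',a',\bar\omega}(y)\bigr) \le \Lx^{Ks} \varphi\bigl(f_{lK,a,\bar\omega}(x), f_{lK,a',\bar\omega}(y)\bigr) + (\La K \Lx^{K-1})^s |a-a'|^s,
$$
valid uniformly for $k'<K$.

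Finally, I would take the expectation of this last inequality and substitute the bound from the first step to obtain
$$
\E \varphi\bigl(f_{lK+k',a,\bar\omega}(x), f_{lK+k',a',\bar\omega}(y)\bigr) \le \frac{\Lx^{Ks}}{2^l}\varphi(x,y) + \bigl(2 C_\varphi \Lx^{Ks} + (\La K \Lx^{K-1})^s\bigr) |a-a'|^s,
$$
which is exactly \eqref{e.in} with
$$
C_\varphi' := \Lx^{Ks}, \qquad C_\varphi'' := 2C_\varphi \Lx^{Ks} + (\La K \Lx^{K-1})^s.
$$
There is no real obstacle here; the only mild care needed is to verify that the conditioning/independence argument can be cleanly applied (which is immediate because the $\omega_i$ are i.i.d.) and to track how the Lipschitz constants $\Lx, \La$ enter the final constants.
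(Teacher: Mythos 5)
Your proposal is correct and follows essentially the same route as the paper: iterate Corollary \ref{c:l-a} over blocks of length $K$ (your explicit induction with the geometric sum is the same computation as the paper's fixed-point argument for the affine map $d\mapsto d/2+C_\varphi|a-a'|^s$), and absorb the leftover $k'<K$ steps via the Lipschitz bound of Lemma \ref{l.shift} after raising to the power $s$. The only cosmetic difference is that the paper places the $k'$ remainder iterations at the beginning (replacing $x,y$ by $f_{k',a,\bar\omega}(x), f_{k',a',\bar\omega}(y)$) while you place them at the end, which changes nothing of substance.
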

\begin{proof}
Corollary~\ref{c:l-a} says that
\begin{equation}\label{eq:averaging-phi}
\E \varphi(f_{K,a,\bar\omega}(x),f_{K,a',\bar\omega}(y)) \le g(\varphi(x,y)),
\end{equation}
where
$$
g:d\mapsto \frac{d}{2}+ C_{\varphi} |a-a'|^s.
$$

The map $g$ is linear, with the unique fixed point $d_*:=2C_{\varphi} |a-a'|^s$. Iterating the application of
Corollary \ref{c:l-a}, we get
\begin{multline*}
    \E \varphi(f_{lK,a,\bar\omega}(x),f_{lK,a',\bar\omega}(y)) \le g^l(\varphi(x,y)) = \frac{1}{2^l} (\varphi(x,y)-d_*)+d_* \le \\ \frac{1}{2^l}\varphi(x,y)+2C_{\varphi} |a-a'|^s.
\end{multline*}
Notice that this proves (\ref{e.in}) for $k'=0$. To prove (\ref{e.in}) for $k'>0$, use Lemma \ref{l.shift} to replace $x$ and $y$ by $f_{k',a,\bar\omega}(x)$ and $f_{k',a',\bar\omega}(y)$.
\end{proof}

\subsection{Distribution of jump intervals}\label{ss:law}

This section is devoted to the proof of the ``Quantity'' and the  ``Measure'' parts of Theorem~\ref{t:main}, i.e. parts {\bf I} and {\bf V}.

Let us recall that for a given large $n$, $M$ is the number of exceptional (``jump'') intervals on $[b_-, b_+]$, and those intervals were denoted by $\{J_{i_k}\}_{k=1,\ldots, M}$. Let us also recall that for a given exceptional interval $J_{i_k}$ the value of the corresponding iterate $m_k$ from part {\bf IV} of Theorem \ref{t:main} was defined as $m_0+\eps'n$, where $m_0$ is the index that corresponds to the first moment when $|X_{m, i_k}|$ becomes larger than $\eps'$, as defined in Proposition \ref{p:classes}.

We know that  $\frac{1}{n} \tf_{n,a,\bar \omega}(\tx_0)$ converges to $\rho(a)$ uniformly on $J$ as $n\to \infty$. Moreover, we know that due to Proposition \ref{p.ldtrot}, the Large Deviation principle for the rotation number, with probability exponentially close to one for any $a\in J$
$$
\left|\frac{1}{n}\tf_{n, a, \bo}(\tx_0)-\rho(a)\right|\le \eps'.
$$
In particular,
$$
\left|\tf_{n, b_+, \bo}(\tx_0)-\tf_{n, b_-, \bo}(\tx_0)-\left(\rho(b_+)-\rho(b_-)\right)n\right|\le 2\eps'n.
$$
On the other hand, below we prove the following statement. Recall that we denoted $\tx_{m, i}=\tf_{m, b_i, \bo}(\tx_0)$, and intervals $X_{m,i}$ were defined by (\ref{e.Xmi}).
\begin{prop}\label{p:ones}
 For any $\eps'>0$ there exists $\zeta_5>0$ such that for any $m\le n$
\begin{equation}\label{eq:ones}
\Prob\left( \frac{(\tx_{m,N}-\tx_{m,0}) - \#\{j \,:\, |X_{m,j}|\ge 1 \}}{n}>\eps'\right) < \exp(-\zeta_5 \sqrt[4]{n}).
\end{equation}
\end{prop}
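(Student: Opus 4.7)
The plan is to combine Proposition~\ref{p:classes} (the trichotomy of intervals, applied with a parameter $\eta = \eta(\eps')$ chosen small enough relative to $\eps'$ and to $\rho(b_+)-\rho(b_-)$) with Corollary~\ref{c.ldtrot} (uniform large deviations for the rotation number, applied with the same $\eta$). Each event holds off a set of probability at most $\exp(-c\sqrt[4]{n})$, so on their intersection both the classification and the estimate $|\tf_{m,a,\bo}(\tx_0)-m\rho(a)|\le \eta n$ hold simultaneously; in particular $\tx_{m,N}-\tx_{m,0}\le (\rho(b_+)-\rho(b_-)+2\eta)n=:Cn$ for all $m\le n$.

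The first key observation is a \emph{monotonicity in $m$} of the indicator $|X_{m,j}|\ge 1$. By assumption $(A4)$ together with the integer-periodicity $\tf_{a,\omega}(x+1)=\tf_{a,\omega}(x)+1$ of the lifts, $\tx_{m,j}\ge \tx_{m,j-1}+1$ implies
$$
\tx_{m+1,j}=\tf_{b_j,\omega_{m+1}}(\tx_{m,j})\ge \tf_{b_{j-1},\omega_{m+1}}(\tx_{m,j-1}+1)=\tx_{m+1,j-1}+1.
$$
Thus $E_m:=\{j:|X_{m,j}|\ge 1\}$ is non-decreasing in $m$, and by Proposition~\ref{p:classes} coincides (on the event above) with the set of jump intervals whose transient has concluded, each satisfying $|X_{m,j}|\in(1,1+\eta)$.

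Next I would split
$$
(\tx_{m,N}-\tx_{m,0}) - \#E_m \;=\; \sum_{j\in E_m}(|X_{m,j}|-1) \;+\; \sum_{j\notin E_m}|X_{m,j}|.
$$
The first sum is bounded by $\eta\cdot\#E_m\le \eta\cdot (\tx_{m,N}-\tx_{m,0}) \le \eta Cn$. For the second sum I would separate $\{j\notin E_m\}$ into: (a) the jump/opinion-changer intervals \emph{currently in their transient window} $[m_0^{(j)},m_0^{(j)}+\eta n]$, for which I only use the crude bound $|X_{m,j}|<1$, and whose number at a fixed $m$ is controlled via the total transient time budget $M\cdot \eta n$ where $M$ is the number of suspicious intervals; and (b) the remaining intervals, for which Proposition~\ref{p:classes} gives $|X_{m,j}|<\eta$ directly.

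The main obstacle is the uniform-in-$m$ control of (b): the naive bound $\eta\cdot N=\eta\exp(\sqrt[4]{n})$ exceeds $\eps' n$. To beat it, I would run a bootstrap between the distortion-control Lemma~\ref{l:DC} (using $\sum_{m'\le n}|X_{m',j}|\le \eta n$ on non-suspicious intervals) and the derivative LD estimate of Lemma~\ref{l:LD-RD}, yielding $\tf'_{m,a,\bo}(\tx_0)\asymp \exp(-2\lambda_F(b_j)m + O(\eta n))$ uniformly on $J_j$, and then integrating the variational equation for $\partial_a \tf_{m,a,\bo}(\tx_0)$ along $J_j$ to get the sharper bound $|X_{m,j}|\lesssim |J|/N$ for non-suspicious $j$; summing these over $j$ and choosing $\eta$ sufficiently small compared with $\eps'$ (and $n$ sufficiently large relative to $\eta$) reduces the contribution of (b) to at most $\eps'n/3$. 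Choosing $\eta$ finally so that all three contributions sum to at most $\eps'n$ yields the claim, with $\zeta_5:=\tfrac12\min(c_1,\zeta_4)$.
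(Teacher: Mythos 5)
Your monotonicity observation (once $|X_{m,j}|\ge 1$ it stays so) is correct and is indeed what the paper uses implicitly, and the decomposition into $\sum_{j\in E_m}(|X_{m,j}|-1)+\sum_{j\notin E_m}|X_{m,j}|$ is a reasonable start. The genuine gap is your item (a): the contribution, at the \emph{fixed} time $m$, of intervals that are inside their transient window $[m_0,m_0+\eta n]$. Proposition~\ref{p:classes} says nothing about $|X_{m,j}|$ during that window (in particular, members of $E_m$ can still be in transient, so even the bound $|X_{m,j}|-1\le\eta$ used for the first sum is unjustified for them), and the proposed ``total transient time budget $M\cdot\eta n$'' cannot do the job: it is a bound summed over time, which never yields a uniform-in-$m$ bound on the number of intervals \emph{concurrently} in transient, and the only available bound on the number $M$ of suspicious intervals is $O(n^2)$, so the budget is $O(\eta n^3)$, i.e.\ $O(\eta n^2)$ per time step, far above $\eps' n$. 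More fundamentally, no bookkeeping based only on Proposition~\ref{p:classes} and Corollary~\ref{c.ldtrot} can close this point: those statements are consistent with a scenario in which, at time $m$, a definite fraction of the total increment $\approx m(\rho(b_+)-\rho(b_-))$ is carried by intervals that became suspicious during the preceding $\eta n$ steps and have not yet resolved (each of length just under $1$, staggered in time so that every individual transient still lasts at most $\eta n$). In that scenario $(\tx_{m,N}-\tx_{m,0})-\#E_m$ is of order $n$, so the estimate you want would fail; an additional probabilistic input is unavoidable.

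That input is exactly what the paper supplies and what your proposal lacks. The paper compares the increment with the jump count $r=[\sqrt n]$ steps later (equivalently, the count at time $m$ with the increment $[\sqrt n]$ steps earlier, the increment changing only by $O(\sqrt n)$ over such a window), and proves Lemma~\ref{l:divido}: for any three grid parameters spaced by at least $|J|/N$ and any lifted points, after $[\sqrt[3]{n}]$ further steps the integer part of the increment survives bisection with probability $1-\exp(-\sqrt[4]{n})$; this rests on the exponential synchronization estimates (Proposition~\ref{p:phi}, Corollary~\ref{c:l-a1}) combined with the monotonicity assumption (A4). Running a dyadic branching process on the parameter grid — so the union bound involves only about $C_f\,n\,([\log_2 N]+1)$ events, not all $N=[\exp(\sqrt[4]{n})]$ intervals nor the $O(n^2)$ suspicious ones — shows that within $[\sqrt n]$ steps the whole integer increment $[\tx_{m,N}-\tx_{m,0}]$ localizes into elementary intervals with $|X|\ge 1$; Proposition~\ref{p:classes} then guarantees each such interval has integer part exactly $1$, which yields \eqref{eq:ones-2} and hence the proposition. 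Your step (b), the sharpened bound $|X_{m,j}|\lesssim |J|/N$ for non-suspicious intervals, is plausible but not needed (the crude bound $\eps'$ per interval is never summed over all $N$ intervals in the correct argument), and in any case it does not repair (a).
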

Proposition \ref{p:ones}, applied to $m=n$, gives that with probability at least $1-\exp(-\zeta_5 \sqrt[4]{n})$, $M=\#\{j \,:\, |X_{n,j}|\ge 1 \}$ is $\eps'n$-close to $\tf_{n, b_+, \bo}(\tx_0)-\tf_{n, b_-, \bo}(\tx_0)$ and, hence, $3\eps'n$-close to $\left(\rho(b_+)-\rho(b_-)\right)n$. This gives the part {\bf I} (``Quantity'') of Theorem \ref{t:main}.

The part {\bf V} (``Measure'') follows from Proposition \ref{p:ones} and Corollary \ref{c.ldtrot} in a similar way. Namely, define the measure
$$
\xi_n=\frac{1}{n}\sum_{k=1}^{M}\delta\left(\frac{m_k}{n}, a_{i_k} \right).
$$
Let us show that for arbitrarily small  $\eps>0$, the measure $\xi_n$ is $\eps$-close to $\Leb\times \DOS$ for sufficiently large $n$.
In order to do that it is enough to show that for any $a\in J$, $s\in [0,1]$
$$
\xi_n([0,s]\times[\bmin,a]) = \frac{1}{n} \# \{ k=1, \ldots, M \mid m_{i_k} \le ns, \, a_{i_k}\in [\bmin,a]\}
$$
is sufficiently close to $s\cdot(\rho(a)-\rho(\bmin))$.

From Corollary \ref{c.ldtrot} we know that with probability exponentially close to one we have
$$
\left|\tf_{m, \edited{a}, \bo}(\tx_0)-\tf_{m, b_-, \bo}(\tx_0)-\left(\rho(\edited{a})-\rho(b_-)\right)m\right|\le 2\eps'n.
$$
At the same time, with probability at least $1-\exp(-\zeta_5 \sqrt[4]{n})$, \edited{for all $m\in [0,n]$} the difference $\tf_{m, \edited{a}, \bo}(\tx_0)-\tf_{m, b_-, \bo}(\tx_0)$ is $\eps'n$-close to the number of jump intervals on $[\bmin, a]$ with the corresponding indices $m_k\le m$. Hence, if we take $m=sn$, then  $\xi_n([0,s]\times[\bmin,a])$ is $3\eps'$-close to $\frac{m}{n}\left(\rho(\edited{a})-\rho(b_-)\right)=s\cdot\left(\rho(\edited{a})-\rho(b_-)\right)$. This implies the part {\bf V} (``Measure'') of Theorem \ref{t:main}.


Let us now prove Proposition \ref{p:ones}.

Note that the increments $x_{m,N}-x_{m,0}$ and $x_{m+r,N}-x_{m+r,0}$ differ by at most $C_f r$, where $C_f$ is a uniform constant. Hence, instead of showing~\eqref{eq:ones} it suffices to establish that for some $\zeta_5>0$ for any $n$ sufficiently large we have \edited{for all $m\in [0,n]$}
\begin{equation}\label{eq:ones-2}
\Prob\left( \frac{(x_{m,N}-x_{m,0}) - \#\{j \,:\, |X_{m+r,j}|\ge 1 \}}{n}\ge \frac{\eps'}{2} \right) < \exp(-\zeta_5 \sqrt[4]{n}),
\end{equation}
where $r:=[\sqrt{n}]$. 

The main step in the proof of this proposition is the following lemma, allowing us to launch a ``bisection'' procedure.

\begin{lemma}\label{l:divido}
For all sufficiently large $n$ the following holds.
Let
$$
b_{(1)}<b_{(2)}<b_{(3)}, \quad b_{(j)}\in J, \quad b_{(2)}-b_{(1)}\ge \frac{|J|}{N},\quad b_{(3)}-b_{(2)}\ge \frac{|J|}{N}.
$$
Also, let $m\le n-[\sqrt[3]{n}]$, and let $z_{(1)}<z_{(2)}<z_{(3)}$ be  points on the real line. Define
$$
z_{t,j}=\tf_{[m, t],b_{(j)},\bo}(z_{(j)}), \ \ j=1,2,3.
$$
Then with  probability at least $1-\exp(-\sqrt[4]{n})$
$$
[z_{m',2}-z_{m',1}]+ [z_{m',3}-z_{m',2}] \ge [z_{(3)}-z_{(1)}]
$$
where $m'=m+ [\sqrt[3]{n}]$.
\end{lemma}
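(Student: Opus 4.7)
I will treat the conclusion as a statement about integer parts of lift differences, and isolate the single nontrivial probabilistic mechanism. Write $K:=[z_{(3)}-z_{(1)}]$, $K_{12}:=[z_{(2)}-z_{(1)}]$, $K_{23}:=[z_{(3)}-z_{(2)}]$, with fractional parts $\theta_{12},\theta_{23}\in[0,1)$, so $K=K_{12}+K_{23}+\epsilon_0$ with $\epsilon_0:=\mathbb{1}_{\{\theta_{12}+\theta_{23}\ge 1\}}$. By (A4) (parameter monotonicity), the spatial monotonicity of each $\tilde f_{a,\omega}$, and the $\mathbb{Z}$-equivariance $\tilde f_{a,\omega}(x+k)=\tilde f_{a,\omega}(x)+k$, both maps $t\mapsto [z_{t,2}-z_{t,1}]$ and $t\mapsto [z_{t,3}-z_{t,2}]$ are nondecreasing. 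Moreover, writing $z_{(3)}=z_{(1)}+K+\theta$ with $\theta\ge 0$ and using (A4) together with monotonicity in the spatial variable gives deterministically
\[
z_{t,3}-z_{t,1}\ \ge\ K\quad\text{for all }t\ge m.
\]

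\textbf{Easy subcase ($\epsilon_0=0$).} Here $K=K_{12}+K_{23}$, and the monotonicity of the integer parts immediately gives $[z_{m',2}-z_{m',1}]+[z_{m',3}-z_{m',2}]\ge K_{12}+K_{23}=K$. No probability is used.

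\textbf{Hard subcase ($\epsilon_0=1$).} Now $K=K_{12}+K_{23}+1$; setting $e_{jk}(t):=z_{t,k}-z_{t,j}-K_{jk}\ge 0$, the deterministic inequality above gives $e_{12}(t)+e_{23}(t)\ge 1$ for all $t\in[m,m']$. The conclusion is equivalent to $[e_{12}(m')]+[e_{23}(m')]\ge 1$, that is, at some time $t\in[m,m']$ at least one of the two lift differences must cross the integer 1. A crucial observation here is that the integer parts of $e_{jk}$ are preserved by iteration using a \emph{single} fixed parameter (since lifts commute with $\mathbb{Z}$-translation and are monotone); hence every increase of an integer part comes strictly from the mismatch between the parameters $b_{(j)}$. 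The plan is to combine this with the circular contraction to force the crossing. Specifically, apply Corollary~\ref{c:l-a1} to each of the pairs $(b_{(1)},b_{(2)})$ and $(b_{(2)},b_{(3)})$, with initial circle points $\pi(z_{t,j})$ and $\pi(z_{t,j+1})$. Over $\ell\asymp\sqrt[4]{n}$ blocks of $K$ iterations (so $\ell K\ll[\sqrt[3]{n}]$, which is the budget granted by the lemma), the expected value of $\varphi(\pi(z_{m'',j}),\pi(z_{m'',j+1}))$ is bounded by $C_\varphi' 2^{-\ell}+C_\varphi''(|J|/N)^{s}$; since $N=\exp(\sqrt[4]{n})$, both summands are of order $\exp(-c\sqrt[4]{n})$. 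Markov's inequality then gives, with probability at least $1-\exp(-\sqrt[4]{n})$, that at some intermediate time $t\in[m,m']$ at least one of the two circular distances $\dist_{\Sc}(\pi(z_{t,1}),\pi(z_{t,2}))$, $\dist_{\Sc}(\pi(z_{t,2}),\pi(z_{t,3}))$ is much smaller than $\theta_{12}\wedge\theta_{23}\wedge(1-\theta_{12})\wedge(1-\theta_{23})$.

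\textbf{Conversion of circular contraction to an integer crossing.} In the hard case, $\theta_{12}+\theta_{23}\ge 1$ forces the cyclic order of the three projections on $\Sc$ to be $\pi(z_{t,1}),\pi(z_{t,3}),\pi(z_{t,2})$ while the lift order is $z_{t,1}<z_{t,2}<z_{t,3}$. Thus $\pi(z_{t,3})$ lies on the \emph{short} positive arc from $\pi(z_{t,1})$ to $\pi(z_{t,2})$, and symmetrically for the other pair. If, say, $\dist_{\Sc}(\pi(z_{t,1}),\pi(z_{t,2}))$ becomes $<\delta$ while $e_{12}(t)$ still lies in $[0,1)$, then necessarily $e_{12}(t)\in(1-\delta,1)$, which combined with the strictly positive per-step parameter shift from (A4) and Lemma~\ref{l.shift} forces $e_{12}$ to cross 1 within a further controlled number of steps; the analogous argument applies to the second arc. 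This is the main obstacle: one must book-keep which end of the short arc is ``approached'' by the contraction, and verify that the cumulative deterministic parameter-shift contribution overwhelms the remaining gap $\delta$ before the iteration budget $[\sqrt[3]{n}]$ is exhausted. Because the contraction is only guaranteed up to the $\exp(-c\sqrt[4]{n})$ ``floor'' from $|b_{(j+1)}-b_{(j)}|^{s}$, one has to arrange $\delta$ to be of the same order, which dictates the choice $\ell\asymp\sqrt[4]{n}$ and the final probability $1-\exp(-\sqrt[4]{n})$.
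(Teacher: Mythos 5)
Your reduction to the ``hard subcase'' is fine (the monotonicity of the integer parts $[z_{t,2}-z_{t,1}]$, $[z_{t,3}-z_{t,2}]$ and the deterministic bound $z_{t,3}-z_{t,1}\ge [z_{(3)}-z_{(1)}]$ are correct, and they play the same role as the paper's reduction to $z_{(3)}=z_{(1)}+1$), but the probabilistic core has a genuine gap. You apply Corollary~\ref{c:l-a1} to the pairs evolved under \emph{different} parameters $(b_{(1)},b_{(2)})$ and $(b_{(2)},b_{(3)})$, and then replace the floor term $C_\varphi''\,|b_{(j+1)}-b_{(j)}|^{s}$ by $C_\varphi''(|J|/N)^{s}$. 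The hypothesis of the lemma only gives the \emph{lower} bound $b_{(j+1)}-b_{(j)}\ge |J|/N$; there is no upper bound, and in the bisection argument of Proposition~\ref{p:ones} the lemma is applied with parameter gaps as large as a fixed fraction of $|J|$ (the first application has $[b_{(1)},b_{(3)}]=J$). For such gaps the floor term is of constant order, so $\E\varphi$ does not decay, Markov's inequality gives nothing against a threshold of size $\exp(-c\sqrt[4]{n})$, and the whole contraction step collapses. In addition, your ``conversion'' step — forcing $e_{12}$ or $e_{23}$ to actually cross an integer once the circular distance is small, using only the per-step (A4) push and Lemma~\ref{l.shift} — is left as a sketch, and it is not clear it closes within the $[\sqrt[3]{n}]$ budget without a further argument.

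The paper's proof is built precisely to avoid the floor problem: after reducing to $z_{(3)}=z_{(1)}+1$, it iterates \emph{both} circle points $x=\pi(z_{(1)})=\pi(z_{(3)})$ and $y=\pi(z_{(2)})$ under the single middle parameter $b_{(2)}$, so Proposition~\ref{p:phi} applies with no parameter-difference term and $\E\varphi\le 2^{-[[\sqrt[3]{n}]/K]}$. The lower bound $b_{(2)}-b_{(1)}\ge |J|/N$ is then used only through monotonicity (A4) to create a \emph{deterministic} margin: $z_{m',1}\le \tf_{[m,m'],b_{(2)},\bo}(z_{(1)})-\delta|J|/N$ and $z_{m',3}\ge \tf_{[m,m'],b_{(2)},\bo}(z_{(3)})+\delta|J|/N$, so that once one of the arcs between the $b_{(2)}$-orbits of $x$ and $y$ has length $>1-\delta|J|/N$ at time $m'$, the integer jump occurs immediately in one of the two bracketed differences — no further iterations or bookkeeping are needed, and larger parameter gaps only enlarge the margin. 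If you want to salvage your scheme, you would have to replace your different-parameter contraction by this same-parameter comparison (or otherwise decouple the contraction estimate from the size of $b_{(j+1)}-b_{(j)}$) and make the crossing argument at time $m'$ deterministic rather than ``within a further controlled number of steps''.
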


Let us first deduce Proposition~\ref{p:ones} from Lemma~\ref{l:divido}.
\begin{proof}[Proof of Proposition~\ref{p:ones}]
Let us prove~\eqref{eq:ones-2}. To do so, we define inductively a branching random process on the set of intervals of parameter of the form $[b_i,b_j]$. That is, to each moment $m_q:=m+q [\sqrt[3]{n}]$, we associate a set of intervals $\{[b_{i_{q,l}},b_{j_{q,l}}]\}_{l=1}^{l_q}$, such that
 $\left[\tf_{[m,m_q],b_{j_{q,l}},\bo}(\tx_0)-\tf_{[m,m_q],b_{i_{q,l}},\bo}(\tx_0)\right]>0$. This will at the end provide us the desired intervals $X_{m+r,i}$ of length more than one; however, we reserve a (small) chance for the construction to result instead in FAIL.
Let
$$
I(i,j,m_q):=[x_{m_q,b_j}-x_{m_q,b_i}]
$$
be the integer part of the increment at the moment $m_q$ over the parameter interval $[b_i,b_j]$.

The branching process is defined in the following way:
\begin{itemize}
\item We start at the moment $m$ with the only interval $[b_0,b_N]=J$.
\item For each interval $[b_i,b_j]$ that is present at some moment $m_q=m+q [\sqrt[3]{n}]$, at the next moment $m_{q+1}$ we do as follows. If $j=i+1$, we leave it as it is. If $j>i+1$, we take $p=i+[(j-i)/2]$ and consider two parameter subintervals, $[b_i,b_p]$ and $[b_p,b_j]$.
\item For these intervals, if we have
$$
I(i,p,m_{q+1})+I(p,j,m_{q+1})< I(i,j,m_q)
$$
all the process results in FAIL.
\item Otherwise,
\begin{equation}\label{eq:keeping}
I(i,p,m_{q+1})+I(p,j,m_{q+1})\ge I(i,j,m_q)
\end{equation}
the descendants of this interval at the moment $m_{q+1}$ will be those among $[b_i,b_p],[b_p,b_j]$, for which
the corresponding integer parts of the increment are positive.
\end{itemize}

Note that in at most $R:=[\log_2 N]+1$ steps, if the process does not result in FAIL, all the descendants will be of the form $[b_i,b_{i+1}]$ (as the difference $j-i-1$ is reduced at least twice on each step). On the other hand, for each interval present at some moment of time, the corresponding integer part of the increment is at least~$1$, hence there is at most $C_f n$ descendants present at any moment. Hence, due to Lemma~\ref{l:divido} the total probability of the process resulting in FAIL is at most $C_f n \cdot ([\log_2 N]+1) \cdot \exp(- \sqrt[4]{n})$.

Then, by an induction on $k$ we obtain that the sum of $\sum_{l=1}^{l_q} I(i_{q,l},j_{q,l},m_q)$ of the integer increments corresponding to the selected intervals is non-decreasing: the induction step is exactly~\eqref{eq:keeping}. Thus, at the moment $m_{R}$ we find the desired parameter intervals $[b_{i_{R,l}},b_{i_{R,l}+1}]$ for which $|X_{m_{R},i_{R,l}}|\ge 1$ and such that
$$
\sum_{l=1}^{l_R} [|X_{m_{R},i_{R,l}}|] \ge [x_{m,N}-x_{m,0}].
$$

Proposition~\ref{p:classes} implies that the integer parts under the sum are not greater than~$1$ with the probability at least $1-\exp(-c_1 \sqrt[4]{n})$, and if this is the case, the sum in the left hand side is equal to the number $l_R$ of summands. As the integer part of the increment cannot decrease, and $r=[\sqrt{n}]> R [\sqrt[3]{n}]$, we finally get the desired
$$
\#\{i \,:\, |X_{m+r,i}|\ge 1 \} \ge \#\{i \,:\, |X_{m_R,i}|\ge 1 \} \ge l_R = \sum_{l=1}^{l_R} [|X_{m_{R},i_{R,l}}|] \ge  [x_{m,N}-x_{m,0}],
$$
concluding the proof of~\eqref{eq:ones-2}, as we have a lower bound for the probability
$$
1- C_f n \cdot ([\log_2 N]+1) \cdot \exp(- \sqrt[4]{n}) - \exp(-c_1 \sqrt[4]{n}) >  1-\exp(-\zeta_5 \sqrt[4]{n})
$$
for any $\zeta_5<\min(c_1,1)$ for all $n$ sufficiently large.

\end{proof}

Now, all that is left is to prove Lemma~\ref{l:divido}.
\begin{proof}[Proof of Lemma~\ref{l:divido}]
Note that we can increase $z_{(1)}$ and decrease $z_{(3)}$ as soon as we do not change the value of $[z_{(3)}-z_{(1)}]$: if the conclusion of Lemma~\ref{l:divido} is satisfied for the new values, it
is also satisfied for the old ones. Moreover, increasing $z_{(1)}$ by~$1$ increases all its images exactly by~$1$, and the same
applies to $z_{(3)}$. Hence, it suffices to consider the situation
$$
z_{(1)}<z_{(2)}<z_{(3)}=z_{(1)}+1,
$$
to which a general case can be reduced. Let now $x$ and $y$ be the points on the circle that
are projections of $z_{(1)}$ (and thus of $z_{(3)}$) and of $z_{(2)}$ respectively, i.e. $\pi(z_{(1)})=\pi(z_{(3)})=x$, $\pi(z_{(2)})=y$.

Consider the iterations of $x$ and $y$ under the random dynamical system on the circle corresponding to the parameter~$b_{(2)}$ (for the same
sequence of iterations defined by~$\bo$). Let us show that if they approach each other at the moment $m'$ at the distance less
than $\frac{|J|\delta}{N}$, then we are done. Indeed, we have
\begin{multline*}
z_{m',1}=  \tf_{[m,m'],b_1,\bo}(z_{(1)}) =  \tf_{\omega_{m'},b_1} (\tf_{[m,m'-1],b_1,\bo}(z_{(1)})) \\ < \tf_{\omega_{m'},b_{(2)}} (\tf_{[m,m'-1],b_{(2)},\bo}(z_{(1)})) - \delta(b_{(2)}-b_{(1)})\le \tf_{[m,m'],b_{(2)},\bo}(z_{(1)}) - \frac{|J| \delta}{N},
\end{multline*}
where we have used the monotonicity assumption~(A4) and the assumption $b_{(2)}-b_1\ge \frac{|J|}{N}$.
In the same way we have
$$
z_{m',3} \ge  \tf_{[m,m'],b_{(2)},\bo}(z_{(3)}) + \frac{|J| \delta}{N}.
$$

Now, if the points $x$ and $y$ approach each other in such a way that the (positive direction) arc $[x,y]$ is expanded on almost all the circle (that is, becomes of length grater than $(1- \frac{|J|\delta}{N})$), then we have
$$
\tf_{[m,m'],b_{(2)},\bo}(z_{(1)})<\tf_{[m,m'],b_{(2)},\bo}(z_{(2)}) - \left(1- \frac{|J|\delta}{N}\right)
$$
and hence
\begin{multline*}
z_{m',1} <\tf_{[m,m'],b_{(2)},\bo}(z_{(1)}) - \frac{|J| \delta}{N}  \\
< \tf_{[m,m'],b_{(2)},\bo}(z_{(2)}) - \left(1- \frac{|J|\delta}{N}\right)- \frac{|J| \delta}{N} = z_{m',2}-1,
\end{multline*}
thus implying the desired $[z_{m',2}-z_{m',1}]\ge 1$.

In the same way, if the points $x$ and $y$ approach each other in such a way that the (positive direction) arc $[y,x]$ is expanded on almost all the circle (that is, becomes of length more than $(1- \frac{|J|\delta}{N})$), then we have
$$
\tf_{[m,m'],b_{(2)},\bo}(z_{(3)})>\tf_{[m,m'],b_{(2)},\bo}(z_{(2)}) + \left(1- \frac{|J|\delta}{N}\right)
$$
and hence
\begin{multline*}
z_{m',3} >\tf_{[m,m'],b_{(2)},\bo}(z_{(2)}) + \frac{|J| \delta}{N}  \\
> \tf_{[m,m'],b_{(2)},\bo}(z_{(2)}) + \left(1- \frac{|J|\delta}{N}\right) + \frac{|J| \delta}{N} = z_{m',2}+1,
\end{multline*}
thus implying the desired $[z_{m',3}-z_{m',2}]\ge 1$.

Let us now show that indeed the points $x$ and $y$ approach each other with the desired probability.
Applying Proposition~\ref{p:phi}, we get that
\begin{equation}\label{eq:expect-phi}
\E \varphi(f_{[m,m'],b_{(2)},\bo}(x),f_{[m,m'],b_{(2)},\bo}(y)) \le \frac{1}{2^{[(m' - m)/K]}} = \frac{1}{2^{[[\sqrt[3]{n}]/K]}}.
\end{equation}
On the other hand, two points $x', y'$ are $\frac{|J| \delta}{N}$-close to each other if and only if
$$
\varphi(x',y')< \left( \frac{|J| \delta}{N} \right)^s.
$$
Combining the Chebyshev inequality with~\eqref{eq:expect-phi}, we see that the probability that the random images $f_{[ m, m'],b_{(2)},\bo}(x),f_{[m, m'],b_{(2)},\bo}(y)$ of $x$ and $y$ will not be $\frac{|J| \delta}{N}$-close to each other is at most
\begin{multline*}
\Prob \left( \dist (f_{[ m, m'],b_{(2)},\bo}(x),f_{[ m, m'],b_{(2)},\bo}(y)) > \frac{|J| \delta}{N}\right)
\\
< \left( \frac{|J| \delta}{N} \right)^{-s} \cdot \frac{1}{2^{[[\sqrt[3]{n}]/K]}}= \frac{1}{|J|^s \delta^s} \cdot  \frac{[\exp(\sqrt[4]{n})]}{\exp([[\sqrt[3]{n}]/K] \log 2)},
\end{multline*}
and the right hand side is smaller than $\exp(-\sqrt[4]{n})$ for all $n$ sufficiently large.

We have obtained the desired lower bound for the probability that the random images of $x$ and $y$ will be sufficiently close to each other. This concludes the proof of Lemma~\ref{l:divido} (and hence of Proposition~\ref{p:ones}).
\end{proof}


\subsection{Intervals characterization}\label{ss:jump-classes}

This section is devoted to the proof of Proposition~\ref{p:classes}, describing the behaviour of the intervals~$X_{m,i}$.
Our first step will be to understand the behaviour of an individual interval, that is, for a specific index~$i$. To do so,
we take an initial moment $m<n$, two points $\tilde y_{m,1}, \tilde y_{m,2}\in \mathbb{R}$ (that will be later interpreted
as the end points of the interval $X_{m,i}$) and define
$$
\tilde y_{k,1}=\tf_{[m,k], b_{i-1}, \bar \omega}(\tilde y_{m,1}), \tilde y_{k,2}=\tf_{[m,k], b_{i}, \bar \omega}(\tilde y_{m,2}),
$$
where $k=m+1, \ldots, n$.

We then show that
\begin{itemize}
\item if this interval was small, it will stay small till the last ($n$-th) iteration with high probability (see Lemma~\ref{l.52} below); 
\item for any initial interval, it quickly (in $\eps'n$ steps) becomes either of length close to~$0$,
or of length close to (and larger than)~$1$, and stays like that till the last ($n$-th) iteration (see Lemma~\ref{l.intervals}).
\end{itemize}

Note that initially all the intervals are quite small (they vanish at $m=0$, and are of length $\sim \frac{\const}{N}$ at the moment $m=1$).
But the above statements do not guarantee that they all will stay small: even if each individual interval stays small with high
probability, there are too many of them ($N=[\exp(\sqrt[4]{n})]$), so among this huge number there may be ones making
and ``individually-improbable'' growth. In fact, there should be: we know from Proposition~\ref{p:ones} that there should be jump intervals,
and that most of the increment $\tx_{m,N}-\tx_{m,0}$ is concentrated on them.

The key to the proof here is the following argument. Instead of considering the evolution of all the $N$ intervals $\{X_{m,i}\}_{m=1}^{n}$,
we consider only those among them that at some moment $m$ become larger than~$\eps'$; we call such intervals \emph{suspicious}.
The non-suspicious intervals are automatically small in the sense of Prop.~\ref{p:classes}, and hence for them there is nothing to prove.

At the same time, at each moment~$m$ there is at most $\frac{\tx_{m,N}-\tx_{m,0}}{\eps'}<\const \cdot m$ suspicious intervals, hence,
there is at most $\const\cdot n^2$ of them in total. Thus, Lemmas~\ref{l.52},~\ref{l.intervals} can be applied to them~\emph{simultaneously}: the probability
of a bad behavior of an individual interval is at most $\exp(-\const \sqrt[4]{n})$. This is done in Corollary~\ref{c:bad} and Lemma~\ref{l.easy} below, and their application concludes the proof of Proposition~\ref{p:classes}.

We call this scheme \emph{the dystopia argument}: as an analogy, even if a ``dystopic state'' does not have a power to
control all of its ``population'' ($\exp(\sqrt[4]{n})$~intervals), it suffices for it to control only those few ($\const \cdot n^2$) that it finds
``suspicious''. The reader is referred to~\cite{Za} for comparison.  

Let us realize this program. As we have already said, we first study the behavior of the end points of the
intervals $X_{m, i}$ for a specific index $i=1, 2, \ldots, N$. In order to do that take some $m<n$ and two
points $\tilde y_{m,1}, \tilde y_{m,2}\in \mathbb{R}$ (that will be later interpreted as the end points of the
interval $X_{m,i}$) and define 
$$
\tilde y_{k,1}=\tf_{[m,k], b_{i-1}, \bar \omega}(\tilde y_{m,1}), \quad \tilde y_{k,2}=\tf_{[m,k], b_{i}, \bar \omega}(\tilde y_{m,2}),
$$
where $k=m+1, \ldots, n$.

We first consider how the interval $[\ty_{k,1},\ty_{k,2}]$ can become longer than~$1$ (so that its projection covers all the circle).
It is easy to see that at the first moment when it happens, the projections of $\ty_{k,1}$ and $\ty_{k,2}$ are very close to each other.

Let us denote $\gamma:=\exp\left(-\sqrt[4]{n}\right)$; then the parameter increments $b_i-b_{i-1}=\frac{|J|}{N}$ are comparable to~$\gamma$.
The next lemma shows that once these two orbits are sufficiently close to each other, they most probably stay close till the last ($n$-th) iteration:

\begin{lemma}\label{l.52}
For all sufficiently large $n$, if $|\tilde y_{m,1}-\tilde y_{m,2}|\le \gamma^{1/3}$, then  with probability at least $1-\gamma^{s/20}$ we have
$$
|\tilde y_{k,1}-\tilde y_{k,2}|\le \gamma^{1/12}
$$
for all $k= m,\dots, n$.
\end{lemma}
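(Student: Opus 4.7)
My plan is to apply Corollary~\ref{c:l-a1} to obtain geometric decay of the expected circle distance between the two orbits, then combine Markov's inequality with a union bound to control the supremum over $k\in[m,n]$, and finally use a short stopping-time argument to pass from the circle distance to the real-line distance requested in the conclusion.

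Set $\varphi_k:=\varphi(\pi(\ty_{k,1}),\pi(\ty_{k,2}))$, where $\pi:\R\to\Sc$ is the standard projection. Since $\gamma^{1/3}<\tfrac{1}{2}$ for $n$ large, the initial circle distance coincides with the initial real distance, and hence $\varphi_m\le\gamma^{s/3}$. The parameter gap is $|b_i-b_{i-1}|=|J|/N\le 2|J|\gamma$, so applying Corollary~\ref{c:l-a1} to the initial points $\pi(\ty_{m,1}),\pi(\ty_{m,2})$ and the shifted noise $\sigma^m\bo$ yields
$$
\E[\varphi_k] \le C_1\cdot 2^{-(k-m)/K}\gamma^{s/3}+C_2\,\gamma^s,\qquad k=m,m+1,\dots,n,
$$
for constants $C_1,C_2$ depending only on $K,s,|J|,C_\varphi',C_\varphi''$.

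Next, Markov's inequality and a union bound give
$$
\P\bigl(\exists k\in[m,n]:\varphi_k>\gamma^{s/12}\bigr) \;\le\; \gamma^{-s/12}\sum_{k=m}^{n}\E[\varphi_k] \;\le\; C_3\,\gamma^{s/4}+C_2\,n\,\gamma^{11s/12},
$$
since the geometric sum $\sum_{k}2^{-(k-m)/K}$ is bounded by a constant depending only on $K$, while the additive $O(\gamma^s)$ contribution accumulates over $n-m+1$ terms. Because $\gamma=\exp(-\sqrt[4]{n})$ and $\tfrac{s}{4},\tfrac{11s}{12}>\tfrac{s}{20}$, each of these two quantities is at most $\tfrac{1}{2}\gamma^{s/20}$ once $n$ is large enough.

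The final step is to translate the circle-distance bound into the real-distance bound. Set $D_k:=|\ty_{k,1}-\ty_{k,2}|$ and consider the stopping time $\tau:=\inf\{k\ge m:D_k>\gamma^{1/12}\}$. Lemma~\ref{l.shift} with a single iteration yields $D_{k+1}\le \Lx\,D_k+\La\,\gamma$, so on $\{\tau\le n\}$ we have $D_\tau\le \Lx\gamma^{1/12}+\La\gamma<\tfrac{1}{2}$ for $n$ large; consequently the circle distance at time $\tau$ equals $D_\tau$, and $\varphi_\tau>\gamma^{s/12}$. Thus $\{\tau\le n\}\subseteq\{\exists k\in[m,n]:\varphi_k>\gamma^{s/12}\}$, and the probability estimate above closes the argument. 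The main obstacle I anticipate is this last circle-versus-real-distance step: because the spatial Lipschitz constant $\Lx$ is greater than $1$, naive iteration of the one-step Lipschitz bound on $D_k$ diverges; the stopping-time trick succeeds precisely because the one-step overshoot of $D_k$ past the threshold $\gamma^{1/12}$ is much smaller than $\tfrac{1}{2}$, so that circle and real distances remain identified at the decisive moment.
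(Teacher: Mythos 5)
Your proposal is correct and follows essentially the same route as the paper: the expected contraction from Corollary~\ref{c:l-a1}, a Markov/Chebyshev estimate combined with a union bound over $k\le n$ (harmless since $n\gamma^{c}\ll\gamma^{s/20}$), and a first-exceedance argument to identify circle and real-line distances at the critical moment. Your stopping-time treatment of that last step, using the one-step bound from Lemma~\ref{l.shift} to rule out overshooting past $\tfrac12$, is exactly the paper's ``first moment when $|\ty_{k,1}-\ty_{k,2}|>\gamma^{1/12}$'' argument, spelled out slightly more carefully.
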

\begin{proof}
If $|\tilde y_{m,1}-\tilde y_{m,2}|\le \gamma^{1/3}$, then $\varphi(y_{m,1}, y_{m,2})\le \gamma^{s/3} $, and due to Corollary \ref{c:l-a1} for any $k>m, k=Kl+k', k'<K$,
\begin{multline*}
   \E\varphi(y_{k, 1}, y_{k, 2})\le \frac{C_{\varphi}'}{2^l}\varphi(y_{m,1}, y_{m,2})+C_{\varphi}''|b_i-b_{i-1}|^s\le \\
\frac{C_{\varphi}'}{2^l} \gamma^{s/3}+C_{\varphi}''|J|^s \gamma^{s}<
\gamma^{s/6}
\end{multline*}
for large $n$. Chebyshev inequality implies that
\begin{equation}\label{eq:phi-gamma}
\P\left(\varphi(y_{k, 1}, y_{k, 2})>\gamma^{s/12} \right)\le\frac{\gamma^{s/6} }{\gamma^{s/12} }=\gamma^{s/12}.
\end{equation}
Notice that
$$
\varphi(y_{k+1, 1}, y_{k+1, 2})=|\tilde y_{k+1,1}-\tilde y_{k+1, 2}|^s \quad \text{provided that } |\tilde y_{k,1}-\tilde y_{k, 2}|<\frac{1}{2}.
$$
Since $|\tilde y_{m,1}-\tilde y_{m,2}|\le \gamma^{1/3}$, at the first moment $k>m$ such that $|\tilde y_{\edited{k},1}-\tilde y_{\edited{k},2}|> \gamma^{1/12}$, if such moment exists, one has $\varphi(y_{k,1},y_{k,2})>\gamma^{s/12}$. Hence, for any $k=m+1,\dots,n$ the probability that this is the first such moment is upper bounded by
$$
\P\left(\left.|\tilde y_{k, 1}- \tilde y_{k, 2}|>\gamma^{1/12}  \right|  |\tilde y_{t, 1}- \tilde y_{t, 2}|\le \gamma^{1/12} \ \text{for}\ m\le t<k \right)\le \gamma^{s/12}
$$
due to~\eqref{eq:phi-gamma}. Summing it over $k$, we finally get
\begin{multline*}
  \P\left(|\tilde y_{k, 1}- \tilde y_{k, 2}|>\gamma^{1/12} \ \text{for some}\ k=m+1, \ldots, n\right)= \\
  \sum_{k=m+1}^n\P\left(\left.|\tilde y_{k, 1}- \tilde y_{k, 2}|>\gamma^{1/12}  \right|  |\tilde y_{t, 1}- \tilde y_{t, 2}|\le \gamma^{1/12} \ \text{for}\ m\le t<k \right) \\
\le  \sum_{k=m+1}^n\P\left(\varphi(y_{k, 1}, y_{k, 2})>\gamma^{s/12}\right)\le n\gamma^{s/12}<\gamma^{s/20}.
\end{multline*}
\end{proof}
Substituting $\ty_{m,1}+1$ instead of $\ty_{m,1}$ (shift by~$1$ commutes with the dynamics), we easily get the following
\begin{coro}\label{c.52}
Similarly, if $|\tilde y_{m,1} + 1 -\tilde y_{m,2}|\le \gamma^{1/3}$, then  with probability at least $1-\gamma^{s/20}$ we have
$$
|\tilde y_{k,1} + 1 -\tilde y_{k,2}|\le \gamma^{1/12}
$$
for all $k= m,\dots, n$.
\end{coro}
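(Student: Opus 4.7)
\medskip

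\noindent\textbf{Proof proposal for Corollary~\ref{c.52}.} The plan is to reduce Corollary~\ref{c.52} to Lemma~\ref{l.52} via the fact that each lift $\tf_{a,\omega}:\mathbb{R}\to\mathbb{R}$ of the projectivized action commutes with the integer translation $x\mapsto x+1$, since these lifts project to well-defined maps of $\Sc = \mathbb{R}/\mathbb{Z}$. First I would introduce the shifted starting point $\tilde y_{m,1}':=\tilde y_{m,1}+1$ and its iterates
$$
\tilde y_{k,1}':=\tf_{[m,k], b_{i-1}, \bar \omega}(\tilde y_{m,1}'), \qquad k=m+1,\dots,n.
$$
By the equivariance $\tf_{a,\omega}(x+1)=\tf_{a,\omega}(x)+1$, applied inductively, one gets $\tilde y_{k,1}'=\tilde y_{k,1}+1$ for every $k\ge m$.

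Now, the hypothesis $|\tilde y_{m,1}+1-\tilde y_{m,2}|\le \gamma^{1/3}$ translates into $|\tilde y_{m,1}'-\tilde y_{m,2}|\le \gamma^{1/3}$, which is precisely the hypothesis of Lemma~\ref{l.52} applied to the pair $(\tilde y_{m,1}',\tilde y_{m,2})$. Thus, with probability at least $1-\gamma^{s/20}$, one has $|\tilde y_{k,1}'-\tilde y_{k,2}|\le \gamma^{1/12}$ for all $k=m,\dots,n$. Substituting $\tilde y_{k,1}'=\tilde y_{k,1}+1$ yields the desired bound $|\tilde y_{k,1}+1-\tilde y_{k,2}|\le \gamma^{1/12}$ for all such $k$. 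There is no real obstacle here; the only thing to verify is the commutation property of the lift with integer shifts, which is immediate from the construction of $\tf_{a,\omega}$ as a lift of a circle map, so the argument is essentially a one-line translation of Lemma~\ref{l.52}.
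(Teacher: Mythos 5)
Your proof is correct and follows the same route as the paper, which obtains Corollary~\ref{c.52} from Lemma~\ref{l.52} precisely by substituting $\ty_{m,1}+1$ for $\ty_{m,1}$ and using that the shift by~$1$ commutes with the lifted dynamics. Nothing further is needed.
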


Denote by $y_{k,1}, y_{k,2}\in \Sc$ the projections of $\tilde y_{k,1}, \tilde y_{k,2}\in \mathbb{R}$.
 Let $K,s$, and the function $\varphi$ be as in Proposition \ref{p:phi}. The following lemma shows
 that the projections on the circle of their images most probably quickly become close
 to each other (so that Lemma~\ref{l.52} and Corollary~\ref{c.52} become applicable).

\begin{lemma}\label{l:pair1}
 In the setting above, for any $y_{m,1}, y_{m,2}\in \Sc$ and for all sufficiently large $n\in \mathbb{N}$, with probability at least $1-\gamma^{s/3}$ one has
\begin{equation}\label{e.sinchro}
\dist_{\Sc}(y_{k,1}, y_{k,2})\le \gamma^{1/3},
\end{equation}
for all $k\ge m+K[\sqrt[3]{n}]$.
\end{lemma}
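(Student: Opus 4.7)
The approach is to apply Corollary~\ref{c:l-a1} to the pair $(y_{m,1}, y_{m,2})$ with the parameters $a = b_{i-1}$ and $a' = b_i$, followed by Chebyshev's inequality and a union bound over $k$. Since the parameter spacing satisfies $|a-a'| = |J|/N \le |J|\gamma$ and $\varphi$ is bounded on $\Sc\times \Sc$, for any $k$ with $k - m \ge K[\sqrt[3]{n}]$ I write $k-m = lK + k'$ with $0\le k' < K$, so $l \ge [\sqrt[3]{n}]$, and Corollary~\ref{c:l-a1} gives
$$
\E\,\varphi(y_{k,1}, y_{k,2}) \le \frac{C_\varphi'\cdot (\mathrm{diam}\,\Sc)^s}{2^{[\sqrt[3]{n}]}} + C_\varphi'' |J|^s \gamma^s.
$$
For $n$ sufficiently large, $2^{[\sqrt[3]{n}]}$ dwarfs $\gamma^{-s} = \exp(s\sqrt[4]{n})$, so the first summand is absorbed into the second and I get $\E\,\varphi(y_{k,1}, y_{k,2}) \le \tilde C \gamma^s$ for a constant $\tilde C$ depending only on $s$, $|J|$ and $C_\varphi''$.

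Chebyshev's inequality with the threshold $\gamma^{s/3}$ then yields, for each individual $k$ in the target range,
$$
\Prob\bigl(\varphi(y_{k,1},y_{k,2}) > \gamma^{s/3}\bigr) \le \tilde C \gamma^{2s/3}.
$$
Since $\varphi(x,y) = (\dist_{\Sc}(x,y))^s$, the event $\{\varphi > \gamma^{s/3}\}$ is precisely $\{\dist_{\Sc} > \gamma^{1/3}\}$. A union bound over the (at most $n$) integer values of $k \in [m+K[\sqrt[3]{n}],\,n]$ then gives
$$
\Prob\bigl(\exists k \ge m + K[\sqrt[3]{n}] : \dist_{\Sc}(y_{k,1}, y_{k,2}) > \gamma^{1/3}\bigr) \le n\tilde C \gamma^{2s/3} = n\tilde C\, e^{-(2s/3)\sqrt[4]{n}}.
$$
For all sufficiently large $n$ the right-hand side is smaller than $\gamma^{s/3} = e^{-(s/3)\sqrt[4]{n}}$, because $n$ is merely polynomial while the slack factor $\gamma^{-s/3}$ grows stretched-exponentially in $\sqrt[4]{n}$. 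This yields the required $1 - \gamma^{s/3}$ lower bound.

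The only delicate point is the bookkeeping of the exponents of $\gamma$: I must pick a Chebyshev threshold (here $\gamma^{s/3}$) large enough compared to $\E\,\varphi \sim \gamma^s$ so that the individual-$k$ failure probability beats the union-bound factor $n$ by a stretched-exponential margin, yet still corresponds, via $\varphi = \dist^s$, to the distance threshold $\gamma^{1/3}$ demanded by the conclusion. The choice is essentially forced by the statement, and the ratio $\gamma^{-s/3} / n$ tending to infinity leaves comfortable room to spare. No reference to Lemma~\ref{l.52} is needed in the argument, since the quantitative bound from Corollary~\ref{c:l-a1} is already uniform in $k$ once $k - m \ge K[\sqrt[3]{n}]$.
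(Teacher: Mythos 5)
Your proposal is correct and follows essentially the same route as the paper: apply Corollary~\ref{c:l-a1} with the parameters $b_{i-1},b_i$ (whose spacing is $\sim|J|\gamma$) to bound $\E\varphi(y_{k,1},y_{k,2})\le \mathrm{const}\cdot\gamma^s$ for $k\ge m+K[\sqrt[3]{n}]$, then Chebyshev at threshold $\gamma^{s/3}$ and a union bound over the at most $n$ values of $k$, with the polynomial factor $n$ absorbed by the stretched-exponential slack. The only cosmetic difference is that the paper states the final failure bound as $<\gamma^{1/3}$ (which implies $\le\gamma^{s/3}$ since $s\le 1$), while you bound it directly by $\gamma^{s/3}$.
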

\begin{proof}
Let us apply Corollary~\ref{c:l-a1}. For any $k\ge m+K[\sqrt[3]{n}]$ we get
\begin{equation}\label{eq:expect-gamma}
\E_{\omega_{m+1},\dots,\omega_{k}} \varphi(y_{k,1},y_{k,2}) \le \frac{C_{\varphi}'}{\displaystyle 2^{[\sqrt[3]{n}]}} +  C_{\varphi}'' \left(b_{i}-b_{i-1}\right)^s \le
3C_{\varphi}''|J|^s \gamma^{s};
\end{equation}
recall that
$$
b_i-b_{i-1}=\frac{|J|}{N}\sim |J| \cdot \gamma, \quad \gamma= \exp(-\sqrt[4]{n}))\gg 2^{-\sqrt[3]{n}}.
$$
Due to Chebyshev inequality, we have
\begin{multline*}
\P\left(\dist_{\Sc}(y_{k,1},y_{k,2})> \gamma^{1/3} \right)=\P\left(\varphi(y_{k,1},y_{k,2})>\gamma^{s/3}\right)\le\\
\le \frac{\E\varphi(y_{k,1},y_{k,2})}{\gamma^{s/3}}\le 3C_{\varphi}''|J|^s \gamma^{2s/3}
\end{multline*}
for large $n$; the last inequality here is due to~\eqref{eq:expect-gamma}. Therefore,
\begin{multline*}
\P\left(\dist_{\Sc}(y_{k,1},y_{k,2})> \gamma^{1/3}\ \text{for some}\ k\ge m+K[\sqrt[3]{n}] \right)\le  \\
\le n\cdot  3C_{\varphi}''|J|^s \gamma^{2s/3}< \gamma^{1/3}
\end{multline*}
for all sufficiently large $n$.
\end{proof}


Let us remind that
$$
\Lx=  \sup_{y\in \mathbb{R}^1, a\in J, \omega\in \Omega}|\tf'_{a, \omega}(y)|, \quad  \La=  \sup_{y\in \mathbb{R}^1, a\in J, \omega\in \Omega}|\partial_a \tf_{a, \omega}(y)|
$$
are the space- and parameter-wise Lipschitz constants respectively, and $\delta>0$ is a small constant from the monotonicity condition $(A4)$. The next few lemmas guarantee that if the length of the interval $X_{k,i}$ becomes close to~$1$ (and thus stays close to~$1$), there will be an actual ``jump'', that is, the interval will become longer than~$1$ sufficiently quickly. The first two of these lemmas are devoted to the moment of the jump:

\begin{lemma}\label{l.skok}
 Suppose for some $k'>m$ we have $\tilde y_{k,1}<\tilde y_{k,2}\le \tilde y_{k,1}+1$ for $k=m, \ldots, k'-1$, and $\tilde y_{k',2}>\tilde y_{k',1}+1$. Then
\begin{equation}\label{eq:skok}
\tilde y_{k',2}-(\tilde y_{k',1}+1)\le \frac{\La|J|}{N}\le \const \cdot \gamma.
\end{equation}
\end{lemma}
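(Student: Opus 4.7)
The plan is to compare $\tilde y_{k',2}$ and $\tilde y_{k',1}$ by writing each as the image under one more iteration of $\tilde y_{k'-1,2}$ and $\tilde y_{k'-1,1}$ respectively, and then splitting the difference into a ``parameter'' part and a ``space'' part. Concretely, I would write
\[
\tilde y_{k',2}-\tilde y_{k',1}
= \bigl[\tf_{b_i,\omega_{k'}}(\tilde y_{k'-1,2})-\tf_{b_{i-1},\omega_{k'}}(\tilde y_{k'-1,2})\bigr]
+ \bigl[\tf_{b_{i-1},\omega_{k'}}(\tilde y_{k'-1,2})-\tf_{b_{i-1},\omega_{k'}}(\tilde y_{k'-1,1})\bigr].
\]

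For the first bracket I would use the parameter-wise Lipschitz bound: by the definition of $\La$,
\[
\bigl|\tf_{b_i,\omega_{k'}}(\tilde y_{k'-1,2})-\tf_{b_{i-1},\omega_{k'}}(\tilde y_{k'-1,2})\bigr|
\le \La\,|b_i-b_{i-1}| = \frac{\La\,|J|}{N}.
\]
For the second bracket I would use the hypothesis of the lemma at step $k'-1$, namely $\tilde y_{k'-1,2}\le \tilde y_{k'-1,1}+1$, together with the fact that $\tf_{b_{i-1},\omega_{k'}}$ is a lift of a circle homeomorphism, hence monotone increasing and commuting with integer translations:
\[
\tf_{b_{i-1},\omega_{k'}}(\tilde y_{k'-1,2})
\le \tf_{b_{i-1},\omega_{k'}}(\tilde y_{k'-1,1}+1)
= \tf_{b_{i-1},\omega_{k'}}(\tilde y_{k'-1,1})+1
= \tilde y_{k',1}+1.
\]
So the second bracket is at most $1$, and combining these yields
\[
\tilde y_{k',2}-\tilde y_{k',1} \le 1 + \frac{\La\,|J|}{N},
\]
which rearranges to the desired inequality~\eqref{eq:skok}. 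The bound $\frac{\La|J|}{N}\le \const\cdot\gamma$ then follows directly from $N=[\exp(\sqrt[4]{n})]$ and $\gamma=\exp(-\sqrt[4]{n})$.

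There is essentially no obstacle here; the only subtlety worth flagging is making sure the splitting is done correctly so that the ``space'' displacement is taken at the same parameter $b_{i-1}$ (to invoke monotonicity and commutation with integer shifts of the lift), while the ``parameter'' displacement is taken at the fixed point $\tilde y_{k'-1,2}$ (to invoke the $\La$-Lipschitz bound in $a$). The order of the add-and-subtract can also be reversed (adding and subtracting $\tf_{b_i,\omega_{k'}}(\tilde y_{k'-1,1})$), yielding the same conclusion with the same constants.
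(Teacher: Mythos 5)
Your proof is correct and is essentially the paper's own argument: the paper performs the same add-and-subtract in the other order (bounding $\tilde y_{k',2}\le \tf_{b_i,\omega_{k'}}(\tilde y_{k'-1,1})+1$ by monotonicity and then using the $\La$-Lipschitz bound in the parameter at the point $\tilde y_{k'-1,1}$), which you yourself note is an equivalent variant with the same constants. No gaps.
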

\begin{proof}
We have $\tilde y_{k'-1,2}\le \tilde y_{k'-1,1}+1$, and hence
$$
\tilde y_{k',2}=\tilde f_{b_i,\omega_{k'}}(\tilde y_{k'-1,2})\le \tilde f_{b_i,\omega_{k'}}(\tilde y_{k'-1,1}+1)=\tf_{b_i,\omega_{k'}}(\tilde y_{k'-1,1})+1.
$$
This implies that
\begin{multline*}
\tilde y_{k',2}-(\tilde y_{k',1}+1) \le \tf_{b_i,\omega_{k'}}(\tilde y_{k'-1,1}) - \tf_{b_{i-1},\omega_{k'}}(\tilde y_{k'-1,1}) \le
\\
\le \La \cdot  (b_i- b_{i-1})=\frac{\La|J|}{N}.
\end{multline*}
The definition $N=[\exp(\sqrt[4]{n})]$ then implies the second inequality of~\eqref{eq:skok}.
\end{proof}

\begin{lemma}\label{l.pereskok}
If $\tilde x, \tilde y\in \mathbb{R}$ are such that $|\tilde x -\tilde y|<\frac{\delta|J|}{\Lx}\gamma$, then for any $\omega\in \Omega$ and any $i=1, \ldots N$ we have
$$
\tf_{b_i, \omega}(\tilde y)>\tf_{b_{i-1}, \omega}(\tilde x).
$$
\end{lemma}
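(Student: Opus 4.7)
The plan is to expand the difference
$$\tf_{b_i,\omega}(\tilde y) - \tf_{b_{i-1},\omega}(\tilde x) = \bigl(\tf_{b_i,\omega}(\tilde y) - \tf_{b_{i-1},\omega}(\tilde y)\bigr) + \bigl(\tf_{b_{i-1},\omega}(\tilde y) - \tf_{b_{i-1},\omega}(\tilde x)\bigr)$$
and to bound the ``parameter-shift'' piece from below using monotonicity (A4), while bounding the ``space-shift'' piece from above using the uniform Lipschitz constant $\Lx$ in the space variable. The choice of the numerical constant $\frac{\delta |J|}{\Lx}\gamma$ in the hypothesis is exactly what is needed to make the second quantity smaller than the first.

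First I would note that since $N = [\exp(\sqrt[4]{n})] \le \exp(\sqrt[4]{n}) = 1/\gamma$, one has $b_i - b_{i-1} = \frac{|J|}{N} \ge |J|\gamma$. The monotonicity assumption (A4) then yields
$$\tf_{b_i,\omega}(\tilde y) - \tf_{b_{i-1},\omega}(\tilde y) \;\ge\; \delta\,(b_i - b_{i-1}) \;\ge\; \delta |J|\gamma.$$
(A minor remark: (A4) is stated for the argument of $F_a(\omega)\bar v$, but passing to projective coordinates gives precisely the lower bound $\delta$ on $\partial_a \tf_{a,\omega}$, up to the chosen normalization of the lift.)

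Second, by the definition of $\Lx$ and the hypothesis $|\tilde x - \tilde y| < \frac{\delta|J|}{\Lx}\gamma$,
$$\bigl|\tf_{b_{i-1},\omega}(\tilde y) - \tf_{b_{i-1},\omega}(\tilde x)\bigr| \;\le\; \Lx \,|\tilde x - \tilde y| \;<\; \Lx \cdot \frac{\delta|J|}{\Lx}\gamma \;=\; \delta|J|\gamma.$$
Adding the two estimates gives
$$\tf_{b_i,\omega}(\tilde y) - \tf_{b_{i-1},\omega}(\tilde x) \;\ge\; \delta |J|\gamma - \delta|J|\gamma \;=\; 0,$$
with the strictness of the second bound yielding a strict inequality overall, which is the conclusion.

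There is no real conceptual obstacle here; the proof is a direct application of (A4) combined with a Lipschitz estimate, and the only care needed is bookkeeping: checking that the threshold $\frac{\delta |J|}{\Lx}\gamma$ is calibrated against the lower bound $\delta |J|\gamma$ on the monotonic displacement over one subinterval of parameter, which in turn requires the inequality $\frac{|J|}{N} \ge |J|\gamma$ arising from the specific choice $N = [\exp(\sqrt[4]{n})]$.
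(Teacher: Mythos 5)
Your proof is correct and is essentially the paper's argument: the paper uses the same splitting into a parameter-shift term bounded below by $\delta(b_i-b_{i-1})\ge\delta|J|\gamma$ via (A4) and a space-shift term bounded by $\Lx|\tilde x-\tilde y|<\delta|J|\gamma$, merely inserting the intermediate point $\tf_{b_i,\omega}(\tilde x)$ instead of your $\tf_{b_{i-1},\omega}(\tilde y)$, which is an immaterial difference.
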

\begin{proof}
Indeed, due to monotonicity assumption $(A4)$ we have
\begin{multline*}
    \tf_{b_i, \omega}(\tilde y)-\tf_{b_{i-1}, \omega}(\tilde x)=\left(\tf_{b_i, \omega}(\tilde y)-\tf_{b_i, \omega}(\tilde x)\right)+\left(\tf_{b_i, \omega}(\tilde x)-\tf_{b_{i-1}, \omega}(\tilde x)\right)\ge\\
    -\Lx|\tilde y-\tilde x|+\delta|J|\cdot \gamma >0
\end{multline*}
\end{proof}

Let us introduce the notation $U_{\eps}^+(x):=[x,x+\eps)$ and $U_{\eps}^- (x):=(x-\eps,x]$ for the right- and left- $\eps$-neighborhoods of the point $x\in \R $ respectively.

\begin{lemma}\label{l.pereskokprob}
If $\tilde y_{m, 2}\in U^-_{\gamma^{1/3}}(\ty_{m,1}+1)$, then with probability at least $1-3\gamma^{s/20}$ we have
$$
\tilde y_{k,2}\in U^+_{\gamma^{1/3}}(\ty_{m,1}+1) 
$$
for all $k> m+K\sqrt[3]{n}$.
\end{lemma}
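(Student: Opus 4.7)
Introduce $z_k := (\ty_{k,1}+1) - \ty_{k,2}$; the hypothesis then reads $z_m \in [0, \gamma^{1/3})$ and the conclusion (reading $\ty_{k,1}+1$ in place of $\ty_{m,1}+1$, which is the evident intended statement since $\ty_{k,2}$ moves with $k$) becomes $z_k \in (-\gamma^{1/3}, 0]$ for every $k > m + K\sqrt[3]{n}$. I will control the size and the sign of $z_k$ separately, combining three high-probability events whose individual failure probabilities are at most $\gamma^{s/20}$ each; a union bound then yields the claimed $3\gamma^{s/20}$.

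The size bound proceeds in two stages. Corollary~\ref{c.52} applied directly to the pair $(\ty_{m,1}, \ty_{m,2})$ yields $|z_k| \le \gamma^{1/12}$ for all $k \ge m$ with probability at least $1-\gamma^{s/20}$. To sharpen, I apply Corollary~\ref{c:l-a1} with $l := [\sqrt[3]{n}]/K$: since $2^{-l} = \exp(-c\sqrt[3]{n})$ is much smaller than $\gamma^s = \exp(-s\sqrt[4]{n})$, the parameter-error term dominates and gives $\E\,\varphi(y_{k,1}, y_{k,2}) \le 2C_\varphi''(|J|\gamma)^s$ for every $k \ge m+K[\sqrt[3]{n}]$. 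Chebyshev at threshold $\gamma^{2s/3}$ together with a union bound over such $k$ then produces $|z_k| \le \gamma^{2/3}$ for all $k \ge m+K[\sqrt[3]{n}]$, with probability at least $1-\gamma^{s/20}$; note that $\gamma^{2/3} < \gamma^{1/3}$.

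The sign behaviour splits into persistence and the first flip. Persistence is immediate from monotonicity (A4): if $z_{k_0} \le 0$, then
\[
\ty_{k_0+1,2} \ge \tf_{b_i, \omega_{k_0+1}}(\ty_{k_0,1}+1) = \tf_{b_i, \omega_{k_0+1}}(\ty_{k_0,1}) + 1 > \tf_{b_{i-1}, \omega_{k_0+1}}(\ty_{k_0,1}) + 1 = \ty_{k_0+1,1}+1,
\]
so $z_{k_0+1} < 0$, and inductively $z_k < 0$ for all $k > k_0$. The crux is producing the first sign flip within the window $[m, m + K[\sqrt[3]{n}]]$. While $z_k > 0$, the recursion reads $z_{k+1} = \tf'_{b_{i-1}, \omega_{k+1}}(\xi_{k+1})\,z_k + B_{k+1}$ with $B_{k+1} \le -\delta(b_i-b_{i-1}) \sim -\delta|J|\gamma$, so the random projective contraction shrinks the ``inertia'' $\tf'\cdot z_k$ while $B$ hammers $z$ downward; once $z_k < \frac{\delta|J|}{\Lx}\gamma$, Lemma~\ref{l.pereskok} applied with $\tilde x = \ty_{k,1}+1$, $\tilde y = \ty_{k,2}$ forces $z_{k+1} < 0$.

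The main obstacle is quantifying this escape: I must bound by $\gamma^{s/20}$ the probability that $z_k$ stays trapped in the ``danger strip'' $[\frac{\delta|J|}{\Lx}\gamma,\, \gamma^{1/12}]$ throughout the full window $[m, m + K[\sqrt[3]{n}]]$. The heuristic is that the multiplicative contraction acts at the Furstenberg rate $\lambda_{RD}<0$, so the iterated derivative product $\tf'_{[m,k], b_{i-1}, \bo}(\xi)$ decays exponentially in $k-m$ and annihilates the inertia $z_m \le \gamma^{1/3}$ long before the cumulative drift of order $\gamma$ does; together, these push $z_k$ across the narrow strip. I plan to formalize this by applying Corollary~\ref{c:l-a1} at finer scale and using Chebyshev plus a union bound over intermediate times; this is the most delicate part of the argument, since the conditional probability must be controlled at each individual step inside the window. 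Once it is in place, combining with the sharpened size bound yields $z_k \le 0$ and $|z_k| \le \gamma^{2/3} < \gamma^{1/3}$ simultaneously for all $k > m + K[\sqrt[3]{n}]$, i.e.\ $\ty_{k,2} \in U_{\gamma^{1/3}}^+(\ty_{k,1}+1)$ as required.
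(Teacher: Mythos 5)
Your bookkeeping and the easy parts are fine: reading the conclusion with $\ty_{k,1}+1$ in place of $\ty_{m,1}+1$ is indeed the intended statement (the paper's own proof ends with exactly that), the size control via Corollary~\ref{c.52} together with the pair estimate behind Lemma~\ref{l:pair1}/Corollary~\ref{c:l-a1} is the same as in the paper, and the persistence of the sign by monotonicity $(A4)$ once $\ty_{k_0,2}\ge\ty_{k_0,1}+1$ is exactly the paper's closing observation. However, the heart of the lemma --- that the sign flip actually occurs somewhere in the window $[m,m+K[\sqrt[3]{n}]]$ except on an event of probability at most about $\gamma^{s/20}$ --- is precisely the step you leave as a ``plan''; as written, the proposal is not a proof. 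Worse, the route you sketch does not follow from the tool you cite: Corollary~\ref{c:l-a1} (and Proposition~\ref{p:phi}) controls the expected $\varphi$-distance between two orbits, and says nothing about the iterated derivative $\prod_j \tf'_{b_{i-1},\omega_j}(\xi_j)$ along the moving points $\xi_j$ that drives your affine recursion for $z_k$. The derivative large-deviation statements in the paper (Lemma~\ref{l.prelimLD}, Lemma~\ref{l:LD-RD}) carry an error $\eps n$, which is useless over a window of length $K[\sqrt[3]{n}]$; to push your scheme through you would need a fresh large-deviation bound at scale $\sqrt[3]{n}$ for the derivative at the given initial point (say, Theorem~\ref{t.diviations} applied with the window length in place of $n$), plus a distortion argument in the spirit of Lemma~\ref{l:DC} to pass from the orbit of $\ty_{m,1}+1$ to the points $\xi_j$ --- and the latter needs extra care because the left endpoint of your interval is iterated with the larger parameter $b_i$. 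None of this is in your text.

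The paper sidesteps derivatives entirely, and this is the one genuinely new idea of the lemma. It introduces the auxiliary orbit $\tilde z_k=\tf_{[m,k],b_{i-1},\bo}(\ty_{m,2})$, i.e.\ the second point iterated with the \emph{smaller} parameter. Monotonicity $(A4)$ gives $\ty_{k,2}>\tilde z_k$ for all $k>m$, while $z$ and $y_1$ now evolve under the \emph{same} maps, so Corollary~\ref{c:l-a1} with $a=a'=b_{i-1}$ has no parameter term and yields $\E\varphi(z_{k_0},y_{k_0,1})\le C_\varphi' 2^{-[\sqrt[3]{n}]}$ at $k_0=m+K[\sqrt[3]{n}]$. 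Chebyshev makes the circle distance between $z_{k_0}$ and $y_{k_0,1}$ smaller than $\frac{\delta|J|}{\Lx}\gamma$ outside a set of probability $2^{-\sqrt[3]{n}/2}$, from which the paper gets $\ty_{k_0,2}>\tilde z_{k_0}\ge \ty_{k_0,1}+1-\frac{\delta|J|}{\Lx}\gamma$, and then Lemma~\ref{l.pereskok} forces the overshoot at step $k_0+1$; your persistence argument and the closeness event finish the proof exactly as you intended, within the stated probability budget. If you want to keep your formulation, replace the unproven ``escape from the danger strip'' estimate by this comparison with the same-parameter orbit; as it stands, that estimate is a genuine gap, not a routine verification.
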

\begin{proof}
Since $|\tilde y_{m,2}-\tilde y_{m,1}-1|<\gamma^{1/3}$, Lemma~\ref{l.52} implies that with probability at least $1-\gamma^{s/20}$ for all $k\ge m$ we have
$$
|\tilde y_{k,2}-\tilde y_{k,1}-1|<\gamma^{1/12}.
$$
Together with Lemma \ref{l:pair1} this implies that with probability at least
$$
1-\gamma^{s/20}-\gamma^{s/3}>1-2\gamma^{s/20}
$$
we have
\begin{equation}\label{eq:y-closeness}
|\tilde y_{k,2}-\tilde y_{k,1}-1|<\gamma^{1/3}
\end{equation}
for all $k\ge m+K[\sqrt[3]{n}]$; in other words, the images of $\ty_{m,2}$ stay close to those of $\ty_{m,1}+1$.

However, as $b_i>b_{i-1}$, the images of $\ty_{m,2}$ are in a sense ``pushed forward'' with respect to those of $\ty_{m,1}+1$. That is, consider a sequence
of points $\{\tilde z_k\}\subset \mathbb{R}$, $k=m, \ldots, n$, given by
$$
\tilde z_k=\tf_{[m,k], b_{i-1}, \bar \omega}(\ty_{m,2}).
$$
Then monotonicity assumption $(A4)$ implies that for all $k>m$ we have $\tilde y_{k,2}>\tilde z_k$. Due to Corollary \ref{c:l-a1} (applied for $a=a'=b_{i-1}$) for $k_0=m+K[\sqrt[3]{n}]$ we have
$$
\E\varphi(z_{k_0}, y_{k_0,1})\le \frac{C_{\varphi}'}{\displaystyle 2^{\sqrt[3]{n}}}\varphi(z_m, y_{m,1})\le \frac{C_{\varphi}'}{\displaystyle{2^{\sqrt[3]{n}}}}
$$
Now (in the same way as before), we have an lower bound for the probability that the images $z_{k_0}$ and $y_{k_0,1}$ are close to each other. Indeed, by Chebyshev inequality
\begin{multline*}
  \P\left(\text{dist}_{\Sc}(z_{k_0}, y_{k_0,1})\ge \frac{\delta|J|}{\Lx}\gamma \right) =   \P\left(\varphi(z_{k_0}, y_{k_0,1})\ge \left(\frac{\delta|J|}{\Lx}\right)^s \gamma^s    \right)\le
  \\
 \le \frac{C'_{\varphi} L^s}{\delta^s |J|^s} \cdot
    \frac{\exp\left(-\sqrt[3]{n}\log 2\right)}{\exp\left(-s\sqrt[4]{n}\right)}< \exp\left(-\frac{\log 2}{2}\sqrt[3]{n}\right) =\frac{1}{\displaystyle{2^{\sqrt[3]{n}/2}}}
\end{multline*}
Hence with probability at least
$$
1-2\gamma^{s/20}-\frac{1}{\displaystyle 2^{\sqrt[3]{n}/2}}>1-3\gamma^{s/20}
$$
we have
$$
\tilde y_{k_0,2}>\tilde z_{k_0}\ge \tilde y_{k_0,1}+1-\frac{\delta|J|}{\Lx}\gamma.
$$
Hence, either $\tilde y_{k_0,2}> \tilde y_{k_0,1}+1$, or, due to Lemma \ref{l.pereskok},
$\tilde y_{k_0+1,2}> \tilde y_{k_0+1,1}+1$. In both cases,
$$
\ty_{k,2}>\ty_{k,1}+1
$$
for
for $k=k_0+1$, and hence, by monotonicity, for all $k> m+K\sqrt[3]{n}$. Joining it with~\eqref{eq:y-closeness}, we finally get the desired
$$
\tilde y_{k,2}\in U^+_{\gamma^{1/3}} (\ty_{k,1}+1)
$$
for $k=k_0+1,\dots, n$.
\end{proof}

\begin{lemma}\label{l.intervals}
If $\tilde y_{m, 2}\in \left(\tilde y_{m,1}, \tilde y_{m,1}+1\right)$, then with probability at least $1-5\gamma^{s/20}$ either
$$
\tilde y_{k,2}\in U^+_{\gamma^{1/3}}(\tilde y_{m,1}),
$$
or
$$
\tilde y_{k,2}\in U^+_{\gamma^{1/3}}(\tilde y_{m,1}+1)
$$
for all $k> m+2K\sqrt[3]{n}$.
\end{lemma}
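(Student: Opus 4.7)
The plan is to reduce Lemma~\ref{l.intervals} to Lemmas~\ref{l.52} and~\ref{l.pereskokprob} by first bringing the two lift-orbits' projections close on $\Sc$ via Lemma~\ref{l:pair1}, then dichotomising by which of $\ty_{k,1}$ or $\ty_{k,1}+1$ the image $\ty_{k,2}$ tracks, and finally invoking the appropriate existing lemma from the new starting time. I read the neighborhoods in the conclusion as centered at $\ty_{k,1}$, $\ty_{k,1}+1$ rather than $\ty_{m,1}$, $\ty_{m,1}+1$, in line with the last line of the proof of Lemma~\ref{l.pereskokprob}; the lift-orbits drift unboundedly on $\R$, so fixed-time centers cannot confine $\ty_{k,2}$ at large $k$.

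Setting $k_0:=m+K[\sqrt[3]{n}]$, apply Lemma~\ref{l:pair1} to $(y_{m,1},y_{m,2})$: with probability at least $1-\gamma^{s/3}$, for every $k\ge k_0$ one has $\dist_{\Sc}(y_{k,1},y_{k,2})\le \gamma^{1/3}$. On this event, $D_k:=\ty_{k,2}-\ty_{k,1}$ is within $\gamma^{1/3}$ of some integer $N_k$. Using $\tf_{a,\omega}(x+1)=\tf_{a,\omega}(x)+1$ and the Lipschitz constants $\Lx,\La$, one checks $|N_{k+1}-N_k|\le \Lx\gamma^{1/3}+\La\gamma<1$ for $n$ large, so $N_k$ is constant, equal to $N_{k_0}$.

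The key step is the dichotomy $N_{k_0}\in\{0,1\}$. For the lower bound, the auxiliary orbit $\tilde z_k:=\tf_{[m,k],b_{i-1},\bo}(\ty_{m,2})$ lies in $(\ty_{k,1},\ty_{k,1}+1)$ for all $k$, by strict-interval preservation of the orientation-preserving shift-commuting lift $\tf_{b_{i-1},\omega}$; and $\ty_{k,2}\ge \tilde z_k$ by (A4), yielding $\ty_{k,2}>\ty_{k,1}$ and hence $N_{k_0}\ge 0$. For the upper bound, introduce the ``same-parameter shadow'' $\ty_{k,1}^{(b_i)}:=\tf_{[m,k],b_i,\bo}(\ty_{m,1})$; by strict-interval preservation at parameter $b_i$, $\ty_{k,2}\in (\ty_{k,1}^{(b_i)},\ty_{k,1}^{(b_i)}+1)$ for all $k$. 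Apply Lemma~\ref{l:pair1} once more to the same-starting-point pair $(\ty_{m,1},\ty_{m,1})$ under parameters $b_{i-1}$ vs.\ $b_i$ to get $\dist_{\Sc}(y_{k_0,1},y_{k_0,1}^{(b_i)})\le \gamma^{1/3}$; monotonicity (A4) gives $E_{k_0}:=\ty_{k_0,1}^{(b_i)}-\ty_{k_0,1}\ge 0$, and an analog of Lemma~\ref{l.skok} applied to the shadow pair at its first crossing of~$1$ rules out $E_{k_0}\ge 1$, forcing $E_{k_0}<\gamma^{1/3}$. Combining, $D_{k_0}$ differs from a point of $(0,1)$ by at most $2\gamma^{1/3}$, so $N_{k_0}\in\{0,1\}$.

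Finally, invoke the appropriate earlier lemma at $k_0$. If $N_{k_0}=0$, Lemma~\ref{l.52} (starting at $k_0$) gives $|\ty_{k,2}-\ty_{k,1}|\le \gamma^{1/12}$ for $k\ge k_0$ with failure probability $\gamma^{s/20}$, and together with $\ty_{k,2}>\ty_{k,1}$ this reads $\ty_{k,2}\in U^+_{\gamma^{1/3}}(\ty_{k,1})$. If $N_{k_0}=1$, the hypothesis of Lemma~\ref{l.pereskokprob} is satisfied at $k_0$ (possibly after one further step controlled by Lemma~\ref{l.pereskok}), and its conclusion, valid for $k>k_0+K\sqrt[3]{n}=m+2K\sqrt[3]{n}$, is $\ty_{k,2}\in U^+_{\gamma^{1/3}}(\ty_{k,1}+1)$, with additional failure probability $3\gamma^{s/20}$. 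Summing the three failure probabilities yields $\gamma^{s/3}+\gamma^{s/20}+3\gamma^{s/20}\le 5\gamma^{s/20}$ for $n$ large. The main obstacle is the upper bound $N_{k_0}\le 1$: the naive Lemma~\ref{l.shift} bound on $|\ty_{k_0,1}^{(b_i)}-\ty_{k_0,1}|$ is exponentially loose ($\sim \Lx^{K\sqrt[3]{n}}\gamma$), and must be replaced by the projective contraction from Lemma~\ref{l:pair1} together with a first-crossing argument modeled on Lemma~\ref{l.skok} to pin the integer offset of the shadow orbit to~$0$.
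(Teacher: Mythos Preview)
Your overall architecture is sound and matches the paper's: use Lemma~\ref{l:pair1} to force circle-closeness by time $k_0=m+K[\sqrt[3]{n}]$, pin the integer offset $N_{k_0}$ to $\{0,1\}$, then branch to Lemma~\ref{l.52} or Lemma~\ref{l.pereskokprob}. The lower bound $N_{k_0}\ge 0$ via monotonicity is fine.

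The gap is in your upper bound $N_{k_0}\le 1$. You invoke ``an analog of Lemma~\ref{l.skok}'' on the shadow pair $(\ty_{k,1},\ty_{k,1}^{(b_i)})$ to force $E_{k_0}<\gamma^{1/3}$. But Lemma~\ref{l.skok} is a \emph{first-crossing} bound: it only says that at the moment $k''$ when $E_{k''}$ first exceeds~$1$, the excess is $O(\gamma)$. It gives no control at the later time $k_0$; nothing prevents a second, third, etc.\ crossing between $k''$ and $k_0$, and the deterministic Lipschitz drift over $K\sqrt[3]{n}$ steps is (as you note) exponentially large. Your combination ``Lemma~\ref{l:pair1} on the shadow pair $+$ Lemma~\ref{l.skok}'' establishes that $E_{k_0}$ is $\gamma^{1/3}$-close to \emph{some} non-negative integer, but does not identify that integer as~$0$. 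The easy fix is to apply Lemma~\ref{l.52} to the shadow pair from time $m$ (where $E_m=0\le\gamma^{1/3}$), which gives $E_k\le\gamma^{1/12}<1$ for all $k$ with failure probability~$\gamma^{s/20}$; this does pin $E_{k_0}$ near~$0$.

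The paper avoids the shadow orbit entirely, and the resulting argument is cleaner. It applies Lemma~\ref{l.skok} directly to the \emph{original} pair $(\ty_{k,1},\ty_{k,2})$: either no crossing occurs by $k_0$, in which case $D_{k_0}\in(0,1]$ and, intersected with (\ref{e.sinchro}), lands in $U^+_{\gamma^{1/3}}(\ty_{k_0,1})\cup U^-_{\gamma^{1/3}}(\ty_{k_0,1}+1)$; or the first crossing occurs at some $k'\le k_0$, and Lemma~\ref{l.skok} gives $\ty_{k',2}\in U_{\gamma^{1/3}}(\ty_{k',1}+1)$ right there. Either way one has a time $k'\le k_0$ at which $\ty_{k',2}$ is $\gamma^{1/3}$-close to $\ty_{k',1}$ or $\ty_{k',1}+1$, and a single application of Lemma~\ref{l.52} (or its shifted Corollary~\ref{c.52}) from $k'$ propagates this to all $k\ge k'$ with the coarser bound $\gamma^{1/12}$; intersecting again with (\ref{e.sinchro}) recovers $\gamma^{1/3}$. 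This uses one fewer probabilistic lemma and no auxiliary orbit, which is why the paper's bookkeeping closes exactly at $5\gamma^{s/20}$.
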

\begin{proof}
Due to Lemma \ref{l:pair1}, with probability at least $1-\gamma^{s/3}$ we have
$$
\tilde y_{k,2}\in \bigcup_{l\in \mathbb{N}\cup\{0\}} U_{\gamma^{1/3}} (\ty_{k,1}+l).
$$
Lemma \ref{l.skok} implies that in this case for some $k'\le m+K[\sqrt[3]{n}]$ we have
$$
\tilde y_{k',2}\in U^+_{\gamma^{1/3}} \left( \tilde y_{k',1} \right) \bigcup  U_{\gamma^{1/3}} \left(\tilde y_{k',1}+1 \right).
$$
Lemma \ref{l.52} now implies that with probability at least
$$
1-\gamma^{s/3}-\gamma^{s/20}>1-2\gamma^{s/20}
$$
we have
$$
\tilde y_{k,2}\in U^+_{\gamma^{1/12}} \left(\tilde y_{k,1}\right) \bigcup U_{\gamma^{1/12}} \left(\tilde y_{k,1}+1\right)
$$
for all $k\ge m+K[\sqrt[3]{n}]$.

Since we assume that (\ref{e.sinchro}) holds, this in fact implies that
$$
\tilde y_{k,2}\in U^+_{\gamma^{1/3}} \left(\tilde y_{k,1}\right) \bigcup U_{\gamma^{1/3}} \left(\tilde y_{k,1}+1\right)
$$
for all $k\ge m+K[\sqrt[3]{n}]$.

Now Lemma \ref{l.pereskokprob} implies that if $\tilde y_{k,2}\in U^-_{\gamma^{1/3}} \left(\tilde y_{k,1}+1\right)$  for $k=m+K[\sqrt[3]{n}]$, then with probability at least $1-3\gamma^{s/20}$ for all $k>m+2K[\sqrt[3]{n}]$ we have $\tilde y_{k,2}\in U^+_{\gamma^{1/3}} \left(\tilde y_{k,1}+1\right)$.

Summarizing, with probability at least
$$
1-3\gamma^{s/20} - 2\gamma^{s/20}=1-5\gamma^{s/20}
$$
for all $k>m+2K[\sqrt[3]{n}]$ either $\tilde y_{k,2}\in U^+_{\gamma^{1/3}}\left(\tilde y_{k,1} \right)$, or  $\tilde y_{k,2}\in U^+_{\gamma^{1/3}}\left(\tilde y_{k,1}+1\right)$.
\end{proof}

Now we are ready to prove Proposition \ref{p:classes}. Fix $\varepsilon'>0$.

\begin{defi}
Let us say that an interval $J_i=[b_{i-1}, b_i]$ is \emph{$m_0$-suspicious} if $|X_{k,i}|\le \varepsilon'$ for $k=1, 2, \ldots, m_0-1$, and $|X_{m_0,i}|>\varepsilon'$.
\end{defi}

\begin{defi}
Let us say that an interval $J_i=[b_{i-1}, b_i]$ is \emph{good} if it satisfies the claim of Proposition \ref{p:classes}, i.e. it is either small ($|X_{m,i}|\le \varepsilon'$ for all $m=1, \ldots, n$), or opinion-changer ($J_i$ is $m_0$-suspicious, and $|X_{m,i}|<\varepsilon'$ for all $m>m_0+\varepsilon'n$), or jump interval ($J_i$ is $m_0$-suspicious, and $1<|X_{m,i}|<1+\varepsilon'$ for all $m>m_0+\varepsilon'n$). Otherwise $J_i$ will be called \emph{bad}.
\end{defi}

In these terms, we get from Proposition~\ref{l.intervals} an immediate
\begin{coro}\label{c:bad}
For any $i$, $m_0$ we have
$$
\P\left(\left.J_i \ \text{is bad}\ \right|\ J_i \ \text{is}\ \text{$m_0$-suspicious}\right)\le 5\gamma^{s/20}.
$$
\end{coro}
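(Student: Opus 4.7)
The plan is to deduce Corollary~\ref{c:bad} essentially as an immediate consequence of Lemma~\ref{l.intervals} applied at time $m=m_0$, via a conditioning on the $\sigma$-algebra $\mathcal{F}_{m_0}:=\sigma(\omega_1,\dots,\omega_{m_0})$. The key structural observation is that the event $\{J_i\text{ is }m_0\text{-suspicious}\}$ is $\mathcal{F}_{m_0}$-measurable, while the dichotomy appearing in Lemma~\ref{l.intervals} is determined by the i.i.d.\ tail $\omega_{m_0+1},\omega_{m_0+2},\dots$, independent of $\mathcal{F}_{m_0}$, so the two aspects decouple cleanly.

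The first task is to check that on $\{J_i\text{ is }m_0\text{-suspicious}\}$ the configuration at time $m_0$ meets the hypothesis of Lemma~\ref{l.intervals}, i.e.\ $\tilde x_{m_0,i-1}<\tilde x_{m_0,i}<\tilde x_{m_0,i-1}+1$. The left inequality is the monotonicity assumption~(A4). For the right one, I would appeal to a one-step Lipschitz estimate in the space and parameter directions: since $|X_{m_0-1,i}|\le\eps'$ by suspiciousness,
$$
|X_{m_0,i}|\le \Lx\cdot|X_{m_0-1,i}|+\La\cdot\tfrac{|J|}{N}\le \Lx\eps'+\La\cdot\tfrac{|J|}{N},
$$
which is $<1$ provided $\eps'<1/(2\Lx)$ and $n$ is large. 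Since Proposition~\ref{p:classes} only becomes stronger for smaller $\eps'$, we may WLOG assume this restriction throughout.

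Next, invoking Lemma~\ref{l.intervals} with $m=m_0$ and the (now $\mathcal{F}_{m_0}$-determined) initial data $\tilde y_{m_0,1}=\tilde x_{m_0,i-1}$, $\tilde y_{m_0,2}=\tilde x_{m_0,i}$ yields, on the $m_0$-suspicious event,
$$
\P\bigl(\text{the dichotomy of Lemma~\ref{l.intervals} fails for some }k>m_0+2K\sqrt[3]{n}\,\bigm|\,\mathcal{F}_{m_0}\bigr)\le 5\gamma^{s/20}.
$$
For all sufficiently large $n$ one has $2K\sqrt[3]{n}<\eps' n$ and $\gamma^{1/3}<\eps'$; hence, whenever this dichotomy does hold, for every $k>m_0+\eps' n$ either $|X_{k,i}|<\gamma^{1/3}<\eps'$ (so $J_i$ is an opinion-changer with jump index $m_0$), or $|X_{k,i}|\in[1,1+\gamma^{1/3})\subset[1,1+\eps')$ (so $J_i$ is a jump interval with jump index $m_0$). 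In either case $J_i$ is good, so on $\{J_i\text{ is }m_0\text{-suspicious}\}$ the ``bad'' event is contained in the failure event of Lemma~\ref{l.intervals}. Averaging the displayed uniform conditional bound over the conditioning then gives the desired $\P(J_i\text{ is bad}\mid J_i\text{ is }m_0\text{-suspicious})\le 5\gamma^{s/20}$.

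The only delicacy, and hence the main obstacle (modest though it is), is ensuring $|X_{m_0,i}|<1$ so that Lemma~\ref{l.intervals} actually applies with starting time $m_0$; this is what forces the harmless Lipschitz-based restriction on $\eps'$ above. A restriction-free alternative would be to apply Lemma~\ref{l.intervals} at time $m_0-1$ instead, where $|X_{m_0-1,i}|\le\eps'<1$ is automatic, and to absorb the single step from $m_0-1$ to $m_0$ into the $2K\sqrt[3]{n}$-long transition window by conditioning on $\mathcal{F}_{m_0-1}$ and noting that the suspiciousness event together with the lemma failure both reduce to $\mathcal{F}_{m_0-1}$-conditional probabilities bounded uniformly by the same $5\gamma^{s/20}$.
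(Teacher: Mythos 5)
Your argument is correct and is essentially the paper's: the paper derives Corollary~\ref{c:bad} as an immediate consequence of Lemma~\ref{l.intervals}, exactly the route you take, with your explicit conditioning on $\mathcal{F}_{m_0}$ and the one-step Lipschitz check that $|X_{m_0,i}|<1$ (harmlessly assuming $\eps'$ small) merely making precise what the paper leaves implicit.
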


In other words,
\begin{equation}\label{eq:b-s}
\P\left(J_i \ \text{is bad and}\  m_0\ \text{suspicious}\right)\le 5\gamma^{s/20} \cdot \P\left( J_i \ \text{is}\ m_0\ \text{suspicious}\right).
\end{equation}

Now, the number of suspicious intervals is easily bounded from above:

\begin{lemma}\label{l.easy}
For any $m_0=1, \ldots, n$, and any $\bar \omega\in \Omega^n$, number of $m_0$-suspicious intervals is not greater than $\frac{M^*}{\varepsilon'} m_0$, where
$$
M^*= [ \La \cdot |J| ]+1. 
$$
\end{lemma}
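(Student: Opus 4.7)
The plan is to reduce the lemma to the deterministic estimate
\[
D_m := \tx_{m,N} - \tx_{m,0} \;=\; \sum_{i=1}^N |X_{m,i}| \;\le\; M^* m
\]
valid for every $m\ge 0$ and every $\bar\omega$. Once this is at hand the lemma follows by a single pigeonhole step: each $m_0$-suspicious $J_i$ contributes strictly more than $\varepsilon'$ to the sum $D_{m_0}$, so their total number is strictly less than $D_{m_0}/\varepsilon' \le M^* m_0/\varepsilon'$. (Note that the monotone ordering $\tx_{m,0}\le \tx_{m,1}\le \dots \le \tx_{m,N}$, used implicitly in $\sum_i |X_{m,i}| = \tx_{m,N}-\tx_{m,0}$, follows by an easy induction from (A4).)

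The main obstacle in bounding $D_m$ linearly is that the spatial Lipschitz constant $\Lx$ of the maps $\tf_{a,\omega}$ is generally much larger than $1$ (projective maps of $\mathbb{RP}^1$ expand strongly near focusing directions), so the naive bound $D_{m+1}\le \Lx\cdot D_m + \La|J|$ would grow exponentially. I will bypass this by combining monotonicity in the parameter (assumption (A4)) with the degree-one identity $\tf_{a,\omega}(y+k)=\tf_{a,\omega}(y)+k$ for every $k\in\Z$, a property available here because each $\tf_{a,\omega}$ is the lift of a circle homeomorphism. Setting $k := \lceil D_m\rceil \in \Z$ and using $R_m:=\tx_{m,N}\le \tx_{m,0}+k=:L_m+k$, monotonicity of the increasing lift $\tf_{b_N,\omega_{m+1}}$ gives
\[
R_{m+1} = \tf_{b_N,\omega_{m+1}}(R_m) \le \tf_{b_N,\omega_{m+1}}(L_m+k) = \tf_{b_N,\omega_{m+1}}(L_m)+k.
\]
Subtracting $L_{m+1}=\tf_{b_0,\omega_{m+1}}(L_m)$ and invoking the Lipschitz-in-parameter bound $\tf_{b_N,\omega_{m+1}}(L_m)-\tf_{b_0,\omega_{m+1}}(L_m)\le \La(b_N-b_0)=\La|J|$ then yields the key inequality
\[
D_{m+1} \le \lceil D_m\rceil + \La|J|.
\]

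Taking ceilings on both sides turns this into a clean integer linear recursion,
\[
\lceil D_{m+1}\rceil \le \lceil D_m\rceil + \lceil \La|J|\rceil \le \lceil D_m\rceil + M^*,
\]
where the last inequality uses $\lceil \La|J|\rceil \le [\La|J|]+1 = M^*$. Induction from $D_0 = 0$ now yields $\lceil D_m\rceil \le M^* m$, and hence $D_m\le M^* m$, which together with the pigeonhole count above finishes the proof. The heart of the argument is thus the ceiling trick in the recursion: it converts the harmful multiplicative dependence on $\Lx$ into a purely additive increment of size at most $M^*$ per step, by absorbing the whole ``spatial spread'' of the orbits into a single integer shift that the degree-one property lets $\tf_{b_N,\omega_{m+1}}$ swallow for free.
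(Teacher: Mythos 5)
Your proof is correct and follows essentially the same route as the paper: bound the total spread $\tx_{m_0,N}-\tx_{m_0,0}=\sum_i |X_{m_0,i}|$ by $M^* m_0$ and then apply the pigeonhole count with threshold $\varepsilon'$, exactly as in the paper's two-line argument. The only difference is that the paper simply asserts the deterministic bound $\tf_{m,b_+,\bo}(\tx_0)-\tf_{m,b_-,\bo}(\tx_0)\le M^* m$, while your ceiling recursion $D_{m+1}\le \lceil D_m\rceil+\La|J|$, based on monotonicity in $a$ and the commutation of the lifts with integer translations, is precisely the justification the paper leaves implicit.
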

\begin{proof}
Indeed,
$$
\tx_{m,N}-\tx_{m,0} = \tf_{m,\bmax,\omega}(\tx_0) - \tf_{m,\bmin,\omega}(\tx_0) \le M^* m,
$$
and at the same time
$$
\tx_{m,N}-\tx_{m,0} = \sum_{i=1}^N |X_{m,i}|  \ge \eps' \cdot \# \{i=1,\ldots,N \mid J_i \ \text{is}\ m_0\ \text{suspicious}\}.\qedhere
$$
\end{proof}


Applying this lemma, for any $m_0=1,\dots,n$ we get an upper bound for the probability of the presence of a bad $m_0$-suspicious interval:
\begin{multline*}
\P\left( \exists i\in \{1,\ldots,N\} \mid J_i \text{ is bad and $m_0$-suspicious} \right) \le
\\
\E \left( \# \left\{ i\in \{1,\ldots,N\} \mid J_i \text{ is bad and $m_0$-suspicious} \right\} \right) =
\\
= \sum_{i=1}^N \P\left(J_i \ \text{is bad and $m_0$-suspicious}\right) \le
\\
\le \sum_{i=1}^N 5\gamma^{s/20} \, \P\left(J_i \ \text{is $m_0$-suspicious}\right) =
\\
= 5\gamma^{s/20} \E \left( \# \left\{ i\in \{1,\ldots,N\} \mid J_i \text{ is $m_0$-suspicious} \right\} \right) \le
\\
\le 5\gamma^{s/20} \cdot \frac{M^* m_0}{\eps'} \le \frac{5M^*}{\eps'} \cdot n\gamma^{s/20},
\end{multline*}
where the last inequality comes from Lemma~\ref{l.easy}.

Finally, summing over $m_0$, we get the desired
\begin{multline*}
  \P\left(\text{at least one of the intervals $J_i$ is bad}\right)\le \\
   \sum_{m_0=1}^n \P\left( \exists i\in \{1,\ldots,N\} \mid J_i \text{ is bad and $m_0$-suspicious} \right) \le
\\
\le n \cdot \frac{5M^*}{\eps'} \cdot n\gamma^{s/20} < \exp\left(-\frac{s}{40}\sqrt[4]{n}\right)
\end{multline*}
for large $n$. This completes the proof of Proposition \ref{p:classes}.


\section{Anderson localization}\label{s.al}

In this section we prove Theorem \ref{t.vector}.

The following two lemmas use only linear algebra. We assume that  a sequence of
matrices $A_j\in \SL(2,\R)$,  $\|A_j\|\le M$, and an initial vector $v_0\in \R^2\setminus \{0\}$ are given. Then, we consider the
corresponding sequence $v_m$ of images, defined by
\begin{equation}\label{eq:v-def}
v_m=A_m v_{m-1}, \quad m=1,\dots, n,
\end{equation}
and describe its possible behavior.

\begin{defi}
Given matrices $A_1,\dots,A_n \in \SL(2,\R)$ with $\|A_j\|\le M$,
we say that the product $A_n\dots A_1$ is \edited{\it $(r,\lambda)$-hyperbolic}
if for any $0\le m< m'\le n$ for the product $T_{[m,m']}:=A_{m'}\dots A_{m+1}$ one has
$$
\log \|T_{[m,m']}\| \in U_{r}(\lambda(m'-m)).
$$
\end{defi}
For instance, the conclusion~\ref{i:m2} of Theorem~\ref{t:main} combined with Proposition \ref{l:upper-finite} implies $(2n\eps,\lambda_F)$-hyperbolicity
for the corresponding product $F_a(\omega_n)\dots F_a(\omega_1)$. At the same time, the conclusion~\ref{i:m3}
implies $(2n\eps,\lambda_F)$-hyperbolicity of both products $F_a(\omega_{m_k})\dots F_a(\omega_1)$
and~$F_a(\omega_{n})\dots F_a(\omega_{m_k+1})$.


\begin{figure}
\begin{center}
\includegraphics[width=0.3\textwidth]{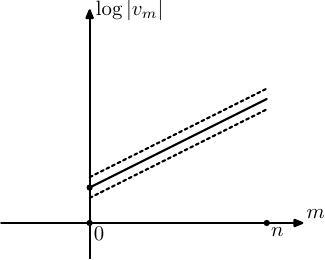} \hfill  \includegraphics[width=0.3\textwidth]{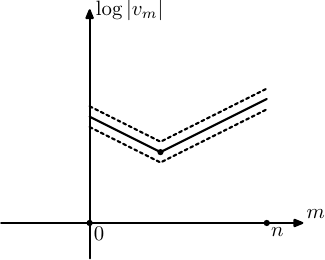}
\hfill \includegraphics[width=0.3\textwidth]{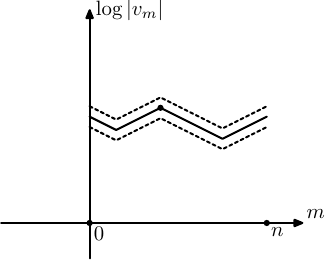}
\end{center}
\caption{Behaviour of log-norm of iterations of a given vector as in Lemmas \ref{l:line-shape}, \ref{l:V-shape}, \ref{l:W-shape}.
Dashed region corresponds to the $\eps n$-neighborhood}\label{f:norms}
\end{figure}

Let us first prove the following lemma.
\begin{lemma}[line-shape]\label{l:line-shape}
For any $M, \lambda, \eps>0$ there exists $\eps'>0$ with the following property. Assume that $v_0$ has the norm smaller than any other vector in the sequence~\eqref{eq:v-def}, i.e. $|v_0|\le |v_m|$ for all $m=1,\ldots, n$, and that the product $A_n\dots A_1$ is $(n\eps',\lambda)$-hyperbolic.
Then
$$
\forall m=0,1,\dots, n \quad \log |v_m| - \log |v_0| \in U_{n\eps}(m\lambda).
$$
\end{lemma}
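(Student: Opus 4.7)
The plan is to show that, after a burn-in of length $m_0=m_0(M,\lambda,\eps')$, the direction $\hat v_m := v_m/|v_m|$ stays very close to the output unstable direction of $B_m := A_m\cdots A_1$, and that thereafter each matrix in the product multiplies the norm by $\|B_{m+1}\|/\|B_m\|\approx e^\lambda$ up to a correction summable in $m$. I would take $\eps'=\eps/C$ for a large constant $C=C(M,\lambda)$ and set $m_0=\lceil(n\eps'+2\log M)/\lambda\rceil$ so that $\|B_{m_0}\|\ge M^2$ by hyperbolicity, working in the regime where $n$ is large enough (depending on $M,\lambda,\eps$) that $n\eps$ dominates the burn-in contribution.

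The upper bound $\log|v_m|-\log|v_0|\le m\lambda+n\eps'\le m\lambda+n\eps$ is immediate from $|v_m|\le\|B_m\|\,|v_0|$. For $m\le m_0$ the two-sided bound $|v_m|/|v_0|\in[1,M^{m_0}]$, combined with $m_0(\lambda+\log M)\le n\eps$ that is forced by the choice of $C$, gives $\log|v_m|-\log|v_0|\in[0,m_0\log M]\subset U_{n\eps}(m\lambda)$.

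The heart of the argument is for $m>m_0$. First, the assumption $|v_m|\ge|v_0|$ rewritten as $|B_m^{-1}\hat v_m|\le 1$, together with the SVD of $B_m^{-1}$, forces $\hat v_m$ to lie within angular distance $1/\|B_m\|$ of the output unstable direction of $B_m$. Next, using the identity that equates $A_{m+1}$ applied to this direction with $\|B_m\|^{-1}$ times $B_{m+1}$ applied to the input unstable direction of $B_m$, together with the standard quantitative slow-rotation statement that in an $(n\eps',\lambda)$-hyperbolic product the angular separation between the input unstable directions of $B_m$ and $B_{m+1}$ is $O(\|B_m\|^{-2})$, I would obtain the step-wise estimate
\[
\log\bigl(|v_{m+1}|/|v_m|\bigr)\in U_{O(M/\|B_m\|)}\bigl(\log(\|B_{m+1}\|/\|B_m\|)\bigr).
\]
Telescoping from $m_0$ to $m$, using $\sum_{j\ge m_0}\|B_j\|^{-1}=O(e^{-m_0\lambda+n\eps'})=O(M^{-2})$ and the burn-in bound $\log(|v_{m_0}|/|v_0|)\in[0,m_0\log M]$, then yields $\log|v_m|-\log|v_0|\in U_{3n\eps'+O(1)}(m\lambda)$, and this lies in $U_{n\eps}(m\lambda)$ by the choices of $C$ and $n$.

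The hard part will be justifying the slow-rotation bound $O(\|B_m\|^{-2})$ from $(n\eps',\lambda)$-hyperbolicity alone. This is the quantitative form of the well-known fact that in a uniformly hyperbolic cocycle the input unstable direction is essentially invariant and converges exponentially; in the $SL(2,\R)$ setting it should reduce to a perturbation argument on the top eigenvector of $B_{m+1}^TB_{m+1}=B_m^T(A_{m+1}^TA_{m+1})B_m$ relative to that of $B_m^TB_m$, pitting the bounded perturbation $\|A_{m+1}^TA_{m+1}\|\le M^2$ against the spectral gap $\|B_m\|^4$ of the unperturbed matrix.
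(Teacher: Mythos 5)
Your proposal is correct in outline, but it takes a genuinely different route from the paper's proof. The paper introduces a single companion orbit: it picks a unit vector $u_0$ realizing $\|T_{[0,n]}\|$, uses the $(n\eps',\lambda)$-hyperbolicity of the products $T_{[0,n]}$ and $T_{[m,n]}$ to get $\log|u_m|\ge m\lambda-2n\eps'$, deduces from the determinant-one (area) argument that the angle between the directions of $v_m$ and $u_m$ is at most $\frac{\pi}{2}e^{-m\lambda+2n\eps'}$, and then writes $\log|v_m|=\sum_{j\le m}\phi_{A_j}([v_{j-1}])$ and compares this sum termwise with the same sum along $[u_{j-1}]$, using only equicontinuity of $\{\phi_A:\|A\|\le M\}$: all but $O(\eps' n)$ terms differ by less than $\eps/2$, and each of the remaining ones by at most $2\log M$. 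You instead track the singular frame of the partial products $B_m$: the hypothesis $|v_m|\ge|v_0|$ pins the direction of $v_m$ to within $O(\|B_m\|^{-1})$ of the long axis of $B_m$ (correct, via the SVD of $B_m^{-1}$), a slow-rotation estimate compares the top singular directions of $B_m$ and $B_{m+1}$, and a telescoping of per-step estimates with exponentially summable errors, plus a crude burn-in bound for $m\le m_0$, finishes the argument; your accounting of the errors ($2n\eps'$ from the head norms, $O(1)$ from the summable corrections, $m_0(\lambda+\log M)$ from the burn-in) does close, with the same ``$n$ large enough'' caveat that the paper's own proof uses implicitly when it bounds the number of bad terms. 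The paper's route buys economy (no control of singular directions of partial products is ever needed, only the one companion orbit, and in fact only growth of $|v_m|\cdot|u_m|$ is used), while yours buys a sharper local picture ($[v_m]$ essentially coincides with the unstable axis of $B_m$, with per-step errors decaying exponentially), which is close in spirit to the cancellation analysis of Section 5.5.

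The one step that needs repair is your justification of the slow-rotation bound. As framed, the perturbation argument does not work: the additive perturbation of $B_m^TB_m$ is $B_m^T(A_{m+1}^TA_{m+1}-I)B_m$, whose norm can be of order $M^2\|B_m\|^2$, i.e.\ comparable to (or larger than) the additive spectral gap of $B_m^TB_m$, which is of order $\|B_m\|^2$; so a Davis--Kahan type ``perturbation over gap'' estimate yields nothing, and the ratio $\|B_m\|^4$ of the eigenvalues cannot be invoked through a standard relative perturbation theorem here either, since the multiplicative factor $A_{m+1}$ is not close to the identity. The claim itself is true and elementary, but prove it directly in the singular frame: writing $B_m=U\Sigma V^T$ with $\Sigma=\mathrm{diag}(\sigma,\sigma^{-1})$, $\sigma=\|B_m\|$, one has $V^TB_{m+1}^TB_{m+1}V=\Sigma\,(U^TA_{m+1}^TA_{m+1}U)\,\Sigma$, a symmetric matrix whose $(1,1)$ entry is at least $\sigma^2M^{-2}$, whose off-diagonal entry is at most $M^2$, and whose $(2,2)$ entry is at most $M^2\sigma^{-2}$; its top eigenvector therefore makes an angle $O(M^4\sigma^{-2})$ with $e_1$, which is exactly the bound you need (note it uses only $\|B_m\|\gg M$, guaranteed by your burn-in, not hyperbolicity per se). Alternatively, the same bound follows in the paper's style from the projective contraction of $f_{B_m^T}$ outside a fixed cone around its most contracted direction. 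With this fix, your telescoping argument goes through as described.
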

Geometrically speaking, the conclusion this lemma states that if we plot the graph of $\log |v_m|$ for $m=0,1,\dots,n$, and then contract this graph $n$ times (both vertically and horizontally), then it will be in the $\eps$-neighborhood of a line with slope $\lambda$ (see Fig.~\ref{f:norms}).
\begin{proof}
Without loss of generality, we can assume that $v_0$ is a unit vector. Take another unit vector, $u_0$, that realizes the norm of the full product,
$$
|T_{[0,n]} u_0| = \|T_{[0,n]}\|,
$$
and consider the associated sequence of its intermediate images,
$$
u_m=A_m u_{m-1}, \quad m=1,\dots, n.
$$
Then, we have a lower bound for their norms: as $u_n=T_{[m,n]}u_m$,
\begin{equation}\label{eq:u-lower}
\log |u_m|\ge \log \frac{|u_n|}{\|T_{[m,n]}\|} = \log \|T_{[0,n]}\| - \log \|T_{[m,n]}\| \ge m\lambda - 2n\eps'.
\end{equation}

Next, $v_0$ and $u_0$ form a parallelogram of area at most $1$, hence
the same holds for the parallelogram formed by $v_m$ and $u_m$ for any~$m$.
As $|v_m|\ge 1$ by assumption and $|u_m|\ge \exp(m\lambda-2n\eps')$, the angle between the lines
passing through $v_m$ and $u_m$ does not exceed $\frac{\pi}{2}\exp(-m\lambda+2n\eps')$. Here we are using the inequality $\arcsin x \le \frac{\pi}{2}x$.

Now, we have
$$
\log |v_m|=\sum_{j=1}^m \log \frac{|A_j v_{j-1}|}{|v_{j-1}|} = \sum_{j=1}^m \phi_{A_j}([v_{j-1}]),
$$
where $\phi_A$ is a function on the projective line $\R P^1$, defined by
$$
\phi_A([v])=\log \frac{|Av|}{|v|}
$$
for any nonzero vector $v$ (where $[v]$ is the corresponding point of $\R P^1$).

\edited{The} family of the functions $\phi_A$ for $A\in SL(2,\R)$, $\|A\|\le M$, is equicontinuous on $\R P^1$. Hence, for any $\eps>0$
there exists $\delta>0$ such that
\begin{equation}\label{eq:eps-delta}
|\phi_A([u])-\phi_A([v])|< \frac{\eps}{2}
\end{equation}
for all $A\in \SL(2,\R)$ with $\|A\|\le M$ and all $u,v$ with the
angle between the corresponding lines less than~$\delta$. At the same time,
\begin{equation}\label{eq:Am-v}
\log |v_m| = \log |u_m| + \sum_{j=1}^m \left(\varphi_{A_j}([v_{j-1}]) - \varphi_{A_j}([u_{j-1}]) \right).
\end{equation}
The first summand is within~$2n\eps'$ from $m\lambda$ due to~\eqref{eq:u-lower} and the assumption on $(n\varepsilon', \lambda)$-hyperbolicity. The sum in the second summand can be decomposed into two parts: where the angle between~$u_{m-1}$ and~$v_{m-1}$ is greater than~$\delta$ and where it is smaller than~$\delta$. The summands of the second type give the contribution of at most $m\frac{\eps}{2}$ due to~\eqref{eq:eps-delta}, while there will be at most
$$
\frac{2n\eps'+ \log \frac{\pi}{2}+|\log \delta|}{\lambda} \le \frac{3\eps'}{\lambda_F} n
$$
summands of the first one (assuming $n$ to be sufficiently large), giving their total contribution of
at most $2\log M \cdot \frac{3\eps'}{\lambda_F} n$. Adding up, we get an estimate
$$
|\log |v_m| - m\lambda | \le 2n\eps' + m \frac{\eps}{2} + 2\log M \cdot \frac{3\eps'}{\lambda} n \le
\left(\left(2 +  \frac{6 \log M}{\lambda} \right) \eps' + \frac{\eps}{2} \right) n.
$$
Fix $\eps' = \left(2 +  \frac{6 \log M}{\lambda} \right)^{-1} \cdot \frac{\eps}{2}$, and we get the desired
$$
|\log |v_m| - m\lambda | \le \eps n.
$$
This completes the proof of Lemma \ref{l:line-shape}.
\end{proof}
\begin{remark}
In fact, the proof of Lemma~\ref{l:line-shape} uses only the exponential growth of the product of lengths $|v_m|\cdot |u_m|$. Hence,
the assumption of $v_0$ being the shortest vector of a sequence $\{v_m\}$ can be weakened to a mere lower bound on the allowed
exponential decrease speed. Namely, it suffices to assume that for some  $c>0$, $\lambda'<\lambda$ we have
$|v_m|\ge c e^{-m\lambda'} |v_0|$ all  $m=1,\dots, n$ for the conclusion of Lemma~\ref{l:line-shape} to hold for all sufficiently large~$n$.
\end{remark}

The next lemma allows to get rid of the assumption of $v_0$ being the shortest vector in the sequence of iterations.

\begin{lemma}[V-shape]\label{l:V-shape}
For any $M, \lambda, \eps>0$ there exists $\eps'>0$ with the following property. Assume that
the product $A_n\dots A_1$ is $(n\eps',\lambda)$-hyperbolic, and $v_m$ be a
sequence of intermediate images associated to some $v_0\in\R^2\setminus \{0\}$ given by (\ref{eq:v-def}).
Then there exists $m'\in \{0,\dots,n\}$, such that
$$
\forall m=0,1,\dots, n \quad \log |v_m| -\log |v_{m'}| \in U_{n\eps}(\lambda\cdot |m-m'|).
$$
\end{lemma}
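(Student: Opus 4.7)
The plan is to take $m'$ to be any index at which $|v_m|$ attains its minimum over $m = 0, 1, \ldots, n$, and then derive the V-shape by invoking Lemma~\ref{l:line-shape} twice: once on the forward half ($m \ge m'$) and once, after time-reversal, on the backward half ($m \le m'$). With this choice of $m'$, the vector $v_{m'}$ is automatically the shortest in either half, which is exactly the hypothesis of Lemma~\ref{l:line-shape} (up to a harmless rescaling making $|v_{m'}|=1$, which does not affect the conclusion since it is stated in terms of $\log|v_m|-\log|v_{m'}|$).

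For $m \ge m'$, I would consider the shifted sub-sequence $v'_j := v_{m'+j}$ generated by the matrices $A_{m'+1}, \ldots, A_n$. Every sub-product of this sub-sequence is itself a sub-product of the original $A_n \cdots A_1$, so the $(n\eps',\lambda)$-hyperbolicity is inherited with the same absolute error $n\eps'$. Provided $n - m' \ge \eta n$ for an auxiliary threshold $\eta = \eta(M,\lambda,\eps)$ to be fixed, the sub-product is $(n_1 \cdot (\eps'/\eta), \lambda)$-hyperbolic where $n_1 = n-m'$, so Lemma~\ref{l:line-shape} applies and gives $\log|v_m| - \log|v_{m'}| \in U_{n_1\,\eps/2}(\lambda(m-m')) \subseteq U_{n\eps}(\lambda(m-m'))$, as long as $\eps'/\eta$ is below the $\eps'$-threshold that Lemma~\ref{l:line-shape} requires for tolerance $\eps/2$. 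If instead $n - m' < \eta n$, the uniform bound $\|A_j^{\pm 1}\| \le M$ yields $|\log|v_m| - \log|v_{m'}|| \le \eta n \log M$ and $|\lambda(m-m')| \le \eta n \lambda$, so the difference fits into $U_{n\eps}(\lambda(m-m'))$ as soon as $\eta(\log M + \lambda) \le \eps$.

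For $m \le m'$, I would reverse the sub-sequence: set $w_k := v_{m'-k}$ for $k = 0, 1, \ldots, m'$, so that $w_k = B_k w_{k-1}$ with $B_k := A_{m'-k+1}^{-1} \in \SL(2, \R)$. The crucial algebraic observation is that for $A \in \SL(2, \R)$ the singular values are $\|A\|$ and $\|A\|^{-1}$, hence $\|A^{-1}\| = \|A\|$; moreover every sub-product $B_{k''} \cdots B_{k'+1}$ equals the inverse of the sub-product $A_{m'-k'} \cdots A_{m'-k''+1}$ of the original, and so shares its norm. Consequently $\|B_k\| \le M$ and the reversed family inherits $(n\eps',\lambda)$-hyperbolicity, while $w_0 = v_{m'}$ remains the shortest vector. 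The forward argument now runs verbatim on the reversed data to produce $\log|v_{m'-k}| - \log|v_{m'}| \in U_{n\eps}(\lambda k)$, completing the V-shape.

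The main technical obstacle — really the only one — is calibrating $\eps'$ so that Lemma~\ref{l:line-shape} can be invoked on sub-sequences whose length $n_1$ ranges over all of $\{0, 1, \ldots, n\}$. Because the inherited hyperbolicity error $n\eps'$ does not shrink with $n_1$, one cannot apply Lemma~\ref{l:line-shape} uniformly to all sub-lengths; the short-sub-sequence range must be peeled off and treated by the elementary $\log M$ Lipschitz bound, which is what the auxiliary threshold $\eta$ is for. Once $\eta$ is chosen in terms of $\eps, M, \lambda$ and $\eps'$ is chosen in terms of $\eta$ and the Lemma~\ref{l:line-shape}-threshold at tolerance $\eps/2$, the two ranges glue cleanly and the V-shape follows.
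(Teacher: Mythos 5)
Your proposal is correct and follows essentially the same route as the paper: choose $m'$ at the minimum of $|v_m|$ and apply Lemma~\ref{l:line-shape} separately to the two halves $[0,m']$ and $[m',n]$. The extra steps you supply (time-reversal using $\|A^{-1}\|=\|A\|$ for $SL(2,\R)$ so that $v_{m'}$ becomes the initial shortest vector, and the $\eta$-threshold peeling to calibrate $\eps'$ for sub-products of arbitrary length) are exactly the details the paper's two-line proof leaves implicit, and they are handled correctly.
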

Again, this lemma admits a geometric interpretation in terms of the graph of $\log |v_m|$: plotting this graph for $m=0,1,\dots,n$,
and then contracting it~$n$ times in both directions, we get a graph that is contained in the $\eps$-neighborhood of a~$V$-shaped piecewise-linear function with slopes $\pm\lambda$ (see Fig.~\ref{f:norms}).
\begin{proof}
We will choose $m'$ so that $v_{m'}$ is a least-norm vector in this sequence:
$$
|v_{m'}|=\min_{0\le m\le n} |v_m|.
$$
Now apply previous Lemma \ref{l:line-shape} separately on the intervals 
$[0,m']$ and $[m',n]$.
\end{proof}

Lemma \ref{l:V-shape} allows us to prove the first part of Theorem~\ref{t.vector}, on one-sided products.

\begin{proof}[Proof of the first part of Theorem~\ref{t.vector}]
If~\eqref{eq:lim-less} holds, then for some $\eps_0>0$ one has
\begin{equation}\label{eq:lim-l-eps}
\limsup_{n\to+\infty} \frac{1}{n} \log \|T_{n,a, \bar \omega} \left( \begin{smallmatrix} 1 \\ 0 \end{smallmatrix} \right)\| < \lambda_F(a)-\eps_0.
\end{equation}
Due to the standard argument of a countable intersection (considering a sequence of positive values of  $\eps_0$ that tends to zero)
it suffices to show that the conclusion of the theorem holds with~\eqref{eq:lim-less}
replaced with~\eqref{eq:lim-l-eps}. From now on, fix small $\eps_0>0$.

Take the point $x_0$ on the circle to be the projectivization
image of the vector~$v_0:=\left(1 \atop 0\right)$.
Note that the series
$$
\sum_n \exp(-\delta_0 \sqrt[4]{n})
$$
converges for any $\delta_0>0$. Hence, due to Borel--Cantelli lemma, for any $\eps,\eps'>0$ almost surely
for all sufficiently large~$n$ the conclusions of Theorem~\ref{t:main} and of Proposition~\ref{p:derivatives-control}
(for this specific choice of the point~$x_0$) hold.

We will fix sufficiently small values of $\eps$ and $\eps'$ for the arguments below to
work; in fact, as the reader will see, it suffices to take an arbitrary
\begin{equation}\label{e.choice}
\eps<\frac{\eps_0}{20} \text{\ \ \ and \ \  }
\eps':=\frac{1}{10 C_1}\eps,
\end{equation}
where $C_1$ is given by Proposition \ref{p:derivatives-control}.

Assume now that for some $a\in J$ the inequality~\eqref{eq:lim-l-eps} holds; it also implies that for all sufficiently large $m$
\begin{equation}\label{eq:n-eps0}
\frac{1}{m} \log \|T_{m,a, \bar \omega} \left( \begin{smallmatrix} 1 \\ 0 \end{smallmatrix} \right)\| < \lambda_F(a)-\eps_0.
\end{equation}
Let $n_1$ be such that conclusions of both Theorem~\ref{t:main} and of Proposition~\ref{p:derivatives-control}, as well as~\eqref{eq:n-eps0},
hold for all $m,n>n_1$.


For any $n>n_1$ consider the interval $J_{i}$ that contains~$a$. Note that for all sufficiently
large $n$ it is one of the exceptional intervals in the sense of Theorem~\ref{t:main}, in other words,
it cannot be neither small nor opinion-changing in terms of Proposition~\ref{p:classes}.
Indeed, otherwise Proposition~\ref{p:derivatives-control} would \edited{imply} the derivatives control~\eqref{eq:derivative-u},
and thus the derivatives at~$x_0$ would satisfy the exponential contraction with almost fastest possible speed:
$$
\log f'_{m,a,\bar\omega}(x_0) \le \lR(a)\cdot m + C_1\eps' n.
$$
Recalling the relation~\eqref{eq:der-norm} between the derivative and the norm change, we thus would get an almost fastest possible expansion:
$$
\log |T_{m,a,\bar\omega}(v_0)| \ge -\frac{1}{2}(\lR(a)\cdot m + C_1\eps' n) = \lambda_F(a)\cdot m -\frac{1}{2} C_1\eps' n.
$$
However, once $\frac{C_1\eps'}{2}<\eps_0$, we would get a contradiction with~\eqref{eq:n-eps0} at $m=n$.
Hence, $J_i$ should be an exceptional interval.

Moreover, the same arguments imply that for all sufficiently large $n$ the \edited{index} 
 $m_0'$ \edited{defined in (\ref{eq:derivative-upl})}, associated
to this $n$, satisfies $m_0'<\frac{1}{10} n$. Indeed, otherwise from~\eqref{eq:derivative-upl} for $m=m_0'$ we would get
$$
\frac{1}{m}\log | T_{m,a,\bo}(v_0)  |  \ge \frac{1}{m}(\lambda_F(a)\cdot m - \frac{C_1 \eps'}{2} n) \ge \lambda_F(a) - 5C_1\eps' >  \lambda_F(a) - \eps_0,
$$
thus again obtaining a contradiction with~\eqref{eq:n-eps0}.

Finally, the product $T_{[m_0';n],a,\bo}$ is also $(2n\eps',\lambda_F)$-hyperbolic. Thus, we can apply to it Lemma~\ref{l:V-shape}, obtaining
from the conclusion of this lemma the corresponding $m'\in [m_0', n]$.

Note now that the above arguments can be applied for all $n>n_1$, so for each such~$n$ we get the corresponding exceptional interval $J_{i_n,(n)}$,
the corresponding $m_{0,(n)}'$ and the moment $m'_{(n)}\in [m_{0,(n)}', n]$ obtained by the application of Lemma~\ref{l:V-shape}.

We then have the following auxiliary
\begin{lemma}\label{l:more-than-half}
$m'_{(n)}\ge \frac{1}{2}n$ for any $n>n_1$.
\end{lemma}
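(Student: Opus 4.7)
I would proceed by contradiction: suppose $m' := m'_{(n)} < n/2$ for some $n > n_1$, and write $m_0' := m'_{0,(n)}$. The V-shape estimate from Lemma~\ref{l:V-shape}, applied to the $(2n\eps,\lambda_F)$-hyperbolic range $[m_0',n]$ with starting vector $v_{m_0'}$, gives
\[
\log|v_m|-\log|v_{m'}|\in U_{n\eps}\bigl(\lambda_F(a)\,|m-m'|\bigr),\qquad m\in[m_0',n].
\]
Evaluating at $m=m_0'$ and using the line-shape bound $\log|v_{m_0'}|\in U_{C_1\eps'n/2}(\lambda_F(a)\,m_0')$ from~\eqref{eq:derivative-upl} pins the vertex value $\log|v_{m'}|\in U_{n(\eps+C_1\eps'/2)}(\lambda_F(a)(2m_0'-m'))$. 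Substituting back at $m=n$ yields the lower bound
\[
\log|v_n|\ \ge\ \lambda_F(a)(n+2m_0'-2m')-O\bigl(n(\eps+\eps')\bigr).
\]

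Combined with the standing upper bound $\log|v_n|<(\lambda_F(a)-\eps_0)n$ from~\eqref{eq:n-eps0}, and using~\eqref{e.choice} to absorb the error, the first round already produces $m'\ge m_0'+\frac{\eps_0\,n}{2\lambda_F(a)}-O(n\eps)$; this is a base lower bound $m'\ge c_0 n$ for some $c_0=c_0(\eps_0,\lambda_F(a))>0$, but in general $c_0<1/2$, so it does not yet close the argument.

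To upgrade this to $m'\ge n/2$, my plan is to bootstrap at the auxiliary scale $\hat n:=2m'<n$. For $n>n_1$ large enough we have $\hat n>n_1$, so the very same dichotomy used earlier to place $m_0'<n/10$ (i.e.\ eliminating the small and opinion-changing cases of Proposition~\ref{p:classes} via Proposition~\ref{p:derivatives-control} against~\eqref{eq:n-eps0}) forces the level-$\hat n$ interval $J_{\hat\imath}$ containing $a$ to be exceptional, with its own jump index $\hat m_0'<\hat n/10$ and V-shape minimum $\hat m'\in[\hat m_0',\hat n]$. The key point is that the two V-shapes, at levels $n$ and $\hat n$, describe the \emph{same} underlying sequence: the level-$n$ V-shape evaluated at $m=\hat n=2m'$ gives $\log|v_{\hat n}|\in U_{O(n(\eps+\eps'))}(2\lambda_F(a)\,m_0')$, while the level-$\hat n$ analysis gives $\log|v_{\hat n}|\in U_{O(\hat n(\eps+\eps'))}(\lambda_F(a)(\hat n+2\hat m_0'-2\hat m'))$. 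Equating these pins $\hat m'\approx \hat n/2+(\hat m_0'-m_0')$, while the nesting of the minimization ranges forces $|v_{\hat m'}|=|v_{m'}|$, hence $\hat m'=m'$ up to the V-shape slack. Then $m'=\hat m'\approx\hat n/2=m'$ forces the strict inequality $m'<n/2$ to fail for $n$ large, or else the descending sequence $n\mapsto 2m'_{(n)}$ drops below the threshold $n_1$, where the hypothesis cannot be invoked and we reach a contradiction.

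The main obstacle is the error management in the bootstrap: the V-shape estimates at levels $n$ and $\hat n$ carry slack $O(n\eps)$ and $O(m'\eps)$ respectively, and one has to show these do not dominate the pin $\hat m'\approx \hat n/2+(\hat m_0'-m_0')$. This is exactly what the choice $\eps<\eps_0/20$ and $\eps'=\eps/(10C_1)$ in~\eqref{e.choice} achieves: the slack is strictly smaller than the base improvement $\frac{\eps_0 n}{2\lambda_F(a)}$, so each step of the descent $n_k\mapsto n_{k+1}=2m'_{(n_k)}$ shrinks $n_k$ by a factor bounded away from $1$; after $O(\log(n/n_1))$ iterations the sequence must leave the regime $n_k>n_1$, contradicting the existence of the initial counterexample and proving $m'_{(n)}\ge n/2$.
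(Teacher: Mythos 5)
Your opening step is fine: combining the V-shape of Lemma~\ref{l:V-shape} on $[m_0',n]$ with \eqref{eq:derivative-upl} and \eqref{eq:n-eps0} does give $m'\gtrsim m_0'+\frac{\eps_0}{2\lambda_F}\,n$, and you rightly note this falls short of $n/2$. The genuine gap is the downward bootstrap at $\hat n=2m'$. If one actually carries out the ``equating'' of the level-$n$ and level-$\hat n$ descriptions of the same sequence (matching the two estimates of $\log|v_{\hat n}|$ and of $\log|v_{m'}|$), all one obtains are consistent relations such as $\hat m'\approx m'+(\hat m_0'-m_0')$; nothing forces $m'\ge n/2$, and the relation $\hat m'\approx \hat n/2$ concerns the scale $\hat n$, not $n$. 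Concretely, the profile $\log|v_m|=-\lambda_F m$ for $0\le m\le m'$ and $\log|v_m|=\lambda_F(m-2m')$ for $m'\le m\le n$, with $\frac{\eps_0}{2\lambda_F}n<m'<\frac n2$, satisfies \eqref{eq:n-eps0} for every $m\le n$ and matches every structural constraint available at scales $\le n$: it has exactly the shape permitted by \eqref{eq:derivative-upl} and Lemma~\ref{l:V-shape} at scale $n$ (with $m_0'$ essentially $0$), and it is exactly the level-$\hat n$ V-shape with $\hat m_0'=0$ and $\hat m'=m'=\hat n/2$. Hence no contradiction can be extracted from scales below $n$. Nor does the termination of the descent $n_k\mapsto 2m'_{(n_k)}$ help: dropping below $n_1$ is not a contradiction, it merely means the hypotheses can no longer be invoked; an infinite-descent argument would require each $n_{k+1}$ to be forced to stay above $n_1$, which is neither shown nor true.

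The reason no argument confined to $[0,n]$ can work is that \eqref{eq:lim-l-eps} is a $\limsup$ hypothesis, so it can only be violated at arbitrarily large times; accordingly, the paper propagates the counterexample \emph{upward}, not downward. If $m'_{(n)}<\frac n2$, the level-$n$ V-shape gives $\log|v_n|-\log|v_{n/2}|\ge\lambda_F\frac n2-4n\eps'$; since $m'_{0,(2n)}<\frac{2n}{20}<\frac n2$, having $m'_{(2n)}\ge n$ would place $[\frac n2,n]$ on the decreasing branch of the level-$2n$ V-shape and force $\log|v_n|-\log|v_{n/2}|\le-\lambda_F\frac n2+8n\eps'$, a contradiction; hence $m'_{(2n)}<n$, and by induction $m'_{(2^kn)}<\frac12\cdot 2^kn$ for all $k$. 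Chaining the resulting slope bounds $\bigl(\log|v_{2^kn}|-\log|v_{2^{k-1}n}|\bigr)/(2^{k-1}n)\ge\lambda_F-8\eps'$ over $k$ yields $\limsup_m\frac1m\log|v_m|\ge\lambda_F-8\eps'>\lambda_F-\eps_0$, contradicting \eqref{eq:n-eps0}. This upward propagation is the missing idea in your proposal; the single-scale ``base improvement'' you compute cannot be bootstrapped by descending to smaller scales.
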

\begin{proof}
It is easy to show that if the statement of Lemma \ref{l:more-than-half} does not hold for some $n$, it does not hold also for $2n$.
Indeed, assume  $m'_{(n)}<\frac{1}{2}n$. Then due to Lemma~\ref{l:V-shape} we have
\begin{equation}\label{eq:growth}
\log |v_n| - \log |v_{n/2}| \ge \lambda_F \frac{n}{2} - 4 n \eps'.
\end{equation}
We already know that $m'_{0,(2n)} \le \frac{2n}{20} < \frac{n}{2}$, so having $m'_{(2n)}\ge \frac{2n}{2}=n$ would imply that from $n/2$ to $n$ we are on the ``decreasing'' branch
of the $V$-shaped graph for log-length, and thus
$$
\log |v_n| - \log |v_{n/2}| \le -\lambda_F \frac{n}{2} + 8 n \eps'.
$$
This would contradict~\eqref{eq:growth} as $\eps'< \frac{\lambda_F}{24}$. Hence, assuming $m'_{(n)}< \frac{1}{2}n$ we also get $m'_{(2n)}< \frac{1}{2}\cdot 2n$, and by induction $m'_{(2^k n)}< \frac{1}{2}\cdot 2^k n$ for all $k$. Note now that~\eqref{eq:growth} can be rewritten as a lower bound for the slope
$$
\frac{\log |v_n| - \log |v_{n/2}|}{n/2} \ge \lambda_F - 8 \eps'.
$$
Joining such inequalities for $n$, $2n$, $4n$, etc., we get
$$
\limsup_{k\to\infty} \frac{1}{2^k n} \log |v_{2^k n}| \ge  \lambda_F - 8 \eps',
$$
and we thus have a contradiction with~\eqref{eq:n-eps0}, as $8\eps'<\eps_0$. This completes the proof of Lemma~\ref{l:more-than-half}.
\end{proof}

Now, the inequality~$m'_{(n)}\ge \frac{1}{2}n$ implies that
\begin{equation}\label{eq:decrease}
\log |v_{n/2}| - \log |v_{n/4}| \le -\lambda_F \frac{n}{4} + 4 n \eps',
\end{equation}
or in terms of a slope,
$$
\frac{\log |v_{n/2}| - \log |v_{n/4}|}{n/4} \le -\lambda_F + 16  \eps'.
$$
Joining such inequalities for $n$, $n/2$, $n/4$, etc., until we hit $n_1$, we get the desired
$$
\limsup_{n\to\infty} \frac{\log |v_n|}{n} \le -\lambda_F + 16  \eps'.
$$
Finally, as $\eps'$ can be chosen arbitrarily small, we finally get
$$
\limsup_{n\to\infty} \frac{\log |v_n|}{n} \le -\lambda_F
$$
and hence, due to Proposition \ref{p.upper},
$$
\lim_{n\to\infty} \frac{\log |v_n|}{n} = -\lambda_F.
$$
This completes the proof of the first part of Theorem \ref{t.vector}.
\end{proof}

%
%

\begin{remark}
If the initial vector was not fixed, the statement of the one-sided version of Theorem~\ref{t.vector} would not hold. Moreover, almost surely
there exists a \emph{residual} set of parameters~$a\in J$, for each of which there exists a nonzero vector~$v_0$ such that for the norms of its images $v_n$ one has
$$
\limsup \frac{1}{n} \log |v_n| =0.
$$
\end{remark}

Now, the conclusions~\ref{i:m2} and~\ref{i:m3} of Theorem~\ref{t:main} together
imply that for any $a\in J$ the product $T_{n,a,\omega}$ either is $(n\eps,\lambda_F)$-hyperbolic
itself, or can be divided into two hyperbolic products. Thus, under the conclusions of Theorem~\ref{t:main} we have
\begin{lemma}[W-shape]\label{l:W-shape}
For any $\eps>0$ there exists $\eps'>0$ with the following property. Assume that the conclusions of Theorem~\ref{t:main} with the given $\eps'$
are satisfied for some finite product $F_a(\omega_n) \dots F_a(\omega_{1})$. Then for any sequence $\bv_m$ of nonzero vectors such that
$\bv_{m}=F_a(\omega_m)(\bv_{m-1})$, there exists a continuous piecewise-linear
function $\varphi(\cdot)$ with slopes~$\pm \lambda_F$ and at most one ``upwards'' break point, such that
$$
\forall m=0,1,\dots, n \quad \log |v_m| \in U_{n\eps}(\varphi(m)).
$$
\end{lemma}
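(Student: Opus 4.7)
The plan is to reduce Lemma~\ref{l:W-shape} to at most two applications of the V-shape Lemma~\ref{l:V-shape}, by splitting on the type of interval $J_i\ni a$ provided by Proposition~\ref{p:classes}. Given $\eps>0$, I would first choose $\eps'>0$ small enough so that Lemma~\ref{l:V-shape} applied with error tolerance $\eps/2$ accepts $\eps'$ as its hyperbolicity parameter.

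If $J_i$ is a ``small'' or ``opinion-changer'' interval, conclusion~\ref{i:m2} of Theorem~\ref{t:main} together with Proposition~\ref{l:upper-finite} implies that the full product $F_a(\omega_n)\cdots F_a(\omega_1)$ is $(n\eps',\lambda_F(a))$-hyperbolic, and a single application of Lemma~\ref{l:V-shape} produces a V-shape for $\log|v_m|$, which is already a W-shape with no upward break.

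If instead $J_i$ is a ``jump'' interval with associated index $m_k$, conclusion~\ref{i:m3} of Theorem~\ref{t:main} combined with Proposition~\ref{l:upper-finite} tells us that both partial products $T_{m_k,a,\bo}$ and $T_{[m_k,n],a,\bo}$ are $(n\eps',\lambda_F(a))$-hyperbolic. I would then apply Lemma~\ref{l:V-shape} separately on each piece: on $[0,m_k]$ with initial vector $v_0$, yielding a valley $m_1'\in[0,m_k]$; and on $[m_k,n]$ with initial vector $v_{m_k}$, yielding a valley $m_2'\in[m_k,n]$. Concatenating the two V-shapes produces a piecewise-linear function with slopes $\pm\lambda_F(a)$ exhibiting the pattern decrease--increase--decrease--increase, i.e.\ two valleys (downward breaks) and one peak (the unique upward break) near $m_k$ --- exactly a W-shape.

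The main obstacle will be to make the concatenated $\varphi$ genuinely continuous at $m_k$: the two V-shape approximations guarantee only that the left and right pieces both lie within $n\eps/2$ of $\log|v_{m_k}|$ at $m=m_k$, so they can a priori disagree by up to $n\eps$. I would resolve this by defining the peak of $\varphi$ not at $m_k$ but at the intersection $m_k^\ast$ of the ascending branch of the left V-shape (the line of slope $+\lambda_F(a)$ through $(m_1',\log|v_{m_1'}|)$) with the descending branch of the right V-shape (the line of slope $-\lambda_F(a)$ through $(m_2',\log|v_{m_2'}|)$). This $m_k^\ast$ lies within $O(n\eps/\lambda_F(a))$ of $m_k$, and the resulting continuous W-shape $\varphi$ stays within $U_{n\eps}(\log|v_m|)$ throughout $[0,n]$ after a final mild shrinking of $\eps'$ to absorb the extra error near $m_k$.
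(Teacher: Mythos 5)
Your proposal is correct and follows essentially the same route as the paper, which states Lemma~\ref{l:W-shape} as an immediate consequence of the fact that, by conclusions~\ref{i:m2} and~\ref{i:m3} of Theorem~\ref{t:main} (together with Proposition~\ref{l:upper-finite}), the product $T_{n,a,\bo}$ is either $(2n\eps',\lambda_F(a))$-hyperbolic itself or splits at the jump index into two such hyperbolic products, to each of which Lemma~\ref{l:V-shape} is applied. Your device of placing the upward break at the intersection $m_k^\ast$ of the ascending left branch and the descending right branch correctly supplies the continuity at the junction, the one detail the paper leaves implicit, at the cost only of a constant-factor adjustment of the tolerances.
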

As earlier, this lemma can be seen geometrically in terms of the corresponding graphs~(see Fig.~\ref{f:norms}).


Let us now conclude the proof of Theorem~\ref{t.vector}.

\begin{proof}[Proof of the second part of Theorem~\ref{t.vector}]
Let $v_n:=T_{n,a,\bo}(v)$ for all $n$. Without loss of generality, we can assume that $|v_0|=1$.
As in the proof of the first part, it suffices to show that
\begin{equation}\label{eq:two-eps0}
\limsup_{n\to\pm \infty} \frac{1}{|n|} \log |v_n | < \lambda_F(a)-\eps_0
\end{equation}
in fact forces
$$
\limsup_{n\to\pm \infty} \frac{1}{|n|} \log |v_n | =- \lambda_F(a).
$$
As before,~\eqref{eq:two-eps0} implies that for all sufficiently large $n$ we have
\begin{equation}\label{eq:two-finite-n}
\frac{1}{n} \log |v_n | < \lambda_F(a)-\eps_0, \quad \frac{1}{n} \log |v_{-n} | < \lambda_F(a)-\eps_0.
\end{equation}

Also as before, we can assume that for any $\eps,\eps'>0$ for all $n$ sufficiently large
the conclusions of Theorem~\ref{t:main} hold for the product
$$
T_{[-n;n],a,\bo}= F_a(\omega_n) \dots F_a(\omega_{-n}),
$$
and hence Lemma~\ref{l:W-shape} can be applied. We will take $\eps$ and $\eps'$ as in 
(\ref{e.choice}), and
let $n_2$ be such that the mentioned above statements hold for all $n>n_2$.

From now on, for any $n>n_2$ let $m'_{-,(n)}<m'_{0,(n)}<m'_{+,(n)}$ be the breakpoints of the function $\varphi_{(n)}$
given for it by Lemma~\ref{l:W-shape}, the central one being the upwards break point. 

Note first that one has $m'_{-,(n)}<0<m'_{+,(n)}$. Indeed, if one had $m'_{+,(n)}\le 0$, this would imply
that $\varphi_{(n)}$ is linear on $[0,n]$, and thus $\varphi_{(n)}(n)-\varphi_{(n)} (0)=n\lambda_F$. On the other hand,
\begin{equation}\label{eq:bv-diff-norm}
\log |v_n|-\log |v_0| \ge (\varphi_{(n)}(n)-\varphi_{(n)} (0)) -2 \eps n
\end{equation}
and thus we would get
$$
\log |v_n|-\log |v_0| \ge (\lambda_F-2\eps) n,
$$
and this  would contradict~\eqref{eq:two-finite-n} as $2\eps< \eps_0$. In the same way we get $m'_{-,(n)}< 0$.

Now, in the same way as in the first part, we are going to prove that
\begin{equation}\label{eq:2-break}
|m_{0,(n)}'| < \frac{1}{10} n.
\end{equation}
Indeed, 
we have
$$
\log |v_{m_{0,(n)}'}| \ge (\varphi_{(n)}(m_{0,(n)}')-\varphi_{(n)} (0)) - 2n\eps =  \lambda_F |m_{0,(n)}'| - 2n\eps,
$$
and from~\eqref{eq:two-finite-n} we know that
$$
\log |v_{m_{0,(n)}'}| \le (\lambda_F-\eps_0)\cdot |m_{0,(n)}'|,
$$
Hence,
$$
\eps_0\cdot |m_{0,(n)}'| \le 2n\eps,
$$
and thus
$$
|m_{0,(n)}'| \le \frac{2\eps}{\eps_0} n <\frac{1}{10}n.
$$

Now, in the same way as in the first part, we are going to prove the auxiliary
\begin{lemma}\label{l:2-half}
$m'_{+,(n)}, |m'_{-,(n)}| \ge \frac{1}{2}n$ for any $n>n_2$.
\end{lemma}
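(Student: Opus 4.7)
The plan is to imitate the doubling argument of Lemma~\ref{l:more-than-half}, now exploiting the W-shape provided by Lemma~\ref{l:W-shape} together with the already-established bound $|m'_{0,(n)}| < n/10$ from~\eqref{eq:2-break}. I will focus on proving $m'_{+,(n)} \ge n/2$ for all $n > n_2$; the inequality $|m'_{-,(n)}| \ge n/2$ then follows by the symmetric argument applied to the left half of the window, using the second condition in~\eqref{eq:two-finite-n}.

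Assume for contradiction that $m'_{+,(n)} < n/2$ for some $n > n_2$. Since $m'_{+,(n)}$ is the last (downwards) break point of $\varphi_{(n)}$, on $[n/2, n]$ the slope of $\varphi_{(n)}$ is $+\lambda_F$. As Lemma~\ref{l:W-shape} is applied here to the two-sided product of length $\approx 2n$ with tolerance $U_{2n\eps}$, this yields
$$\log|v_n| - \log|v_{n/2}| \ge \tfrac{\lambda_F n}{2} - 4n\eps.$$
Next I would compare this with what the W-shape over the doubled window $[-2n, 2n]$ allows. By~\eqref{eq:2-break} applied at size $2n$ we have $|m'_{0,(2n)}| < n/5$, so assuming $m'_{+,(2n)} \ge n$ would force the slope of $\varphi_{(2n)}$ on $[n/2, n] \subseteq [m'_{0,(2n)}, m'_{+,(2n)}]$ to be $-\lambda_F$; the $U_{4n\eps}$-tolerance then gives
$$\log|v_n| - \log|v_{n/2}| \le -\tfrac{\lambda_F n}{2} + 8n\eps,$$
which contradicts the previous lower bound provided $12\eps < \lambda_F$. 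This inequality is automatic from the choice~\eqref{e.choice}, since $\eps < \eps_0/20 < \lambda_F/20$ (recalling $\eps_0 < \lambda_F$ for~\eqref{eq:two-eps0} to be a nontrivial condition). Therefore $m'_{+,(2n)} < n$, and iterating the same implication gives $m'_{+,(2^k n)} < 2^{k-1} n$ for every $k \ge 0$.

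The conclusion follows by summing dyadically. For each $k \ge 1$ the interval $[2^{k-1}n, 2^k n]$ lies beyond $m'_{+,(2^k n)}$, so $\varphi_{(2^k n)}$ has slope $+\lambda_F$ on it, giving
$$\log|v_{2^K n}| - \log|v_n| \ge \lambda_F (2^K - 1) n - 8(2^K - 1) n \eps,$$
whence $\limsup_{K\to\infty} \frac{1}{2^K n}\log|v_{2^K n}| \ge \lambda_F - 8\eps > \lambda_F - \eps_0$, contradicting~\eqref{eq:two-finite-n}. The only delicate point in carrying out the plan is the bookkeeping of the tolerance, which doubles each time the window is doubled and produces the $4n\eps$ versus $8n\eps$ asymmetry between the two estimates for $\log|v_n| - \log|v_{n/2}|$; the slack $12\eps < \lambda_F$ absorbs this comfortably, and no additional probabilistic input beyond what has already been used in the one-sided case is needed.
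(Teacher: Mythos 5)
Your proposal is correct and follows essentially the same route as the paper: prove the bound for $m'_{+,(n)}$ (the case of $m'_{-,(n)}$ being symmetric), show by the doubling argument that a failure at scale $n$ propagates to scale $2n$ via the W-shape at $2n$ together with $|m'_{0,(2n)}|<n/5$, and then sum the resulting slope lower bounds dyadically to contradict \eqref{eq:two-eps0}. The only differences are cosmetic bookkeeping of the tolerance constants ($4n\eps$, $8n\eps$ and the condition $12\eps<\lambda_F$ versus the paper's $2n\eps$, $4n\eps$ and $\eps<\lambda_F/3$), which is harmless given the choice \eqref{e.choice} with $\eps_0$ small.
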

\begin{proof}
We will prove the conclusion for $m'_{+,(n)}$, the statement for $m'_{-,(n)}$ is absolutely analogous.
The proof goes in the same way as in Lemma~\ref{l:more-than-half}. Namely, we first note that if its conclusion does
not hold for some $n>n_2$, it does not hold for $2n$ neither. Indeed, if we had $m'_{+,(n)}\le \frac{n}{2}$, then we
would have
\begin{equation}\label{eq:2-growth}
\log |v_{n}| - \log |v_{n/2}| \ge \lambda_F \frac{n}{2} - 2\eps n.
\end{equation}
Then, we have $m'_{0,(2n)}\le \frac{n}{5}$, and if we had $m'_{+,(2n)}> n$, this would imply that $\varphi_{(2n)}$ is linear
on the interval $[\frac{n}{2},n]$, and hence
\begin{equation}\label{eq:2-decr}
\log |v_{n}| - \log |v_{n/2}| \le -\lambda_F \frac{n}{2} + 4\eps n.
\end{equation}
And as $\eps<\frac{\lambda_F}{3}$, the inequalities~\eqref{eq:2-growth} and~\eqref{eq:2-decr} contradict each other.

Thus, if the conclusion of Lemma \ref{l:2-half} did not hold for some $n>n_2$, it would also be wrong for $2n$, and
by induction $m'_{+,(2^k n)}< \frac{1}{2}\cdot 2^k n$ for all $k$. Note now that~\eqref{eq:2-growth} can be rewritten as a lower bound for the slope
$$
\frac{\log |v_n| - \log |v_{n/2}|}{n/2} \ge \lambda_F - 4 \eps.
$$
Joining such estimates for $2^k n$, we get
$$
\limsup_{k\to\infty} \frac{1}{2^k n} \log |v_{2^k n}| \ge  \lambda_F - 4 \eps,
$$
thus obtaining a contradiction with~\eqref{eq:two-eps0}. This contradiction proves Lemma \ref{l:2-half}. 
\end{proof}

Let us now conclude the proof of the second part of Theorem~\ref{t.vector}. Lemma~\ref{l:2-half} together with~\eqref{eq:2-break} imply that the
function $\varphi_{(n)}$ is linear on $[\frac{n}{4},\frac{n}{2}]$ and hence that
\begin{equation}\label{eq:2-decrease}
\log |v_{n/2}| - \log |v_{n/4}| \le -\lambda_F \frac{n}{4} + 2 n \eps;
\end{equation}
in terms of a slope, it means that
$$
\frac{\log |v_{n/2}| - \log |v_{n/4}|}{n/4} \le -\lambda_F + 8  \eps.
$$
Joining such inequalities for $n$, $n/2$, $n/4$, etc., until we hit $n_2$, we get the desired
$$
\limsup_{n\to\infty} \frac{\log |v_n|}{n} \le -\lambda_F + 8  \eps.
$$

As $\eps>0$ can be chosen arbitrarily small, we finally get
$$
\limsup_{n\to\infty} \frac{\log |v_n|}{n} \le -\lambda_F
$$
and hence, due to Proposition \ref{p.upper},
$$
\lim_{n\to\infty} \frac{\log |v_n|}{n} = -\lambda_F.
$$

The asymptotics at $-\infty$ can be handled in the same way. This completes the proof of Theorem \ref{t.vector}.
\end{proof}


\begin{appendix}

\section{Generalized Johnson's Theorem}\label{s.johnson}

Suppose that $\frak{M}$ is a compact metric space, $\sigma:\frak{M}\to \frak{M}$ is a homeomorphism, and $\frak{m}$ is an ergodic invariant Borel probability measure supported on $\frak{M}$. Assume also that we are given a continuous map $g_{\cdot}:\frak{M}\to Homeo^+(S^1)$. Then, one can consider an associated skew product
$$
F:(\omega, x)\mapsto (\sigma \omega, g_\omega (x)).
$$

Next, let us choose for any $\omega\in\frak{M}$ a lift $\tilde g_\omega:\mathbb{R}\to \mathbb{R}$ of the map $g_\omega\in Homeo^+(S^1)$,
$$
g_\omega(\pi(x))=\pi(\tilde g_\omega(x)),
$$
where $\pi:\mathbb{R}^1\to S^1=\mathbb{R} / \mathbb{Z}$ is a natural covering map, in such a way that $\{\tilde g_\omega(0)\}$ is a bounded measurable (in $\omega$) function (e.g. one can require $g_\omega(0)\in [0,1)$ for all $\omega\in \frak{M}$). We then can consider the associated lift of the skew product:
$$
\tilde F:(\omega, x)\mapsto (\sigma \omega, \tilde g_\omega (x)).
$$

Finally, let $G_{m,\omega}$ and $\tilde G_{m,\omega}$ be the length $m$ fiberwise compositions associated to these skew products:
$$
F^m (\omega,x)=(\sigma^m \omega, G_{m,\omega}(x)), \quad \tilde F^m (\omega,x)=(\sigma^m \omega, \tilde G_{m,\omega}(x)),
$$
so that for $m>0$ we have
$$
\tilde G_{m,\omega}=\tilde g_{\sigma^{n-1}\omega}\circ \ldots \circ \tilde g_{\sigma\omega}\circ\tilde g_{\omega}.
$$

Then, we have the following
\begin{prop}\label{p.rotnum}
In this setting above the following statement holds.
There exists a number $\rho\in \mathbb{R}$ such that for $\frak{m}$-a.e. $\omega\in \frak{M}$ and every $x\in \mathbb{R}$ the limit
\begin{equation}\label{eq:tG}
\lim_{n\to \infty}\frac{1}{n} (\tilde G_{n,\omega} (x)-x)
\end{equation}
exists and is equal to $\rho$.
\end{prop}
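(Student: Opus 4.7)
The plan is to study the \emph{displacement cocycle}
$$
\varphi_n(\omega, x) := \tilde G_{n,\omega}(x) - x,
$$
reduce the convergence statement to Kingman's subadditive ergodic theorem, and then use a uniform oscillation bound to upgrade almost-sure convergence at one point to uniform convergence in $x$.

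First, I would record the structural features of $\varphi_n$. Since every $\tilde g_\omega$ is a lift of an orientation-preserving circle homeomorphism, the composition $\tilde G_{n,\omega}$ is a strictly increasing homeomorphism of $\mathbb{R}$ that commutes with the unit translation: $\tilde G_{n,\omega}(x+1) = \tilde G_{n,\omega}(x)+1$. Consequently $\varphi_n(\omega,\cdot)$ is $1$-periodic, and monotonicity together with periodicity gives the key oscillation estimate
$$
\osc_{x\in\mathbb{R}} \varphi_n(\omega, x) \le 1 \qquad \text{for every } \omega \text{ and every } n.
$$
Compactness of $\frak{M}$ and continuity of $\omega \mapsto g_\omega$ (plus the measurable choice of lifts with $\{\tilde g_\omega(0)\}$ bounded) yield a uniform bound $|\varphi_1(\omega, x)|\le C$. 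The standard cocycle identity $\tilde G_{n+m,\omega} = \tilde G_{m,\sigma^n\omega}\circ \tilde G_{n,\omega}$ translates into
$$
\varphi_{n+m}(\omega, x) = \varphi_n(\omega, x) + \varphi_m(\sigma^n\omega,\, \tilde G_{n,\omega}(x)).
$$

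Next I would apply Kingman's subadditive ergodic theorem to the two functions
$$
\bar\varphi_n(\omega) := \sup_{x\in\mathbb{R}} \varphi_n(\omega, x), \qquad \underline\varphi_n(\omega) := \inf_{x\in\mathbb{R}} \varphi_n(\omega, x).
$$
The cocycle identity together with taking $\sup$ (resp. $\inf$) over $x$ on both sides shows that $\bar\varphi_n$ is subadditive and $\underline\varphi_n$ is superadditive over $(\frak{M},\sigma,\frak{m})$; both are measurable and bounded by $Cn$. Kingman's theorem together with the ergodicity of $\frak{m}$ therefore yields constants $\rho^+, \rho^- \in \mathbb{R}$ with
$$
\lim_{n\to\infty} \frac{1}{n}\bar\varphi_n(\omega) = \rho^+, \qquad \lim_{n\to\infty} \frac{1}{n}\underline\varphi_n(\omega) = \rho^-
$$
for $\frak{m}$-a.e. $\omega$.

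Finally, the oscillation bound forces $0 \le \bar\varphi_n(\omega) - \underline\varphi_n(\omega) \le 1$ for every $n$ and every $\omega$. Dividing by $n$ and passing to the limit gives $\rho^+ = \rho^- =: \rho$. Since $\underline\varphi_n(\omega) \le \varphi_n(\omega,x) \le \bar\varphi_n(\omega)$ for all $x$, we conclude that for $\frak{m}$-almost every $\omega$, for every $x\in\mathbb{R}$,
$$
\lim_{n\to\infty} \frac{1}{n}(\tilde G_{n,\omega}(x) - x) = \rho,
$$
with convergence uniform in $x$.

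I expect no serious obstacle: the only subtlety is ensuring that the $\sup$ and $\inf$ over $x$ really produce a measurable subadditive sequence, which is immediate from $1$-periodicity of $\varphi_n(\omega,\cdot)$ (so that the $\sup/\inf$ can be taken over the compact set $[0,1]$) and continuity in $x$. Everything else follows from Kingman's theorem and the geometric fact that lifts of circle homeomorphisms have displacement oscillation at most $1$.
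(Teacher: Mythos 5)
Your argument is correct, but it follows a genuinely different route from the paper. You work entirely on the base system: after observing the cocycle identity $\varphi_{n+m}(\omega,x)=\varphi_n(\omega,x)+\varphi_m(\sigma^n\omega,\tilde G_{n,\omega}(x))$, the $1$-periodicity in $x$, and the oscillation bound $\osc_x\varphi_n(\omega,\cdot)\le 1$, you apply Kingman's subadditive ergodic theorem to $\bar\varphi_n=\sup_x\varphi_n$ and (via superadditivity) to $\underline\varphi_n=\inf_x\varphi_n$, and then the oscillation bound collapses the two a.e.\ constant limits $\rho^{\pm}$ to a single $\rho$, giving the limit for every $x$ with uniformity for free. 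The paper instead keeps the fiber variable: it views $\frac1n(\tilde G_{n,\omega}(x)-x)$ as a Birkhoff average of the bounded function $\psi(\omega,t)=\tilde g_\omega$-displacement on the compact skew product $\frak{M}\times S^1$, produces an $F$-invariant measure $\eta$ projecting to $\frak{m}$ by Krylov--Bogolyubov, applies the Birkhoff ergodic theorem to get the limit for $\eta$-a.e.\ $(\omega,x)$, and only then uses the same oscillation bound to make the limit independent of the point in the fiber and $\sigma$-ergodicity of $\frak{m}$ to make it constant. Your approach buys a shorter proof that avoids constructing any invariant measure for the circle extension and yields uniform convergence in $x$ directly; the paper's approach uses only Birkhoff (no subadditive machinery) and has the side benefit of exhibiting $\rho$ as $\int\psi\,d\eta$ for an invariant measure of the skew product, which ties the rotation number to (stationary) measures on the fiber. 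The only points to keep explicit in your write-up are the ones you already flagged: measurability of $\bar\varphi_n,\underline\varphi_n$ (sup/inf over $[0,1]$, or over rationals, of a function continuous in $x$ and measurable in $\omega$) and their linear bound ensuring integrability for Kingman; with those noted, the proof is complete.
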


\begin{defi}
 The number $\rho$ from Proposition \ref{p.rotnum} is called \emph{rotation number}.
\end{defi}

\begin{remark}
Notice that the rotation number $\rho$ depends on the choice of lifts $\tilde g_\omega$.
\end{remark}

\begin{remark}
 It can happen that the lifts $\{\tilde g_\omega\}$ cannot be taken continuous in~$\omega$. At the same time, in the case when $\{g_\omega\}$ are projectivizations of the transfer matrices of a Schr\"odinger cocycle defined by a continuous potential, the lifts $\{\tilde g_\omega\}$ can always be chosen continuously in $\omega$ (since any Schr\"odinger cocycle is homotopic to a constant one).
 \end{remark}

\begin{remark}
 Some of the assumptions in Proposition \ref{p.rotnum} can be essentially relaxed. For example, one can start with a probability space $(\frak{M}, \frak{m})$ and a measure preserving transformation $\sigma$ instead on a measure preserving homeomorphism of a compact metric space, or relax the assumption on continuity of~$g_\omega$. To keep the presentation more transparent, we are not trying to give the statements in the most general form.
\end{remark}

\begin{remark}
 While the case that we consider in this paper in a sense corresponds to the case of linear cocycle (i.e. the maps $g_{\omega}$ are projective maps of the circle), in Proposition \ref{p.rotnum} the cocycle is non-linear (i.e. we allow arbitrary homeomorphisms of the circle, not necessarily projective). Notice that in fact many of the questions and results that we consider here can also be posted for non-linear case as well. For example, if one reformulates the Furstenberg Theorem as a statement on almost sure exponential convergence of vectors in projective space under random projective dynamics, then  non-linear analogs of Furstenberg Theorem are known \cite{A, Bax, DKN1, KN, GGKV, M}.
\end{remark}
Proposition \ref{p.rotnum} is certainly well known \edited{(see \cite[Section 5]{Her} and \cite{R2} for similar statements)}, 
but we provide the proof here for the convenience of a reader.
\begin{proof}[Proof of Proposition \ref{p.rotnum}]
Define the displacement function $\varphi:\frak{M}\times \mathbb{R}\to \mathbb{R}$ by
$$
\varphi(\omega, x)=\tilde g_\omega(x)-x.
$$

Then, the displacement under $n$ iterations in~\eqref{eq:tG} can be rewritten as a sum of~$n$~individual displacements:
\begin{equation}\label{eq:tG-phi}
    \tilde G_{n,\omega}(x) -x= \sum_{k=0}^{n-1} (\tilde G_{k+1,\omega}(x) -\tilde G_{k,\omega}(x)) = \sum_{k=0}^{n-1}  \varphi(\tilde F^k (\omega, x)).
\end{equation}
Moreover, note that the function $\varphi(\omega,x)$ is in fact $1$-periodic in the $x$ variable, and hence as a function of $x$ can be considered as a function on the circle. Indeed, if $y=x+k$, $k\in \mathbb{Z}$, then
$$
\varphi(\omega, y)=\tilde g_\omega(x+k)-(x+k)=\tilde g_\omega(x)-x =\varphi(\omega, x).
$$
Hence, a function $\psi:\frak{M}\times S^1\to \mathbb{R}$, $\psi(\omega, t)=\varphi(\omega, \pi^{-1}(t))$, is well defined,
and the sum in~\eqref{eq:tG-phi} can be written as
$$
\sum_{k=0}^{n-1}  \varphi(\tilde F^k (\omega, x)) =  \sum_{k=0}^{n-1}  \psi(F^k (\omega, x)).
$$
Thus,
\begin{equation}\label{eq:psi-F}
\frac{1}{n}(\tilde G_{n,\omega}(x) -x) = \frac{1}{n}  \sum_{k=0}^{n-1}  \psi(F^k (\omega, x))
\end{equation}
is a time-average of a bounded function $\psi$ on a compact space~$\frak{M}\times \Sc$.

Now, Krylov-Bogolyubov arguments imply that the map $F(\omega, t)=(\sigma \omega, g_\omega(t))$
has an invariant measure $\eta$ such that the projection of $\eta$ to the first coordinate of the product
$\frak{M}\times S^1$ gives the measure $\frak{m}$.
Birkhoff Ergodic Theorem then implies the existence of the limit~\eqref{eq:psi-F} for $\eta$-a.e. point
$(\omega, x)\in \frak{M}\times S^1$.

Finally, note that $\tilde G_{n,\omega}$ is the lift of $G_{n,\omega}$, and hence for any $x,y\in \R$ one has
$$
\left| (\tilde G_{n,\omega}(x)-x) - (\tilde G_{n,\omega}(y)-y) \right| <1.
$$
Hence, if the limit~\eqref{eq:psi-F} exists for some point $(\omega,x)$, it also exists
and takes the same value for any other point $(\omega,y)$ on the same fiber.
This limit thus defines a function $\rho(\omega)$ on $\frak{M}$.
Finally, as this function $\sigma$-invariant, and the measure $\frak{m}$ is ergodic,
this function is $\frak{m}$-almost everywhere equal to some constant~$\rho$.
\end{proof}

Let us now consider the dependence of the rotation number on a parameter. Namely, assume now that we are given a continuous
family $g_{\cdot,\cdot}: \frak{M}\times J \to Homeo_+(\Sc)$ of maps as above. Then, we can consider their lifts $\tg_{a,\omega}:\R\to\R$ to be chosen continuously in parameter $a\in J$. The corresponding skew products $G_a$ and $\tilde G_a$ as well as the fiberwise compositions
$F_{n,a,\omega}$ and $\tilde F_{n,a,\omega}$ then can be defined in the same way as before. The notion of monotonicity can then be applied in this situation, too.
\begin{defi}
The family is \emph{monotonous} if for any $\omega\in \frak{M}, x\in\Sc$ the function $\tg_{a,\omega}(x)$ is monotonous increasing in $a\in J$.
\end{defi}

An important note is  that the \emph{increments} of the images $\tG_{n,a',\omega}(x)-\tG_{n,a,\omega}(x)$ do not depend on a particular choice of lifts $\tg_{a,\omega}$. Moreover, this increment is a \emph{continuous} in $\omega$ and $x$ (and in fact is a well-defined function of the point~$x$ on the circle, not on the real line). Also, dividing by $n$ and passing to the limit, one gets that the difference of the corresponding rotation numbers $\rho(a')-\rho(a)$ does not depend on the choice of lifts~$\tg_{a,\omega}$, thus getting the following important note.

\begin{remark}
Even though the rotation number $\rho$ depends on a particular choice of the lifts $\tg_{a,\omega}$, the differences of rotation numbers $\rho(a')-\rho(a)$ do not. In particular, different choice of lifts $\tilde g_{a, \omega}$ leads to a shift of the rotation number $\rho(a)$ by a constant, and intervals of constancy of $\rho$ are independent of the choice of the lifts.
\end{remark}

The following result is known in many particular cases, e.g. see \cite[Theorem~4.8]{GJ}, \cite{Le}. For example, the ergodic Schr\"odinger cocycles satisfy the assumptions of Theorem~\ref{t.gjt}; the corresponding statement in the context of Schr\"odinger cocycles is known as {\it Johnson's Theorem}, see \cite{J86}. Generalizations to the cases of Jacobi matrices \cite{Ma} and CMV matrices \cite{DFLY} are also available. Monotone $SL(2, \mathbb{R})$ cocycles homotopic to a constant were treated in \cite[Proposition C.1]{ABD}. For the convenience of a reader we provide here the proof of the statement that is just slightly more general, but covers many of those cases.

\begin{theorem}\label{t.gjt}
Suppose that a family of $SL(2, \mathbb{R})$ cocycles is given by a continuous map
$$
A:\frak{M}\times J\to SL(2, \mathbb{R}),
$$
where $J\subset \mathbb{R}$ is an interval of parameters, and $A_a=A(\cdot, a):\frak{M}\to SL(2, \mathbb{R})$ is a cocycle corresponding to the parameter $a\in J$.

Assume that  for each $\omega\in \frak{M}$ and any vector $v\in \mathbb{R}^2\backslash \{0\}$, $\mathrm{arg}\, A_a(\omega)v$ as a function of the parameter $a$ is strictly  increasing.

Let $g_{a, \omega}:S^1\to S^1$ be a projective map induced by $A_a(\omega):\mathbb{R}^2\to \mathbb{R}^2$, and choose a family of lifts $\tilde g_{a, \omega}:\mathbb{R}^1\to \mathbb{R}^1$ as in Proposition \ref{p.rotnum} that depend continuously on the parameter $a$ for each $\omega$. Let $\rho(a)$ be the corresponding rotation number. Then $\rho$ is constant on an open interval $U\subset J$ if and only if the cocycle $A_a$ is uniformly hyperbolic for all $a\in U$.
\end{theorem}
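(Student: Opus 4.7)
The plan is to prove the two implications separately, using continuous invariant sections of the projective bundle as the bridge between uniform hyperbolicity of the linear cocycle and local constancy of the rotation number.

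For the forward direction, uniform hyperbolicity of $\{A_a\}_{a\in U}$ provides, via the standard cone-field/graph-transform construction, jointly continuous invariant stable and unstable line subbundles $E^{s,u}_a(\omega) \in \R P^1$ on $U \times \frak{M}$. Assuming $\frak{M}$ connected for the exposition (otherwise one argues on the $\sigma$-ergodic components supporting $\frak{m}$), the stable section admits a jointly continuous local lift $\tilde E^s_a(\omega) \in \R$, and the integer-valued cocycle
\[
k_a(\omega) := \tilde g_{a,\omega}\bigl(\tilde E^s_a(\omega)\bigr) - \tilde E^s_a(\sigma\omega)
\]
is jointly continuous in $(a,\omega)$, hence locally constant in $a$. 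Iterating the invariance relation and applying Birkhoff's theorem yields $\rho(a) = \int k_a \, d\frak{m}$, which is therefore locally constant on $U$.

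For the converse direction, fix $a_0 \in U$ and choose $a_1 < a_0 < a_2$ in $U$. Strict monotonicity gives $\tilde g^n_{a_1,\omega}(x) < \tilde g^n_{a_2,\omega}(x)$ for all $n \ge 1$, $\omega$, $x$, while constancy of $\rho$ together with Proposition~\ref{p.rotnum} yields $n^{-1}(\tilde g^n_{a_2,\omega}(x) - \tilde g^n_{a_1,\omega}(x)) \to 0$ almost surely. The \emph{key claim} is the strengthening
\[
\sup_{n \ge 0, \, \omega \in \frak{M}, \, x \in \R} \bigl|\tilde g^n_{a_2,\omega}(x) - \tilde g^n_{a_1,\omega}(x)\bigr| < \infty.
\]
Granting this, orbits under $g_{a_0,\omega}$ are trapped in a uniformly bounded $\R$-window relative to the two flanking orbits, and one extracts two distinct continuous $g_{a_0,\omega}$-invariant sections of the projective bundle as monotone semicontinuous envelopes of pull-back trajectories through the flanking cocycles; strict monotonicity in $a$ forces these sections to be distinct at $a_0$. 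For an $SL(2,\R)$-cocycle over a compact base, the existence of two distinct continuous invariant line sections is equivalent to uniform hyperbolicity (by a standard cone-field argument, one must be uniformly contracted in forward time and the other in backward time), concluding the proof.

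The principal obstacle is the \emph{key boundedness claim}, which strengthens the $o(n)$ conclusion of Proposition~\ref{p.rotnum} to an $O(1)$ bound uniform in $(n, \omega, x)$. To establish it I would argue by contradiction via a Krylov--Bogolyubov limit of the joint skew product
\[
(\omega, x_1, x_2) \mapsto \bigl(\sigma\omega,\, g_{a_1,\omega}(x_1),\, g_{a_2,\omega}(x_2)\bigr)
\]
on $\frak{M} \times S^1 \times S^1$: if $D_n(\omega, x) := \tilde g^n_{a_2,\omega}(x) - \tilde g^n_{a_1,\omega}(x)$ were unbounded, the empirical averages along diverging orbits would accumulate to a joint invariant probability measure $\eta$ projecting to $\frak{m}$ and forced by the unboundedness to concentrate on the diagonal $\{x_1 = x_2\}$ modulo $\Z$. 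Integrating the one-step displacement $\psi(\omega, x) := \tilde g_{a_2,\omega}(x) - \tilde g_{a_1,\omega}(x)$, a strictly positive continuous function on the compact space $\frak{M} \times S^1$ by assumption (A4), against $\eta$ along the diagonal would give a strictly positive value; but Birkhoff's theorem would identify the same integral with $\lim_n D_n/n = \rho(a_2) - \rho(a_1) = 0$, a contradiction. The same measure-theoretic argument, suitably refined, also yields continuity of the resulting invariant sections $\phi^\pm_{a_0}$ on $\frak{M}$.
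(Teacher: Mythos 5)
Your forward implication (uniform hyperbolicity implies local constancy of $\rho$) is essentially the paper's argument and is fine. The converse contains two genuine gaps. First, your proof of the key boundedness claim does not work as sketched: you take empirical measures along orbits witnessing the unboundedness of $D_n$, but nothing forces their weak-* limits to project to $\frak{m}$ (the base point realizing the unboundedness need not be $\frak{m}$-generic), so the identification of $\int \Psi\, d\eta$ with $\rho(a_2)-\rho(a_1)=0$ via Proposition \ref{p.rotnum} is unavailable; and even for measures over $\frak{m}$, no mechanism is given for why unboundedness of the lift difference should force the limit measure to charge the diagonal --- each crossing of the diagonal takes boundedly many steps and can contribute zero asymptotic frequency, so the limit measure may well live away from it. The claim itself is true, but the paper obtains it by an elementary route you are missing: Lemma \ref{l:incr-2} shows that an $m$-step increment exceeding $2$ at a single $(\omega,\tx)$ forces the $m$-step increment to exceed $1$ at \emph{every} point of that fiber, hence on a whole neighborhood $V$ of $\omega$ in $\frak{M}$, and then ergodicity plus full support give $\rho(a_2)-\rho(a_1)\ge \frak{m}(V)>0$; the contrapositive is exactly your claim with the explicit bound $2$ (and the paper's North--South example shows the bound $1$ would not suffice, so some such argument is genuinely needed).

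Second, and more seriously, your concluding step invokes a false equivalence: for an $SL(2,\R)$ cocycle over a compact base, the existence of two distinct (even uniformly separated) continuous invariant line sections does \emph{not} imply uniform hyperbolicity. Take $A(\omega)=\mathrm{diag}(e^{c(\omega)},e^{-c(\omega)})$ with $c$ continuous, $\int c\, d\frak{m}=0$, $c\not\equiv 0$: the two coordinate axes give continuous invariant sections, yet $\|A_n(\omega)\|$ equals the exponential of a Birkhoff sum that is $o(n)$ almost everywhere, so there is no exponential dichotomy. What is missing is precisely a uniform exponential contraction towards one of your sections (equivalently, a strictly invariant cone field, i.e.\ domination), and extracting that from constancy of $\rho$ is the actual content of Johnson's theorem; your outline never produces any exponential estimate. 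The paper sidesteps this by proving the contrapositive instead: if $A_{a_0}$ is not uniformly hyperbolic, there is a Sacker--Sell solution bounded in both time directions, and Proposition \ref{l:f-unstable} uses monotonicity to show that the lifted orbits at parameters $a_0\pm\alpha$ separate by more than $2$ at that point, whence $\rho(a_0+\alpha)>\rho(a_0-\alpha)$ by Lemma \ref{l:incr-2}. If you want to keep your direct strategy, you would have to replace the ``two sections imply hyperbolicity'' step by a construction of a strictly invariant cone field out of the monotone trapping, which is where the real work lies.
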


The first step in the proof of this theorem does not require the cocycle to be projective:
\begin{lemma}\label{l:incr-2}
Let $\tg_{a,\omega}$ be a monotonous family as above, and assume that for some $m, a,a',\tx,\bo$ one has
\begin{equation}\label{eq:more-2}
\tG_{m,a',\bo}(\tx)-\tG_{m,a,\bo}(\tx)>2.
\end{equation}
Then $\rho(a')>\rho(a)$.
\end{lemma}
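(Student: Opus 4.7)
The plan is to proceed in two stages: a short induction propagating the $>2$ gap, then a bootstrap to strict inequality of rotation numbers.

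For the first stage, strict monotonicity of $\tg_{a,\omega}$ in $a$ forces $a'>a$. By induction on $n \ge m$, using (i) monotonicity of each $\tg_{a',\omega}$ in its argument, (ii) the integer-shift equivariance $\tg_{a',\omega}(\tilde y + k)=\tg_{a',\omega}(\tilde y)+k$ for $k\in\Z$, coming from each map being a lift of a circle homeomorphism, and (iii) monotonicity in the parameter, the induction step
\begin{align*}
\tG_{n+1,a',\bo}(\tx)
 &= \tg_{a',\omega_{n}}(\tG_{n,a',\bo}(\tx))
 \ge \tg_{a',\omega_{n}}(\tG_{n,a,\bo}(\tx)+2) \\
 &= \tg_{a',\omega_{n}}(\tG_{n,a,\bo}(\tx))+2
 \ge \tg_{a,\omega_{n}}(\tG_{n,a,\bo}(\tx))+2 \\
 &= \tG_{n+1,a,\bo}(\tx)+2
\end{align*}
propagates the bound to $\tG_{n,a',\bo}(\tx)-\tG_{n,a,\bo}(\tx)\ge 2$ for all $n\ge m$. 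This immediately yields $\rho(a')\ge\rho(a)$, since
$$\rho(a')-\rho(a)=\lim_{n\to\infty} \tfrac{1}{n}\bigl(\tG_{n,a',\bo}(\tx)-\tG_{n,a,\bo}(\tx)\bigr)$$
for $\frak{m}$-a.e.~$\bo$ by Proposition~\ref{p.rotnum}.

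A bounded gap contributes $0$ in the limit, so the second stage upgrades this using the set
$$A:=\{(\bo',x')\in\frak{M}\times\Sc : \tG_{m,a',\bo'}(\tilde x')>\tG_{m,a,\bo'}(\tilde x')+2\},$$
which is open (by continuity of finite compositions in both coordinates), well-defined on $\frak{M}\times\Sc$ (the difference is $1$-periodic in the lift variable), and non-empty (it contains the hypothesis point $(\bo,\pi(\tx))$). A refinement of the previous induction shows that whenever an orbit lands in $A$ at time $k$ with current integer-gap already $\ge K$, the integer-gap at time $k+m$ is at least $K+2$. Applying Birkhoff's ergodic theorem to an ergodic $F_a$-invariant probability measure $\eta_a$ on $\frak{M}\times\Sc$ projecting to $\frak{m}$ then produces a linear number of well-separated $+2$ contributions, summing to
$$\rho(a')-\rho(a) \ge \frac{2\,\eta_a(A)}{m}>0,$$
provided $\eta_a(A)>0$.

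The main obstacle will be ensuring $\eta_a(A)>0$: since $A$ is only guaranteed open and non-empty, its mass under a specific invariant measure need not be positive, because the support of $\eta_a$ could avoid $A$. My approach here would be to use instead an $F_{a'}$-invariant measure, constructed via a Krylov--Bogolyubov weak-$*$ subsequential limit of the empirical averages $\tfrac{1}{N}\sum_{k=0}^{N-1}\delta_{F_{a'}^k(\bo,\pi(\tx))}$: since this orbit starts in the open set $A$, one can extract an ergodic component whose support meets $A$, and the symmetric version of the refined propagation for $F_{a'}$-orbits then delivers the desired linear lower bound on the gap. Cleanly establishing that some ergodic component of the Krylov--Bogolyubov limit indeed charges $A$ (rather than placing all its mass on an attractor disjoint from $A$) is the most delicate step and likely requires exploiting the strictness of the inequality in the hypothesis together with the continuity and monotonicity of the cocycle in the parameter.
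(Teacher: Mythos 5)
Your propagation mechanism in Stage 2 is sound, but the proof has a genuine gap, and it is exactly the one you flag: nothing yields $\eta_a(A)>0$, and the proposed Krylov--Bogolyubov repair does not supply it. Starting the $F_{a'}$-orbit inside the open set $A$ gives no lower bound whatsoever on the asymptotic frequency of its visits to $A$; nothing prevents the orbit from leaving $A$ and returning with frequency zero, in which case, by the portmanteau inequality for open sets, every weak-$*$ limit of the empirical measures assigns $A$ measure zero, and then so does each of its ergodic components. Nor is there a soft reason why \emph{any} invariant measure of the skew product should charge $A$: such measures tend to concentrate on attracting sections of the fiber dynamics, while your condition ``the $a'$-image runs more than $2$ ahead of the $a$-image over $m$ steps'' is typically realized only near repelling fiber directions --- precisely the phenomenon in the North--South example discussed right after the lemma in the paper (Figure~\ref{f:circle-eps}), where the large parameter-jump occurs only at the repeller. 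So the delicate step you postpone is not a technicality; as stated, the route through an invariant measure on $\frak{M}\times\Sc$ cannot be closed.

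The idea you are missing, and on which the paper's proof rests, is to uniformize the gap over the fiber \emph{before} doing any ergodic theory. Since $\tG_{m,a,\bo}$ and $\tG_{m,a',\bo}$ are lifts of circle homeomorphisms (they commute with integer translations), the hypothesis $\tG_{m,a',\bo}(\tx)-\tG_{m,a,\bo}(\tx)>2$ forces $\tG_{m,a',\bo}(y)-\tG_{m,a,\bo}(y)>1$ for \emph{every} $y$: for $y\in[\tx,\tx+1]$ one has $\tG_{m,a',\bo}(y)\ge \tG_{m,a',\bo}(\tx)>\tG_{m,a,\bo}(\tx)+2\ge \tG_{m,a,\bo}(y)+1$, and periodicity extends this to all $y\in\R$. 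Consequently the ``good'' event depends on the base alone: passing to the $m$-th iterate (so one may take $m=1$), continuity of the increment in $\omega$ gives an open set $V\ni\bo$ in $\frak{M}$ with $\tg_{a',\omega}(y)-\tg_{a,\omega}(y)>1$ for all $\omega\in V$ and all $y$. Birkhoff's theorem for $(\frak{M},\sigma,\frak{m})$ at an $\frak{m}$-generic point $\omega^*$ then gives visits to $V$ with frequency $\frak{m}(V)>0$, each visit increases the integer part of the gap by at least $1$ regardless of the fiber position, and hence $\rho(a')-\rho(a)\ge\frak{m}(V)>0$; no invariant measure on the product space is needed. Note that the same observation would also rescue your scheme: with threshold $1$ instead of $2$, your set contains the cylinder $V\times\Sc$, so any invariant measure projecting to $\frak{m}$ charges it --- but at that point the product-space measure has become superfluous.
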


Note that a lower bound of an increment by~$1$ in the assumptions of Lemma~\ref{l:incr-2} would not suffice, even in the case of one circle homeomorphism.
Indeed, consider a very strong North-South map $g$, and a family of its perturbations $g_{\eps}:=R_{\eps} \circ g \circ R_{\eps}$. Then, on the one hand, the rotation number vanishes in a neighborhood of $\eps=0$. On the other hand, the images of the repelling fixed point $x$ can gain more than a full turn in such a neighborhood: see Fig.~\ref{f:circle-eps}. In fact, Proposition~\ref{l:f-unstable} below shows that this example is quite instructive.
\begin{figure}[!h!]
\begin{center}
\includegraphics{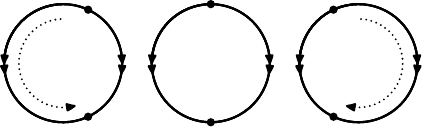}
\end{center}
\caption{Behavior of the family $R_{\eps} \circ g \circ R_{\eps}$ with a strong North-South map $g$; dashed arrows indicate the image
of the repeller $x$ of $g$ for positive (left) and negative (right) values of $\eps$.}\label{f:circle-eps}
\end{figure}

\begin{proof}[Proof of Lemma~\ref{l:incr-2}]
It suffices to consider the case $m=1$, otherwise passing to the $m$-th iteration of the initial system. Now,
as $\tg_{a,\omega}=\tG_{1,a,\omega}$ is a lift of a circle homeomorphism, inequality~\eqref{eq:more-2} implies that
$$
\forall y \quad \tg_{a',\bo}(y)-\tg_{a,\bo}(y)>1.
$$

The continuity of the increment in $\omega$ thus implies, that there exists a neighborhood $V\ni \bo$ such that
\begin{equation}\label{eq:incr-1}
\forall \omega\in V, \quad \forall y \quad \tg_{a',\omega}(y)-\tg_{a,\omega}(y)>1.
\end{equation}

Finally, for a generic~$\omega^*\in \frak{M}$, its orbit $\{\sigma^n \omega^*\}$ visits $V$ with an asymptotic frequency of~$\frak{m}(U)>0$.
On the other hand, if during $n$ iterations the orbit has visited $U$ at $k$ moments $n_1<\dots<n_k$, then it is easy to see from~\eqref{eq:incr-1} that
$$
\forall y \quad \tG_{n,a',\omega^*}(y)-\tG_{n,a,\omega^*}(y)>k.
$$
Taking a generic $\omega^*$, dividing by $n$ and passing to the limit, we get
$$
\rho(a') - \rho(a) \ge \frak{m}(V)>0.
$$
\end{proof}

Let us now pass to the proof of Theorem~\ref{t.gjt}: the arguments below will start using the projective nature of the cocycle. Denote by
$A_{m,a,\omega}$ the corresponding fiberwise composition: let
$$
{\bf A}_{a, m, \omega}=\left\{
  \begin{array}{ll}
    A_a({\sigma^{m-1}\omega})\cdot \ldots \cdot A_a({\omega}), & \hbox{if $m>0$;} \\
    Id_{\mathbb{R}^2}, & \hbox{if $m=0$;} \\
    A_a({\sigma^{m}\omega})^{-1}\cdot \ldots \cdot A_a(\sigma^{-1}{\omega})^{-1}, & \hbox{if $m<0$,}
  \end{array}
\right.
$$
so that $G_{m,a,\omega}$ is the projectivization of $A_{m,a,\omega}$, and $\tG_{m,a,\omega}$ is the corresponding lift.

\begin{proof}[Proof of Theorem~\ref{t.gjt}]
If the cocycle is uniformly hyperbolic for some parameter $a\in J$, then the cone condition holds for all parameters from some neighborhood $U$ of $a$  (with stable/unstable cones independent of parameter). Therefore, for all values $a'\in U$ for any $\omega\in \frak{M}$ and any $x\in S^1$ that corresponds to a vector from an unstable cone, the values of $\tG_{n,a',\omega}$ will remain on bounded distance from $\tG_{n,a,\omega}$ for all $n\in \mathbb{N}$. Hence, $\rho(a)$ is locally constant for uniformly hyperbolic cocycles.

Now assume that the cocycle is not uniformly hyperbolic for some value of the parameter. Without loss of generality we can set this value of the parameter to $0\in J$. We need to show that the rotation number $\rho(a)$ cannot be constant in any interval containing $0$.

Theorem \ref{t.gjt} certainly holds if $\frak{M}$ consists of just one periodic orbit of $\sigma$. Therefore we assume that this is not the case.

It is known that a cocycle is not uniformly hyperbolic if and only if there exists a Sacker-Sell solution, i.e. for some $\omega\in \frak{M}$, some $K>0$, and some unit vector $v\in \mathbb{R}^2$ we have
\begin{equation}\label{e.sssolution}
|A_0({\sigma^n\omega})\cdot \ldots \cdot A_0({\omega})v | \le K, \text{\ \ \ and\ \ \ } |A_0^{-1}({\sigma^{-n}\omega})\cdot \ldots \cdot A_0^{-1}({\sigma^{-1}\omega})v | \le K
\end{equation}
for all $n\in \mathbb{N}$, e.g. see \cite[Theorem 1.2]{DFLY}.

We will need the following statement.

\begin{prop}\label{l:f-unstable}
In the setting of Theorem~\ref{t.gjt}, let $\omega, v$ be such that the forward iterations of the vector $v$, associated to $\omega$, are bounded:
$$
\exists K : \quad \forall n\in \N  \quad | A_{n,0,\omega} v| \le K.
$$
Then for an arbitrary small $\alpha>0$ there exists $n>0$ such that
$$
\tG_{n,\alpha,\omega}(x_v)-\tG_{n,-\alpha,\omega}(x_v) >1,
$$
where $x_v\in\R$ is one of the lifted points associated to the direction of the vector~$v$.
\end{prop}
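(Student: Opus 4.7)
The strategy is a first-variation computation in the parameter, showing that the derivative $\partial_a \tG_{n,a,\omega}(x_v)|_{a=0}$ diverges as $n\to\infty$ under the bounded-orbit hypothesis. Set $v_k := A_{k,0,\omega}v$ and $\tx_k^a := \tG_{k,a,\omega}(x_v)$, so that $\tx_k^0$ is a lift of the direction of $v_k$, and $|v_k|\le K$ by hypothesis. First, the strict monotonicity of $a\mapsto \tg_{a,\omega}(x)$ (the hypothesis of Theorem~\ref{t.gjt}) combined with the compactness of $\frak{M}\times \Sc$ yields a uniform lower bound $\delta := \inf_{\omega,x,\,|a|\le\alpha_0}\partial_a \tg_{a,\omega}(x) > 0$ for any fixed small $\alpha_0>0$.

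Next, by the chain rule together with the projective-derivative identity $(\tg_{0,\omega})'(x) = |u|^2/|A_{0,\omega}u|^2$ for any $u$ in the direction $x$ (cf.~\eqref{eq:der-norm}), a telescoping computation gives
\begin{equation*}
\eta_n \;:=\; \left.\partial_a\right|_{a=0} \tG_{n,a,\omega}(x_v) \;=\; \sum_{k=0}^{n-1} \partial_a\tg_{0,\sigma^k\omega}(\tx_k^0)\cdot \frac{|v_{k+1}|^2}{|v_n|^2}\;\ge\; \frac{\delta}{K^2}\sum_{k=0}^{n-1}|v_{k+1}|^2,
\end{equation*}
using $|v_n|\le K$. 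From here I would argue $\eta_n\to\infty$ by splitting into cases: if $\inf_k |v_k| = c > 0$ the right-hand side grows at least like $\delta c^2 n/K^2$; otherwise one takes a subsequence $n_j$ with $|v_{n_j}|\to 0$, along which the single $k=0$ term $|v_1|^2/|v_{n_j}|^2 \ge (|v|/M)^2/|v_{n_j}|^2$ alone diverges.

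The last step, and the main obstacle, is to convert $\eta_n\to\infty$ into the desired inequality $\tG_{n,\alpha,\omega}(x_v)-\tG_{n,-\alpha,\omega}(x_v)>1$ at a finite $n$ for a prescribed $\alpha>0$. Since this difference equals $\int_{-\alpha}^{\alpha}\eta_n(a)\,da$ where $\eta_n(a):=\partial_a\tG_{n,a,\omega}(x_v)$, the plan is to maintain a comparable divergent lower bound $\eta_n(a)\gtrsim \eta_n(0)$ on a \emph{fixed} (not shrinking with $n$) neighborhood of $0$. The analogous representation $\eta_n(a)\ge \delta\sum_k |v_{k+1}^a|^2/|v_n^a|^2$ with $v_k^a := A_{k,a,\omega}v$ reduces the task to keeping $|v_n^a|$ controlled by a multiple of $K$ uniformly in $a$, which is the delicate point since a naive $C^1$ estimate gives only a neighborhood of width $\sim M^{-n}$. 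My plan to overcome this is a reductio argument: if the conclusion were to fail for some $\alpha>0$, so that $\tG_{n,\alpha,\omega}(x_v)-\tG_{n,-\alpha,\omega}(x_v)\le 1$ for every $n$, then the monotone family $a\mapsto \tx_n^a$ would stay within distance $1$ of $\tx_n^0$ in the lift for all $a\in[-\alpha,\alpha]$ and all $n$; translating this projective bound back into norm-control along orbits via~\eqref{eq:der-norm} should yield $|v_n^a|\le CK$ uniformly on $[-\alpha,\alpha]$, which restores the divergence of $\eta_n(a)$ and contradicts the bounded integral.
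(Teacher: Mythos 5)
Your first-variation computation is correct as far as it goes: the telescoping identity $\partial_a\tG_{n,a,\omega}(x_v)\big|_{a=0}=\sum_{k}\partial_a\tg_{0,\sigma^k\omega}(\tx_k^0)\,|v_{k+1}|^2/|v_n|^2$ and the conclusion $\limsup_n\eta_n(0)=\infty$ under the hypothesis $|v_n|\le K$ are fine. But the step you yourself flag as ``the main obstacle'' is where the argument genuinely breaks, and the reductio you propose does not close it. Assuming $\tG_{n,\alpha,\omega}(x_v)-\tG_{n,-\alpha,\omega}(x_v)\le 1$ for all $n$ only says that the image point moves by less than a full turn as $a$ ranges over $[-\alpha,\alpha]$; this is purely information about the \emph{position} of $f_{n,a,\omega}(x_v)$, and it carries no control whatsoever on the \emph{norms} $|v_n^a|=|A_{n,a,\omega}v|$, which by \eqref{eq:der-norm} are governed by the derivative of $f_{n,a,\omega}$ at $x_v$, a local quantity not determined by where the image lands. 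Indeed, for $a\neq 0$ the cocycle is typically uniformly hyperbolic with $v$ off the stable direction, so $|v_n^a|$ grows exponentially \emph{while} the conclusion of the proposition (and a fortiori your hypothetical bound) can hold; in that regime your lower bound $\eta_n(a)\ge\delta\sum_k|v^a_{k+1}|^2/|v^a_n|^2$ is bounded, so no contradiction with $\int_{-\alpha}^{\alpha}\eta_n(a)\,da\le 1$ arises. Even granting the (unjustified) bound $|v^a_n|\le CK$, your dichotomy only yields divergence of $\eta_n(a)$ along an $a$-dependent subsequence ($\limsup$, not $\liminf$), so a Fatou-type argument does not produce the contradiction either. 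In short, the reductio is essentially circular: the quantity you would need to control uniformly in $a$ is exactly the increment you are trying to bound from below.

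The paper's proof avoids the infinitesimal route entirely and works with the geometry of the finite products, split into two cases according to whether $\|{\bf A}_{0,n,\omega}\|$ admits a bounded subsequence. When it does, uniform monotonicity gives a definite shift $\eps_1$ per application, and the bounded distortion of $\tG_{0,n_k,\omega}^{-1}$ lets these shifts accumulate linearly ($\ge k\delta_1$), eventually exceeding one. When instead $\|{\bf A}_{0,n,\omega}\|\to\infty$, the bounded-orbit hypothesis together with Lemma~\ref{l:x-C-image} forces $x_v$ to lie within $\eps_1/2$ of the most contracted direction $x^-(A_{0,n,\omega})$; the uniform parameter shift \eqref{eq:shifted} then pushes the starting point across $x^-$, and the product estimate \eqref{eq:dist-product} shows its image sweeps essentially a full turn past $x^+$, producing the jump $>1$ between $a=-\alpha$ and $a=\alpha$. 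This ``crossing the contracted direction'' mechanism is a global event in the parameter, and it is exactly what a derivative bound at the single value $a=0$ cannot see; to salvage your approach you would have to prove uniform (in $a$ on a fixed interval) control of $|v^a_n|$ or of $\eta_n(a)$, which is not available and is in general false.
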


Postponing for the moment the proof of this lemma, let us see that it implies Theorem~\ref{t.gjt}. Indeed, due to~\eqref{e.sssolution}
it can be applied to both forward and backward iterations of the vector $v$. Thus, for an arbitrary $\alpha>0$ there exist $n,n'>0$ such that
$$
\tG_{n,\alpha,\omega}(x_v)-\tG_{n,-\alpha,\omega}(x_v) >1,
$$
and
$$
\tG_{-n',-\alpha,\omega}(x_v)-\tG_{-n',\alpha,\omega}(x_v) >1.
$$
Take an arbitrary $y\in [\tG_{-n',\alpha,\omega}(x_v); \tG_{-n',-\alpha,\omega}(x_v)-1]$, and let $\bo:=\sigma^{-n'}\omega$. Then,
$$
\tG_{n',-\alpha,\bo}(y) <  \tx -1, \quad  \tG_{n',\alpha,\bo}(y)> x_v,
$$
hence
$$
\tG_{n+n',-\alpha,\bo}(y) <  \tG_{n,-\alpha,\omega}(x_v) -1, \quad  \tG_{n+n',\alpha,\bo}(y)> \tG_{n,\alpha,\omega}(x_v),
$$
and finally
$$
\tG_{n+n',\alpha,\bo}(y) - \tG_{n+n',-\alpha,\bo}(y) >\tG_{n,\alpha,\omega}(x_v) -(\tG_{n,-\alpha,\omega}(x_v) -1)>2.
$$
An application of Lemma~\ref{l:incr-2} concludes the proof.

\begin{proof}[Proof of Proposition~\ref{l:f-unstable}]

We will consider the following two cases separately:

\vspace{4pt}

{\bf Case 1.} There is a constant $C>0$ and a sequence $\{n_k\}$ of indices such that $n_k\to +\infty$ as $k\to +\infty$, and $\|{\bf A}_{0, n_k, \omega}\|\le C$.

\vspace{4pt}

{\bf Case 2.} We have $\|{\bf A}_{0, n, \omega}\|\to \infty$ as $n\to +\infty$.

\vspace{4pt}

Consider Case 1 first. Suppose $\|{\bf A}_{0, n_k, \omega}\|\le C$. Let us show (by induction in~$k\in \mathbb{N}$) that for any small $\alpha>0$ there is $\delta_1=\delta_1(\alpha, C)>0$ such that for any $x\in \mathbb{R}$
\begin{equation}\label{e.want}
\tilde G_{0, n_k, \omega}^{-1}\circ\tilde G_{\alpha, n_k, \omega}(x)-x\ge k\delta_1.
\end{equation}

Since $\tilde G_{a, m, \omega}$ are strictly increasing functions of the parameter $a$, by compactness arguments  for some $\varepsilon_1=\varepsilon_1(\alpha)>0$, any $\omega'\in \frak{M}$, and any $x\in \mathbb{R}$ we have
$$
\tilde g_{\alpha, \omega'}(x)-\tilde g_{0, \omega'}(x)\ge \varepsilon_1.
$$
and hence (considering the last iteration) for any $m>0$,
\begin{equation}\label{e.epsforce}
\tilde G_{\alpha, m, \omega'}(x)-\tilde G_{0, m, \omega'}(x)\ge \varepsilon_1.
\end{equation}
Since $\tilde G_{0, n_k, \omega}^{-1}$ is a projectivization of a matrix of a norm at most~$C$, it is a monotone function with derivative bounded away from zero by some constant that depends only on $C$. Thus we have for some $\delta_1=\delta_1(\alpha, C)$
\begin{equation}\label{e.force}
\text{\rm if}\ \ y_2-y_1\ge \varepsilon_1, \text{\rm \ \ then}\ \ \ \tilde G_{0, n_k, \omega}^{-1}(y_2)-\tilde G_{0, n_k, \omega}^{-1}(y_1)\ge \delta_1.
\end{equation}

In particular,
$$
\tilde G_{0, n_k, \omega}^{-1}\circ\tilde G_{\alpha, n_k, \omega}(x)- x =\tilde G_{0, n_k, \omega}^{-1}\circ\tilde G_{\alpha, n_k, \omega}(x)-\tilde G_{0, n_k, \omega}^{-1}\circ\tilde G_{0, n_k, \omega}(x)\ge \delta_1.
$$
Take and fix such $\delta_1$. Assume now that for some $k\in \mathbb{N}$ and for any $y\in \mathbb{R}$ we have
$$
\tilde G_{0, n_{k-1}, \omega}^{-1}\circ\tilde G_{\alpha, n_{k-1}, \omega}(y)-y\ge (k-1)\delta.
$$
Then using (\ref{e.epsforce}) and (\ref{e.force}) we have
\begin{multline*}
   \tilde G_{0, n_k, \omega}^{-1}\circ\tilde G_{\alpha, n_k, \omega}(x)-x = \\
   (\tG_{0, n_k, \omega}^{-1}\circ\tilde G_{\alpha, n_k, \omega}(x)-\tG_{0, n_{k-1}, \omega}^{-1}\circ\tilde G_{\alpha, n_{k-1}, \omega}(x)) + \\ +
   (\tG_{0, n_{k-1}, \omega}^{-1}\circ\tilde G_{\alpha, n_{k-1}, \omega}(x)-x). 
\end{multline*}
The second summand in the right hand side is no less than~$(k-1)\delta_1$ by the induction assumption. At the same time, the first one can be rewritten as
$$
\tG_{0, n_{k}, \omega}^{-1}(y_2) - \tG_{0, n_{k}, \omega}^{-1}(y_1),
$$
where
$$
y_1=\tilde G_{0, n_k-n_{k-1},\sigma^{n_{k-1}} \omega}(y_0), \quad y_2=\tilde G_{\alpha, n_k-n_{k-1},\sigma^{n_{k-1}} \omega}(y_0), \quad y_0=
\tilde G_{0, n_{k-1}, \omega}(x).
$$
and joining~\eqref{e.epsforce} with~\eqref{e.force} we see that it is greater than~$\delta_1$. We finally get
$$
   \tilde G_{0, n_k, \omega}^{-1}\circ\tilde G_{\alpha, n_k, \omega}(x)-x >\delta_1 + (k-1)\delta_1=k\delta_1.
$$
This completes the step of induction, and hence proves~(\ref{e.want}).

Now, taking $k> \frac{1}{\delta}$, we have $k\delta>1$, and thus~\eqref{e.want} implies that
\begin{multline*}
  \tilde G_{\alpha, n_k, \omega}(x)   = \tilde G_{0, n_k, \omega}\left(\tilde G_{0, n_k, \omega}^{-1}\circ \tilde G_{\alpha, n_k, \omega}(x)\right)  \ge \\
\tilde G_{0, n_k, \omega}(x+k\delta)>\tilde G_{0, n_k, \omega}(x+1)=\tilde G_{0, n_k, \omega}(x)+1,
\end{multline*}
proving the conclusion of the Proposition in this case.

\vspace{4pt}

Let us now consider Case 2. First, decreasing $\varepsilon_1=\varepsilon_1 (\alpha)$ if needed we can be sure that additionally to (\ref{e.epsforce}) we also have that for any $\omega'\in \frak{M}$, any $x\in \mathbb{R}$, and we have
$$
\tg_{0,\omega'}^{-1}(\tg_{\alpha,\omega'}(x)) > x + \varepsilon_1, \quad \tg_{\alpha,\omega'}(\tg_{0,\omega'}^{-1}(x)) > x + \varepsilon_1.
$$
Joining the two together (applying one for the first and one for the last iteration), for any $m\ge 2$, any $x$ and any $\omega'$ we get
\begin{equation}\label{eq:shifted}
\tG_{\alpha, m, \omega'}(x) \ge \tG_{0, m, \omega'}(x+\varepsilon_1)+ \varepsilon_1.
\end{equation}
In the same way (again, reducing the value of $\varepsilon_1$ if necessary) we get for all $x,\omega'$, and $m\ge 2$
$$
\tG_{-\alpha, m, \omega'}(x) \le \tG_{0, m, \omega'}(x-\varepsilon_1)- \varepsilon_1.
$$
%
%

Now, take $n$ such that the norm $\|A_{0,n,\omega}\|$ becomes sufficiently large (we will choose the lower bound later).
As we will see, the point $x_v$ is close to a lift of the point $\tx_-:=x_-(A_{0,n,\omega})\in \R$.
Let
$$
\tx_+ < \tG_{0,n,\omega} (\tx_-) <\tx_+ +1
$$
be the two lifts of the image of the most expanded direction.
We will show that, assuming appropriate lower bound for the norm $\|A_{0,n,\omega}\|$, we have
\begin{equation}\label{eq:overjump}
\tG_{\alpha,n,\omega} (x_v) > \tx_+ +1, \quad \tG_{-\alpha,n,\omega} (x_v)  < \tx.
\end{equation}
Together, these estimates will imply the desired $\tG_{\alpha,n,\omega} (x_v)-\tG_{-\alpha,n,\omega} (x_v) >1$.

\begin{figure}[!h!]
\begin{center}
\includegraphics{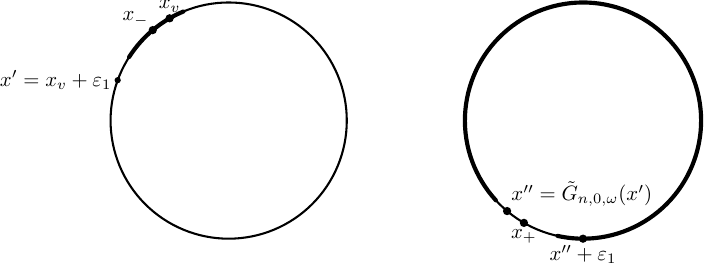}
\end{center}
\caption{Point $x_v$ on the circle and a lower bound $\tG_{0,n,\omega} (x_v+\eps_1)+\eps_1$ for its image~$\tG_{\alpha,n,\omega} (x_v)$.}\label{f:forward}
\end{figure}

Let us obtain the first of them. Indeed, due to Lemma~\ref{l:x-C-image} we have
$$
|x_v-\tx_-| \le \frac{\pi}{2} \frac{|A_{0,n,\omega} v|/|v|}{\|A_{0,n,\omega} \|} \le \frac{\pi K}{2 \|A_{0,n,\omega} \|}.
$$
In particular, provided that $\|A_{0,n,\omega} \|>\frac{\pi K}{\eps_1}$, we have $|x_v-\tx_-|\le \frac{\eps_1}{2}$.

Hence, $x':=x_v+\eps_1>\tx_- + \frac{\eps_1}{2}$. Now, an easy corollary to Lemma~\ref{l:x-C-image} is that for any $A, x$ we have
\begin{equation}\label{eq:dist-product}
\dist (f_A(x),x_+(A)) \cdot \dist (x,x_-(A)) \le \left( \frac{\pi}{2 \|A\|} \right)^2
\end{equation}
(it suffices to multiply the first two conclusions, and the numerators cancel out). As $x'-\tx_- > \frac{\eps_1}{2}$, we get an upper estimate for the distance from its image to $\tx_+ + 1$. Indeed, if we have
$\tG_{\alpha,n,\omega} (x')\le \tx_+ +1 - \frac{\eps_1}{2}$, the left hand side of~\eqref{eq:dist-product} is at least~$(\eps_1/2)^2$. If $\|A_{0,n,\omega}\|\ge \frac{\pi}{\eps_1}$, having this would imply a contradiction.

Adding up, once $\|A_{0,n,\omega}\|\ge \frac{\pi \max(K,1)}{\eps_1}$, we have
$$
\tG_{0,n,\omega} ( x_v + \eps_1) > \tx_+ +1 - \frac{\eps_1}{2},
$$
and thus we get the desired
$$
\tG_{\alpha,n,\omega} (x_v)>\tG_{0,n,\omega} ( x_v + \eps_1)+ \eps_1 > \tx_+ +1 + \frac{\eps_1}{2} > \tx_+ +1.
$$
The second inequality of~\eqref{eq:overjump} is absolutely analogous. We have obtained~\eqref{eq:overjump}, and thus have concluded the proof of the proposition.
\end{proof}
As Proposition~\ref{l:f-unstable} is proven, so is Theorem~\ref{t.gjt}.
\end{proof}

\end{appendix}

\section*{Acknowledgments}

We are grateful to David Damanik who attracted our attention to the question and provided numerous relevant references, to Abel Klein and  Lana Jitomirskaya for useful discussions and remarks, and to Jairo Bochi, Jake Fillman, and Zhenghe Zhang for sending us helpful comments on the first draft of the paper. \edited{Also, we would like to thank both  referees for extremely careful refereeing, multiple helpful remarks, and providing a few highly relevant references.}


\end{document}